\title[Thurston geometry of Teichmüller space]{The infinitesimal and global Thurston geometry of Teichm\"uller space}
\author{Yi Huang, Ken'ichi Ohshika and Athanase Papadopoulos}
\address{Yi Huang, Yau Mathematical Sciences Center,
Tsinghua University, Haidian District
Beijing 100084, China.
email: {\rm yihuangmath@tsinghua.edu.cn}}
\address{Ken'ichi Ohshika,    
Department of Mathematics,
Gakushuin University,
Mejiro, Toshima-ku, Tokyo, Japan.
email: {\rm ohshika@math.gakushuin.ac.jp}}
\address{Athanase Papadopoulos, Institut de Recherche Mathématique Avancée
(Université de Strasbourg et CNRS),
7 rue René Descartes,
67084 Strasbourg Cedex France.
email: {\rm papadop@math.unistra.fr}}
\newcommand{\reals}{\mathbb{R}}
\newcommand{\naturals}{\mathbb{N}}
\newcommand{\codim}{\operatorname{codim}}
\newcommand{\hyperbolic}{\mathbb{H}}
\newcommand{\ie}{i.e.\ }
\newcommand{\len}{\ell}
\newcommand{\ml}{\mathcal{ML}}
\newcommand{\pml}{\mathcal{PML}}
\newcommand{\fdim}{\mathrm{Fdim}}
\newcommand{\adim}{\mathrm{Adim}}
\newcommand{\teich}{\mathcal{T}}
\definecolor{NoteColor}{rgb}{1,0,0}
\newtheorem{theorem}{Theorem}[section]
\newtheorem{proposition}[theorem]{Proposition}
\newtheorem{lemma}[theorem]{Lemma}
\newtheorem{corollary}[theorem]{Corollary}
\theoremstyle{definition}
\newtheorem{definition}[theorem]{Definition}
\newtheorem{remark}[theorem]{Remark}
\newtheorem{question}[theorem]{Question}
\newtheorem{conjecture}[theorem]{Conjecture}
\begin{document}
\maketitle

\begin{abstract}
We undertake a systematic study of the infinitesimal geometry of the Thurston metric on Teichm\"uller space, showing that the topology, convex geometry and metric geometry of the tangent and cotangent spheres based at any marked hyperbolic surface representing a point in Teichm\"uller space can recover the marking and geometry of this marked surface. We then translate the results concerning the  infinitesimal structures to global geometric statements for the Thurston metric, most notably deriving rigidity statements for the Thurston metric analogous to the celebrated Royden theorem.

\medskip
\noindent  \emph{Keywords.---} Teichm\"uller space, Thurston metric, rigidity, hyperbolic surfaces, stretch map, convex geometry.\medskip

\noindent    \emph{AMS classification.---}  32G15, 30F60, 30F10, 52A21.
\end{abstract}

\sloppy

\section{Introduction}

An important and beautiful legacy of S. S. Chern's {\oe}uvre in differential geometry is the idea that many \emph{global} geometric and topological properties of a manifold may be inferred from its \emph{infinitesimal} structure. Our goal is to investigate Thurston's Finsler metric geometry on Teichm\"uller space along this theme. Throughout this paper, 
\begin{itemize}
\item
$S=S_{g,n}$ denotes an orientable hyperbolic surface of genus $g\geq 0$, and with $n\geq0$ cusps (but without boundary);
\item
  $\teich(S)$ denotes the Teichm\"uller space of $S$, \ie  the space of isotopy classes of complete finite-area hyperbolic metrics on $S$. 
\end{itemize}
Here are some highlights of our work.\medskip

\noindent\textsc{Structured stratification}: For any $x$ in $\teich(S)$, we establish and study stratifications of the Thurston Finsler unit norm ball in $T_x\teich(S)$ and of the unit co-norm ball in $T^*_x\teich(S)$ induced by their convex geometry. These stratifications are highly heterogeneous, with polytope-like strata, many of which are naturally labelled by geodesic laminations in a functorially structured manner. They are used in our proof of the infinitesimal rigidity results for the Thurston metric. 
\medskip

\noindent\textsc{Length extraction}: We obtain a limiting procedure for recovering any hyperbolic metric $x\in\teich(S)$ from the Thurston Finsler norm on the tangent space $T_x\teich(S)$ at $x$.\medskip

\noindent\textsc{Infinitesimal rigidity}: We prove an infinitesimal rigidity theorem for the Thurston metric which is a stronger form of an analogue of the celebrated Royden infinitesimal rigidity theorem that holds for the Teichm\"uller metric. We show that isometries of the Thurston Finsler norm $\|\cdot\|_{\mathrm{Th}}$ on $T_x\teich(S)$ are precisely given by extended mapping classes on $S$. Stated precisely,
\begin{theorem}[Infinitesimal rigidity]
\label{main}
Consider two arbitrary points $x,y\in\teich(S)$. The normed vector spaces 
\[
(T_x \teich(S),\|\cdot\|_{\mathrm{Th}})\text{ and }(T_y \teich(S),\|\cdot\|_{\mathrm{Th}})\text{ are isometric }
\] 
if and only if the hyperbolic surfaces $(S,x)\text{ and }(S,y)$ are isometric. 
\end{theorem}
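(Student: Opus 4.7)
The easy direction (isometric surfaces $\Rightarrow$ isometric tangent spaces) is immediate: a hyperbolic isometry $\varphi\colon(S,x)\to(S,y)$ acts on $\teich(S)$ as an extended mapping class, and its derivative at $x$ is a linear isometry of tangent spaces because every ingredient of the Thurston norm---hyperbolic lengths of measured laminations---is natural under hyperbolic isometries. The real content is the converse, which I plan to prove in three steps by exploiting the \emph{structured stratification} and \emph{length extraction} results highlighted in the introduction.

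\emph{Step 1: Transport of the stratification.} Let $\Phi\colon(T_x\teich(S),\|\cdot\|_{\mathrm{Th}})\to (T_y\teich(S),\|\cdot\|_{\mathrm{Th}})$ be a linear isometry. Then $\Phi$ bijects the two Thurston unit balls and, dually, $\Phi^*$ bijects the two cotangent unit balls. Since the stratification is defined in convex-geometric terms intrinsic to the ball, $\Phi$ must carry each stratum at $x$ to a stratum at $y$ of the same dimension, combinatorial type, and incidence pattern. Functoriality of the labelling by geodesic laminations then produces a canonical bijection $f$ between the laminations labelling strata at $x$ and at $y$, which preserves support-containment and all other invariants visible to the stratification.

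\emph{Step 2: Length matching.} Applying length extraction at both $x$ and $y$, the hyperbolic lengths of all laminations in the relevant class are recovered from the ambient Thurston norms via a universal limiting procedure. Because $\Phi$ is norm-preserving and the labelling is matched, one obtains $\ell_y(f(\lambda))=\ell_x(\lambda)$ for every such $\lambda$. In particular, $f$ yields an isomorphism of marked simple closed curve length spectra between $(S,x)$ and $(S,y)$.

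\emph{Step 3: Realisation and conclusion.} The final step is to show that the restriction of $f$ to isotopy classes of simple closed curves is induced by a self-homeomorphism $\varphi\colon S\to S$. Once this is established, Step 2 tells us that $\varphi^*y$ and $x$ have identical marked length spectra on $S$, and Fricke--Klein--Thurston length spectrum rigidity upgrades this to a hyperbolic isometry $(S,x)\to(S,y)$. The main obstacle is precisely this topological realisation of $f$: I expect it to follow by combining the functoriality of the labelling with an Ivanov-type simplicial rigidity theorem, recognising $f$ as a simplicial automorphism of a combinatorial structure built into the stratification (the curve complex being the most natural candidate). The exceptional low-complexity surfaces for Ivanov's theorem, and the bootstrap from simple closed curves to more general measured laminations, are the technicalities that will require the most care.
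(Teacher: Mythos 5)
Your overall blueprint assembles the same ingredients the paper uses --- transporting the convex stratification, invoking Ivanov-type simplicial rigidity, and running the length-extraction formula --- but there is a genuine logical gap in the ordering of your Steps 2 and 3, and it is not a cosmetic one.

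Your Step 2 asserts that because $\Phi$ is norm-preserving and the stratification labelling is matched, one obtains $\ell_y(f(\lambda))=\ell_x(\lambda)$. This is too fast. The Thurston norm is built from \emph{ratios} $d\ell(\gamma)/\ell_x(\gamma)$, so norm-preservation alone sees only length-normalised data --- it cannot produce the lengths themselves. What actually extracts lengths in the paper (\cref{lem:length}) is the asymptotic behaviour of $\|v_{+,m}-v_{-,m}\|_{\mathrm{Th}}$ for specific \emph{stretch vector} pairs $v_{\pm,m}$; for this to transfer under $\Phi$, you need the equivariance $\Phi(v_\Lambda)=v_{h(\Lambda)}$ of stretch vectors (\cref{equivariant stretch}). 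But that equivariance is proved \emph{using} the topological realisation (your Step 3): it relies on first identifying the induced map on $\pml(S)$ with an extended mapping class $h$, so that $h$ acts on Hausdorff limits of multicurve sequences (\cref{laminationlimit}, \cref{uniquelimit}) and stretch vectors can be recognised as unique limits of shrinking faces $N_x([\Gamma_i])$. Without Step 3 in hand, your bijection $f$ is only defined on $\pml(S)$ (i.e.\ on measured laminations), not on the maximal chain-recurrent laminations that index the stretch vectors, and there is no reason yet that $\Phi$ sends a stretch vector to a stretch vector. The paper's actual order is therefore: topological rigidity (\cref{first part}) first, then equivariance of stretch vectors, then length extraction and geometric rigidity (\cref{second part}).

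A secondary point: the statement is about isometries of normed vector spaces, and the Thurston norm is asymmetric, so it is worth noting explicitly (as the paper does via Mazur--Ulam, \cref{mazurulam}) why one may assume the isometry is linear. Your proposal quietly builds linearity into the hypothesis. Once the ordering is corrected --- first realise $f$ via the curve complex and Ivanov's theorem, then use that realisation to prove stretch-vector equivariance, and only then run the length-extraction limit --- the remainder of your outline aligns with the paper's proof.
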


This project grew out of our attempt to prove \cref{main}, and the second author presented this earlier work in a series of lectures given at a CIMPA school in Varanasi in December 2019 \cite{O}. In the course of preparing the present paper, we learned of Pan's independent work \cite{P} establishing \cref{main}. Our respective proof strategies and the flavour of our work differ substantially: Pan's approach makes beautiful use of \cite[Theorem~3.1 and Theorem~10.1]{ThS} to efficiently establish infinitesimal rigidity. In contrast, in part informed by conversations with David Dumas and Kasra Rafi, we opt to follow Thurston's philosophy \cite{ThS}:
\begin{displayquote}
\textit{In the course of this paper, we will develop some other ideas which are of interest in their own right. The intent is not to give the slickest proof of the main theorem, but to develop a good picture.}
\medskip
\end{displayquote}
We now present an overview of the picture we have learned.

\subsection{The Thurston metric}

Let  $\mathcal{S}= \mathcal{S}(S)$ denote the set of free homotopy classes of essential simple closed curves on $S$, that is, simple closed curves which are neither homotopic to a point nor to a cusp. In \cite{ThS}, Thurston introduces the following quantity, defined for any \emph{ordered} pair $x,y\in\teich(S)$ 
\begin{align}
d_{\mathrm{Th}}(x,y)
:=\sup_{\gamma  \in \mathcal{S}}
\log\frac{\len_y(\gamma)}{\len_x(\gamma)},\label{eq:metric-curve}
\end{align}
 where $\len_z(\gamma)$ denotes the hyperbolic length of the unique geodesic in $(S,z)$ freely homotopic to $\gamma$. He shows that $e^{d_{\mathrm{Th}}(x,y)}$ is the optimal Lipschitz constant for self-homeomorphisms of $S$ homotopic to $\mathrm{id}_S: (S,x)\to(S,y)$ (see \cref{eq:metric-Lip} in \S\ref{s:optimal}), and that $d_{\mathrm{Th}}$ defines an asymmetric Finsler metric on $\teich(S)$. In particular, Thurston's theory furnishes the following description of the asymmetric Finsler norm of an arbitrary tangent vector $v\in T_x\teich(S)$:
\begin{align}
\|v\|_{\mathrm{Th}}
=\sup_{\gamma  \in \mathcal{S}}
\left(
d\log\len_{(\cdot)}(\gamma)
\right)_x(v)
=\sup_{\gamma  \in \mathcal{S}}
\left(
\frac{v(\len_{(\cdot)}(\gamma))}{\len_{x}(\gamma)}
\right),\label{eq:finsler}
\end{align}
where $\len_{(\cdot)}(\gamma):\teich(S)\to\mathbb{R}$ measures the length of the geodesic representative of $\gamma$ on $x$ for each $\gamma\in\mathcal{S}$.

\subsection{Infinitesimal structures on tangent and cotangent space} \label{ss:Infinitesimal}

The differential $d \log \len(\lambda)$ of $\log\len_{(\cdot)}(\lambda):\teich(S)\to\mathbb{R}$ at $x\in\teich(S)$ defines a linear function from the tangent space $T_x\mathcal{T}(S)$ to $\mathbb{R}$, and hence defines an element of the cotangent space $T^*_x\teich(S)$. Moreover, $d\log \len(\lambda)$ depends only on the projective class of $\lambda$ and defines a map
\begin{align}
\iota_x : \pml(S) \to T^*_x\teich(S),\quad
[\lambda]\mapsto d_x \log\len_{(\cdot)}(\lambda).\label{eq:iota}
\end{align}
\begin{theorem}[Thurston, \cite{ThS}] 
\label{Thurston} For every $x\in \mathcal{T}(S)$, the map $\iota_x$ is an embedding of $\pml(S)$ into $T^*_x\teich(S)$. The image set
\[
\mathbf{S}_x^*:=\iota_x(\pml(S))\subsetneq T_x^* \teich(S)
\]
bounds a convex body  in $T^*_x\teich(S)$ containing the origin.
\end{theorem}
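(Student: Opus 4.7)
The plan is to proceed via polar duality with the Thurston unit ball $B_x := \{v \in T_x\teich(S) : \|v\|_{\mathrm{Th}} \leq 1\}$. Reading \eqref{eq:finsler} directly gives
\[
B_x = \bigcap_{\gamma \in \mathcal{S}}\{v \in T_x\teich(S) : (d\log\len(\gamma))_x(v) \leq 1\},
\]
which exhibits $B_x$ as a closed convex set about $0$. Granting that $B_x$ is bounded (a consequence of length-spectrum rigidity, encoded in the fact that $\|\cdot\|_{\mathrm{Th}}$ is a genuine asymmetric norm), the theorem reduces to identifying the boundary of the polar dual $B_x^\circ \subset T^*_x\teich(S)$ with $\iota_x(\pml(S))$.

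\textbf{Continuity.} Since hyperbolic length extends to a jointly continuous pairing $\len : \ml(S) \times \teich(S) \to \reals$ that is $\reals_{>0}$-homogeneous in its first factor, Kerckhoff--Wolpert-type formulae yield a continuous cotangent-valued assignment $\lambda \mapsto d\log\len_{(\cdot)}(\lambda)_x$ on $\ml(S)\setminus\{0\}$ which descends to the projectivisation. As $\pml(S)$ is compact Hausdorff, topological embedding would follow from injectivity alone.

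\textbf{Injectivity via stretch paths --- the main obstacle.} I expect the hard step to be proving that $\iota_x$ separates projective classes. Here I would invoke Thurston's stretch construction: for every complete geodesic lamination $\nu$ on $(S,x)$, there is a smooth path $t \mapsto \mathrm{str}_\nu(t)$ in $\teich(S)$ with $\mathrm{str}_\nu(0) = x$ along which every measured lamination $\lambda$ supported in $\nu$ has length $\len_{\mathrm{str}_\nu(t)}(\lambda) = e^t \len_x(\lambda)$. Differentiating at $t=0$, the tangent direction $v_\nu$ satisfies $(d\log\len(\lambda))_x(v_\nu) = 1$, while a parallel estimate bounds $(d\log\len(\mu))_x(v_\nu) \leq 1$ for every $[\mu] \in \pml(S)$, with equality only when the support of $\mu$ lies in $\nu$. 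Distinct projective classes $[\lambda] \neq [\mu]$ are then separated by selecting a maximal $\nu$ containing one support but not the other.

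\textbf{The convex ball structure.} The same stretch-path analysis simultaneously identifies $\iota_x(\pml(S))$ with the set of unit-norm supporting functionals of $B_x$; via polar duality this coincides with the topological boundary $\partial B_x^\circ$. Since $\pml(S)$ is a topological sphere of dimension $6g-7+2n = \dim T^*_x\teich(S) - 1$, the image bounds a full-dimensional convex region containing $0$ in its interior, yielding the final assertion.
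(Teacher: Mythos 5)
Your overall architecture — recast $\iota_x(\pml(S))$ as $\partial B_x^\circ$ via polar duality, place the image inside the boundary using stretch vectors, and close the loop by an invariance-of-domain argument on an embedded codimension-one sphere — is a sensible reconstruction. Note, though, that the paper does not actually supply a proof of this statement; it is quoted as a theorem of Thurston from \cite{ThS} and used as a black box downstream (e.g.\ in \eqref{eq:norm} and in \cref{lem:supportset}).

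The genuine gap in your argument is the injectivity step. Your separation mechanism reads: for $[\lambda]\neq[\mu]$, choose a complete lamination $\nu$ containing one support but not the other, and use the stretch vector $v_\nu$ to separate the two covectors. This collapses precisely when $|\lambda|=|\mu|$, which is a common occurrence: it happens for every multicurve with two or more components (take two non-proportional weightings of the same support), and for every minimal non-uniquely-ergodic lamination. In such cases no complete lamination contains one support without the other, and the stretch-vector calculus assigns the value $1$ to both $\iota_x([\lambda])(v_\nu)$ and $\iota_x([\mu])(v_\nu)$; your own clause ``with equality only when the support of $\mu$ lies in $\nu$'' already flags the problem. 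Closing this gap requires a mechanism that detects the transverse measure rather than merely the support — for instance the Weil--Petersson duality identifying $d\len(\lambda)$ with the earthquake vector $E_\lambda$, together with the injectivity of $\lambda\mapsto E_\lambda$, or a direct full-rank computation on the simplex of transverse measures over a fixed $|\lambda|$ (the map being affine there, as observed in the proof of \cref{lem:supportset}). Without injectivity, $\iota_x(\pml(S))$ need not be an embedded topological sphere, and the invariance-of-domain step at the end has nothing to bite on.
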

Setting $\mathbf{S}_x^*\subsetneq T_x^*\teich(S)$ to be a unit co-norm sphere and extending by positive homothety define an asymmetric norm $\|\cdot\|_{\mathrm{Th}}^*$ on each vector space $T_x^*\teich(S)$. Its dual norm $\|v\|_{\mathrm{Th}}^{**}$ on the tangent space $T_x\teich(S)$ coincides with the original norm $\|\cdot\|_{\mathrm{Th}}$. To see this, observe that 
\begin{align}
\|v\|_{\mathrm{Th}}^{**}(v)
:=&
\sup
\left\{
w^*(v) \mid
w^*\in\mathbf{S}_x^*
\right\}\notag
\\
=&\sup 
\left\{
\iota_x([\lambda])(v)\in\mathbb{R} \mid\ [\lambda]\in\pml(S)
\right\}
=
\|v\|_{\mathrm{Th}}(v)
\label{eq:norm}.
\end{align}
The rightmost equality in \cref{eq:norm} follows from \cref{eq:finsler} and the density of the set of projectivised simple closed geodesics in $\pml(S)$. We denote the unit sphere for the norm $\|v\|_{\mathrm{Th}}$ by $\mathbf{S}_x\subsetneq T_x\teich(S)$.

\subsection{The unit norm sphere $\mathbf{S}_x$}
In \cref{sec:convexbodies}, we develop some elements of the general theory of convex bodies, and show that the boundary of any convex body has a \emph{convex stratification} as in \cref{defn:convexstrat} which decomposes it into (generically uncountably many) convex strata which are the interiors of its \emph{faces}.
In such a context, a face, introduced in \cref{defn:classicalface}, is defined as a convex subset of $\mathbf{S}_x$ containing the endpoints of every segment that has an interior point in that set.
Thurston shows, in a deep result \cite[Theorem~10.1]{ThS}, that the top-dimensional faces of the stratification of the unit Thurston norm sphere $\mathbf{S}_x\subsetneq T_x\teich(S)$, seen as the boundary of a convex body,  are naturally indexed by the set of homotopy classes of simple closed curves and that these faces cover almost all this Thurston norm unit  sphere.\smallskip

The aforementioned correspondence between the top-dimensional faces and simple closed curves is explicit: for any $\gamma\in\mathcal{S}$, its associated top-dimensional face is given by
\begin{align*}
\left\{
v\in \mathbf{S}_x
\mid 
\iota_x([\gamma])(v):=(\mathrm{d}\log\len_{(\cdot)}(\gamma))_x(v)
=\left\|v\right\|_{\mathrm{Th}}
\right\}.
\end{align*}
The above expression depends only on the projective measure class of $\gamma$, and we generalise it to general projective measured laminations $[\lambda]\in\pml(S)$ by
\begin{align}
N_x([\lambda])
:=\left\{ 
v\in \mathbf{S}_x
\mid 
\iota_x([\lambda])(v)
=\left\|v\right\|_{\mathrm{Th}}
 \right\}.
\end{align}
The set $N_x([\lambda])$ is necessarily an \emph{exposed} face of $\mathbf{S}_x$, that is, the intersection of $\mathbf{S}_x$ with one of the support hyperplanes of the convex body bounded by $\mathbf{S}_x$, where a \emph{support hyperplane} of $\mathbf{S}_x$ is a hyperplane in $T_x \teich(S)$ with non-empty intersection with $\mathbf{S}_x$, such that one of the two half-spaces it bounds contains the entire set $\mathbf{S}_x$.
\begin{corollary}[{contravariant labelling, \cref{contravariant}}]
For two arbitrary projective multicurves $[\Gamma_1],[\Gamma_2]\in\pml(S)$, denote their respective supports by $|\Gamma_1|$ and $|\Gamma_2|$. Then,
\begin{align}
|\Gamma_1|\subseteq |\Gamma_2|
\textit{ if and only if }
N_x([\Gamma_2])\subseteq N_x([\Gamma_1]).
\end{align}
\end{corollary}
The above corollary induces the embedding of a na\"ive ``dual'' to the curve complex as a subcomplex of $\mathbf{S}_x$.

\begin{corollary}[{embedded dual curve complex, \cref{prop:embeddedccx}}]
For every $x\in\teich(S)$, the convex stratification of $\mathbf{S}_x$ contains a set $\mathbf{C}_x$ consisting of a subcollection of strata which is dual to the curve complex in the sense that
\begin{itemize}
\item
the support $|\Gamma|$ of any projective multicurve $[\Gamma]\in\pml(S)$ is assigned to a face $N_x([\Gamma])\subsetneq \mathbf{S}_x$;
\item
the subset-relation for simplices in the curve complex corresponds, by the above assignment, to the superset-relation for faces in $\mathbf{C}_x$;
\item
the dimension of each simplex in the curve complex is equal to the codimension of its corresponding face in $\mathbf{C}_x$ as a subset of $\mathbf{S}_x$.
\end{itemize}
\end{corollary}

This polytope-like convex geometry of $\mathbf{S}_x^*$ and $\mathbf{S}_x$ tells us that the Finsler structure of Thurston's metric is highly heterogeneous, lending further support for the infinitesimal rigidity of the Thurston metric.

\subsection{The unit co-norm sphere $\mathbf{S}_x^*$}
The convex stratification of the dual unit co-norm sphere $\mathbf{S}_x^*$ enjoys a cleaner and more natural ``dual'' statement:
\begin{corollary}[{embedded curve complex, \cref{cor:embeddedcx}}]
The convex stratification of $\mathbf{S}_x^*$ contains the curve complex for $S$ as a subcomplex $\mathbf{C}_x^*$.
\end{corollary}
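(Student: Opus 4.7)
The plan is to explicitly parametrise a candidate subcomplex $\mathbf{C}_x^* \subseteq \mathbf{S}_x^*$ by cells of the curve complex, and then use the contravariant labelling corollary together with a standard convex-geometric separation argument to identify each candidate cell as the relative interior of an exposed face of the unit co-norm ball.

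For each $k$-cell $\sigma = \{a_0, \ldots, a_k\}$ of the curve complex, let $\Delta(\sigma) \subset \pml(S)$ denote the (open) simplex of projective measured laminations supported on $|\sigma|$. Unpacking $\iota_x([\lambda]) = \mathrm{d}\log\len_{(\cdot)}(\lambda)_x$ for $\lambda = \sum_i c_i a_i$ with $c_i > 0$ yields
\[
\iota_x\Big(\Big[\sum_i c_i a_i\Big]\Big)
=\sum_i \frac{c_i\,\len_x(a_i)}{\sum_j c_j\,\len_x(a_j)}\,\iota_x([a_i]),
\]
so $\iota_x$ carries $\Delta(\sigma)$ diffeomorphically onto the relative interior of the Euclidean $k$-simplex with vertices $\iota_x([a_i])$. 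I set $\mathbf{C}_x^*(\sigma)$ to be this open Euclidean simplex and take $\mathbf{C}_x^* := \bigcup_\sigma \mathbf{C}_x^*(\sigma)$.

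To identify each $\mathbf{C}_x^*(\sigma)$ as a stratum of the convex stratification of $\mathbf{S}_x^*$, I would exhibit a supporting hyperplane of the unit co-norm ball $\mathbf{B}_x^*$ whose intersection with $\mathbf{S}_x^*$ is exactly $\overline{\mathbf{C}_x^*(\sigma)}$. The contravariant labelling corollary furnishes a face $N_x([\sigma]) \subsetneq \mathbf{S}_x$ that depends only on $|\sigma|$; let $v$ be any point in its relative interior (non-empty by general finite-dimensional convex geometry). For any $[\lambda] \in \pml(S)$, the condition $\iota_x([\lambda])(v) = \|v\|_{\mathrm{Th}} = 1$ is equivalent to $v \in N_x([\lambda])$, and the standard principle that a face meeting another face's relative interior must contain it entirely then forces $N_x([\lambda]) \supseteq N_x([\sigma])$, which in turn translates (via the contravariant labelling) to $|\lambda| \subseteq |\sigma|$. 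Hence the supporting hyperplane $\{w^* : w^*(v) = 1\}$ intersects $\mathbf{S}_x^*$ in precisely $\iota_x(\{[\lambda] : |\lambda| \subseteq |\sigma|\}) = \overline{\mathbf{C}_x^*(\sigma)}$, identifying $\overline{\mathbf{C}_x^*(\sigma)}$ as an exposed face of $\mathbf{B}_x^*$ with relative interior $\mathbf{C}_x^*(\sigma)$.

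Finally, the compatibility $\tau \subseteq \sigma \Rightarrow \mathbf{C}_x^*(\tau) \subseteq \overline{\mathbf{C}_x^*(\sigma)}$ is immediate from the parametrisation, so $\mathbf{C}_x^*$ is a subcomplex of the convex stratification combinatorially identified with the curve complex. The principal technical obstacle I anticipate is extending the contravariant labelling from multicurves (as stated in the excerpt) to arbitrary projective measured laminations $[\lambda]$: this step should go through either by using the density of projective multicurves in $\pml(S)$ together with the continuity of $\iota_x$, or by a direct structural argument showing that when $N_x([\lambda]) \supseteq N_x([\sigma])$ the functional $\iota_x([\lambda])$ is forced to coincide with that of a multicurve supported on $|\sigma|$. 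Subject to this routine extension, the general framework of convex bodies developed earlier in the paper supplies all remaining ingredients.
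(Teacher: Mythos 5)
Your parametrisation of the candidate cells is exactly right, and the identity
\[
\iota_x\Bigl(\bigl[\textstyle\sum_i c_i a_i\bigr]\Bigr)
=\sum_i \frac{c_i\,\len_x(a_i)}{\sum_j c_j\,\len_x(a_j)}\,\iota_x([a_i])
\]
does show that each $\iota_x(\Delta(\sigma))$ is an affine $k$-simplex, matching what the paper obtains. The supporting-hyperplane strategy via an interior point $v$ of $N_x([\sigma])$ is also sound in outline. However, the step you label a ``routine extension'' is precisely where the mathematical content of the corollary sits, and neither of the two fixes you propose actually closes it. The density-and-continuity route fails because $\iota_x([\Gamma_i]) \to \iota_x([\lambda])$ only gives $\iota_x([\Gamma_i])(v) \to 1$, not $\iota_x([\Gamma_i])(v) = 1$; inclusion of faces (and hence the contravariant labelling) is simply not preserved under limits in $\pml(S)$, so one cannot pass containments through the approximating multicurves. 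In other words, nothing in the convex-geometry framework alone rules out an $\iota_x([\lambda])$, with $|\lambda|$ meeting $|\sigma|$ transversely, from accidentally landing in the hyperplane $\{w^*(v)=1\}$.

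The missing ingredient is hyperbolic-geometric: one must choose $v$ not as an arbitrary interior point of $N_x([\sigma])$ but as the stretch vector $v_\xi$ of a complete geodesic lamination $\xi$ that contains $|\sigma|$ and whose complement admits no transverse measure outside $|\sigma|$ (obtained by completing $|\sigma|$ with non-closed leaves). The maximally-stretched lemma (\cref{thm:infcomparison}) then shows that any $[\lambda]$ with $\iota_x([\lambda])(v_\xi)=1$ must have $|\lambda|$ contained in $\xi$, hence in $|\sigma|$. By \cref{lem:preface} this conclusion transfers to every interior point $v$ of $N_x([\sigma])$, so your argument then goes through. Note, though, that once you supply this step you have re-derived the pertinent special case of \cref{thm:equisupport}; the paper's proof simply cites that theorem directly, reading off the simplicial structure of $F_{|\Gamma|}$ from its statement, which is shorter but relies on the same stretch-vector construction under the hood.
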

The convex stratification of $\mathbf{S}_x^*$ consists of uncountably many faces of dimensions varying from $0$ to $3g-4+n$ (\cref{thm:bound}), with many of its faces ``covariantly" labelled by geodesic laminations which are the support of measured laminations. Let $\left|\mathcal{ML}\right|$ denote the set of supporting laminations of (projective) measured laminations on $S$, and let $\mathcal{F}$ denote the set of faces of $\mathbf{S}_x^*$. We define an injective map  (see \cref{lem:supportset}) 
\[
F_{(\cdot)}\colon \left|\mathcal{ML}\right| \to \mathcal{F},\quad |\lambda|\mapsto F_{|\lambda|}
\] 
which assigns to the support $|\lambda|\in\left|\mathcal{ML}\right|$ of an arbitrary measured lamination $\lambda\in\mathcal{ML}(S)$ the minimal face $F_{|\lambda|}$ containing the $\iota_x$-image of the set of  (projective) measured laminations supported on $|\lambda|$. We show in \cref{thm:equisupport} that $F_{|\lambda|}$ consists precisely of the $\iota_x$-image of projective measured laminations supported on $|\lambda|$, thereby deriving the following
\begin{corollary}[covariant labelling, \cref{inclusion}]
For two arbitrary projective laminations $[\lambda],[\mu]\in\pml(S)$, 
\[
|\mu|\subsetneq|\lambda|\text{ if and only if }
F_{|\mu|}
\subsetneq
F_{|\lambda|}.
\]
\end{corollary}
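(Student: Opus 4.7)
The plan is to obtain this corollary as a formal consequence of \cref{thm:equisupport}, which characterises $F_{|\lambda|}$ as precisely the set $\iota_x(\{[\nu]\in\pml(S) : |\nu|\subseteq|\lambda|\})$. With that characterisation in hand, inclusions between faces translate directly into inclusions between sets of supports, and the corollary should follow by elementary set-theoretic manipulation.

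For the forward implication, I will assume $|\mu|\subsetneq|\lambda|$. Any projective measured lamination $[\nu]$ with $|\nu|\subseteq|\mu|$ certainly satisfies $|\nu|\subseteq|\lambda|$, so \cref{thm:equisupport} immediately yields $F_{|\mu|}\subseteq F_{|\lambda|}$. Strictness will be witnessed by the point $\iota_x([\lambda])$, which lies in $F_{|\lambda|}$ by definition but lies in $F_{|\mu|}$ only when $|\lambda|\subseteq|\mu|$, contradicting the hypothesis.

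For the reverse implication, I will assume $F_{|\mu|}\subsetneq F_{|\lambda|}$. Since $\iota_x([\mu])\in F_{|\mu|}\subseteq F_{|\lambda|}$, \cref{thm:equisupport} forces $|\mu|\subseteq|\lambda|$. Equality is ruled out because $|\mu|=|\lambda|$ would give $F_{|\mu|}=F_{|\lambda|}$ — either directly from the characterisation, or from the injectivity of $F_{(\cdot)}$ asserted in \cref{lem:supportset} — contradicting strict containment of faces.

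The genuine difficulty in this line of reasoning lies upstream, in the proof of \cref{thm:equisupport}: one must verify that the minimal face $F_{|\lambda|}$ contains \emph{no} extraneous $\iota_x$-image of a projective measured lamination whose support is not contained in $|\lambda|$. Once this is available, the corollary itself is essentially bookkeeping. I expect that the hard part of the upstream work amounts to showing that if an $\iota_x$-image $\iota_x([\nu])$ lies in a face determined by the laminations supported on $|\lambda|$, then the support $|\nu|$ is forced into $|\lambda|$; this is where the intersection-theoretic and duality machinery for measured laminations inherited from Thurston's work will do the heavy lifting, and where any real obstacle to the overall argument would appear.
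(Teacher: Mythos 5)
Your proof is correct, and it is exactly the derivation the paper intends: the paper states this result as an immediate corollary of \cref{thm:equisupport} with no separate proof, and your two-directional bookkeeping — using the set-theoretic characterisation of $F_{|\lambda|}$ for the non-strict inclusions and the membership of $\iota_x([\lambda])$, $\iota_x([\mu])$ to force strictness — is precisely what the deduction amounts to. You are also right that the substantive content is located upstream in \cref{thm:equisupport} (in particular that $F_{|\lambda|}$ contains no $\iota_x$-images of laminations whose support is not a subset of $|\lambda|$), which the paper establishes via the non-transversality machinery of \cref{thm:infcomparison} and \cref{case 1}.
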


\subsection{Closure correspondence}

There is deeper structure attached to the face-assigning map $F_{(\cdot)}$. We introduce two notions of closure,
\begin{itemize}
\item
\emph{support closure} (\cref{defn:supportclosure}) for laminations: given an arbitrary measured lamination $\lambda$, we add to its support lamination $|\lambda|$ the boundary of the smallest (up to isotopy) incompressible subsurface of $S$ containing $\lambda$. We denote the new geodesic lamination by $\widehat{|\lambda|}$ and we call it the \emph{support closure} of $\lambda$.

\item
\emph{adherence closure} (\cref{defn:adherence}) for faces of convex bodies: We assign to each face $F$ of $\mathbf{S}_x^*$ a specific superface $\widehat{F}$ in a manner purely determined by the convex geometry of $\mathbf{S}_x^*$. Precisely, the definition is as follows:
First we say that
a face $F$ of $\mathbf{S}_x^*$ is \emph{adherent} to a face $F'\supseteq F$ if for any face $F''$ containing $F$ there is a face which contains both $F'$ and $F''$ as subfaces.
Each face $F$ is adherent to a unique face which is maximal with respect to inclusion and which we call the \emph{adherence closure $\widehat{F}$ of $F$}. 

\end{itemize}

We show, as a corollary to \cref{measure} and \cref{set of measures}, that

\begin{corollary}[{\cref{naturaltransformation}}]
For any projective measured lamination $[\lambda]$,
\[
\widehat{F_{|\lambda|}}
=
F_{\widehat{|\lambda|}}.
\]
\end{corollary}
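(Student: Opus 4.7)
The plan is to establish the identity $\widehat{F_{|\lambda|}} = F_{\widehat{|\lambda|}}$ by double inclusion, reducing each direction to one of the two invoked results.

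For the inclusion $F_{\widehat{|\lambda|}} \subseteq \widehat{F_{|\lambda|}}$, I would write $\widehat{|\lambda|} = |\lambda| \cup \partial \Sigma$, where $\Sigma$ denotes the minimal incompressible subsurface (up to isotopy) containing $|\lambda|$, so that any measured lamination $\mu$ supported on $\widehat{|\lambda|}$ decomposes as a sum $\mu = \mu_{\mathrm{int}} + \mu_\partial$ with $|\mu_{\mathrm{int}}| \subseteq |\lambda|$ and $|\mu_\partial| \subseteq \partial \Sigma$. Rescaling yields a one-parameter family $\mu_t := \mu_{\mathrm{int}} + t\, \mu_\partial$ whose projective classes limit onto $[\mu_{\mathrm{int}}]$ as $t \to 0^+$, so $\iota_x([\mu])$ arises as a limit direction from inside $F_{|\lambda|}$. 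By \cref{measure}, which I expect matches such scaling-limit directions to elements of the adherence closure, every $\iota_x([\mu])$ for $\mu$ supported on $\widehat{|\lambda|}$ lies in $\widehat{F_{|\lambda|}}$; minimality of $F_{\widehat{|\lambda|}}$ then yields the inclusion.

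For the reverse inclusion $\widehat{F_{|\lambda|}} \subseteq F_{\widehat{|\lambda|}}$, injectivity of $F_{(\cdot)}$ from \cref{lem:supportset} gives $\widehat{F_{|\lambda|}} = F_{|\nu|}$ for some geodesic lamination $|\nu|$ with $|\lambda| \subseteq |\nu|$ (by covariant labelling, \cref{inclusion}), so it suffices to prove $|\nu| \subseteq \widehat{|\lambda|}$. Any extra component of $|\nu|\setminus|\lambda|$ must be disjoint from $|\lambda|$, since lying in a common geodesic lamination prohibits transversal intersections, and \cref{set of measures} should force such components to be closed geodesics bounding an incompressible subsurface containing $|\lambda|$. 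By minimality of $\Sigma$, these components must lie in $\partial \Sigma$, whence $|\nu| \subseteq \widehat{|\lambda|}$ and the inclusion follows.

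The main obstacle is verifying that the two notions of closure---the rescaling-based behaviour of $\iota_x$ on measured laminations and the intrinsic convex-geometric adherence closure on faces of $\mathbf{S}_x^*$---line up precisely. This matching is exactly what \cref{measure} and \cref{set of measures} should provide, but confirming that the topological operation of adjoining $\partial \Sigma$ to $|\lambda|$ translates faithfully into convex-body adherence in $T_x^* \teich(S)$ is where the substantive work lives; without such a correspondence, pure convex-geometric reasoning alone would not recover the specific subsurface structure encoded in $\widehat{|\lambda|}$.
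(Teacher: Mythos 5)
Your high-level intuition --- that \cref{measure} and \cref{set of measures} together carry the substance --- is correct, but both directions of your proposed double inclusion misuse those results, and the actual proof is shorter than either direction you sketch.

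The gap in your first inclusion is that the scaling-limit family $\mu_t = \mu_{\mathrm{int}} + t\mu_\partial$ establishes only a \emph{topological} limit: $\iota_x([\mu_t]) \in F_{\widehat{|\lambda|}}$ for $t>0$ while $\iota_x([\mu_0]) \in F_{|\lambda|}$. This is trivially compatible with the containment $F_{|\lambda|}\subseteq F_{\widehat{|\lambda|}}$ and tells you nothing about \emph{adherence}, which (\cref{defn:adherence}) is a purely combinatorial condition on the face lattice --- ``for every $F''\supseteq F$ there exists $\hat F\supseteq F'\cup F''$'' --- not a convergence condition. \cref{measure} does not ``match scaling-limit directions to the adherence closure''; it computes the adherence closure of the face for a given point $\iota_x([\lambda'])$ as $F_{\widehat{|\lambda'|}}$, and applying it needs you to know which point has $F_{|\lambda|}$ as its face. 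The gap in your second inclusion is a logical error: injectivity of $F_{(\cdot)}$ gives uniqueness of preimages, not existence, so you cannot deduce from \cref{lem:supportset} that $\widehat{F_{|\lambda|}}$ is of the form $F_{|\nu|}$ at all. The only way to see that $\widehat{F_{|\lambda|}}$ lies in the image of $F_{(\cdot)}$ is again to apply \cref{measure}, which requires the same identification.

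The identification you are missing is exactly what \cref{set of measures} supplies, and it makes the whole argument collapse to a few lines: there exists $[\lambda']\in\pml(S)$ with $|\lambda'| = |\lambda|$ such that $\iota_x([\lambda'])$ lies in the \emph{interior} of $F_{|\lambda|}$, so $F_{|\lambda|}$ is literally the face for $\iota_x([\lambda'])$. Then \cref{measure}(a), applied to $[\lambda']$, gives $\widehat{F_{|\lambda|}} = \widehat{F_{|\lambda'|}} = F_{\widehat{|\lambda'|}} = F_{\widehat{|\lambda|}}$, the last equality because $|\lambda'| = |\lambda|$. There is no need for the decomposition $\mu = \mu_{\mathrm{int}} + \mu_\partial$, the scaling family, the injectivity/covariant-labelling step, or the support analysis of $|\nu|$. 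The subtle point your route skirts around --- that not every projective lamination with support $|\lambda|$ maps to an interior point of $F_{|\lambda|}$, so one cannot apply \cref{measure} to $[\lambda]$ directly --- is precisely why \cref{set of measures} is invoked, and is precisely the piece you should put at the centre of the argument.
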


The above result asserts that the support closure operation $\widehat{\cdot}$ defines a natural transformation of the face assigning map $F_{(\cdot)}$ --- regarded as a functor between two subcategories of the category of sets (see \cref{q:covariant}).

\subsection{Topological rigidity}

Any homeomorphism $f^*:\mathbf{S}_y^*\to\mathbf{S}_x^*$ between two unit co-norm spheres induces a homeomorphism 
\[
f_{y,x}:=\iota_x^{-1} \circ f^* \circ \iota_y:\pml(S)\to\pml(S).
\] 
\cref{prop:embeddedccx}, combined with the celebrated rigidity of the curve complex \cite{Iv,K,L} and the density of projectivised multicurves in $\pml(S)$, tells us that if $f^*$
\begin{itemize}
\item
preserves the convex stratification, and 
\item
takes the embedded curve complex $\mathbf{C}_y^*\subsetneq\mathbf{S}_y^*$ to the curve complex in $\mathbf{C}_x^*\subsetneq\mathbf{S}_x^*$, 
\end{itemize}
then $\iota_x\circ f^*\circ \iota_y^{-1}$ is induced by an extended mapping class.\medskip

In particular, linear isometries from $(T_y^*\teich(S),\|\cdot\|_\mathrm{Th})$ to $(T_x^*\teich(S),\|\cdot\|_\mathrm{Th})$ restrict to homeomorphisms from $\mathbf{S}_y^*$ to $\mathbf{S}_x^*$ which preserve the convex stratification (\cref{thm:linvariance}) and take $\mathbf{C}_y^*$ to $\mathbf{C}_x^*$ --- we prove the latter claim using various adherence-related properties to distinguish the faces in $\mathbf{C}_x^*$ from the other faces in $\mathbf{S}_x^*$. This yields:

\begin{theorem}[Topological rigidity]
\label{first part}
Let $f^*\colon T_y^* \teich(S) \to T_x^* \teich(S)$ denote the linear isometry  dual to a linear Thurston-norm isometry $f: T_x \teich(S) \to T_y \teich(S)$.
Then the map $f_{y,x}=\iota_x^{-1} \circ f^* \circ \iota_y:\pml(S)\to\pml(S)$ is necessarily induced by some diffeomorphism $h: (S,x) \to (S,y)$.
\end{theorem}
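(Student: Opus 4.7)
The plan is to follow the three-step roadmap outlined in the preceding discussion: (i) upgrade $f^*$ to a stratification-preserving homeomorphism of the unit co-norm spheres; (ii) show this homeomorphism sends $\mathbf{C}_y^*$ onto $\mathbf{C}_x^*$; and (iii) conclude via curve complex rigidity combined with the denseness of projectivised multicurves in $\pml(S)$.

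Step (i) is nearly automatic: being a linear isometry of normed spaces, $f^*$ restricts to a homeomorphism $\mathbf{S}_y^* \to \mathbf{S}_x^*$, and since the convex stratification is determined intrinsically by the convex body, \cref{thm:linvariance} forces $f^*$ to map each stratum bijectively to a stratum of the same dimension. Granting (ii), step (iii) is equally direct: the restriction $f^*\colon \mathbf{C}_y^* \to \mathbf{C}_x^*$ is a bijection of cells which, by contravariant labelling (\cref{contravariant}), respects simplicial incidences, and therefore --- after identification with the curve complex of $S$ via \cref{cor:embeddedcx} --- defines a simplicial automorphism of the curve complex. The Ivanov--Korkmaz--Luo rigidity theorem then provides an extended mapping class realizing this automorphism, represented by some diffeomorphism $h\colon S \to S$. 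Viewing $h$ as $(S,x) \to (S,y)$, the induced pushforward $h_*$ on $\pml(S)$ and $f_{y,x}$ are both continuous maps which agree on the dense subset of projectivised multicurves, and hence coincide everywhere.

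The heart of the argument --- and the principal obstacle --- lies in step (ii): giving an intrinsic convex-geometric characterization of the cells of $\mathbf{C}_x^*$ within the full stratification of $\mathbf{S}_x^*$, since only such intrinsic characterizations are automatically preserved under an abstract stratum-preserving homeomorphism like $f^*$. The plan is to leverage the adherence operation $\widehat{\cdot}$ together with the closure correspondence $\widehat{F_{|\lambda|}} = F_{\widehat{|\lambda|}}$ of \cref{naturaltransformation}: multicurves are precisely those measured lamination supports coinciding with the boundary of their smallest carrying incompressible subsurface, and their associated faces can accordingly be singled out by combinatorial properties of their adherence closures (for example, being adherence-fixed, together with dimensional or minimality conditions that distinguish them from faces $F_{|\lambda|}$ with non-discretely supported $\lambda$). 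As adherence is itself derived purely from the convex stratification, these distinguishing invariants transport across $f^*$, forcing $f^*(\mathbf{C}_y^*) = \mathbf{C}_x^*$ and closing the loop.
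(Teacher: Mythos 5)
Your three-step skeleton matches the paper's Method~1 exactly, and steps~(i) and~(iii) go through essentially verbatim. The genuine gap is the invariant you propose for step~(ii). Being adherence-closed (together with dimension or maximality of the face itself) does not single out multicurve faces inside $\mathbf{S}_x^*$: for any maximal geodesic lamination $|\lambda|$ one has $\widehat{|\lambda|}=|\lambda|$, hence by \cref{naturaltransformation} the face $F_{|\lambda|}$ is its own adherence closure; and if $|\lambda|$ is moreover maximal but \emph{not} uniquely ergodic, then $F_{|\lambda|}$ can be positive-dimensional and maximal among all faces (\cref{thm:maximality}), just like a pants-decomposition face. So adherence-closedness of the face, together with dimension or maximality, cannot distinguish a pants decomposition from a maximal non-uniquely-ergodic lamination, and the transport argument across $f^*$ breaks down.

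The criterion the paper actually uses (\cref{characterising mc}) is a condition on \emph{every subface}, not on the face alone: $[\lambda]$ is a weighted multicurve or a maximal uniquely ergodic lamination if and only if \emph{all} subfaces of the face for $\iota_x([\lambda])$ are adherence-closed. This is exactly what detects the failure mode above: when $|\lambda|$ is maximal and not uniquely ergodic, the ergodic decomposition yields non-interior points $\iota_x([\mu])\in F_{|\lambda|}$ with $|\mu|=|\lambda|$, whose face is a proper subface with adherence closure $F_{\widehat{|\mu|}}=F_{|\lambda|}$, hence not adherence-closed. Even granting this, the maximal-uniquely-ergodic alternative must be ruled out separately; the paper does so by observing that such a face is simultaneously $0$-dimensional and maximal, which would force the preimage under $f^*$ to be the $(3g-4+n)$-dimensional face of a pants decomposition (\cref{dim of mc}) --- impossible since $3g-4+n>0$ outside the excluded low-complexity cases. (The paper also records a second, independent characterization of multicurve faces via adherence-dimension, \cref{adherence dim}, as those attaining the maximal value $3g-3+n$.)
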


\begin{remark}
\cref{first part} is key to our strategy for proving infinitesimal rigidity of the Thurston metric, and relies upon Ivanov's theorem, completed by Korkmaz and Luo, \cite{Iv,K,L}. This argument fails when $(g,n)=(1,1)\text{ or }(0,4)$, which are precisely the two cases (effectively) covered by \cite{DLRT}.
\end{remark}

\subsection{Thurston's stretch maps}
We have hitherto mainly considered the infinitesimal theory of the Thurston metric. Thurston bridges this to the global perspective via objects called \emph{stretch paths} (\cref{sec:stretch}). He (implicitly) proves that the Finsler metric defined by \cref{eq:finsler} is equal to the asymmetric metric defined by \cref{eq:metric-curve}.
\medskip

A geodesic lamination is said to be \emph{complete} if all its complementary components are ideal triangles. 
Stretch paths are parametrised by three inputs: an initial point $x\in\teich(S)$, a complete geodesic lamination $\Lambda$ on $(S,x)$, and a time period over which to stretch. A time-$t$ stretch uniformly increases the length of every geodesic segment in $\Lambda$ by a factor of $e^t$ whilst non-uniformly shrinking the ``dual'' perpendicular measured foliation $\mu$, which is determined by the initial point $x\in\teich(S)$. In each complementary component of $\Lambda$, the leaves of the perpendicular measured foliation are pieces of horocycles that intersect the leaves of the lamination perpendicularly,  see \cref{fig:stretch-triangle}.
The third author shows in \cite{Pap2}  that stretch paths always converge in forward time to the projective measured lamination representing $\mu$ in the Thurston compactification. The point of the forward-time flow induced by a stretch path associated with a fixed geodesic lamination $\Lambda$  is highly dependent on the starting point, and this provides a codimension-$1$ projection map from $\teich(S)$ to $\pml(S)$ when $S$ has no cusps \cite{A}.\medskip

A geodesic lamination is said to be \emph{finite-leaf} if it consists of finitely many geodesics.
In \cref{s:crown}, we explicitly express stretch paths for all complete finite-leaf laminations as real analytic functions of certain Fenchel--Nielsen length and twist parameters and Thurston's shearing parameters on $\teich(S)$ (see \cite{Huang-Thesis} and \cite{CF}), and give a new proof to the following result (\cref{thm:backtime}) due to Guillaume Th\'{e}ret \cite[Theorem~4.1]{The}.
In this statement, given the point $x$ in the Teichm\"uller space $\mathcal{T}(S)$ and  given the complete geodesic lamination $\Lambda$ on $x$, then for $t\in \mathbb{R}$, the $e^{-t}$-stretch path with respect to $\Lambda$ is a path in $\mathcal{T}(S)$ which is a time-$t$ stretch path but traversed in the opposite direction. In general, it is not a geodesic for the Thurston metric.

\begin{theorem}[back-time convergence {\cref{thm:backtime}}]
Consider a finite-leaf complete geodesic lamination $\Lambda$, and denote its closed leaves by $\gamma_1,\ldots,\gamma_k$. As $t\to\infty$, for any $x\in\teich(S)$, the $e^{-t}$-stretch path with respect to $\Lambda$, $x_{-t}:=\mathrm{stretch}(x,\Lambda,-t)$ converges to the uniformly weighted projectivised multicurve
\[
[\gamma_1+\gamma_2+\cdots+\gamma_k]\in\pml(S)=\partial_\infty\teich(S),
\] 
where $\pml(S)$ is seen as the boundary $\partial_\infty \teich(S)$ of a Thurston compactification of $\teich(S)$. Note that this limit is independent of the starting point $x$.
\end{theorem}

The use of a combination of  Fenchel--Nielsen and shear coordinates will show clearly that this is an effective combination of coordinates.

\begin{remark}
There is a proto-dictionary between the structure of various $\pml$-related flows on $\teich(S)$ and the Anosov nature of the geodesic flow on the unit tangent bundle $T^1\mathbb{H}$ of the hyperbolic plane $\mathbb{H}$:

\begin{itemize}
\item
The projectivisation $\pml(S)$ of the measured lamination space $\ml(S)$ defines a boundary at infinity of $\teich(S)$, just as the projectivisation $\mathbb{P}(L^+)=\mathbb{S}^1$ of the positive light-cone $L^+$ in Minkowski space $\mathbb{R}^{2,1}$ (see \cite[p.~302]{Pen}) defines the ideal boundary of (the hyperboloid model of) $\mathbb{H}\subsetneq\mathbb{R}^{2,1}$.

\item
The Thurston Finsler cometric on the cotangent bundle of $\teich(S)$ has conorm given by taking $\iota_x(\pml(S))\subsetneq T_x^*\teich(S)$ as the unit sphere (\cite[Theorem~5.1]{ThS}, see also \cref{ss:Infinitesimal} of the present paper). Analogously, the standard hyperbolic cometric on the cotangent bundle of $\mathbb{H}$ has conorm given by embedding $\mathbb{P}L^+$ as the unit circle in $T^*\mathbb{H}$, where the embedding is defined (in analogy with \cref{eq:iota}) as follows: for $u\in\mathbb{H}\subsetneq\mathbb{R}^{2,1}$ and $[v]\in\mathbb{P}L^+$, define
\begin{align}
\iota_u\colon
\mathbb{P}L^+\to T_u^*\mathbb{H},\quad [v]\mapsto d_u \log \langle \cdot, v\rangle, 
\end{align}
where $\langle\cdot,\cdot\rangle$ here denotes the Minkowski metric on $\mathbb{R}^{2,1}$.

\item
Consequentially, the Thurston Finsler metric on $\teich(S)$ parallels the hyperbolic metric on $\mathbb{H}$, and hence the unit tangent bundle $T^1\teich(S)$ is akin to the unit tangent bundle $T^1\mathbb{H}$.

\item
The flow induced by Thurston's stretch maps is akin to the geodesic flow --- it is expanding in positive time and contracting in negative time.

\item
The earthquake flow with respect to uniformly weighted projectivised multicurves (which constitute the closed leaves of a finite-leaf lamination) is akin to the unstable horocyclic flow.

\item
The earthquake flow with respect to the dual perpendicular measured foliation is analogous to the stable horocyclic flow.

\end{itemize}
\end{remark}

\subsection{Stretch vectors.} Thurston shows that his stretch paths are analytic with respect to their time parameter \cite[Corollary~4.2]{ThS}, and we refer to their tangent vectors as \emph{stretch vectors} (\cref{def:infinitesimal}). 
He also shows  in \cite[Theorem~8.5]{ThS} that any two points in $\teich(S)$ can be joined by a geodesic with respect to the Thurston metric which is a concatenation of stretch paths (\ie  subsegments of stretch rays).
For $x, y \in \teich(S)$, following Dumas, Lenzhen, Rafi and Tao in \cite{DLRT} and Guéritaud in  \cite[Problem 3.2]{Su-Problems}, we define the \emph{envelope} $\mathrm{Env}(x,y)$ to be the set of points through which geodesics from $x$ to $y$ pass.

\begin{remark}
Thurston's construction of concatenated stretch paths as Thurston-metric geodesics is analogous to Dantzig's simplex method in linear programming. In this analogy:

\begin{itemize}
\item
The role of the finitely-constrained feasible region
   is served by the envelope $\mathrm{Env}(x,y)\subsetneq\teich(S)$.
\item
The feasible region in the simplex method is specified by a finite system of inequalities. One naïvely expects $\mathrm{Env}(x,y)$ to be defined by a countable collection of inequalities labelled by simple closed curves $\gamma$ in the complement $S\setminus \mu(x,y)$, where $\mu(x,y)$ denotes the maximal ratio-maximising chain recurrent lamination, whose form is specified by how much one may Fenchel--Nielsen twist with respect to $\gamma$ whilst increasing its length from $x$ and to $y$;

\item
The goal of the simplex method is to start at a basic feasible solution (\ie extreme point) and to algorithmically traverse the feasible region and reach the maximum of some function over the feasible region. Analogously, Thurston takes the point $x$ and maximises the Thurston distance from $x$ within $\mathrm{Env}(x,y)$. In particular, this algorithmically produces a path in $\mathrm{Env}(x,y)$ going from $x$ to $y$, as the maximum distance from $x$ is uniquely attained by $y$.
\item
The algorithm for the simplex method is given by traversing (only) along edges of the feasible region. One expects the edges of $\mathrm{Env}(x,y)$ to be given by stretch paths with respect to maximal chain-recurrent laminations, and Thurston's algorithm similarly starts at $x$ and travels along stretch paths, pivoting to new stretch paths whenever we reach the end of a stretch path on $\mathrm{Env}(x,y)$, and repeating this until we reach $y$.
\end{itemize} 

\end{remark}

Consider the cross-sections of $\mathrm{Env}(x,y)$ at given distance from $x$. As the distance tends to $0$, we expect to get the Thurston-norm unit sphere $\mathbf{S}_x$, and so the above conjectural description suggests a conjecture which we shall state now (\cref{conj:extremepoint}), but before stating it we need to recall some terminology: 

A geodesic lamination $\lambda$ on a hyperbolic surface $S$ is said to be \emph{chain-recurrent} if for any point $x$ on $\lambda$ and for any $\epsilon >0$ there exists a parametrised $C^1$-closed curve $\delta$ on $S$ containing $x$ such that for any unit length path $l_1$ contained in $\delta$, there exists a unit length path $l_2$ contained in $\lambda$ such that $l_1$ and $l_2$ are $\epsilon$-close in the $C^1$-topology. 
A chain-recurrent lamination $\lambda$ on $S$ is said to be \emph{maximal} if there is no chain-recurrent lamination on $S$ properly containing it. 
We also recall that a point $p$ on the boundary $S$ of a convex body is called an {\em extreme point} when it does not lie on the interior of a segment lying on $S$.

\begin{conjecture}\label{conj:extremepoint}
The set of stretch vectors in $\mathbf{S}_x$ that are tangent to stretch paths with respect to maximal chain-recurrent laminations characterise the set of extreme points of $\mathbf{S}_x$. 
\end{conjecture}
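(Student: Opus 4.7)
The plan is to turn the simplex-method analogy articulated just before the conjecture into an actual proof, treating $\mathbf{S}_x$ as the infinitesimal feasible region and its extreme points as infinitesimal basic feasible solutions. The key organising object is, for each chain-recurrent lamination $\Lambda$ on $(S,x)$, the face
\[
N_x(\Lambda):=\bigcap_{\lambda\in\mathcal{ML}(\Lambda)}N_x([\lambda])\subseteq\mathbf{S}_x,
\]
obtained by intersecting exposed faces of the unit norm sphere. For any completion $\Lambda_c$ of $\Lambda$ to a complete geodesic lamination, the associated stretch vector $V_{\Lambda_c}$ lies in $N_x(\Lambda)$: Thurston's time-$t$ stretch path increases the length of every measured lamination supported on $\Lambda$ at logarithmic rate exactly $1$, and the supports of measured laminations are automatically chain-recurrent sublaminations of $\Lambda_c$.

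The heart of the argument is to establish that, when $\Lambda$ is maximal chain-recurrent, $N_x(\Lambda)$ is a polytope whose vertex set is precisely $\{V_{\Lambda_c}\}$ as $\Lambda_c$ ranges over the finitely many completions of $\Lambda$. Finiteness of completions follows because each complementary region of a maximal chain-recurrent lamination is a finite-sided ideal polygon, possibly with cusp vertices, and such polygons admit only finitely many ideal triangulations. The polytope structure itself I would derive from Bonahon's identification of $T_x\teich(S)$ with transverse cocycles on a chosen completion $\Lambda_c$: tangent vectors $v\in N_x(\Lambda)$ correspond to cocycles realising uniform shear along the chain-recurrent leaves of $\Lambda$, with the remaining freedom lying in how to distribute shearing among the finitely many ideal triangulations of the complementary regions. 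Each triangulation yields a vertex $V_{\Lambda_c}$, while edges and higher-dimensional strata of $N_x(\Lambda)$ correspond to flips and more general combinatorial compositions between triangulations.

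Given the polytope structure, the forward direction of the conjecture is direct: if $V_{\Lambda_c}=\tfrac{1}{2}(u+w)$ with $u,w\in\mathbf{S}_x$, then for each $\lambda\in\mathcal{ML}(\Lambda)$ the linearity of $\iota_x([\lambda])$ combined with the bounds $\iota_x([\lambda])(u),\iota_x([\lambda])(w)\leq 1$ forces $\iota_x([\lambda])(u)=\iota_x([\lambda])(w)=1$, so $u,w\in N_x(\Lambda)$; the vertex-ness of $V_{\Lambda_c}$ in the polytope $N_x(\Lambda)$ then yields $u=w=V_{\Lambda_c}$. For the reverse direction, given an extreme point $v\in\mathbf{S}_x$, set $\Lambda(v):=\bigcup\{|\lambda|:\iota_x([\lambda])(v)=1\}$, which is chain-recurrent as a union of measured-lamination supports; by construction $v\in N_x(\Lambda(v))$. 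Extremality transfers down to extremality within $N_x(\Lambda(v))$, and I would argue that $\Lambda(v)$ must be maximal chain-recurrent: any strict chain-recurrent enlargement $\Lambda'\supsetneq\Lambda(v)$ would introduce ``slack'' in the defining constraints cutting out $N_x(\Lambda(v))$ and produce a positive-dimensional stratum through $v$, contradicting its extremality. The core step then identifies $v$ with some $V_{\Lambda_c}$.

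The main obstacle is the core step: establishing the polytope description of $N_x(\Lambda)$ and the vertex–stretch-vector correspondence. This is essentially an infinitesimal sharpening of Thurston's structure theorem \cite[Theorem~10.1]{ThS} for stretch geodesics, and would require a careful interplay between Bonahon's transverse-cocycle description of the tangent space, Thurston's shearing coordinates, and the combinatorics of ideal triangulations of complementary regions. A further technical subtlety arises when $\Lambda$ supports multiple ergodic transverse measures: in such cases $N_x(\Lambda)$ must faithfully encode both the ergodic decomposition and the triangulation data without introducing spurious extreme points, and the vertex–stretch correspondence must be shown to remain bijective throughout.
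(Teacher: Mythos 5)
The statement you are proving is explicitly labelled a \emph{conjecture} in the paper, and the authors state immediately afterwards that it ``is unknown for surfaces of general type. In fact, neither direction of the characterisation is known.'' So the paper has no proof for you to match; the burden on your argument is therefore high, and unfortunately it has a fatal structural error at its core. You claim that, for a maximal chain-recurrent lamination $\Lambda$, the set $N_x(\Lambda)$ is a polytope whose vertices are the stretch vectors $V_{\Lambda_c}$ ranging over ``the finitely many completions'' of $\Lambda$ to a complete geodesic lamination, with the vertex-finiteness and combinatorics supplied by ideal triangulations of the complementary regions. This contradicts \cref{rmk:uniqueextension}: the complementary regions of a maximal chain-recurrent lamination are ideal triangles and once-punctured monogons (plus one exceptional bigon for $S_{1,1}$), not general finite-sided ideal polygons, and --- crucially --- \emph{every} complete geodesic lamination extending a given maximal chain-recurrent lamination induces the \emph{same} stretch path, hence the same stretch vector $v_\Lambda$. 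There is no nontrivial polytope of stretch vectors indexed by completions; the ``vertices'' you want to enumerate all collapse to a single point. The entire framework of distributing shear among distinct triangulations, and the flip-move interpretation of the edges of $N_x(\Lambda)$, evaporates once this is noticed.

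With the polytope picture gone, both directions of your argument lose their engine. For the forward direction you would need to establish directly that $N_x(\Lambda)$ is the singleton $\{v_\Lambda\}$; the paper's \cref{uniquelimit} shows that the faces $N_x([\Gamma_i])$ shrink in diameter to $v_\Lambda$ as $|\Gamma_i|\to\Lambda$ in the Hausdorff topology, which is suggestive but does not say that the \emph{intersection} $\bigcap_{\lambda\in\mathcal{ML}(\Lambda)}N_x([\lambda])$ is a point (these faces are not nested, and your $N_x(\Lambda)$ is not their Hausdorff limit). For the reverse direction, your construction of $\Lambda(v)=\bigcup\{|\lambda|:\iota_x([\lambda])(v)=1\}$ needs the supports of maximally-stretched laminations to be pairwise non-transverse before it can even be a geodesic lamination; for stretch vectors this follows from \cref{thm:infcomparison}, but for an arbitrary unit vector $v\in\mathbf{S}_x$ it is precisely the infinitesimal analogue of Thurston's \cite[Theorem~8.2]{ThS}, which the paper does not establish. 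Your subsequent ``slack'' heuristic for why $\Lambda(v)$ must be maximal chain-recurrent is also only an intuition, not an argument. In short, the approach founders on the false finiteness/polytope claim, and even setting that aside, the remaining steps are at exactly the level of difficulty the authors flag as open.
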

\cref{conj:extremepoint} is true when $S$ is either the once-punctured torus or the $4$-punctured sphere \cite{DLRT}, but is unknown for surfaces of general type. In fact, neither direction of the characterisation is known. Nevertheless, the conjecture being true would imply the set of stretch vectors (with respect to maximal chain-recurrent laminations) would be preserved under linear Thurston-norm isometries. We provide support for \cref{conj:extremepoint} by showing something stronger:

\begin{corollary}[equivariant stretch vectors, \cref{equivariant stretch}]
\label{equivariant stretch 1}
Given an arbitrary linear Thurston-norm  isometry $f\colon T_x\teich(S)\to T_y\teich(S)$, let $[h]\in\mathrm{Mod}^*(S)$ denote its inducing extended mapping class (topological rigidity, \cref{first part}). For an arbitrary maximal chain-recurrent lamination $\Lambda$ of $S$, the stretch vector $v_{\Lambda}\in T_x\teich(S)$ satisfies the following equivariance property:
\begin{align*}
f(v_\Lambda)=v_{h(\Lambda)}\in T_y\teich(S).
\end{align*}
\end{corollary}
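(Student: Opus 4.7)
The plan is to identify the Thurston-norm isometry $f$ with the differential of the $\teich(S)$-action of the extended mapping class $[h]$ supplied by topological rigidity, after which the equivariance follows from the naturality of Thurston's stretch construction under surface diffeomorphisms. First, \cref{first part} furnishes $[h]\in\mathrm{Mod}^*(S)$ whose action on $\pml(S)$ coincides with $f_{y,x}=\iota_x^{-1}\circ f^*\circ\iota_y$. Let $T_{[h]}\colon\teich(S)\to\teich(S)$ denote the (Thurston) isometric action of $[h]$; then $dT_{[h]}\colon T_x\teich(S)\to T_{[h]\cdot x}\teich(S)$ is a linear Thurston isometry whose induced map on $\pml(S)$ agrees with that of $f$, by naturality of length functions under the mapping class group.

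The next step is to show $[h]\cdot x=y$, so that $f$ and $dT_{[h]}$ share the codomain $T_y\teich(S)$. I would derive this from the length extraction procedure highlighted in the introduction: the composite $(dT_{[h]})\circ f^{-1}\colon T_y\teich(S)\to T_{[h]\cdot x}\teich(S)$ is a Thurston-norm isometry that induces the identity on $\pml(S)$, and length extraction (reconstructing a point in $\teich(S)$ from the Finsler structure on its tangent space together with the $\iota$-labelling by $\pml(S)$) then forces the source and target to agree, i.e.\ $y=[h]\cdot x$. With $f$ and $dT_{[h]}$ now both linear Thurston isometries $T_x\teich(S)\to T_y\teich(S)$ inducing the same $\pml$-action, their duals agree on the set $\iota_y(\pml(S))=\mathbf{S}_y^*$; since $\mathbf{S}_y^*$ spans $T_y^*\teich(S)$ (it is a full-dimensional convex body by \cref{Thurston}), the duals agree as linear maps and hence $f=dT_{[h]}$. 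Finally, the construction of stretch paths is equivariant under diffeomorphisms --- $h\bigl(\mathrm{stretch}(x,\Lambda,t)\bigr)=\mathrm{stretch}(y,h(\Lambda),t)$ --- so differentiating at $t=0$ gives $dT_{[h]}(v_\Lambda)=v_{h(\Lambda)}$, and combining with $f=dT_{[h]}$ completes the proof.

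The principal obstacle is the identification $[h]\cdot x=y$. The argument above routes through length extraction, which is clean provided that result is established independently of this corollary in the paper. If that is not the case, one could instead bypass the global identification and argue entirely infinitesimally: combining the dual identity $f^*\circ\iota_y=\iota_x\circ f_{y,x}$ with Thurston's characterising relation $\iota_x([\mu])(v_\Lambda)=1$ for every $[\mu]\in F_{|\Lambda|}$ places $f(v_\Lambda)$ in the same face of $\mathbf{S}_y$ as $v_{h(\Lambda)}$; one would then pin it down as the stretch vector via the explicit realisations of $v_\Lambda$ in the Fenchel--Nielsen and shearing coordinates developed in \cref{s:crown}, coupled with the convex-stratification rigidity established earlier in the paper.
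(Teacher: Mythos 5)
Your primary route is circular. To identify $f$ with $dT_{[h]}$ you need to know $[h]\cdot x=y$, and you correctly flag this as the principal obstacle. But the length-extraction proposition (\cref{lem:length}) by itself only recovers $\ell_x(\gamma)$ from asymptotics of $\|v_{+,m}-v_{-,m}\|_{\mathrm{Th}}$ for stretch vectors \emph{at} $x$; to transport this across your isometry $(dT_{[h]})\circ f^{-1}\colon T_y\teich(S)\to T_{[h]\cdot x}\teich(S)$ and compare simple length spectra, you must already know that this map carries stretch vectors at $y$ for $\Lambda$ to stretch vectors at $[h]\cdot x$ for $\Lambda$. That is precisely \cref{equivariant stretch} in the special case of the identity mapping class. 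In the paper's logical order, \cref{second part} (geometric rigidity, i.e.\ $[h]\cdot x=y$) is proved \emph{from} \cref{equivariant stretch}, not the other way around, so you cannot assume it here.

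Your fallback also has a gap, and it is exactly the one the paper's remark after the corollary warns about. Knowing that $f(v_\Lambda)$ lands in $N_y(h([\mu]))$ for every $[\mu]$ supported on $|\Lambda|$ places $f(v_\Lambda)$ in the same exposed face as $v_{h(\Lambda)}$, but that face is a convex set of positive dimension in general (\cref{codim of mc}, \cref{pants case}), so containment alone does not identify the two vectors. Pinning it down via explicit Fenchel--Nielsen/shearing realisations of $v_{h(\Lambda)}$ at $y$ does not close the gap either: those coordinates parametrise $T_y\teich(S)$ intrinsically, but you have no a priori dictionary relating the coordinate expression of $f(v_\Lambda)$ (which depends on the geometry at $x$) to that of $v_{h(\Lambda)}$ (which depends on the geometry at $y$) without already knowing $y=[h]\cdot x$. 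The ingredient you are missing is \cref{uniquelimit}: approximate $\Lambda$ in the Hausdorff topology by pants decompositions $\Gamma_i$ (\cref{lem:curveapprox}, \cref{laminationlimit}), and show via the shrinking twist-widths theorem (\cref{thm:shrink}) that the diameters of the faces $N_x([\Gamma_i])$ tend to zero, so that the nested faces collapse to the single point $v_\Lambda$. Then $f$ being a continuous linear isometry sends $N_x([\Gamma_i])$ to $N_y(h([\Gamma_i]))$ by \cref{thm:conemcg}, the latter faces shrink to $v_{h(\Lambda)}$ by the same \cref{uniquelimit} applied at $y$, and passing to the limit gives $f(v_\Lambda)=v_{h(\Lambda)}$ with no reference to geometric rigidity. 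This is the paper's argument, and it is the only place a genuinely new analytic input (the shrinking-face estimate) is required; your proposal does not supply it.
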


\begin{remark}
It is tempting to wonder if \cref{equivariant stretch 1} (and even \cref{conj:extremepoint}) might be derived from Thurston's \cite[Theorem~8.4]{ThS}. The difficulty lies in showing that the faces $N_x([\lambda])$ vary  continuously in the Hausdorff topology as $x\in\teich(S)$ varies. We prove \cref{equivariant stretch 1} by showing that any stretch vector with respect to maximal chain-recurrent laminations necessarily arises as the Hausdorff limit of a sequence of shrinking faces of $\mathbf{S}_x$ (\cref{uniquelimit}) and then invoking topological rigidity.
\end{remark}

\subsection{Geometric rigidity}

We show in \cref{lem:length} that the simple length spectrum for any hyperbolic metric $x\in\teich(S)$ may be extracted from the asymptotic behaviour of Thurston norms of differences between certain stretch vector pairs.\medskip

For any simple closed geodesic $\gamma\in\mathcal{S}$, there are sequences $\{v_{+,m} , v_{-,m}\}$ of stretch vector pairs such that
\begin{align*}
\ell_x(\gamma)
=
\lim_{m\to\pm\infty}
\frac{-\log\|v_{+,m}-v_{-,m}\|_{\mathrm{Th}}}{i(\gamma,\alpha_0)\cdot |m|}
,
\end{align*}
where  $\alpha_0$ is a simple closed geodesic which has nonempty transverse intersection with $\gamma$, whose Dehn twists are used to generate $\{v_{+,i}\}$ and $\{v_{-,i}\}$. This observation, coupled with the equivariance of stretch vectors under Thurston-norm isometries (\cref{equivariant stretch 1}), promotes topological rigidity to \emph{geometric rigidity}:

\begin{theorem}[Geometric rigidity]
\label{second part}
Let $f \colon T_x \teich(S) \to T_y \teich(S)$ be a linear Thurston-norm isometry and let $h$ be a diffeomorphism representing the mapping class inducing $f$ as given by topological rigidity (\cref{first part}). Then, for every simple closed curve $\gamma\in\mathcal{S}$ on $S$, we have 
\[
\ell_x(\gamma)=\ell_y(h(\gamma)),
\]
and hence $(S,y)=(S,h(x))$.
\end{theorem}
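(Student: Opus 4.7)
The plan is to bootstrap geometric rigidity from infinitesimal data by using the length extraction formula (\cref{lem:length}) as a bridge, and transporting the extracted length data from $x$ to $y$ via the isometry $f$, with the equivariance of stretch vectors (\cref{equivariant stretch}) supplying the necessary compatibility between the two sides.

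First I would fix an arbitrary $\gamma\in\mathcal{S}$ and choose a transverse simple closed curve $\alpha_0$ with $i(\gamma,\alpha_0)>0$. \cref{lem:length} then produces sequences $\{(v_{+,m},v_{-,m})\}\subset T_x\teich(S)$ of stretch vector pairs, built from iterated Dehn twists along $\alpha_0$ applied to suitable maximal chain-recurrent laminations, such that
\[
\ell_x(\gamma) = \lim_{|m|\to\infty}\frac{-\log\|v_{+,m}-v_{-,m}\|_{\mathrm{Th}}}{i(\gamma,\alpha_0)\cdot|m|}.
\]
Next I would push these pairs across $f$. Linearity plus the Thurston-norm isometry property yield $\|v_{+,m}-v_{-,m}\|_{\mathrm{Th}} = \|f(v_{+,m})-f(v_{-,m})\|_{\mathrm{Th}}$, while \cref{equivariant stretch} identifies each $f(v_{\pm,m})$ as the stretch vector in $T_y\teich(S)$ associated to the $h$-image of the maximal chain-recurrent lamination indexing $v_{\pm,m}$. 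Because $h$ is a diffeomorphism, it preserves intersection numbers and conjugates the Dehn twist along $\alpha_0$ to the Dehn twist along $h(\alpha_0)$, so running the same recipe at $y$ using $h(\gamma)$ and $h(\alpha_0)$ produces precisely the pushed pairs $\{(f(v_{+,m}),f(v_{-,m}))\}$. A second application of \cref{lem:length} at $y$ then delivers
\[
\ell_y(h(\gamma)) = \lim_{|m|\to\infty}\frac{-\log\|f(v_{+,m})-f(v_{-,m})\|_{\mathrm{Th}}}{i(h(\gamma),h(\alpha_0))\cdot|m|} = \ell_x(\gamma).
\]

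Since $\gamma\in\mathcal{S}$ was arbitrary, the marked simple length spectra of $(S,y)$ and $(S,h(x))$ agree, and invoking the classical fact that this marked spectrum determines a point in Teichm\"uller space (e.g.\ via the existence of a finite family of simple closed curves whose lengths serve as global coordinates) closes the proof with $(S,y)=(S,h(x))$. The step I expect to be the main obstacle is the middle one: precisely matching the stretch-vector pairs furnished by the length extraction lemma with those controlled by \cref{equivariant stretch}, and verifying that the matching is genuinely $h$-natural. This amounts to ensuring that the construction in \cref{lem:length} is phrased purely in terms of maximal chain-recurrent laminations together with diffeomorphism-invariant combinatorics (intersection numbers and Dehn twists), so that applying $h$ on the surface side corresponds exactly to applying $f$ on the tangent-space side. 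Once that naturality is pinned down, the rest of the argument is a direct computation.
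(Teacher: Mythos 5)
Your proof follows the paper's argument essentially line for line: apply \cref{lem:length} at $x$, push the stretch-vector pairs across $f$ using norm-preservation, identify $f(v_{\pm,m})$ with the stretch vectors for $h(\Lambda_{\pm,m})$ via \cref{equivariant stretch}, and read off $\ell_y(h(\gamma))$ by applying \cref{lem:length} at $y$. The naturality concern you raise at the end is already dispensed with by the construction: the $\Lambda_{\pm,m}$ in the setup for \cref{lem:length} are stipulated to be chain-recurrent \emph{and} complete, hence maximal chain-recurrent, so \cref{equivariant stretch} applies to them verbatim.
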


\subsection{Infinitesimal rigidity}
\label{sec:strategy}

We begin with a remark:

\begin{remark}
\label{mazurulam}
The statement of \cref{main} \emph{appears to be} slightly stronger than the main result in Pan's paper \cite{P}, as the latter assumes linearity of the isometry. However, this linearity condition is unnecessary. Indeed, isometries of the Thurston norm are also isometries of its (additive) symmetrisation. Therefore we can use the Mazur--Ulam theorem \cite{MU} (which applies to symmetric normed spaces), which implies that any isometry of the Thurston norm is linear.
\end{remark}

When combined, geometric rigidity (\cref{second part}) and the Mazur--Ulam theorem (\cref{mazurulam}) suffice to prove infinitesimal rigidity (\cref{main}). Further combining the infinitesimal rigidity with \cite[Theorem~6.1]{DLRT} then yields:

\begin{corollary}[Local rigidity]\label{thm:local}
Consider a connected open set $U\subset\teich(S)$. Then any isometric embedding $(U,d_{\mathrm{Th}})\to (\teich(S),d_{\mathrm{Th}})$ is the restriction to $U$ of an element of the extended mapping class group of $S$.
\end{corollary}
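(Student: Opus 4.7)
The plan is twofold: first, reduce the local metric isometry to pointwise linear Thurston-norm isometries via \cite[Theorem~6.1]{DLRT}; second, apply the infinitesimal rigidity (\cref{main}) at each point of $U$ to produce a candidate extended mapping class, and then use connectedness of $U$ together with the proper discontinuity of the $\mathrm{Mod}^*(S)$-action on $\teich(S)$ to show this pointwise candidate is globally constant.

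As a first step, I would invoke \cite[Theorem~6.1]{DLRT} as a black box: any isometric embedding $\phi\colon(U,d_{\mathrm{Th}})\to(\teich(S),d_{\mathrm{Th}})$ is differentiable at every $x\in U$ with differential $d\phi_x\colon T_x\teich(S)\to T_{\phi(x)}\teich(S)$ a linear isometry of the Thurston Finsler norms. This is the step I expect to be the main obstacle, and not one I would attempt to reprove here: the Thurston metric is merely Finsler, so a purely metric local isometry need not be smooth in any a priori sense, and the Mazur--Ulam theorem (cf.\ \cref{mazurulam}) applies only to globally defined isometries of normed vector spaces and so cannot be invoked directly for a local embedding.

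Once this regularity is in hand, for each $x\in U$ I would combine infinitesimal rigidity (\cref{main}) with topological (\cref{first part}) and geometric (\cref{second part}) rigidity to extract an extended mapping class $[h_x]\in\mathrm{Mod}^*(S)$ whose $\teich(S)$-action satisfies $h_x(x)=\phi(x)$ and whose derivative at $x$ realises $d\phi_x$. The composite $\psi_x:=h_x^{-1}\circ\phi$ is then a local Thurston-isometry fixing $x$ with identity differential there, giving a convenient normalised ``local error term''.

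It then suffices to argue that $x\mapsto[h_x]$ is locally constant on $U$. If $x_n\to x$ in $U$, the continuity of $\phi$ forces $h_{x_n}(x_n)=\phi(x_n)\to\phi(x)=h_x(x)$, and since each $h_{x_n}$ is a Thurston-isometry of $\teich(S)$, the triangle inequality gives $h_{x_n}(x)\to h_x(x)$ as well. Proper discontinuity of $\mathrm{Mod}^*(S)\curvearrowright\teich(S)$ then confines the sequence $\{[h_{x_n}]\}$ to a finite set for large $n$; the infinitesimal uniqueness of the mapping class realising a given linear Thurston-norm isometry (from \cref{first part} and \cref{second part}) then pins down $[h_{x_n}]=[h_x]$ eventually. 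Connectedness of $U$ upgrades this local constancy to a single $[h]\in\mathrm{Mod}^*(S)$ with $\phi=h|_U$, which is the desired conclusion.
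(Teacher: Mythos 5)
Your architecture — extract a pointwise extended mapping class from infinitesimal rigidity, then propagate it across $U$ via proper discontinuity — is a sound way to fill in what the paper leaves implicit (the paper gives no proof of this corollary beyond the single sentence citing \cite[Theorem~6.1]{DLRT}, so there is no written argument to compare against). That said, the way the authors phrase the deduction, together with the fact that DLRT is credited with completing the Royden analogue in the $S_{1,1}$ and $S_{0,4}$ cases, strongly suggests that DLRT's Theorem~6.1 is not merely the regularity statement you attribute to it, but is itself the full reduction from infinitesimal to local rigidity, roughly: \emph{if every linear Thurston-norm isometry between tangent spaces comes from a mapping class, then every local Thurston isometric embedding is the restriction of a mapping class.} Under that reading, your second and third steps are a correct re-derivation of DLRT's theorem rather than a separate argument. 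You should check the actual statement of DLRT~6.1 before relying on your interpretation; the gap between "just differentiability" and "the full reduction" changes how much work remains.

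Separately, the step "the infinitesimal uniqueness of the mapping class realising a given linear Thurston-norm isometry then pins down $[h_{x_n}]=[h_x]$ eventually" is imprecise as written: the uniqueness furnished by \cref{first part} and \cref{second part} pins down the class realising a \emph{single} linear isometry, whereas $[h_{x_n}]$ and $[h_x]$ a priori realise isometries at different basepoints. The honest argument is: proper discontinuity confines $\{[h_{x_n}]\}$ to a finite set, so if it is not eventually $[h_x]$ there is a constant subsequence $[g]\neq[h_x]$; continuity gives $g(x)=\phi(x)=h_x(x)$; $C^1$-continuity of $\phi$ (not merely pointwise differentiability) together with smoothness of the $\mathrm{Mod}^*(S)$-action gives $dg|_x=d\phi_x=dh_x|_x$; and only then does uniqueness of the class realising $d\phi_x$ force $[g]=[h_x]$. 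The derivative matching is essential here, because distinct mapping classes can share a fixed point in $\teich(S)$, so agreement of $g$ and $h_x$ merely at $x$ does not suffice. Make explicit that your invocation of DLRT~6.1 must deliver $C^1$ regularity, not just differentiability at each point.
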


And this in turn implies the following result:

\begin{corollary}[Global rigidity] \label{thm:walsh}
Isometries of $(\teich(S),d_{\mathrm{Th}})$ are precisely those induced by extended mapping classes of $S$.
\end{corollary}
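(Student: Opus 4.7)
The plan is to deduce global rigidity from local rigidity (\cref{thm:local}) by a simple connectedness argument on $\teich(S)$. Let $\Phi\colon\teich(S)\to\teich(S)$ be an arbitrary $d_{\mathrm{Th}}$-isometry; the goal is to exhibit a single extended mapping class whose induced action on $\teich(S)$ coincides with $\Phi$ globally.

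First, for each extended mapping class $[h]\in\mathrm{Mod}^*(S)$, I would introduce the closed subset
\[
V_{[h]}:=\{x\in\teich(S):\Phi(x)=[h](x)\}
\]
and its interior $W_{[h]}:=\Int(V_{[h]})$. Applying \cref{thm:local} to the restriction of $\Phi$ to any connected open neighborhood $U$ of an arbitrary point $x\in\teich(S)$ produces some $[h_x]\in\mathrm{Mod}^*(S)$ with $\Phi|_U=[h_x]|_U$; in particular $U\subseteq V_{[h_x]}$ and hence $x\in W_{[h_x]}$. Consequently $\teich(S)=\bigcup_{[h]\in\mathrm{Mod}^*(S)}W_{[h]}$.

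Next, I would establish pairwise disjointness of the $W_{[h]}$'s, modulo identification of mapping classes inducing identical actions on $\teich(S)$. If $x\in W_{[h]}\cap W_{[h']}$, then $[h]$ and $[h']$ agree with $\Phi$, and hence with each other, on some open neighborhood of $x$; since the extended mapping class group acts on $\teich(S)$ by real analytic diffeomorphisms (visible for example through Fenchel--Nielsen coordinates) and $\teich(S)$ is connected, the real-analytic identity principle forces $[h]$ and $[h']$ to induce the same map on all of $\teich(S)$. Thus the $W_{[h]}$'s, regarded as indexed by the image of $\mathrm{Mod}^*(S)$ in $\Isom(\teich(S),d_{\mathrm{Th}})$, are disjoint.

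The conclusion is then immediate: $\teich(S)$ is connected and is covered by the pairwise disjoint open sets $W_{[h]}$, so exactly one of them must equal all of $\teich(S)$. The corresponding $[h_0]$ satisfies $\Phi=[h_0]$ everywhere, showing that every $d_{\mathrm{Th}}$-isometry is induced by an extended mapping class; the reverse inclusion is trivial. The main (and only real) obstacle is having local rigidity in hand; once this is secured, the gluing step is purely formal, and the cosmetic ambiguity coming from any non-faithfulness of the mapping class action (such as the hyperelliptic involution in closed genus $2$) is absorbed the moment one identifies $[h]$ with its induced element of $\Isom(\teich(S),d_{\mathrm{Th}})$.
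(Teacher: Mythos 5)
Your gluing argument is logically sound, but it is a great deal more elaborate than what is needed. The statement of local rigidity (\cref{thm:local}) applies to \emph{any} connected open set $U\subseteq\teich(S)$, and $\teich(S)$ is itself a connected open subset of $\teich(S)$. Any self-isometry of $(\teich(S),d_{\mathrm{Th}})$ is in particular an isometric embedding $\teich(S)\to\teich(S)$, so \cref{thm:local} with $U=\teich(S)$ already says it is the restriction to $\teich(S)$ — that is, exactly equal to — the action of an extended mapping class. There is nothing to glue. This one-line specialisation is surely what the paper intends by ``And this in turn implies.'' Your route — defining $V_{[h]}$ and $W_{[h]}$, covering $\teich(S)$, establishing pairwise disjointness via the real-analytic identity principle, and then invoking connectedness — does reach the same conclusion, and it would be the appropriate strategy if \cref{thm:local} were phrased only for \emph{small} or \emph{bounded} $U$; but as stated it is not so restricted, so your argument imports extra machinery (in particular the real analyticity of the mapping class group action, needed for your disjointness step) that the direct argument never touches. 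The proposal is correct, just overbuilt for the problem.
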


As previously noted, \cref{thm:walsh}, which was first proven by Walsh \cite{W}, is an analogue of the celebrated  (global form of the) Royden Theorem \cite{Royden1971}, with the role of the Teichm\"uller metric supplanted by the Thurston metric. Walsh's proof is based on the action of the mapping class group on the horocyclic boundary of the Teichm\"uller space of the surface equipped with the Thurston metric. Dumas, Lenzhen, Rafi and Tao \cite{DLRT} utilise a different strategy: they first derive infinitesimal rigidity in the two special cases when $S$ is either a one-cusped torus or a four-cusped sphere, thereby completing the Thurston metric analogue of Royden's theorem. Our method of proof shares many similar ingredients to theirs. In particular, our main results in the special cases where $S$ is either a one-cusped torus or a four-cusped sphere are due to \cite{DLRT}.

\subsection{Paper outline}

This paper is structured as follows:
\begin{itemize}
\item
In \cref{s:stretch} we provide a sketch of the preliminaries underpinning the theory of the Thurston metric on hyperbolic surfaces. The basic construction is that of a $K$-Lipschitz homeomorphism of an ideal triangle. Such maps are glued to constitute stretch maps between hyperbolic surfaces, which define stretch lines in Teichm\"uller space. We highlight the main results from Thurston's paper \cite{ThS} needed in the constructions for the present paper.
\item
In \cref{s:crown} we describe all $K$-stretch maps on crowned annuli, \ie annuli with boundary cusps, in terms of Fenchel--Nielsen coordinates and shearing parameters. This also paves the way for describing stretch maps with respect to finite-leaf laminations on (finite type) hyperbolic surfaces.

\item
In \cref{s:infinitesimal} we take the stretch maps described in \cref{s:crown} and we determine the behaviour of stretch vectors, \ie tangent vectors to stretch maps. These estimates are used to isolate the lengths of simple closed geodesics in the limiting behaviour of the difference between certain sequences of pairs of stretch vectors, and hence play a major role in establishing geometric rigidity. Moreover, they help us to determine the codimension of $\iota_x$-images of multicurves in $\mathbf{S}_x^*$
  
\item
In \cref{sec:convexbodies} we develop the general theory of convex bodies $D$ in finite-dimensional vector spaces $\mathbb{V}$, using faces on the boundary $\partial D$ to stratify this boundary. We introduce various invariants associated with points and faces on $D$, including dimension, face-dimension, adherence-dimension and codimension.

\item
In \cref{s:unitball} we explore the convex geometry of the unit tangent and contangent spheres in $T_x\teich(S)$ and $T^*_x\teich(S)$, showing that much of the inclusion structure for many faces of $\mathbf{S}_x^*$ is encoded in terms of  geodesic laminations associated with faces. We also produce bounds and derivations for various notions of dimension associated with points in $\mathbf{S}^*_x$.

\item
In \cref{s:rigidity} we give two distinct proofs of topological rigidity before promoting topological to geometric rigidity, thereby establishing the infinitesimal rigidity of the Thurston metric. We also put forward consequences of the infinitesimal rigidity result and raise further questions and lines of investigation. 

\end{itemize}

 Let us say now a word on references. The bases of Thurston's theory of surfaces including hyperbolic structures, geodesic lamination, Teichm\"uller spaces and mapping class groups are contained in Chapter 8 of his lecture notes \cite{Th4} and in his article \cite{Th2}, with more details and developments in \cite{FLP}, \cite{HP} and \cite{CB}. In the next section, we shall give references for Thurston's theory of best Lipschitz maps and Thurston's metric on Teichm\"uller space.  At the same time, we shall provide details for  some constructions of Thurston that were only sketched in his paper \cite{ThS}, at a few times slightly correcting some of the statements.

In October 2012, a workshop with the title ``Thurston's metric on Teichm\"uller space" was organized at the IAM in Palo Alto. A list of open problems was compiled by Weixu Su during that workshop, and an updated list,  compiled by the same author, was published in Volume V of the Handbook of Teichm\"uller theory \cite{Su-Problems}. The work in the present paper answers positively two questions of the updated list, viz.\medskip

\noindent\textbf{Problem 2.3.} (K. Rafi) \textit{What properties of the hyperbolic surface are determined by the Thurston infinitesimal norm?}\medskip

\noindent\textbf{Problem 2.6.} (Collective) \textit{Is each local isometry of  Teichm\"uller space with the Thurston metric induced by an element of the extended mapping class group?}\medskip

  Several questions on that list, concerning the  properties of the Thurston metric, remain open.

\subsection{Acknowledgements}

The first named author wishes to acknowledge support by Beijing Natural Science Foundation Grant no. 04150100122.
The second named author is supported by JSPS Grant-in-Aid for Fund for the Promotion of Joint International Research (Fostering Joint International Research(B)) 18KK0071.
The three authors express their gratitude to the Erwin Schrödinger International Institute for Mathematics and Physics for its  hospitality during the writing of this paper.

\section{Preliminaries on Thurston's metric}\label{s:stretch}
 
 We shall use the fact that the length function defined on the set $\mathcal{S}$ of homotopy classes of simple closed curves has a continuous and positively homogeneous extension to the space $\ml=\ml(S)$ of compactly supported measured laminations on $S$ (see Thurston \cite{Th3} and Kerckhoff \cite{KeC}) and that the supremum in  \cref{eq:metric-curve} is realised over the projective space $\pml(S)$, as it is compact. Thus, the distance function $d_{\mathrm{Th}}$ defined by \cref{eq:metric-curve} is also
equal to the function
\begin{equation}\label{eq:K}
R(x,y)
:=\max_{[\lambda]  \in \pml(S)}
\log\frac{\len_y(\lambda)}{\len_x(\lambda)}
\end{equation}
(the letter $R$ here stands for ``Ratio"), where $\lambda\in\ml(S)$ is an arbitrary measured geodesic lamination representative of $[\lambda]\in\pml(S)$.
\subsection{Lipschitz constants formulation of the Thurston metric}\label{s:optimal}

In \cite{ThS}, Thurston gives another formula for the metric defined in \cref{eq:metric-curve}. This alternative formulation is based on the comparison of the shapes 
 of two marked hyperbolic surfaces by examining the smallest Lipschitz constant of a homeomorphism between them in the right homotopy class. \medskip

Given any two hyperbolic structures $x,y\in\teich(S)$ on $S$ and any homeomorphism $\varphi: (S,x)\to (S,y)$, the Lipschitz constant $\mathrm{Lip}(\varphi)$ of $\varphi$ is defined as the quantity
\[
\mathrm{Lip}(\varphi)
:=\sup_{p\not=q\in S}\frac{d_y(\varphi(p),\varphi(q))}{d_x(p,q)}\in\mathbb{R}\cup\{\infty\},
\]
where $d_x$ denotes the distance function on the hyperbolic surface $(S,x)$. Consider the expression
\begin{align}
\label{eq:metric-Lip}
L(x,y)
:=\inf_{\varphi\sim\mathrm{Id}_S}\log \mathrm{Lip}(\varphi) \in\mathbb{R}.
\end{align}

The quantity $L(x,y)$ is invariant under isotopic deformations of the hyperblic metrics $x$ and $y$ on $S$, and hence yields a well-defined function $L\colon\teich(S)\times \teich(S)\to\mathbb{R}$. Thurston shows that this function satisfies all the properties of an asymmetric metric \cite[\S 2]{ThS} and he proves the following: 
\begin{theorem}[{Thurston \cite[Theorem~8.5]{ThS}}]\label{th:R=L}
The asymmetric metric defined in \cref{eq:metric-Lip} coincides with the asymmetric metric $d_{\mathrm{Th}}$ defined in \cref{eq:metric-curve} or \cref{eq:K}, \ie
\begin{align}\label{eq:R=L}
L\equiv R
\end{align}
\end{theorem}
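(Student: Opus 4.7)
The plan is to prove the two inequalities $R(x,y)\le L(x,y)$ and $L(x,y)\le R(x,y)$ separately. The first inequality is soft and follows directly from the Lipschitz hypothesis, while the second inequality is the deep content of the theorem and requires the full strength of Thurston's stretch-map construction.

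For the direction $R\le L$: Let $\varphi:(S,x)\to(S,y)$ be any homeomorphism homotopic to $\mathrm{id}_S$ with finite Lipschitz constant $K=\mathrm{Lip}(\varphi)$. For any $\gamma\in\mathcal{S}$, the image under $\varphi$ of the $x$-geodesic representative of $\gamma$ is a rectifiable curve on $(S,y)$ freely homotopic to $\gamma$ whose length is at most $K\,\len_x(\gamma)$. Since the $y$-geodesic representative minimises length in its free homotopy class, $\len_y(\gamma)\le K\,\len_x(\gamma)$, and therefore $\log(\len_y(\gamma)/\len_x(\gamma))\le \log K$. Taking the supremum over $\gamma\in\mathcal{S}$ and then the infimum over all admissible $\varphi$ yields $R(x,y)=d_{\mathrm{Th}}(x,y)\le L(x,y)$. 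The same bound on projective measured laminations then follows from continuity of length along $\ml(S)$ and the density of (projectivised) simple closed geodesics in $\pml(S)$.

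For the direction $L\le R$: The strategy is to exhibit, for each $\varepsilon>0$, a homeomorphism $(S,x)\to(S,y)$ in the correct homotopy class whose log-Lipschitz constant is at most $R(x,y)+\varepsilon$. Set $K=e^{R(x,y)}$. The construction proceeds in three steps:
\begin{enumerate}
\item[(i)] Extract from the maximisers of \cref{eq:K} a distinguished geodesic lamination $\mu=\mu(x,y)$ carrying every maximally stretched measured lamination; one shows that $\mu$ is chain-recurrent and that on each of its leaves the ``stretching ratio'' from $x$ to $y$ equals exactly $K$.
\item[(ii)] Extend $\mu$ to a complete geodesic lamination $\Lambda$ on $(S,x)$ by spinning finitely many ideal arcs around $\mu$, so that the complementary regions $S\setminus\Lambda$ are ideal triangles (or once-punctured monogons at cusps). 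On each ideal triangle there is the canonical horocyclic foliation, and Thurston's explicit ideal-triangle construction produces a $K$-Lipschitz homeomorphism of the triangle which multiplies arc length along $\Lambda$ by $K$ and contracts the horocyclic foliation by $K^{-1}$.
\item[(iii)] Glue these triangle maps along $\Lambda$ to obtain a global $K$-Lipschitz homeomorphism $(S,x)\to(S,x')$ where $x'=\mathrm{stretch}(x,\Lambda,\log K)$ lies on the $\Lambda$-stretch path through $x$.
\end{enumerate}

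The main obstacle will be that in general $x'\ne y$, so a single stretch map does not realise $L(x,y)$. The remedy — and the technical heart of Thurston's Theorem~8.5 in \cite{ThS} — is an iterative concatenation argument: having constructed $x\rightsquigarrow x_1\rightsquigarrow x_2\rightsquigarrow\cdots$ by successive stretch maps, each with log-Lipschitz constant equal to the ratio-distance it covers, one must show (a) that the sequence $x_j$ can be chosen to converge to $y$, and (b) that the Lipschitz constants add correctly, so that the composite homeomorphism from $x$ to $y$ has log-Lipschitz constant at most $R(x,y)$. Control over (a) comes from the preservation of the maximally stretched locus under stretching and the finiteness of the complementary ideal triangulation; control over (b) reduces to the additivity of log-Lipschitz constants under concatenation along an $L$-geodesic. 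Once (i)--(iii) and the concatenation argument are in place, passing to the infimum gives $L(x,y)\le R(x,y)$, and combining with the first direction yields the equality \cref{eq:R=L}.
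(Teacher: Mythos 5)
The paper does not prove this theorem itself; it is imported by citation to Thurston \cite[Theorem~8.5]{ThS}, and \S\S 2.2--2.4 of the paper only sketch the ingredients of Thurston's argument (ideal-triangle stretch maps, ratio-maximising chain-recurrent laminations, concatenation of stretch paths). Your reconstruction is a faithful outline of exactly that argument: the soft inequality $R\le L$ by pushing forward geodesics under a Lipschitz map, and the hard inequality $L\le R$ via the maximal ratio-maximising lamination $\mu(x,y)$, its completion to $\Lambda$, the ideal-triangle stretch construction, and the finite concatenation. So the approach matches.

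A few imprecisions are worth flagging. In step~(ii), once $\mu$ is extended to a \emph{complete} geodesic lamination $\Lambda$, the complementary regions $S\setminus\Lambda$ are \emph{all} ideal triangles; once-punctured monogons arise only in the complement of a maximal chain-recurrent lamination that has not yet been completed (cf.\ \cref{rmk:uniqueextension}). In step~(iii), the gluing of triangle stretch maps is immediate only when $\Lambda$ has finitely many leaves; in general $\mu(x,y)$ may be uncountably-leaved, and one needs Thurston's transverse measured foliation parametrisation of a neighbourhood of $\Lambda$ (sketched in \S\ref{sec:stretch}) to make sense of the global stretch map. Finally, your phrase that (b) ``reduces to the additivity of log-Lipschitz constants under concatenation along an $L$-geodesic'' is circular as stated, since one does not know in advance that the concatenated path is an $L$-geodesic; the additivity $\sum_i \log K_i = R(x,y)$ is something one \emph{establishes} from the fact that stretching along $\mu(x,\cdot)$ decreases the ratio-distance to $y$ at unit rate and that the ratio-maximising lamination can change topological type only finitely many times along the way. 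None of these are gaps in the strategy, but they are the places where the real work of Thurston's proof lives.
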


\subsection{Stretch maps on ideal triangles}

The key advantage of \cref{eq:K} over \cref{eq:metric-curve} is that, unlike taking the supremum over $\mathcal{S}(S)$ for $L$, the projective measured lamination $[\lambda]\in\pml(S)$ maximising $R$ is realised, since $\pml(S)$ is compact.
 The proof that Thurston gives of Theorem \ref{th:R=L} encodes the fact that
 the best Lipschitz constant of a homeomorphism between the two hyperbolic surfaces $(S,x)$ and $(S,y)$ is attained on some compactly supported measured lamination $\lambda$, and that any optimal Lipschitz constant-realising such homeomorphism must be stretching ``maximally'' at a constant rate on (the unit tangent bundle of) the support of $\lambda$. Suppose, in particular, that some projective measured lamination $[\lambda]$ which realises the maximum of $R$ between $(S,x)$ and $(S,y)$ is supported on a complete geodesic lamination $|\lambda|$, \ie a geodesic lamination  $|\lambda|$ such that the complementary regions on $S\setminus|\lambda|$ are composed of ideal triangles. In this case, the fact that a uniquely determined Lipschitz map on $|\lambda|$ homeomorphically extends to all of $S$ means that the theory of Lipschitz maps between ideal triangles is both natural and unavoidable in Thurston's theory of Lipschitz homeomorphisms between hyperbolic surfaces.\smallskip

In \cite[\S4]{ThS}, Thurston concretely describes the construction of a canonical $K$-Lipschitz self-map of an ideal triangle, for every $K\geq 1$. We recall now this construction. It is illustrated in  \cref{fig:stretch-triangle}. 

\begin{figure}[h!]
\begin{center}
\includegraphics[scale=0.5]{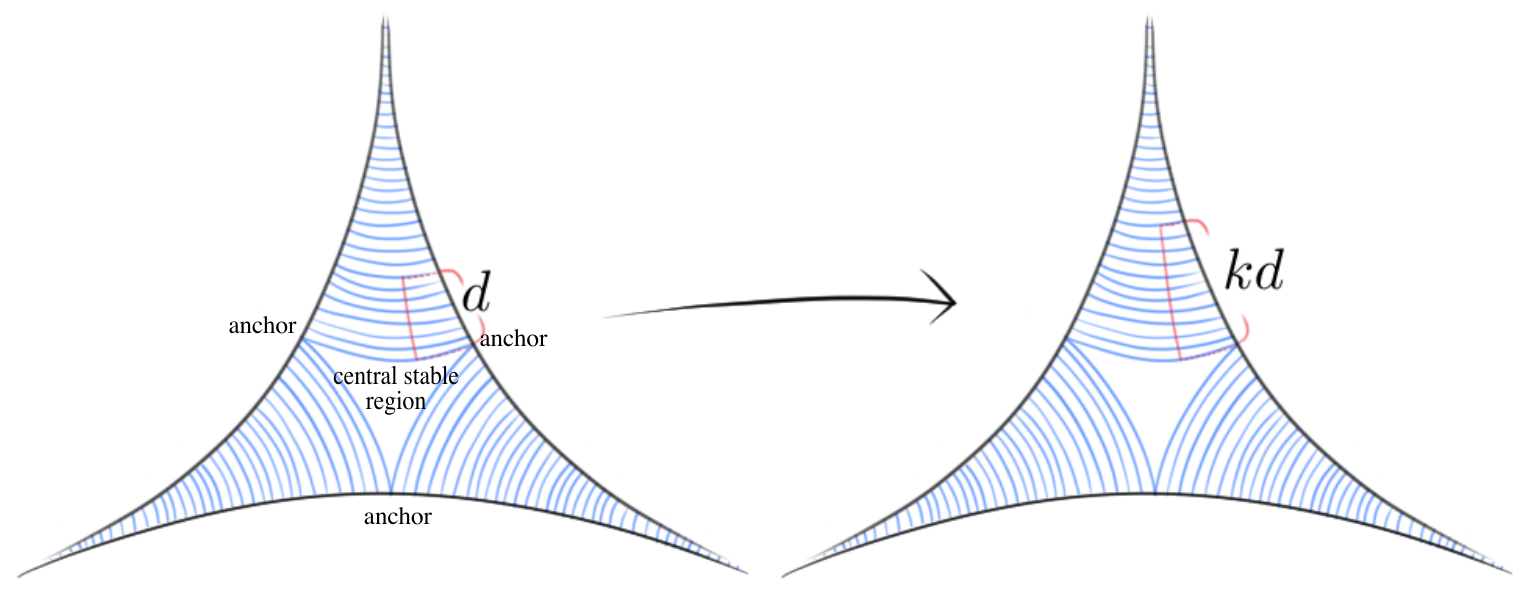}
\caption{A stretch map between two ideal triangles.}
\label{fig:stretch-triangle}
\end{center}
\end{figure}

The ideal triangle is foliated by pieces of horocycles which are perpendicular to the boundary except for a central non-foliated region bordered by three pieces of such horocycles. 

\begin{definition}[central stable triangle and anchors]
We refer to the central nonfoliated region of an ideal triangle as the \emph{central stable triangle}, and refer to the three vertices of the stable triangle, \ie the three points where two distinct horocycle leaves meet, as the \emph{anchors}. 
\end{definition}

The ideal triangle equipped with its horocyclic foliation is unique up to isometry. The $K$-Lipschitz self-map of the triangle is taken to be the identity on the central stable triangle and to send every horocyclic segment at distance $d$ from this region to a horocyclic segment at distance $Kd$. Furthermore, on each horocyclic segment at distance $d$ from the central stable triangle, the map uniformly contracts horocylic arc-length by the factor  $e^{-dK}$.  These properties uniquely determine the map. Restricted to the boundary geodesics, this map expands distance by the factor $K$.  One can show that this map is $K$-Lipschitz. It is called the \emph{$K$-stretch map} of the ideal triangle.

\begin{definition}[stretch-invariant foliations]
There are two foliations that are invariant under stretching and supported on the complement of the central stable triangle: the horocyclic foliation illustrated in \cref{fig:stretch-triangle} and the geodesic foliation orthogonal to it. We refer to these respectively as the \emph{stretch-invariant horocyclic foliation} and the \emph{stretch-invariant geodesic foliation}.
\end{definition}

\subsection{Stretch maps on surfaces}\label{sec:stretch}

Thurston uses $K$-stretch maps between ideal triangles to generate $K$-stretch maps between hyperbolic surfaces \cite{ThS}. Consider a complete geodesic lamination $\nu$ on the surface $S$ equipped with a hyperbolic structure $x\in\teich(S)$. We assume that not all leaves of $\nu$ converge to cusps at both ends, \ie that $\nu$ is not an ideal geodesic triangulation, but impose no other condition; $\nu$ need not carry a transverse measure of full support. Thurston constructs a family $\{x_t\}_{t\in[0,\infty)}$ of hyperbolic structures on $S$ such that for every $t\geq 0$, 

\begin{itemize}
\item $L(x,x_t)=R(x,x_t)=t$;
\item there exists a homeomorphism $\phi_t\colon (S,x)\to (S,x_t)$ with $\mathrm{Lip}(\phi)= e^t$;
\item $\phi_t$ sends the geodesic lamination $\nu$ on $(S,x)$ to the (unique geodesic) lamination (isotopic to) $\nu$ on $(S,x_t)$;
\item the restriction of $\phi_t$ to the lamination $\nu$ on $(S,x)$ is an $e^t$-Lipschitz map which uniformly expands arc-length on $\nu$ by $e^t$.
\end{itemize}

Thurston's construction is fairly straightforward when the lamination $\nu$ consists of finitely many geodesic leaves: the $e^t$-stretch maps on each of the $4g-4+2n$ ideal triangles complementary to $\nu$ stretches its boundary geodesics uniformly by a factor of $e^t$. Hence these maps are glued together continuously. In particular, all of the leaves of $\nu$ are boundary geodesics and hence this defines an $e^t$-stretch map from $x$ on $S$ to a different hyperbolic structure $x_t$ on $S$ satisfying the properties listed above.

However, when the lamination $\nu$ has (uncountably) infinitely many leaves, the situation is much subtler as only finitely many leaves in $\nu$ bound the complementary ideal triangles, and it is far from obvious as to whether Thurston's $e^t$-stretch maps on these complementary ideal triangles continuously extend to all the leaves in $\nu$. Instead, Thurston studies the space of all possible hyperbolic metrics on small $\epsilon$-neighbourhoods of $\nu$, and parametrises this space in terms of functions describing transversely measured foliations (with mainly horocyclic leaves) transverse to $\nu$ on the aforementioned $\epsilon$-neighbourhood. 

To get a sense of how this works, observe that the stretch-invariant horocyclic foliation on an ideal triangle (see \cref{fig:stretch-triangle}) is endowed with a natural transverse measure which measures the arc-length of subsegments of the leaves of $\nu$ (this determines the transverse measure). 
Thurston shows \cite[Proposition~4.1]{ThS} that any such transversely measured foliation (with closed leaves around cusps and with infinite transverse measure in the neighbourhoods of cusps) is realised by a hyperbolic metric  on the $\epsilon$-neighbourhood of $\nu$ in such a way that the transverse measure of the foliation measures arc-length on subsegments of the leaves of $\nu$. In so doing, he shows that the initial transversely measured foliation describing the neighbourhood of $\nu$ in $x\in\teich(S)$, when multiplied by $e^t$, $t\in[0,\infty)$, corresponds to a ray of hyperbolic metrics in the $\epsilon$-neighbourhood of $\nu$. The requisite  ray $\{x_t\}_{t\in[0,\infty)}$ of metrics and the associated Lipschitz maps $\{\phi_t\}$ are obtained by filling in the rest of the surface using $e^t$-stretch maps on ideal triangles. The family of hyperbolic metrics defined by this ray satisfies the previously listed properties.

\begin{definition}[stretch lines and stretch rays]
\label{defn:stretchpath}
The ray $\{x_t\}_{t\geq 0}$ in Teichm\"uller space is called a \emph{stretch ray} starting at $x_0$,  and a line 
 $\{x_t\}_{t\in \mathbb{R}}$ obtained in this way is a called \emph{stretch line}. We use the term \emph{stretch path} to refer to a subarc of either a stretch line or a stretch ray.
 The complete lamination $\nu$ used for this construction is called the {\em stretching locus} of the path.
 \end{definition}

\begin{remark}
\label{maxstretch}
We see by construction that a $K=e^t$-stretch map supported by a geodesic lamination $\nu$ on a hyperbolic surface $(S,x)$ stretches the stretch-invariant geodesic foliation on each complementary ideal triangle by a factor of $K>1$ and contracts in the direction of the (orthogonal) stretch-invariant horocyclic foliation. Therefore, the only geodesic segments stretched by $K$ are those which lie on $\nu$ and the stretch-invariant geodesic foliation. In particular, since the leaves of the stretch-invariant geodesic foliation which do not lie on the boundary of the complementary ideal triangles (and hence $\nu$) necessarily meet one central stable triangle, such leaves can never be complete geodesics. Hence, any complete geodesic which is stretched by $K$ is necessarily a leaf of $\nu$. In particular, this means that (compactly supported) measured laminations which transversely intersect $\nu$ cannot have their length increase by the factor $K$. This fact is used by Thurston, for example, in the proof of \cite[Theorem~5.1]{ThS}, and we highlight it for future reference. We shall give a precise infinitesimal version of this fact in \cref{thm:infcomparison}.
\end{remark}

\subsection{Laminations}

We specify several special classes of geodesic laminations which play a crucial role in the Lipschitz theory of hyperbolic surfaces.

\begin{definition}[chain-recurrent laminations]\label{defn:chainrec}
A geodesic lamination $\lambda$ on a hyperbolic surface is said to be \emph{chain-recurrent} if for any point $x$ on  $\lambda$ and  for any $\epsilon >0$, there exists a  parametrised $C^1$ closed path $\delta$ on $S$ containing $x$ such that for any unit length path $I_1$ contained in $\delta$ there exists a unit length  path $I_2$ contained in $\lambda$ such that the two paths $I_1$ and $I_2$ are $\epsilon$-close in the $C^1$ topology. 
\end{definition}

We advise the reader to see \cite[p. 24-25]{ThS} and elsewhere in Thurston's paper for some basic properties of chain-recurrent laminations. Crucially, the support of any measured lamination is chain-recurrent. We note, however, that there is a small error in the statement of \cite[Lemma~8.3]{ThS}: it is \emph{not} true that any chain-recurrent lamination is approximated arbitrarily closely by simple closed curves. The correct statement is for chain-recurrent laminations that are \emph{connected}. This correction is supported by the second sentence on \cite[p.~38]{ThS}, where Thurston invokes the ``\emph{hypothesis that $\lambda$ is connected}''. The following is an immediate corollary to Thurston's Lemma~8.3 \cite{ThS}:

\begin{lemma}\label{lem:curveapprox}
For any chain-recurrent lamination $\lambda$ there exists a (simple closed) multicurve which approximates it arbitrarily closely in the Hausdorff topology.
\end{lemma}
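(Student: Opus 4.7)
The plan is to reduce the statement to Thurston's Lemma 8.3 (correctly stated as the assertion that \emph{connected} chain-recurrent laminations are Hausdorff-approximable by simple closed curves) by decomposing $\lambda$ into its connected components, approximating each component individually, and then assembling the resulting curves into a multicurve.

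First, I would argue that on a finite-type hyperbolic surface $S$, any chain-recurrent geodesic lamination $\lambda$ has only finitely many connected components. The standard structure theorem for geodesic laminations decomposes $\lambda$ into finitely many minimal sublaminations together with (countably many) isolated leaves each of whose ends either exits a cusp, spirals onto a minimal sublamination, or is a closed leaf. Chain-recurrence forbids the ``exits a cusp'' behaviour and more generally prevents any isolated leaf from being non-recurrent, so each isolated leaf must spiral at both ends onto minimal sublaminations (or itself be a closed leaf). Since there are only finitely many minimal sublaminations, and each isolated leaf attaches to one or two of them, it follows by a standard bookkeeping argument that the number of connected components of $\lambda$ is finite; call them $\lambda_1,\ldots,\lambda_k$. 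Each $\lambda_i$ is closed and, being a component of a chain-recurrent lamination, is itself chain-recurrent (any chain used to verify chain-recurrence at a point of $\lambda_i$ can be pulled into an arbitrarily small neighbourhood of $\lambda_i$, hence stays within the component).

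Next, by \cite[Lemma~8.3]{ThS} applied to each connected chain-recurrent $\lambda_i$, there is a simple closed curve $\gamma_i$ which Hausdorff-approximates $\lambda_i$ to within any prescribed $\varepsilon>0$. Since the components $\lambda_1,\ldots,\lambda_k$ are pairwise disjoint closed subsets of $S$, we may choose pairwise disjoint open neighbourhoods $U_1,\ldots,U_k$ of them; taking $\varepsilon$ smaller than the minimum of the Hausdorff distances from $\lambda_i$ to $S\setminus U_i$, we can arrange that each $\gamma_i\subset U_i$. Then $\Gamma:=\gamma_1\cup\cdots\cup\gamma_k$ is a disjoint union of simple closed curves, i.e.\ a multicurve, and its Hausdorff distance to $\lambda=\lambda_1\cup\cdots\cup\lambda_k$ is bounded by $\varepsilon$.

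The only step requiring any real care is the finiteness of the number of components, since the rest of the argument is essentially bookkeeping. I would anticipate that the main subtlety is verifying that chain-recurrence of $\lambda$ implies chain-recurrence of each component $\lambda_i$ (rather than just some weaker recurrence property); this is where I would spend the most attention, either by a direct argument using the fact that chains used in \cref{defn:chainrec} can be localised to a single component, or by quoting the relevant portion of \cite[pp.~24--25]{ThS} where Thurston discusses the structure of chain-recurrent laminations. Once that is in hand, the multicurve approximation follows immediately.
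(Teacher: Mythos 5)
Your proof is correct and supplies the argument the paper leaves implicit in calling the lemma ``an immediate corollary to Thurston's Lemma~8.3'': decompose $\lambda$ into its finitely many connected components, observe that each component inherits chain-recurrence (since for small $\epsilon$ a chain through a point of $\lambda_i$ must remain in the $\epsilon$-neighbourhood of $\lambda_i$ and its comparison arcs in $\lambda$ must likewise lie in $\lambda_i$), apply the corrected Lemma~8.3 to each component, and assemble the resulting disjoint curves into a multicurve. One minor imprecision: on a finite-type hyperbolic surface a geodesic lamination has \emph{finitely} many (not merely ``countably many'') isolated leaves, but this only strengthens the finiteness you invoke and does not affect the argument.
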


\begin{definition}[maximal chain-recurrent laminations]
A chain-recurrent lamination $\lambda$ on $S$ is \emph{maximal} if there are no chain-recurrent laminations on $S$ properly containing  $\lambda$.
\end{definition}

\begin{remark}\label{rmk:uniqueextension}
Thurston states in the paragraph before Theorem~10.2 of \cite{ThS} that the complement of a maximal chain-recurrent lamination $\lambda$ consists of
\begin{itemize}
\item
ideal triangles and/or 
\item
once-punctured monogons (the latter arises only in the setting when $S$ has cusps), as well as 
\item
a single once-punctured bigon in the special case when $S$ is a one-cusped torus.\footnote{Thurston does not specify this last case, and it is a consequence of the hyperelliptic involution on the once-punctured tori. Interested readers may consult \cite{DLRT} or \cite[Remark~3.4]{HPap}.}
\end{itemize}
A careful reading of the proof of \cite[Theorem~8.5]{ThS} tells us that  complete geodesic laminations extending a specified maximal chain-recurrent lamination induce the same stretch path. This observation is highlighted in \cite[Corollary~2.3]{DLRT} and utilised in \cite{HPap}, and allows us to adopt the notation
\[
\mathrm{stretch}(x,\lambda,t)
\]
even when a maximal chain-recurrent lamination $\lambda$ is not complete (this arises precisely when $S$ has cusps).
\end{remark}

The set of compactly supported geodesic laminations on $S$ is a subset of the set of compact subsets of $S$, and hence naturally inherits the Hausdorff metric; indeed, it forms a compact metric space (see Chap.\ 4 of \cite{CEG}). Thurston shows that the set of chain-recurrent laminations forms a closed (and hence also compact) subset, and we use this fact to establish the following lemma:

\begin{lemma}\label{laminationlimit}
Given a sequence of chain-recurrent laminations $\{\lambda_i\}$ whose Hausdorff limit $\Lambda$ is a maximal chain-recurrent lamination, let $\{\Lambda_i\}$ be a sequence of chain-recurrent laminations such that $\lambda_i\subseteq\Lambda_i$. Then, the sequence $\{\Lambda_i\}$ also converges to $\Lambda$ in the Hausdorff topology.
\end{lemma}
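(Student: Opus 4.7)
The plan is to exploit compactness of the space of chain-recurrent laminations under the Hausdorff topology, combined with the maximality hypothesis on $\Lambda$. Since every subsequence of a sequence in a compact metric space has a convergent sub-subsequence with a limit in the same space, it suffices to show that every Hausdorff-convergent subsequence of $\{\Lambda_i\}$ must converge to $\Lambda$; this will force the entire sequence to converge to $\Lambda$.

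First I would pass to an arbitrary convergent subsequence, which (after relabeling) we may denote $\Lambda_{i_k}\to \Lambda'$ with $\Lambda'$ again a chain-recurrent lamination, invoking Thurston's result that the set of chain-recurrent laminations is closed (and therefore compact) inside the Hausdorff-metric space of compact subsets of $S$. Next, I would use the standard lemma that Hausdorff convergence preserves inclusion of closed subsets: if $A_k\subseteq B_k$, $A_k\to A$ and $B_k\to B$ in the Hausdorff topology, then $A\subseteq B$. The argument is immediate: any $a\in A$ is the limit of some $a_k\in A_k\subseteq B_k$, and such a limit lies in $B$ because $B_k\to B$. Applying this to the containments $\lambda_{i_k}\subseteq \Lambda_{i_k}$, together with $\lambda_{i_k}\to \Lambda$, gives $\Lambda\subseteq \Lambda'$.

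Now I would invoke the hypothesis that $\Lambda$ is a \emph{maximal} chain-recurrent lamination: since $\Lambda'$ is a chain-recurrent lamination containing $\Lambda$, maximality forces $\Lambda'=\Lambda$. Thus every convergent subsequence of $\{\Lambda_i\}$ has the same limit $\Lambda$, and a routine compactness argument (if a sequence in a compact metric space did not converge to $\Lambda$, some subsequence would stay a definite Hausdorff distance away from $\Lambda$, yet would still have a convergent sub-subsequence, contradicting what we have shown) yields $\Lambda_i\to \Lambda$.

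The only delicate point is the preservation-of-inclusion step, but this is purely a general fact about Hausdorff convergence of closed subsets of a fixed compact metric space and requires no lamination-specific input beyond closedness of each $\lambda_{i_k}$ and $\Lambda_{i_k}$. The maximality of $\Lambda$ is doing all of the real work: without it, one could not rule out $\Lambda'\supsetneq \Lambda$, and indeed the conclusion would fail (for instance if $\lambda_i=\lambda$ is a single closed geodesic for all $i$ and $\Lambda_i$ is a strictly increasing sequence of chain-recurrent extensions, no forcing of the limit occurs). Hence I expect no serious obstacle; the proof is essentially a one-paragraph compactness-plus-maximality argument.
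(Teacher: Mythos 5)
Your proposal is correct and follows essentially the same line of reasoning as the paper's proof: both pass to an arbitrary Hausdorff-convergent subsequence, use closedness of the set of chain-recurrent laminations to identify the limit as chain-recurrent, show this limit contains $\Lambda$ via the containments $\lambda_{i_k}\subseteq\Lambda_{i_k}$, and then invoke maximality of $\Lambda$ to conclude. The only cosmetic difference is that you isolate the inclusion-preservation fact as a named lemma, while the paper argues it directly from the description of the Hausdorff limit as the set of limits of sequences of points $p_m\in\Lambda_i$.
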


\begin{proof}
\cite[Proposition~6.2]{ThS} tells us that the sequence $\{\Lambda_i\}$ has limit points, and our goal is show that every convergent sequence tends to $\Lambda$. 
Passing to a subsequence, we may assume without loss of generality that $\{\Lambda_i\}$ converges to some geodesic lamination $\Lambda_{\infty}$. By the definition of the Hausdorff topology, the limit has the form \[
\Lambda_{\infty}
=
\left\{\;
p\in (S,x)\mid 
\text{ there is a sequence }\{p_m\in\Lambda_i\}\text{ converging to }p
\;\right\},
\]
and by the closedness of the set of chain-recurrent laminations, $\Lambda_\infty$ is chain-recurrent.
This necessarily contains 
\[
\Lambda
=
\left\{\;
p\in (S,x)\mid 
\text{ there is a sequence }\{p_m\in\lambda_i\}\text{ converging to }p
\;\right\}.
\]
The maximality of $\Lambda$ amongst chain-recurrent laminations implies that $\Lambda_\infty=\Lambda$ and hence the Hausdorff limit of $\{\Lambda_i\}$ is indeed $\Lambda$.
\end{proof}

\begin{definition}[ratio-maximising lamination]
Given an ordered pair of distinct points $x,y\in\teich(S)$, we say that a geodesic lamination $\lambda$ is \emph{ratio-maximising} from $x$ to $y$ if 
\begin{itemize}
\item
there exists a Lipschitz homeomorphism, in the correct homotopy class as determined by the markings of $x,y\in \mathcal{T}(S)$, with optimal (\ie minimal) Lipschitz constant $e^{\mathrm{d}_{\mathrm{Th}}(x,y)}$ mapping from a neighbourhood of $\lambda$ in $(S,x)$ to a neighbourhood of $\lambda$ in $(S,y)$, and
\item
every such optimal Lipschitz constant homeomorphism takes the leaves of $\lambda$ in $(S,x)$ to the leaves of $\lambda$ in $(S,y)$ by multiplying arclength on $\lambda$ by $e^{\mathrm{d}_{\mathrm{Th}}(x,y)}$.
\end{itemize}
\end{definition}

Thurston unfortunately gives the incorrect definition for ratio-maximising laminations in the paragraph prior to \cite[Theorem~8.2]{ThS}. This is corrected in both \cite[Section~2.6]{DLRT} and \cite[Definition~4.2]{HPap}. In any case, the main motivation for defining ratio-maximising laminations is the central role they play in Thurston's concatenated stretch path construction for Thurston metric geodesics \cite[Theorem~8.5]{ThS}.

\begin{definition}[{maximal ratio-maximising chain-recurrent lamination, \cite[Theorem~8.2]{ThS}}]
For any ordered pair of distinct points $x,y\in\teich(S)$, there is a unique ratio-maximising chain-recurrent lamination which contains all other ratio-maximising chain-recurrent laminations from $x$ to $y$. This lamination  is called the \emph{maximal ratio-maximising chain-recurrent lamination}, and we denote it by $\mu(x,y)$.
\label{defn:maxratmax}
\end{definition}

The existence of such a lamination is a non-trivial result of Thurston. 

The maximal ratio-maximising chain-recurrent lamination is central to Thurston's construction of concatenated stretch path geodesics \cite[Theorem~8.5]{ThS}.
He there showed that the stretching locus of every stretch path constituting a geodesic from $x$ to $y$ always contains $\mu(x,y)$.

\begin{remark}[maximal ratio-maximising laminations]
Note that Thurston also refers to maximal ratio-maximising laminations as \emph{maximal maximally-stretched laminations} on two occasions in \cite{ThS}:
\begin{itemize}
\item
in the last paragraph of the proof of Theorem~8.5, and
\item
in the statement of Theorem~10.7.
\end{itemize}
Similar nomenclature is adopted in \cite[Section~2.6]{DLRT}.
\end{remark}

\section{Stretch maps and stretch vectors} \label{s:crown}

The goal of this section is to better understand the trajectory and behaviour of Thurston's stretch paths. We focus on the specific instance where $S$ is a complete hyperbolic surface which is topologically an annulus and has boundary cusps on both of its boundary components.
 We refer to such surfaces as \emph{$(n_L,n_R)$-crowned annuli} \cite{CB, Huang-Thesis}, where $n_L,n_R(\geq1)$ denote the number of boundary cusps on the ``left'', respectively ``right'' boundary component. Up to the action of the mapping class group relative to the boundary, any crowned annulus  admits only finitely many complete geodesic laminations, and each of the laminations contains finitely many leaves. 
From now on, we always assume that the leaves of a lamination do not all go from cusp to cusp. 

Let $S$ be an $(n_L, n_R)$-crowned annulus.
Let $\gamma_S$ be its core curve, which we take to be a closed geodesic when $x \in \teich(S)$ is given.
We fix orientations on $S$ and $\gamma_S$.
We number boundary cusps on the left of $\gamma_S$ by $p_1, \dots , p_{n_L}$ and those on the right by $p'_1, \dots, p'_{n_R}$.

For each point $x \in \teich(S)$, we define its {\em twist parameter} , denoted by $\tau_{\gamma_S}(x)$ as follows.
We fix an open simple arc $\sigma$ connecting $p_1$ with $p_1'$.
For $x \in \teich(S)$, we homotope $\sigma$ to a broken geodesic arc consisting of three geodesics:  the first one is a geodesic perpendicular from $p_1$ to $\gamma_S$, the second one is a geodesic immersed in $\gamma_S$, and the third one is a geodesic perpendicular from $p_1'$ to $\gamma_S$ (traversed in the opposite direction).
The twist parameter of $x \in \teich(S)$ is defined to be the signed length of the second immersed geodesic where the sign is positive when the second geodesic lies on the left of the first one.

Now we consider a lift of $(S,x)$ with its twist parameter $\tau_0$ to its universal cover embedded in $\hyperbolic$ regarded as the upper half-plane.
We lift $\gamma_S$ to a geodesic $\tilde\gamma_S$, which we assume to connect $0$ and $\infty$ so that the induced orientation is upward, and $\sigma$ to a geodesic $\tilde \sigma$ crossing $\tilde{\gamma}_S$.
Then we have the following immediately from the definition of the cross ratio.

\begin{lemma}
\label{cross ratio}
Let $a, b$ be the left and right endpoints of $\tilde \sigma$.
Then we have $\displaystyle-\frac{b}{a}=e^{\tau_0}.$
\end{lemma}

In the following argument, we consider a point $x \in \teich(S)$ with its twist parameter $\tau_0$ and look at how it is moved by the $K$-stretch map by computing the twist parameter after the $K$-stretch.
For that purpose, we choose a lift $\tilde\sigma$ so that the right endpoint at infinity is situated at $\displaystyle -e^{\frac{\tau_0}{2}}$, and hence the other endpoint at infinity is at $\displaystyle e^{\frac{\tau_0}{2}}$.

\subsection{Stretch maps for $(1,1)$-crowned annuli}\label{ss:stretch}

We begin with the case where $S$ is a \emph{$(1,1)$-crowned annulus}.
%
The Teichm\"uller space for $(1,1)$-crowned annuli is $\teich(S)\cong\mathbb{R}_{>0}\times\mathbb{R}$ (see \cite{Huang-Thesis}), where, for a given marked hyperbolic surface $x\in\teich(S)$, the first coordinate parametrises the length of $\gamma_S$ and the second is the twist parameter.
In this subsection, we explicitly write down the effects of all $K$-stretch maps for the $(1,1)$-annulus $S$. 
To do so, we first determine the collection of all complete geodesic laminations on $S$ containing $\gamma_S$ as a leaf.
Any such lamination consists of five geodesics: a geodesic starting from $p_1$ and spiralling around $\gamma_S$, which we denote by  $\alpha_1$, the closed geodesic $\gamma_S$,  a geodesic starting from $p_1'$ and spiralling around $\gamma_S$, which we denote by $\alpha_2$, and the two boundary components.
There are four possible laminations depending on the directions of spiralling of $\alpha_1$ and $\alpha_2$, as depicted in \cref{fig:2}.
We name them $\lambda_+, \lambda_-$ and $\lambda_0$, where 
we intentionally conflate the two laminations where $\alpha_1$ and $\alpha_2$ spiral toward $\gamma_S$ in the same direction and denote them by $\lambda_0$. 
%

\begin{figure}[h!]
\begin{center}
\includegraphics[scale=1.7]{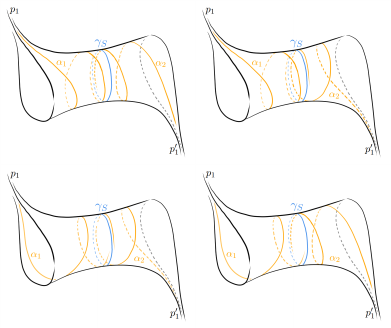}
\caption{The four possible configurations of a complete lamination composed of $\gamma_S$, $\alpha_1$ and $\alpha_2$. Case 1 is top left, case 2 is top right, case 3 is bottom left, case 4 is bottom right. We respectively denote the maximal laminations for cases~1 and 3 as $\lambda_-$ and $\lambda_+$. Cases 2 and 4 are those where the two geodesics $\alpha_1$ and $\alpha_2$ spiral around $\gamma_S$  in the same direction, and where we denote the resulting maximal lamination  by $\lambda_0$.}

\label{fig:2}
\end{center}
\end{figure} 


\begin{theorem}[stretching $(1,1)$-crowned annuli]
\label{thm:key}
Consider a hyperbolic metric $x\in\teich(S)$ on $S$ given by the Fenchel--Nielsen coordinates $(\ell_0,\tau_0)$ and let $K\geq 1$ be an arbitrary real number $\geq 1$. The $K$-stretch map with respect to $\lambda_0$ deforms the Fenchel--Nielsen coordinates $(\ell_0,\tau_0)$ to $(K\ell_0,K\tau_0)$, whereas the $K$-stretch map with respect to $\lambda_\pm$ takes $(\ell_0,\tau_0)$ to
\[
\left(K\ell_0,K\tau_0\pm2\left(\log(1-e^{-K\ell_0})-K\log(1-e^{-\ell_0})\right)\right).
\]
\end{theorem}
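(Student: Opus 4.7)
My plan is to compute the stretch-map image of $(\ell_0,\tau_0)$ by working in the universal cover $\mathbb{H}$ with $\gamma_S$ lifted to the imaginary axis and tracking positions of lifted cusps. That the new length is $K\ell_0$ is immediate: $\gamma_S$ is a closed leaf of each of the complete laminations $\lambda_0,\lambda_\pm$, and Thurston's stretch rescales arclength on lamination leaves uniformly by $K$. All the content lies in computing the new twist.

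I take covering transformation $h(z)=e^{\ell_0}z$ and lift cusps $1$ and $2$ to $p_1>0$ and $-p_2<0$ respectively; the perpendicular foot from $\pm p_i$ lands on the imaginary axis at $\pm ip_i$, whose hyperbolic arclength coordinate is $\log p_i$, so the Fenchel--Nielsen convention gives $\tau_0=\log p_2-\log p_1$. Setting up the new metric analogously with $h_K(z)=e^{K\ell_0}z$ and cusp lifts $p_1',-p_2'$ gives $\tau_K=\log p_2'-\log p_1'$, so the task reduces to determining the ratio $p_2'/p_1'$ in each case. For each lamination, each side of $\gamma_S$ is tiled (up to $h$-orbits) by a single ideal triangle of the form $\{a,ae^{\ell_0},\infty\}$ or $\{a,ae^{\ell_0},0\}$, the third vertex lying on the imaginary axis at $\infty$ or $0$ according to the spiraling direction of the corresponding $\alpha_i$.

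For an ideal triangle $\{a,ae^{\ell_0},\infty\}$ a quick M\"obius normalisation to the standard $\{0,1,\infty\}$ places the anchors on the two vertical edges at height $a(e^{\ell_0}-1)$, so Thurston's $K$-stretch takes the edge point $(a,y)$ to $(a',cy^K)$ with $c=a'(e^{K\ell_0}-1)/(a(e^{\ell_0}-1))^K$. This formula is $h$-equivariant, hence applies verbatim on every translated edge at $\text{Re}=ae^{n\ell_0}$; as $n\to -\infty$ these edges accumulate on the imaginary axis and exhibit the stretch's restriction to $\gamma_S$ as the $K$-dilation $y\mapsto cy^K$. Running the analogous computation on the opposite side of $\gamma_S$ --- using the involution $g(z)=-1/z$ to exchange $0$ and $\infty$ when the spiral vertex sits at $0$ rather than $\infty$ --- produces a second formula for this restriction. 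Since the stretch restricted to $\gamma_S$ is a single $K$-dilation, the two constants must agree, yielding one linear relation between $\log p_1'$ and $\log p_2'$ modulo the harmless $\mathbb{R}$-translation gauge on new $\gamma_S$.

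For $\lambda_0$ both spiral vertices lie at $\infty$, the two sides are symmetric, the $(e^{\ell_0}-1)$-factors cancel, and $\tau_K=K\tau_0$ drops out immediately. For $\lambda_\pm$ the mismatched vertex configurations (one side at $\infty$, the other at $0$) introduce the correction $\pm 2[\log(e^{K\ell_0}-1)-K\log(e^{\ell_0}-1)]$, which after rewriting $\log(e^{K\ell_0}-1)=K\ell_0+\log(1-e^{-K\ell_0})$ collapses to the stated formula. I expect the main obstacle to be rigorously justifying the $n\to -\infty$ limit that extracts the stretch's restriction to $\gamma_S$: Thurston's stretch is prescribed directly only on the interior of each complementary ideal triangle, together with an arclength-rescaling rule along lamination leaves, so one must carefully argue that the transverse limit of the triangle formula on the accumulating edges does indeed produce the prescribed $K$-dilation on $\gamma_S$ --- i.e., that the global stretch map is continuous across the closed leaf in precisely the way the consistency argument demands.
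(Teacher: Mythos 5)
Your proof is correct, and it takes a genuinely different route from the paper's. The paper homotopes $\sigma$ to a piecewise geodesic--and--horocyclic path $\varsigma$, lifts it, and explicitly tracks the Euclidean heights and widths of its pieces (in particular computing each composite horocyclic length as the geometric series $(1-e^{-\ell_0})^{-1}$, and its post-stretch analogue $(1-e^{-K\ell_0})^{-1}$) to locate the images of the ideal endpoints $u_1,u_2$. You instead exploit the fact that the stretch restricted to $\tilde\gamma_S$ is a $K$-dilation $y\mapsto cy^K$ and that the constant $c$ must agree when computed from the developing picture on either side; the consistency $c_L=c_R$ is then equivalent to the desired twist identity. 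The two computations carry the same content --- your anchor heights $a(e^{\ell_0}-1)$ and the paper's horocyclic lengths $(1-e^{-\ell_0})^{-1}$ are (after rescaling by the Euclidean width) the same quantities --- but your decomposition avoids the horocyclic bookkeeping and is arguably cleaner for the $(1,1)$ case. The trade-off is that the paper's more explicit $\varsigma$-tracking is what gets reused and generalised in the subsequent crowned-annulus lemmas, so its directness pays off later. On the obstacle you flag: it is not actually an issue. You observed (correctly) that the constant $c_n$ for the $n$-th translated edge is independent of $n$; since those edges are leaves of the lamination accumulating on $\tilde\gamma_S$ in the Hausdorff sense, and since Thurston's stretch map is by construction a (Lipschitz, hence continuous) homeomorphism of $S$ that sends $\gamma_S$ to $\gamma_S$ and restricts on it to a $K$-stretch, continuity forces the restriction to $\tilde\gamma_S$ to equal the constant limit of the edge formulas. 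There is no delicate transverse limit to justify beyond citing that continuity, which is part of Thurston's construction. One small cosmetic point: your sign convention $\tau_0=\log p_2-\log p_1$ is opposite to the one fixed implicitly in the paper's Figure setup ($\tilde\sigma$ has endpoints $-e^{-\tau_0/2}$, $e^{\tau_0/2}$, giving $\tau_0=\log p_1-\log p_2$), so your computation as stated lands on the formula for $\lambda_-$ rather than $\lambda_+$; this only relabels the $\pm$ and does not affect correctness.
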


\begin{proof}
We first note that since $\gamma_S$ is in a leaf of the maximally stretched lamination, it  is stretched by a factor of $K$. 
Thus the length coordinate $\ell_0$ increases to $K\ell_0$ in all three cases.
Since the cases of $\lambda_-$ and $\lambda_+$ can be dealt with in the same way by just changing the figure by symmetry, we shall only consider the cases for $\lambda_0$ and $\lambda_+$.
In both cases, we lift the lamination to $\hyperbolic$ in such a way that $\gamma_S$ is lifted to the geodesic connecting $0$ and $\infty$, which we denote by $\tilde \gamma_S$.
By \cref{cross ratio}, the twist parameter is the logarithm of a cross-ratio between $0,\infty$ and the two endpoints of $\tilde{\sigma}$.
We shall compute the positions of the endpoints after applying the $K$-stretch map.

\begin{figure}[h!]
\begin{center}
\includegraphics[scale=0.55]{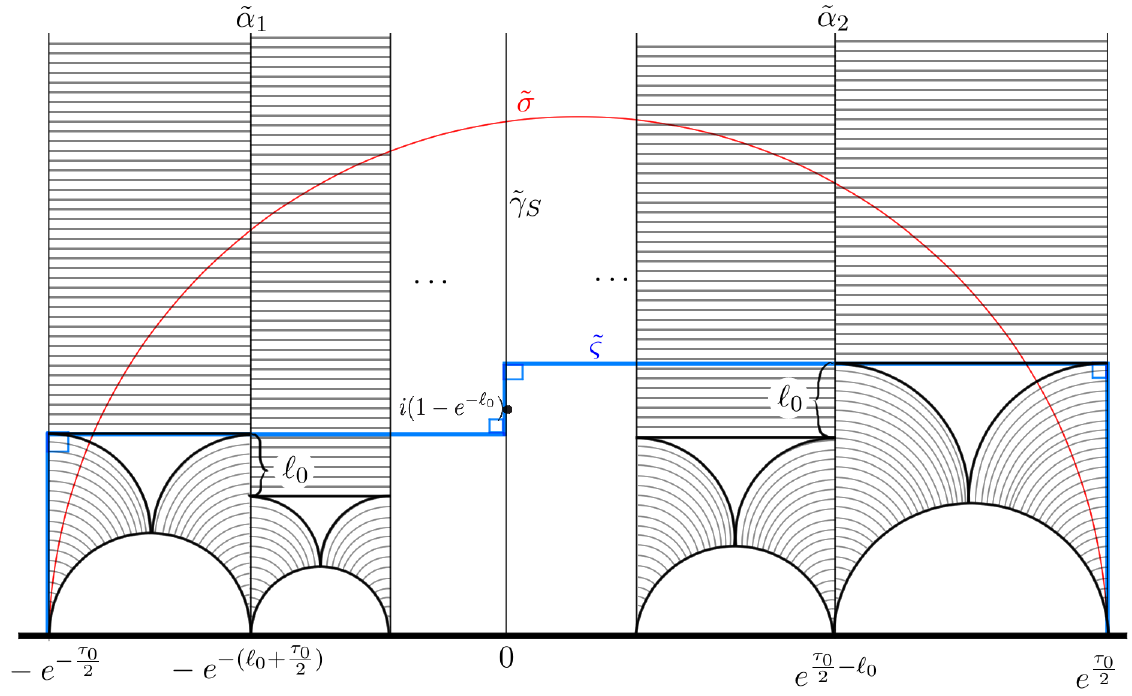}
\caption{A lift of $\lambda_0=\gamma_S \cup \alpha_1 \cup \alpha_2$ to the universal cover with specifications of $\tilde{\gamma}_S$, $\tilde{\sigma}$ and $\tilde{\varsigma}$. 
}

\label{fig:3}
\end{center}
\end{figure}

\noindent
\textbf{The $\lambda_0$ case:} We first calculate the twist parameter for the stretch map along $\lambda_0$. 
%
The universal cover of the hyperbolic surface $(S,x)$ is a contractible bordered domain embedded in the hyperbolic plane $\mathbb{H}$, which we identify with the upper half-plane as in \cref{fig:3}.
On each side of $\tilde{\gamma}_S$, we see a $\mathbb{Z}$-family of lifts of $\alpha_1$ and $\alpha_2$ which tend to $\infty$.
We mark each ideal triangle in the complement of the lifts of the $\alpha_i$ with its central stable triangle and note that the distance, along a lift $\tilde{\alpha}_i$, of the anchors for any two adjacent ideal triangles is $\ell_0$.
Fix an arbitrary lift $\tilde{\sigma}$ of the infinite arc $\sigma$ connecting $p_1$ and $p_1'$ we fixed before. 
Since the initial twist parameter is $\tau_0$, the ideal points for $\tilde{\sigma}$ are $-e^{-\frac{\tau_0}{2}}$ and $e^{\frac{\tau_0}{2}}$ for some convenient parametrisation of the boundary at infinity by $\mathbb{R}\cup\{\infty\}$. 
The infinite arc $\sigma$ can be homotoped to a piecewise smooth arc as follows.
Consider a half-infinite arc $\varsigma_1$ starting from $p_1$ that lies on $\alpha_1$ until it reaches an anchor for the first time, and then travels on  a leaf of the horocycle foliation until it reaches $\gamma_S$. 
Likewise we construct an infinite arc $\varsigma_2$ starting from $p_1'$.
We join the endpoints of $\varsigma_1$ and $\varsigma_2$ by an arc immersed in $\gamma_S$ so that we get an infinite arc properly homotopic to $\sigma$.
We let $\tilde \varsigma$ be a lift of $\varsigma$ which has the same endpoints at infinity as $\tilde \sigma$.
We use this $\tilde \varsigma$ to compute how the endpoints of $\tilde \sigma$ are moved after $K$-stretching.

%
%

The $K$-stretch map acts on $\varsigma$ by uniformly stretching the geodesic segments shown in the figure by the factor $K$ and non-uniformly shrinking the two horocylic segments, each joining $\alpha_i$ and $\gamma_S$. 
Each of these horocyclic segments consists of countably infinite many horocyclic segments joining two lifts of $\alpha_1$ or $\alpha_2$. An easy calculation shows that both segments have length equal to
\begin{align}
1+e^{-\ell_0}+e^{-2\ell_0}+e^{-3\ell_0}+\ldots=(1-e^{-\ell_0})^{-1}.\label{eq:horo}
\end{align}
We note that this horocyclic length only dependens on $\ell_0$, the length of $\gamma_S$ (see the statement of \cref{thm:key}).

Since the leftmost ideal triangle in \cref{fig:3} has vertices at $\displaystyle -e^{-\frac{\tau_0}{2}}, -e^{-(\ell_0+\frac{\tau_0}{2})}$ and $\infty$,  the first left horocyclic segment is positioned at height $e^{\frac{-\tau_0}{2}}(1-e^{-\ell_0})$, and in the same way  the first right horocyclic segment is positioned at height $e^{\frac{\tau_0}{2}}(1-e^{-\ell_0})$. 
This implies that the midpoint of the vertical geodesic segment in the middle of $\tilde{\varsigma}$ corresponds to $i(1-e^{-\ell_0})$.

After applying the $K$-stretch map, the new metric deforms the surface in such a way that the geodesic paths $\alpha_1$ and $\alpha_2$ remain  geodesics which spiral to $\gamma_S$, and the transverse horocyclic foliation is preserved. 
We lift the $K$-stretch map to the universal cover so that $0$, $\infty$, and  $i(1-e^{-\ell_0})$ are fixed.
Then, the configuration depicted in \cref{fig:3} still holds, but the endpoints of $\tilde \alpha_1$ and $\tilde\alpha_2$, which are $e^{\frac{\tau_0}{2}}$ and $e^{-\frac{\tau_0}{2}}$ in the original figure, will be moved to other points, which we denote by $u_1$ and $u_2$. 

We now compute $u_1$ and $u_2$.
First observe that since the metric along $\tilde{\gamma}_S$ is stretched by a factor of $K$,   the two ends of the vertical segment in $\tilde \varsigma$ are 
$\displaystyle ie^{\frac{-K\tau_0}{2}}(1-e^{-\ell_0})$
and $\displaystyle ie^{\frac{K\tau_0}{2}}(1-e^{-\ell_0})$.
Recall from above that each composite horocyclic segment is made up of infinitely many horocyclic segments, each contained in a distinct lift of one of the two ideal triangles in $S-\lambda_0$. The total length of each of the new composite horocyclic segments is given by
\begin{align}
1+e^{-K\ell_0}+e^{-2K\ell_0}+e^{-3K\ell_0}+\ldots=(1-e^{-K\ell_0})^{-1}.\label{eq:transformedhoro}
\end{align}
For the left one, since the segment is at height $e^{\frac{-K\tau_0}{2}}(1-e^{-\ell_0})$, its vertical projection onto the real axis has length equal to $\displaystyle\frac{e^{\frac{-K\tau_0}{2}}(1-e^{-\ell_0})}{(1-e^{-K\ell_0})}$, which implies that $\displaystyle u_1=-\frac{e^{\frac{-K\tau_0}{2}}(1-e^{-\ell_0})}{(1-e^{-K\ell_0})}$.
By the same argument, we have $\displaystyle u_2=\frac{e^{\frac{K\tau_0}{2}}(1-e^{-\ell_0})}{(1-e^{-K\ell_0})}$.

%
%
%
%
Having obtained $u_1$ and $u_2$, which are the endpoints of  the $K$-stretched image of $\tilde\sigma$, we see by \cref{cross ratio}  that the twist parameter after the $K$-stretch is equal to $\displaystyle\log|\frac{u_2}{u_1}|=K\tau_0$, as desired.

%
\medskip
\noindent
\textbf{The $\lambda_+$ case}: we repeat the same strategy as for $\lambda_0$.
In particular, we again set $\tilde{\gamma}_S$ to be the geodesic joining $0$ and $\infty$, set $\tilde{\sigma}$ as the geodesic joining $-e^{-\frac{\tau_0}{2}}$ and $e^{\frac{\tau_0}{2}}$, and we lift $\varsigma$ to $\tilde \varsigma$ sharing the two endpoints at infinity with $\tilde \sigma$. 
We know immediately from the computation in the $\lambda_0$ case  that the horocyclic subsegment of $\tilde{\varsigma}$ to the left of $\tilde{\gamma}_S$ is placed at height $e^{-\frac{\tau_0}{2}}(1-e^{-\ell_0})$. 
Observe that the hyperbolic involution $z\mapsto -z^{-1}$ preserves both $\tilde{\gamma}_S$ and $\tilde{\varsigma}$, and permutes the two horocyclic subsegments of $\tilde{\varsigma}$. This immediately tells us that the horocyclic segment to the right of $\tilde{\gamma}_S$ lies on the Euclidean circle passing through $0$ and $ie^{\frac{\tau_0}{2}}(1-e^{-\ell_0})^{-1}$ and tangent to $0$. Furthermore, observe that the hyperbolic length of the left composite horocyclic segment is still given by $(1-e^{-\ell_0})^{-1}$, and hence, by the $z\mapsto -z^{-1}$ involution. This is also the length of the right composite horocyclic segment. Furthermore, these two composite horocyclic segments are both $K$-stretched to horocyclic segments of hyperbolic length $(1-e^{-K\ell_0})^{-1}$.\medskip

\begin{figure}[h!]
\begin{center}
\includegraphics[scale=0.55]{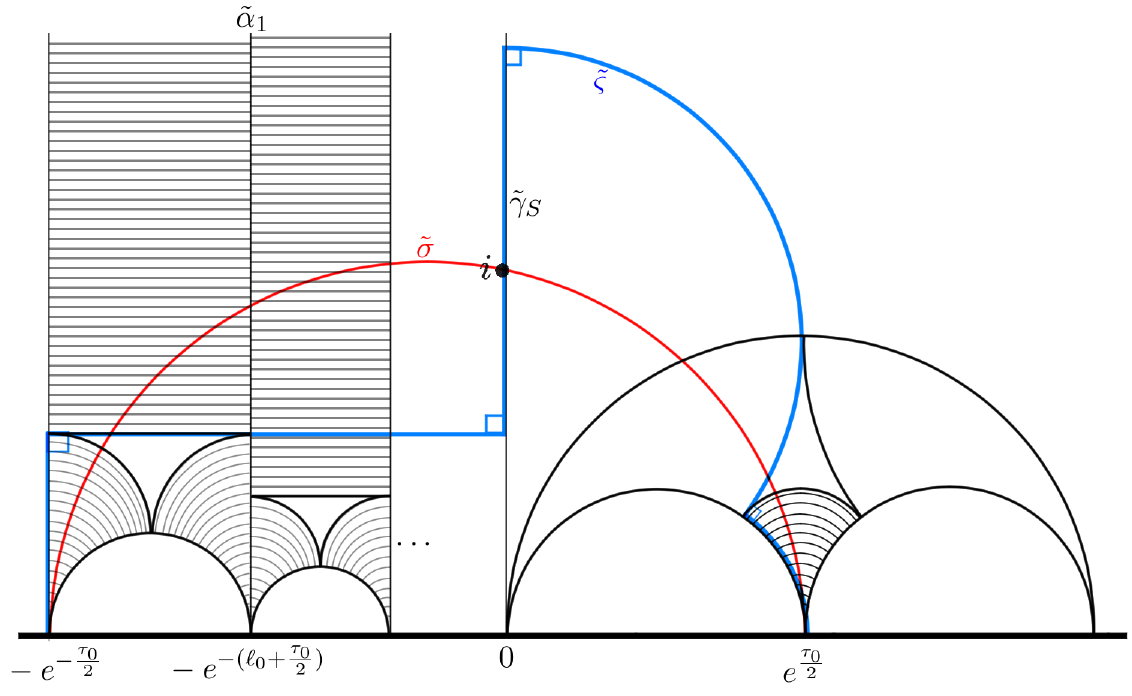}
\caption{A lift of $\lambda_+$ to the universal cover with specifications of $\tilde{\gamma}_S$, $\tilde{\sigma}$ and $\tilde{\varsigma}$.}
\label{fig:4}
\end{center}
\end{figure}

We now compute the twist parameter for $\lambda_+$ after applying the $K$-stretch map  (see \cref{fig:4}). 
We lift the $K$-stretch map to $\hyperbolic$ so that it fixes $0, \infty$ and $i \in \hyperbolic$ and so that the image is invariant under $z\mapsto -z^{-1}$.
Our computations for $\lambda_0$ show that the height of the lower endpoint is $e^{-\frac{\tau_0}{2}}(1-e^{-\ell_0})$, hence the distance between this end and the mid-point $i$ is $\log(e^{\frac{\tau_0}{2}}(1-e^{-\ell_0})^{-1})=\tfrac{\tau_0}{2}-\log(1-e^{-\ell_0})$. 
By symmetry, the same holds for the distance between the upper endpoint and $i$.
The $K$-stretch distorts distances along $\tilde{\gamma}_S$ by a factor of $K$, hence the distance between $i$ and the new endpoints of the central geodesic subsegment of $\tilde{\varsigma}$ is $\tfrac{K\tau_0}{2}-K\log(1-e^{-\ell_0})$. Therefore, the new endpoints of the $K$-stretched central (vertical) geodesic segment of the $K$-stretched $\tilde{\varsigma}$ are at 
$\displaystyle
ie^{\frac{-K\tau_0}{2}}(1-e^{-\ell_0})^K\text{ and }ie^{\frac{K\tau_0}{2}}(1-e^{-\ell_0})^{-K}\in\mathbb{H}.$

The same computation as in the preceding case shows that 
$\displaystyle
u_1=-\frac{e^{\frac{-K\tau_0}{2}}(1-e^{-\ell_0})^{K}}{(1-e^{-K\ell_0})}.$
To determine the position of $u_2$, we again apply the $z\mapsto -z^{-1}$ involution and see that
$\displaystyle
u_2=-(u_1)^{-1}=\frac{e^{\frac{K\tau_0}{2}}(1-e^{-K\ell_0})}{(1-e^{-\ell_0})^{K}}.$
Thus we now know the two endpoints at infinity of the $K$-stretched image of $\tilde{\sigma}$, and hence by \cref{cross ratio}, we see that the new twist coordinate is 
\begin{align*}
\log\left|\tfrac{u_2}{u_1}\right|
&=2\log\left(
\frac{e^{\frac{K\tau_0}{2}}(1-e^{-K\ell_0})}{(1-e^{-\ell_0})^{K}}
\right)\\
&=K\tau_0+2\left(\log(1-e^{-K\ell_0})-K\log(1-e^{-\ell_0})\right),
\end{align*}
as claimed.
%
\end{proof}

\subsection{Stretch maps for general crowned annuli}
We now seek to understand stretch maps on an $(n_L,n_R)$-crowned annulus $S$, \ie an annulus with $n_L,n_R>0$ boundary cusps to the left and the right of the core curve $\gamma_S$,  endowed with a complete finite-area geodesic-bordered hyperbolic metric \cite{CB, Huang-Thesis}. 
The geodesic $\gamma_S$ cuts $S$ into two annuli, and we refer to the annulus containing the $n_L$ boundary cusps as the left component, and the and the one with the $n_R$ boundary cusps as the right component.

Let $\Lambda$ be a complete lamination containing $\gamma_S$ as a leaf.
Then no leaves of $\Lambda$ pass from the left component of $S\setminus \gamma_S$ to the right. 
Since both the left and right components are topologically annuli, the left component of $S\setminus \gamma_S$ contains precisely $n_L$ leaves (not including $\gamma_S$), and the right contains $n_R$ leaves. 
Since each component of $S\setminus \Lambda$ is an ideal triangle, and since an ideal triangle is unique up to isometry, the isometry type of $S$ is determined by how they are pasted together.

For two ideal triangles $\Delta_1$ and $\Delta_2$ on $S$ sharing a geodesic $\ell$, the {\em shearing parameter} between $\Delta_1$ and $\Delta_2$ is defined to be the signed distance between an anchor of $\Delta_1$ and an anchor of $\Delta_2$ on $\ell$, where the sign is positive if the anchor of $\Delta_2$ is to the left of that of $\Delta_1$.

Now we can parametrise the Teichm\"uller space $\teich(S)$ of $S$ with
\begin{itemize}
\item
$n_L$ shearing parameters, one for each of the $n_L$ leaves on the left component of $S\setminus\gamma_S$ with the linear constraint that the shearing parameters which spiral around $\gamma_S$ sum to $\ell_{\gamma_S}$;

\item
$n_R$ shearing parameters, one for each of the $n_R$ leaves on the right component of $S\setminus\gamma_S$ with the linear constraint that the shearing parameters which spiral around $\gamma_S$ sum to $\ell_{\gamma_S}$; and

\item
one Fenchel--Nielsen length parameter $\ell_{\gamma_S}$ and one twist parameter $\tau_{\gamma_S}$ for $\gamma_{S}$.
\end{itemize}

The $K$-stretch map along $\Lambda$ multiplies all the shearing parameters and the Fenchel--Nielsen length parameter $\ell_{\gamma_S}$ by the factor $K$ (the two linear constraints are still satisfied). 
Thus, we need only determine what happens to the twist parameter to determine the stretch path given by the $K$-stretch map along $\Lambda$.
Since only leaves spiralling around $\gamma_S$ can actually affect $\tau_{\gamma_S}$,  we can determine $\tau_{\gamma_S}$ completely from the subsurface of $S$ which consists of the convex hull of the leaves of $\Lambda$ spiralling around $\gamma_S$. 
Therefore, by possibly reducing to a smaller crowned annulus, we need only determine $\tau_{\gamma_S}$ for stretch maps with respect to $\Lambda$ which consist of $\gamma_S$ and $n_L+n_R$ bi-infinite leaves spiralling around $\gamma_S$. 
We need only consider four possible cases for $\Lambda$: there are two cases where the left and right leaves spiral in the same direction toward $\gamma_S$ (like for $\lambda_0$ in \cref{fig:2}), and there are two cases where the left and right leaves spiral in opposite directions toward $\gamma_S$ (like for $\lambda_\pm$ in \cref{fig:2}). 

\subsection{$(n,0)$-crowned annulus}
Before starting to compute the changing of twisting parameter by $K$-stretch maps, we establish a useful lemma for a single component of $S\setminus \gamma_S$, \ie an $(n,0)$-crowned annulus.
%
%
%

Consider an $(n,0)$-crowned annulus $A$ for $n \in \naturals$, \ie a complete finite-area geodesic bordered hyperbolic surface which is topologically an annulus with $n$ boundary cusps on one of its boundaries. Denote the closed geodesic boundary of $A$ by $\alpha$.
We fix an orientation on $A$, and give a direction of $\alpha$ compatible with the given orientation.
 We consider a complete geodesic lamination $\mu$ on $A$ consisting of $\alpha$ and $n$ bi-infinite geodesic leaves which spiral towards $\alpha$ in the the direction of the orientation of $\alpha$. 
 See \cref{fig:19_2} for a depiction of the universal cover of $A$, which lies in the upper half plane $\hyperbolic$.
 We lift $\alpha$ to the geodesic connecting $0$ and $\infty$.

\begin{figure}[h!]
\begin{center}
\includegraphics[scale=0.55]{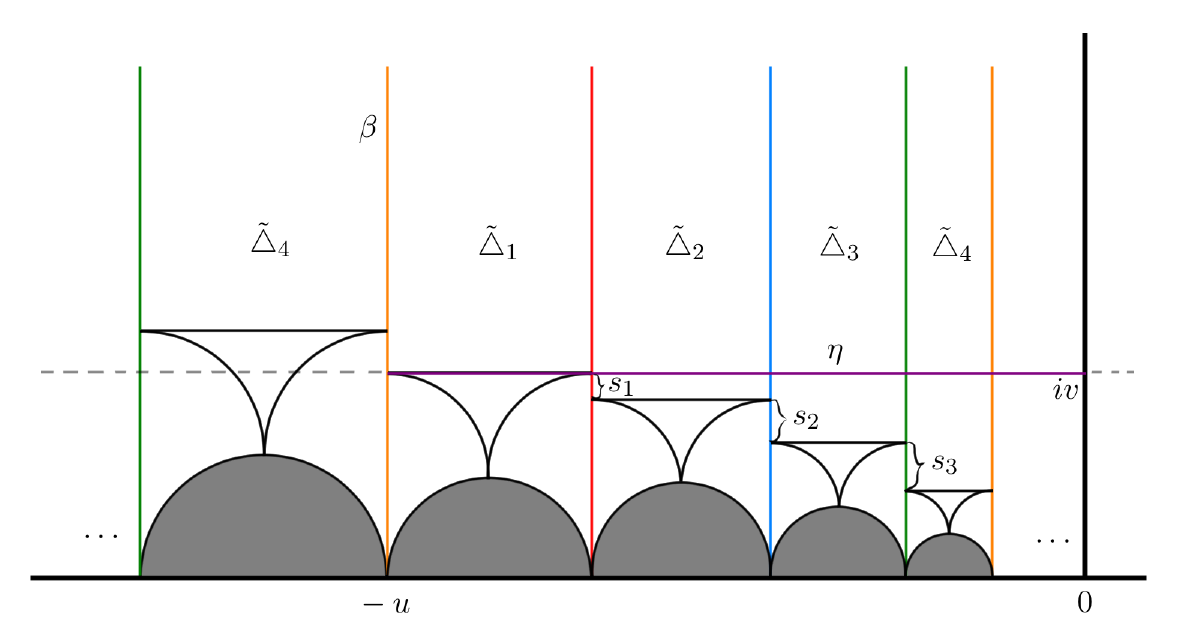}
\caption{The universal cover for an $(4,0)$-crowned annulus.
This is a picture when all the $s_i$ are negative.}
\label{fig:19_2}
\end{center}
\end{figure}

We sequentially label the ideal triangles in $A \setminus \mu$ by $\triangle_1,\ldots,\triangle_{n}$ so that $\triangle_{j-1}$ and $\triangle_j$ have sides with endpoint at $p_j$.  \cref{fig:19_2} represents the situation in the universal cover. Let $s_i\in\mathbb{R}$ denote the shearing parameter for the edge shared by $\triangle_i$ and $\triangle_{i+1}$ (cyclically indexed so that $\triangle_1=\triangle_{n+1}$). 
Take an arbitrary lift $\beta$ of the ideal edge shared by $\triangle_n$ and $\triangle_{1}$ and let $-u$ denote the endpoint at infinity of $\beta$ other than $\infty$. 
We consider juxtaposed lifts of the triangle $\triangle_1, \dots, \triangle_n$ so that the lift of $\triangle_1$ contains $\beta$, and denote them by $\tilde \triangle_1, \dots , \tilde \triangle_n$.
Let $\tilde \triangle_n'$ be the lift of $\triangle_n$ lying on the other side of $\beta$ from $\tilde \triangle_1$.
We consider a horocyclic arc on $\tilde{\triangle}_1$ centred at $\infty$  connecting two anchors, and extend it to the entire horocycle  tangent to $\infty$, which we denote by $\eta$. 
The intersection of $\beta$ and $\eta$ is expressed as $-u+iv$ with some $v >0$.
Then, the (hyperbolic) length of the horocyclic subsegment of $\eta$ which traverses from $-u+iv$ to $iv$ is $\tfrac{u}{v}$. We have the following:

\begin{lemma}
\label{lem:component}
With the above notation, we have the following relation:
\begin{align}
\frac{u}{v}
=
\frac{1+e^{-s_1}+e^{-(s_1+s_2)}+\cdots+e^{-(s_1+s_2+\cdots+s_{n-1})}}
{1-e^{-\ell_{\alpha}}}.
\end{align}
\end{lemma}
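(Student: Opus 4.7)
The plan is to compute the hyperbolic length of the horocyclic segment of $\eta$ from $-u+iv$ to the $\alpha$-deck translate $-ue^{-\ell_\alpha}+iv$ in two ways---a direct Euclidean computation on one hand, and a sum of sub-arc lengths given by ideal triangle geometry on the other---and to equate them. In the universal cover, between $\sigma$ (at $x=-u$) and its deck image $\sigma'$ (at $x=-ue^{-\ell_\alpha}$), there are exactly $n$ lifted ideal triangles $R_0=\tilde\triangle_n, R_1, \ldots, R_{n-1}$, each bounded by two vertical edges (lifts of leaves of $\mu$) and a semicircular lift of a crown edge; the horizontal line $\eta$ at height $v$ sweeps across these in sequence.

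By the standard fact that each ideal triangle admits three pairwise tangent separatrix horocycles of unit hyperbolic length, the separatrix horocycle tangent to $\infty$ in $R_k$ is a horizontal segment whose two endpoints (the anchors of $R_k$ on its two vertical edges) lie at a common height $h_k$; a direct computation in the upper half-plane then shows that the Euclidean slab-width of $R_k$ also equals $h_k$. The hypothesis that $-u+iv$ is the anchor of $R_0=\tilde\triangle_n$ on $\sigma$ fixes the initial value $h_0=v$, and the first sub-arc of $\eta$ in $R_0$ thus has hyperbolic length $h_0/v=1$.

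Next, the shear recursion $h_k=h_{k-1}\cdot e^{-s_k}$ propagates anchor heights across consecutive vertical edges: two points at heights $h, h'$ on a common vertical line are at hyperbolic distance $|\log(h/h')|$, and the paper's definition of $s_k$ as the signed hyperbolic distance from the anchor of one adjacent triangle to that of the other (positive when moving closer to $\alpha$, which in our coordinates means larger $y$-coordinate) determines the sign of the exponent. Iterating gives $h_k = v\cdot e^{-(s_1+\cdots+s_k)}$, so the $k$-th sub-arc of $\eta$ in $R_k$ has hyperbolic length $h_k/v = e^{-(s_1+\cdots+s_k)}$, and the sum of the $n$ sub-arc lengths is $1+e^{-s_1}+e^{-(s_1+s_2)}+\cdots+e^{-(s_1+\cdots+s_{n-1})}$.

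On the other hand, direct Euclidean computation gives the hyperbolic length of the same arc as $(-ue^{-\ell_\alpha}-(-u))/v=u(1-e^{-\ell_\alpha})/v$. Equating the two expressions and solving for $u/v$ yields the formula. The hard part will be verifying the sign in the recursion $h_k=h_{k-1}\cdot e^{-s_k}$ and checking that the shears encountered along the rightward traversal match the labelling $s_1, s_2, \ldots, s_{n-1}$ (possibly after relabelling the paper's cyclic indexing); once this bookkeeping is done, the rest reduces to elementary manipulations in the upper half-plane model.
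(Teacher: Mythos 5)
Your proof is correct and follows the same underlying idea as the paper's: partition the horocyclic arc $\eta$ into sub-arcs, one per ideal triangle, and observe that the $k$-th sub-arc has hyperbolic length $e^{-(s_1+\cdots+s_{k-1})}$ because the shears $s_i$ measure the vertical offsets of consecutive anchors. The one genuine (though small) organizational difference: you truncate $\eta$ to a single fundamental domain for the $\alpha$-deck translation, match the finite sum $1+e^{-s_1}+\cdots+e^{-(s_1+\cdots+s_{n-1})}$ against the Euclidean computation $u(1-e^{-\ell_\alpha})/v$, and solve; the paper instead runs $\eta$ all the way to $iv$ on the axis of $\alpha$, reads off its length as $u/v$ directly, and sums the resulting infinite geometric series $\bigl(1+e^{-s_1}+\cdots+e^{-(s_1+\cdots+s_{n-1})}\bigr)\bigl(1+e^{-\ell_\alpha}+e^{-2\ell_\alpha}+\cdots\bigr)$. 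The two are algebraically equivalent (your finite arc is the full arc times $1-e^{-\ell_\alpha}$). Your observation that for an ideal triangle with a vertex at $\infty$ the Euclidean slab-width between the two vertical edges equals the separatrix height $h_k$ is a nice way to make the sub-arc length $h_k/v$ manifestly correct independently of the signs of the partial shear sums; the paper phrases this as ``at distance $d$ from the length-$1$ separatrix, hence of length $e^{-d}$,'' which is the same thing when $\eta$ sits on the $\infty$-side. The indexing/sign bookkeeping you flag is real but benign --- the paper itself has a small wobble there (it speaks of $\tilde\triangle_n$ but then begins the sum at $\triangle_1$) and your formulation handles it equally well.
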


\begin{proof}
The horocyclic subsegment of $\eta$ which traverses from $-u+iv$ to $iv$ and whose length is $\tfrac{u}{v}$ is composed of countably many horocyclic segments each of which is contained in a lift of one of the $\triangle_1, \dots , \triangle_n$. The first of these segments is a boundary of the central stable triangle of $\tilde \triangle_1$, and hence has length $1$. 
The second horocyclic segment, which is on $\tilde \triangle_2$, is at distance $s_1$ away from the horocyclic segment which constitutes the boundary of the central stable triangle of $\triangle_2$ and hence has length $e^{-s_1}$. 
We continue in this manner until we get through all $n$ ideal triangles and return to $\triangle_1$, at which point we now know that the $(n+1)$-th horocyclic segment is at distance $\ell_\alpha$ away from the original length $1$ horocyclic segment, and hence is of length $e^{-\ell_\alpha}$. Similarly, the $(n+2)$-th horocyclic segment, which lies on $\triangle_2$, is of length $e^{-(\ell_\alpha+s_1)}$, and so forth. Therefore we have 
\begin{align*}
\frac{u}{v}
=&
1+e^{-s_1}+e^{-(s_1+s_2)}+\cdots+e^{-(s_1+s_2+\cdots+s_{n-1})}\\
&+e^{-\ell_\alpha}+e^{-(\ell_\alpha+s_1)}+e^{-(\ell_\alpha+s_1+2)}+e^{-(\ell_\alpha+s_1+s_2+\cdots+s_{n-1})}\\
&+e^{-2\ell_\alpha}+e^{-(2\ell_\alpha+s_1)}+e^{-(2\ell_\alpha+s_1+2)}+e^{-(2\ell_\alpha+s_1+s_2+\cdots+s_{n-1})}+\ldots\\
=&
\left(1+e^{-s_1}+\cdots+e^{-(s_1+s_2+\cdots+s_{n-1})}\right)
\left(1+e^{-\ell_\alpha}+e^{-2\ell_\alpha}+\cdots\right)\\
=&
\frac{1+e^{-s_1}+e^{-(s_1+s_2)}+\cdots+e^{-(s_1+s_2+\cdots+s_{n-1})}}{1-e^{-\ell_{\alpha}}}
,
\end{align*}
as desired.
\end{proof}

\subsubsection{Parallel spiralling case}
We now move on to working with stretch maps of the $(n_L, n_R)$-crowned annulus $S$.
We first consider the case where all bi-infinite leaves of the maximally stretched lamination spiral around $\gamma_S$ in the same direction. Let $\mu_+$ denote the complete lamination on $S$ containing, as its leaves (see \cref{fig:20_1}),
the closed geodesic $\gamma_S$,
$n_L+n_R$ geodesics going from the boundary cusps of $S$ and spiralling towards $\gamma_S$ in the direction of $\gamma_S$,
and
the boundary geodesics of $S$.

\begin{figure}[h!]
\begin{center}
\includegraphics[scale=0.35]{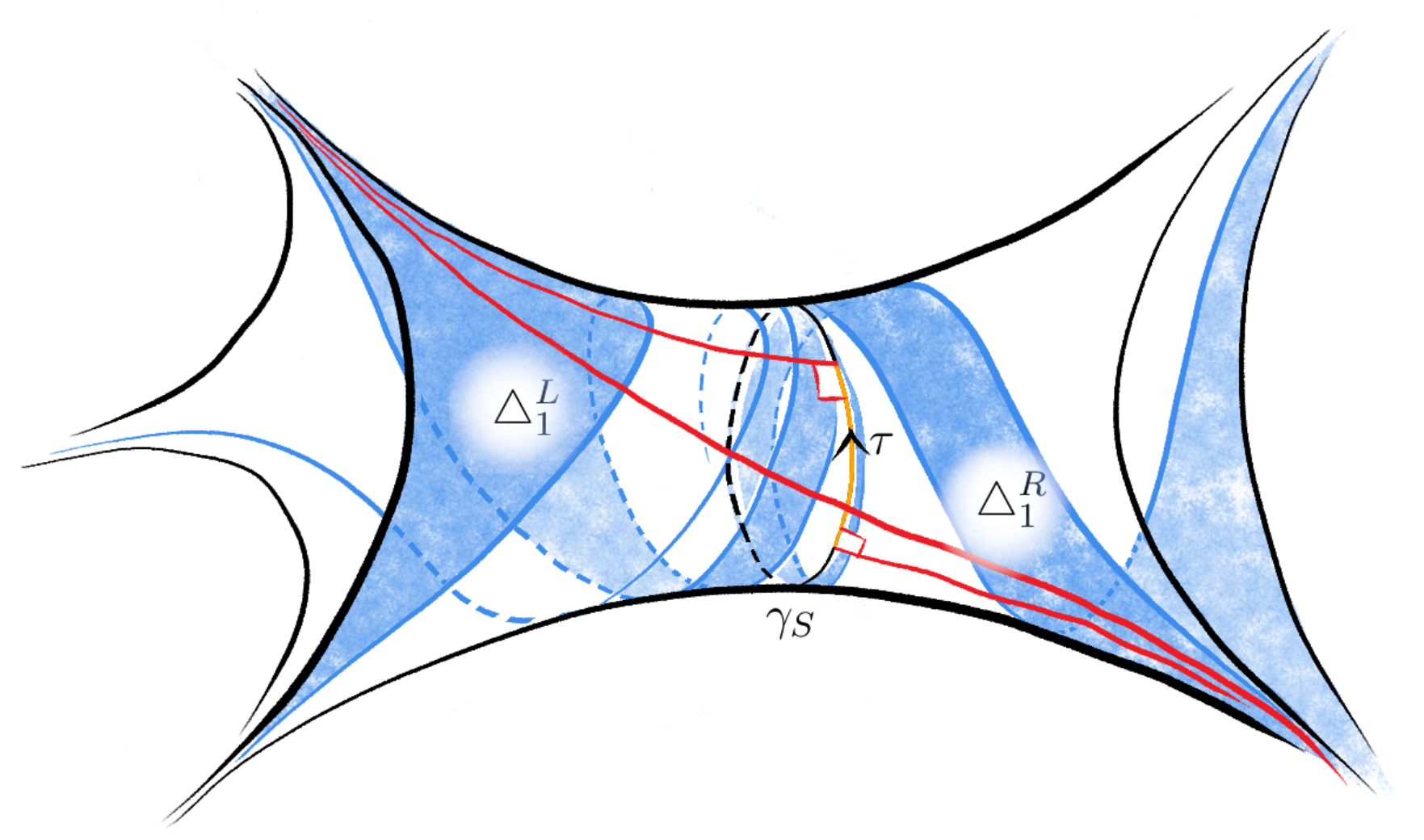}
\caption{A $(3,2)$-crowned annulus with $\tau$ coordinate given by the signed length of the orange geodesic arc.}
\label{fig:20_1}
\end{center}
\end{figure}

We label the ideal triangles on the left component by $\triangle^L_1,\ldots,\triangle^L_{n_L}$ and those on the right by $\triangle^R_1,\ldots,\triangle^R_{n_R}$ ordered as in \cref{fig:20_2}. Let $s^L_i\in\mathbb{R}$ denote the shearing parameter for the edge shared by $\triangle^L_i$ and $\triangle^L_{i+1}$ (cyclically indexed), and likewise let $s^R_i\in\mathbb{R}$ denote the shearing parameter for the edge shared by $\triangle^R_i$ and $\triangle^R_{i+1}$. 
Then we have the following.

\begin{figure}[h!]
\begin{center}
\includegraphics[scale=0.55]{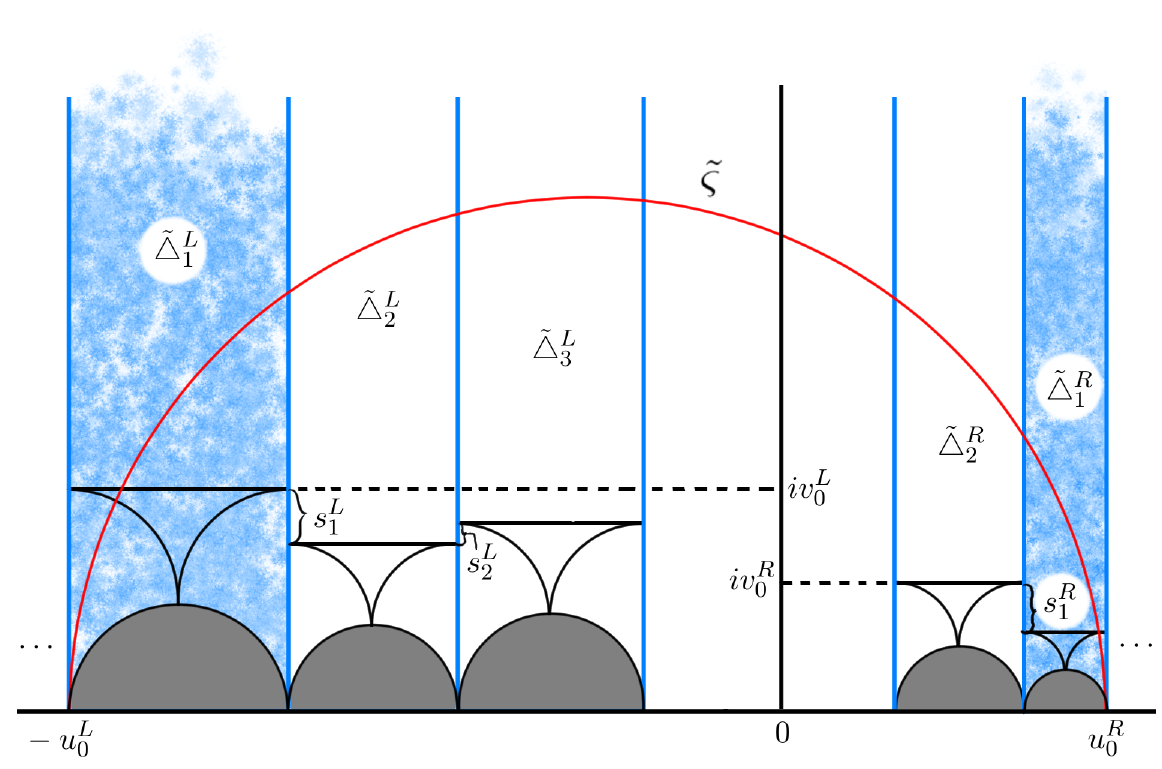}
\caption{The universal cover of the $(3,2)$-annulus in \cref{fig:20_1}.}
\label{fig:20_2}
\end{center}
\end{figure}


\begin{lemma}\label{lem:pregencrown}
Let $(S,x)$ be an $(n_L, n_R)$-crowned annulus.
Then,
\begin{align*}
&\tau_{\gamma_S}(\mathrm{stretch}(x,\mu_+,t))\\
&=e^t\tau_{\gamma_S}(x)\\
&+e^t\log\left(1+e^{-s^L_1(x)}+
\cdots+e^{-(s^L_1(x)+\ldots+s^L_{n_L-1}(x))}\right)\\
&-\log\left(1+e^{-e^ts^L_1(x)}+
\cdots+e^{-e^t(s^L_1(x)+\ldots+s^L_{n_L-1}(x))}\right)\\
&-e^t\log\left(1+e^{-s^R_1(x)}+
\cdots+e^{-(s^R_1(x)+\ldots+s^R_{n_R-1}(x))}\right)\\
&+\log\left(1+e^{-e^ts^R_1(x)}+
\cdots+e^{-e^t(s^R_1(x)+\ldots+s^R_{n_R-1}(x))}\right),
\end{align*}
where $\mathrm{stretch}(x,\mu_+,t)$ denotes the hyperbolic structure obtained by the $e^t$-stretching along $\mu_+$ from $x$.
\end{lemma}

\begin{proof}
Let $\tau_0$ denote the twist parameter $\tau_{\gamma_S}(x)$, and let $\tau_t$ denote the twist parameter $\tau_{\gamma_S}(\mathrm{stretch}(x,\mu_+,t))$ after the $e^t$-stretching. 
We lift ideal triangles $\triangle_1^L, \dots , \triangle_{n_L}^L$ to juxtaposed ideal triangles $\tilde \triangle_1^L, \dots , \tilde \triangle_{n_L}^L$, and $\triangle^R_1, \dots , \triangle^R_{n_R}$ to $\tilde \triangle^R_1, \dots \tilde \triangle^R_{n_R}$.
We also denote the left ideal vertex on $\reals$ of $\triangle_1^L$ by $u_0^L$ and the right ideal vertex on $\reals$ of $\triangle_1^R$ by $u^R_0$.
The geodesic $\sigma$ is lifted to a geodesic $\tilde \varsigma$ connecting $u_0^L$ to $u_0^R$.
The anchor on the side of $\triangle_1^L$ with endpoints $u_0^L, \infty$ has a form $-u_0^L+iv_0^L$, and in the same way, the anchor on the side of $\triangle_1^R$ with endpoints $u_0^R, \infty$ has a form $u_0^R+iv_0^R$.
After the $e^t$-stretch the vertices $-u_0^L$ and $u_0^R$ are moved to vertices at infinity which we respectively denote by $-u_t^L$ and $u_t^R$. See \cref{fig:20_2}. Likewise, the points $iv_0^L$ and $iv_0^R$ are respectively moved to $iv_t^L$ and $iv_t^R$. Then, we have the following equality by \cref{cross ratio}:
\begin{align}
\tau_0=\log\left(\tfrac{u^R_0}{u^L_0}\right)
\quad\text{ and }\quad
\tau_t=\log\left(\tfrac{u^R_t}{u^L_t}\right).
\end{align}
Our goal is to express $\tau_t$ in terms of $\tau_0$ and other $\teich(S)$ coordinates. From \cref{lem:component}, we obtain
\begin{equation}
\begin{split}
\tau_0
=&\log\left(
\tfrac{u^R_0}{v^R_0}
\cdot\tfrac{v^L_0}{u^L_0}
\cdot\tfrac{v^R_0}{v^L_0}\right)
=
\log\left(\tfrac{u^R_0}{v^R_0}\right)
-
\log\left(\tfrac{u^L_0}{v^L_0}\right)
+\log\left(\tfrac{v^R_0}{v^L_0}\right)
\\
=&\log\left(\frac{v^R_0}{v^L_0}\right)
-\log\left(\frac{1+e^{-s^L_1(x)}+
\cdots+e^{-(s^L_1(x)+s^L_2(x)+\cdots+s^L_{n_L-1}(x))}}
{1-e^{-\ell_{\gamma_S}(x)}}\right)\\
&+\log\left(\frac{1+e^{-s^R_1(x)}+
\cdots+s^R_{n_R-1}(x))}
{1-e^{-\ell_{\gamma_S}(x)}}\right).
\label{eq:tau0}
\end{split}
\end{equation}
The fact that the last term of \cref{eq:tau0} is equal to $\log\left(\tfrac{u^R_0}{v^R_0}\right)$ follows from \cref{lem:component} by applying the reflection isometry with respect to the imaginary axis in $\mathbb{H}$.\medskip

We draw particular attention to the term $\log\left(\frac{v^R_0}{v^L_0}\right)$, which measures the length of a geodesic segment  that traverses over $\gamma_S$. After $e^t$-stretching, the length of this segment must therefore be $\log\left(\tfrac{v^R_t}{v^L_t}\right)=e^t\log\left(\tfrac{v^R_0}{v^L_0}\right)$. Substituting this into the $e^t$-stretched version of \cref{eq:tau0} yields
\begin{equation}
\begin{split}
\tau_t
=&
\log\left(
\tfrac{u^R_t}{v^R_t}
\cdot\tfrac{v^L_t}{u^L_t}
\cdot\tfrac{v^R_t}{v^L_t}\right)
=
\log\left(\tfrac{u^R_t}{v^R_t}\right)
-
\log\left(\tfrac{u^L_t}{v^L_t}\right)
+\log\left(\tfrac{v^R_t}{v^L_t}\right)
\notag\\
=&e^t\log\left(\frac{v^R_0}{v^L_0}\right)-\log\left(\frac{1+e^{-e^ts^L_1(x)}+
\cdots+e^{-e^t(s^L_1(x)+s^L_2(x)+\cdots+s^L_{n_L-1}(x))}}
{1-e^{-e^t\ell_{\gamma_S}(x)}}\right)
\\
&+\log\left(\frac{1+e^{-e^ts^R_1(x)}+
\cdots+e^{-e^t(s^R_1(x)+s^R_2(x)+\cdots+s^R_{n_R-1}(x))}}
{1-e^{-e^t\ell_{\gamma_S}(x)}}\right).\notag
\end{split}
\end{equation}
Using \cref{eq:tau0} to replace the $\log\left(\tfrac{v^R_0}{v^L_0}\right)$ term here with expressions in $\tau_0$, $\ell_{\gamma_S}(x)$ and shearing parameters $s^L_i(x),\ s^R_j(x)$,  we obtain the desired result.
\end{proof}

\begin{remark}

Similarly, we define $\mu_-$ to be a geodesic lamination with the $n_L+n_R$ bi-infinite geodesic spiralling towards $\gamma_S$ in the direction of $\gamma_S^{-1}$. By symmetry, the relevant Dehn-twist parameter $\tau_{\gamma_S}$ satisfies:
\begin{align*}
&\tau_{\gamma_S}(\mathrm{stretch}(x,\mu_-,t))\\
&=e^t\tau_{\gamma_S}(x)-e^t\log\left(1+e^{-s^L_1(x)}+
\cdots+e^{-(s^L_1(x)+\ldots+s^L_{n_L-1}(x))}\right)\\
&+\log\left(1+e^{-e^ts^L_1(x)}+
\cdots+e^{-e^t(s^L_1(x)+\cdots+s^L_{n_L-1}(x))}\right)\\
&+e^t\log\left(1+e^{-s^R_1(x)}+
\cdots+e^{-(s^R_1(x)+\cdots+s^R_{n_R-1}(x))}\right)\\
&-\log\left(1+e^{-e^ts^R_1(x)}+
\cdots+e^{-e^t(s^R_1(x)+\ldots+s^R_{n_R-1}(x))}\right).
\end{align*}
However, we should emphasise that in changing the lamination from $\mu_+$ to $\mu_-$, the shearing parameters $s^L_i$ and $s^R_j$ are different from the identically denoted variables for $\mu_+$. In particular, it is \emph{generically false} that 
\[
\tau_{\gamma_S}(\mathrm{stretch}(x,\mu_+,t))+\tau_{\gamma_S}(\mathrm{stretch}(x,\mu_-,t))=2e^t\tau_{\gamma_S}(x).
\]
\end{remark}
\medskip

\subsubsection{Opposite spiralling case}
We now consider the case when the stretching locus is a complete geodesic lamination where the leaves on the left and right components of $S\setminus \gamma_S$ spiral toward $\gamma_S$ in opposite directions.
Let $\lambda_+$ denote a complete lamination on $S$ consisting of 
the closed geodesic $\gamma_S$,
$n_L+n_R$ geodesics going from the boundary cusps of $S$ and spiralling towards $\gamma_S$ in such a way that those on the left component spiral in the direction of $\gamma_S$, and those on the right component spiral in the direction of $\gamma_S^{-1}$, and
 the boundary geodesics on $S$.
See \cref{fig:23_1}.

\begin{figure}[h!]
\begin{center}
\includegraphics[scale=0.35]{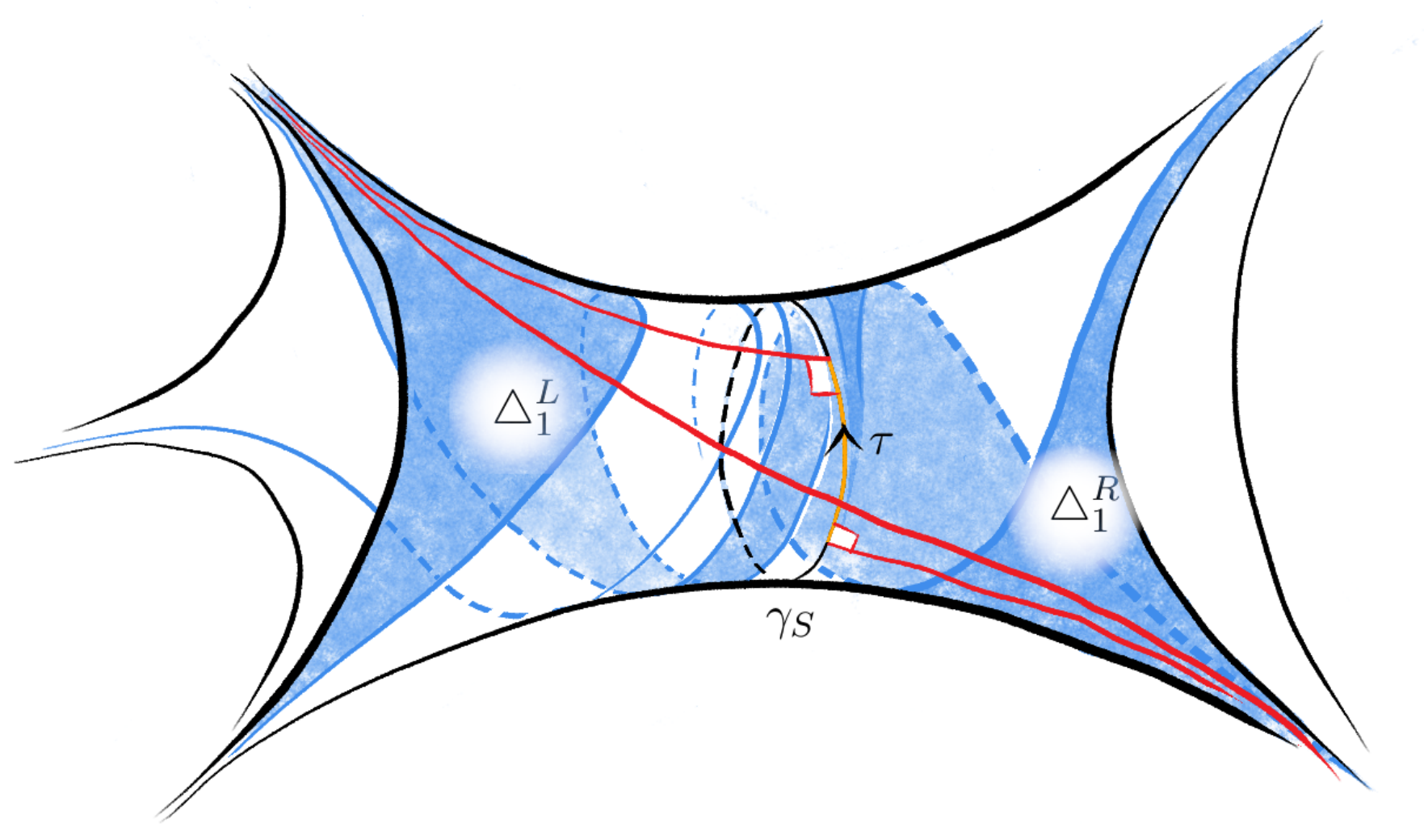}
\caption{A $(3,2)$-crowned annulus with $\tau$ coordinate given by the signed length of the orange geodesic arc. }
\label{fig:23_1}
\end{center}
\end{figure}

We label the ideal triangles on the left component by $\triangle^L_1,\ldots,\triangle^L_{n_L}$ and those on the right by $\triangle^R_1,\ldots,\triangle^R_{n_R}$ ordered as in \cref{fig:23_2}, \ie  $\triangle_{i+1}^L$ (resp. $\triangle^R_{i+1}$) lies on the right (resp. the left) of $\triangle^L_i$ (resp. $\triangle^R_i$). Let $s^L_i\in\mathbb{R}$ denote the shearing parameter for the edge shared by $\triangle^L_i$ and $\triangle^L_{i+1}$ (cyclically indexed) and likewise let $s^R_i\in\mathbb{R}$ denote the shearing parameter for the edge shared by $\triangle^R_i$ and $\triangle^R_{i+1}$. 

\begin{figure}[h!]
\begin{center}
\includegraphics[scale=0.55]{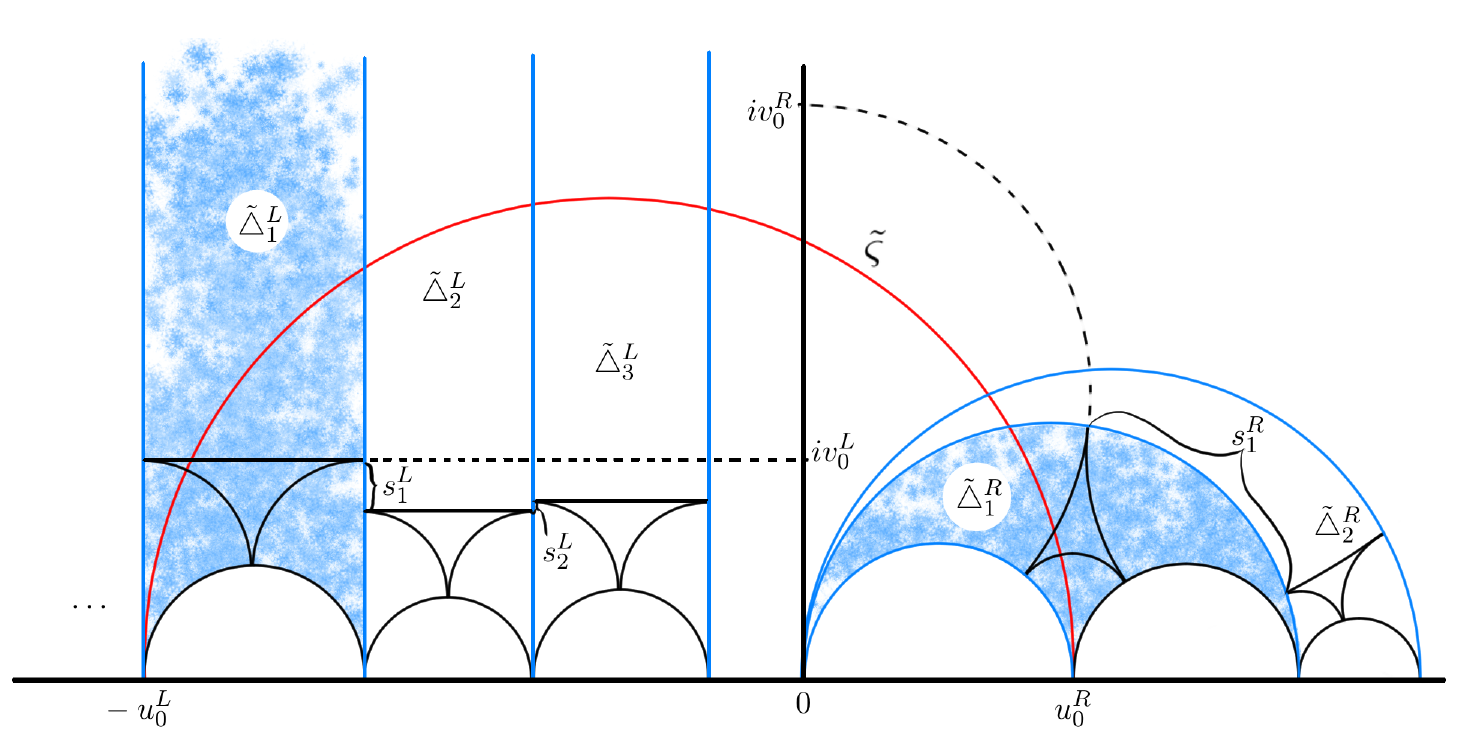}
\caption{The universal cover of the $(3,2)$-annulus in \cref{fig:23_1}.}
\label{fig:23_2}
\end{center}
\end{figure}

\begin{lemma}\label{lem:gencrown}
Let $(X,x)$ be  an $(n_L,n_R)$-crowned hyperbolic annulus with core geodesic $\gamma_S$, and set $\tau_{\gamma_S}$ to be the twist parameter for $\gamma_S$.
Then, we have
\begin{align*}
&\tau_{\gamma_S}(\mathrm{stretch}(x,\lambda_+,t))\\
&=e^t\tau_{\gamma_S}(x)+2\left(\log(1-e^{-e^t\ell_{\gamma_S}(x)})-e^t\log(1-e^{-\ell_{\gamma_S}(x)})\right)
\\&+e^t\log\left(1+e^{-s^L_1(x)}+
\cdots+e^{-(s^L_1(x)+\ldots+s^L_{n_L-1}(x))}\right)\\
&-\log\left(1+e^{-e^ts^L_1(x)}+
\cdots+e^{-e^t(s^L_1(x)+\ldots+s^L_{n_L-1}(x))}\right)\\
&+e^t\log\left(1+e^{-s^R_1(x)}+
\cdots+e^{-(s^R_1(x)+\ldots+s^R_{n_R-1}(x))}\right)\\
&-\log\left(1+e^{-e^ts^R_1(x)}+
\cdots+e^{-e^t(s^R_1(x)+\ldots+s^R_{n_R-1}(x))}\right).
\end{align*}
\end{lemma}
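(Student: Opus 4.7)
The plan is to mirror the proof of Lemma~\ref{lem:pregencrown}, but to incorporate the "involution trick" used in the $\lambda_{\pm}$ cases of Theorem~\ref{thm:key} to handle the opposite-spiralling configuration. I would lift $\gamma_S$ to the imaginary axis of the upper half-plane model with the left crown developing on the left of it, fix a lift $\tilde{\sigma}$ of the chosen simple ideal geodesic going from the leftmost cusp on $\triangle^L_1$ to the leftmost cusp on $\triangle^R_1$, and read off $\tau_{\gamma_S}$ as the logarithm of the cross-ratio formed by the endpoints of $\tilde{\sigma}$ and $0,\infty$. Applying Lemma~\ref{lem:component} to each crown separately recovers the relevant horocyclic-to-geodesic ratios in terms of the shearing parameters and $\ell_{\gamma_S}$.

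The key geometric difference from the parallel-spiralling setting is that the composite horocyclic arc joining the leftmost cusp of $\triangle^R_1$ to $\tilde{\gamma}_S$ now approaches $\tilde{\gamma}_S$ at the $0$-end rather than at the $\infty$-end; this is precisely the configuration handled via the involution $z\mapsto -z^{-1}$ in the $\lambda_+$ case of Theorem~\ref{thm:key}. Consequently, both the left and right shearing sums contribute with the \emph{same} sign to $\tau_{\gamma_S}$ (in contrast to Lemma~\ref{lem:pregencrown}), and the factors $(1-e^{-\ell_{\gamma_S}})$ produced by Lemma~\ref{lem:component} on either side of $\tilde{\gamma}_S$ no longer cancel out between the two crowns; they instead survive to generate the extra additive $\gamma_S$-length term in the final formula.

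Concretely, mimicking the symmetric computation for $\lambda_+$ in Theorem~\ref{thm:key}, the central $\tilde{\gamma}_S$-traversing subsegment of $\tilde{\sigma}$ decomposes into a left half of length $\tau_0/2 - \log(1-e^{-\ell_{\gamma_S}(x)})$ and a right half of equal length. Applying the $e^t$-stretch multiplies every shearing parameter and $\ell_{\gamma_S}(x)$ by $e^t$, rescales the central geodesic-subsegment length by $e^t$ (via the same relation $\log(v^R_t/v^L_t) = e^t \log(v^R_0/v^L_0)$ used in Lemma~\ref{lem:pregencrown}), and simultaneously replaces each horocyclic geometric sum by its stretched analogue as in \cref{eq:transformedhoro}. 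Writing $\tau_t$ as the $e^t$-stretched cross-ratio, and then invoking the pre-stretch decomposition (analogous to \cref{eq:tau0}) to eliminate the auxiliary $\log(v^R_0/v^L_0)$ variable, yields the claimed formula.

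The main obstacle I expect is careful sign bookkeeping: one must verify that the opposite orientation on the right crown correctly translates into both (a) a sign flip of the right shearing logarithm, so that both shearing contributions share a $+$ sign, and (b) the survival, rather than cancellation, of the $(1-e^{-\ell_{\gamma_S}})$ factors from Lemma~\ref{lem:component}, which must combine to produce precisely the $2(\log(1-e^{-e^t\ell_{\gamma_S}(x)}) - e^t\log(1-e^{-\ell_{\gamma_S}(x)}))$ summand characteristic of the opposite-spiralling case. A secondary consistency check is that, in the degenerate instance $n_L=n_R=1$, the formula collapses back to the $\lambda_+$ formula of Theorem~\ref{thm:key} (all shearing sums become $\log 1 = 0$).
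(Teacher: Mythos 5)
Your proposal follows essentially the same route as the paper: mirror the argument of Lemma~\ref{lem:pregencrown}, apply Lemma~\ref{lem:component} to each crown, flip the orientation on the right crown via the involution $z\mapsto -z^{-1}$ (exactly as in the $\lambda_+$ case of Theorem~\ref{thm:key}), observe that the sign flip makes both shearing logarithms and both $(1-e^{-\ell_{\gamma_S}})^{-1}$ factors contribute with the same sign rather than cancelling, then $e^t$-stretch and eliminate $\log(v^R_0/v^L_0)$ via the pre-stretch decomposition analogous to \cref{eq:tau0}. That is the paper's proof.

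One intermediate claim in your write-up is not quite right, though it does not affect the final elimination step. You assert that the central $\tilde\gamma_S$-traversing subsegment of $\tilde\sigma$ splits into a left half and a right half each of length $\tau_0/2 - \log(1-e^{-\ell_{\gamma_S}(x)})$. That equal-halves decomposition relies on the $z\mapsto -z^{-1}$ involution being a symmetry of the whole universal cover, which holds in the $(1,1)$ case of Theorem~\ref{thm:key} but fails for general $(n_L,n_R)$: the left composite horocyclic arc has length $\left(1+e^{-s^L_1}+\cdots\right)/(1-e^{-\ell_{\gamma_S}})$ while the right one has length $\left(1+e^{-s^R_1}+\cdots\right)/(1-e^{-\ell_{\gamma_S}})$, and these differ whenever the left and right shearing data differ. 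The correct decomposition is the asymmetric one
\[
\log\!\left(\tfrac{v^R_0}{v^L_0}\right)
= \tau_0 + \log\!\left(\tfrac{u^L_0}{v^L_0}\right) + \log\!\left(\tfrac{v^R_0}{u^R_0}\right),
\]
from which the claimed formula still follows because the $e^t$-rescaling of the geodesic length $\log(v^R_0/v^L_0)$ and the geometric-sum substitution do not require any left-right symmetry. Since you already invoke precisely this elimination at the end, the slip is cosmetic rather than structural, but it would need to be removed from a written proof.
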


\begin{proof}
We employ the same strategy of proof as for \cref{lem:pregencrown}. Let $\tau_0$ and $\tau_t$ respectively denote the twist parameter $\tau_{\gamma_S}$ evaluated at $x$ and at $\mathrm{stretch}(x,\lambda_+,t)$. As before, this tells us that the correspondingly labelled $u^L_0,u^R_0, iv^L_0, iv^R_0$ (see \cref{fig:23_2}), and the points after the $e^t$-stretching, $u^L_t,u^R_t, iv^L_t, iv^R_t$ satisfy
\begin{align}
\tau_0=\log\left(\tfrac{u^R_0}{u^L_0}\right)
\quad\text{ and }\quad
\tau_t=\log\left(\tfrac{u^R_t}{u^L_t}\right).
\end{align}
From \cref{lem:component}, we obtain
\begin{align}
\tau_0
=&\log\left(
\tfrac{u^R_0}{v^R_0}
\cdot\tfrac{v^L_0}{u^L_0}
\cdot\tfrac{v^R_0}{v^L_0}\right)
=\log\left(\tfrac{v^R_0}{v^L_0}\right)
-\log\left(\tfrac{v^R_0}{u^R_0}\right)
-
\log\left(\tfrac{u^L_0}{v^L_0}\right)
\notag\\
=&\log\left(\tfrac{v^R_0}{v^L_0}\right)\notag\\
&-\log\left(\frac{1+e^{-s^L_1(x)}+
\cdots+e^{-(s^L_1(x)+s^L_2(x)+\cdots+s^L_{n_L-1}(x))}}
{1-e^{-\ell_{\gamma_S}(x)}}\right)\notag\\
&-\log\left(\frac{1+e^{-s^R_1(x)}+
\cdots+e^{-(s^R_1(x)+s^R_2(x)+\cdots+s^R_{n_R-1}(x))}}
{1-e^{-\ell_{\gamma_S}(x)}}\right).\label{eq:explain}
\end{align}
To see how the last term in \cref{eq:explain} follows from \cref{lem:component}, apply the M\"obius transformation $z\mapsto -z^{-1}$ to the universal cover of the right component of $S\setminus\gamma_S$ and observe that its image precisely satisfies the configuration of \cref{lem:component}, and that our particular definition of shearing parameter remains unchanged under such a transformation, and the only changes are for $u^R_0$ and $iv^R_0$, which respectively get sent to $-(u^R_0)^{-1}$ and $(v^R_0)^{-1} i$.
Hence

\begin{align*}
\tfrac{v^R_0}{u^R_0}
=
\tfrac{(u^R_0)^{-1}}{(v^R_0)^{-1}}
=
\frac{1+e^{-s^R_1(x)}+
\cdots+e^{-(s_1^R(x)+s_2^R(x)+\cdots+s_{n_R-1}^R(x))}}
{1-e^{-\ell_{\gamma_S}(x)}}.
\end{align*}

Proceeding as in the proof of \cref{lem:pregencrown}, we observe that the segment between $v_0^L$ and $v_0^R$ is $e^t$-stretched to a segment of length $\log\left(\tfrac{v^R_t}{v^L_t}\right)=e^t\log\left(\tfrac{v^R_0}{v^L_0}\right)$. Substituting this into the $e^t$-stretched version of \cref{eq:explain} yields
\begin{align}
\tau_t
=&
\log\left(
\tfrac{u^R_t}{v^R_t}
\cdot\tfrac{v^L_t}{u^L_t}
\cdot\tfrac{v^R_t}{v^L_t}\right)
=\log\left(\tfrac{v^R_t}{v^L_t}\right)
-\log\left(\tfrac{v^R_t}{u^R_t}\right)
-
\log\left(\tfrac{u^L_t}{v^L_t}\right)
\notag\\
=&e^t\log\left(\frac{v^R_0}{v^L_0}\right)\notag\\
&-\log\left(\frac{1+e^{-e^ts^L_1(x)}+
\cdots+e^{-e^t(s^L_1(x)+s^L_2(x)+\cdots+s^L_{n_L-1}(x))}}
{1-e^{-e^t\ell_{\gamma_S}}}\right)
\notag\\
&-\log\left(\frac{1+e^{-e^ts^R_1(x)}+
\cdots+e^{-e^t(s^R_1(x)+s^L_2(x)+\cdots+s^R_{n_R-1}(x))}}
{1-e^{-e^t\ell_{\gamma_S}(x)}}\right).\notag
\end{align}

Replacing the $\log\left(\tfrac{v^R_0}{v^L_0}\right)$ term here, using \cref{eq:explain}, with expressions in $\tau_0$, $\ell_{\gamma_S}(x)$ and shearing parameters then yields the desired result.
\end{proof}

\begin{remark}
\label{rmk:oppositespiral}
We  analogously define $\lambda_-$ with $n_L+n_R$ geodesics going from the boundary cusps of $S$ and spiralling towards $\gamma_S$ in such a way that those on the left component spiral in the direction of $\gamma_S^{-1}$, and those on the right component spiral in the direction of $\gamma_S$. By symmetry, the relevant Dehn-twist parameter $\tau_{\gamma_S}$ satisfies:
\begin{align*}
&\tau_{\gamma_S}(\mathrm{stretch}(x,\lambda_-,t))\\
&=e^t\tau_{\gamma_S}(x)-2\left(\log(1-e^{-e^t\ell_{\gamma_S}(x)})-e^t\log(1-e^{-\ell_{\gamma_S}(x)})\right)\\
&-e^t\log\left(1+e^{-s^L_1(x)}+
\cdots+e^{-(s^L_1(x)+\ldots+s^L_{n_L-1}(x))}\right)\\
&+\log\left(1+e^{-e^ts^L_1(x)}+
\cdots+e^{-e^t(s^L_1(x)+\ldots+s^L_{n_L-1}(x))}\right)\\
&-e^t\log\left(1+e^{-s^R_1(x)}+
\cdots+e^{-(s^R_1(x)+\ldots+s^R_{n_R-1}(x))}\right)\\
&+\log\left(1+e^{-e^ts^R_1(x)}+
\cdots+e^{-e^t(s^R_1(x)+\ldots+s^R_{n_R-1}(x))}\right).
\end{align*}
We again emphasise that in changing the lamination from $\lambda_+$ to $\lambda_-$, the new shearing parameters $s^L_i$ and $s^R_j$ are (a priori) different to the corresponding variables for $\lambda_+$, and it is \emph{generically false} that 
\[
\tau_{\gamma_S}(\mathrm{stretch}(x,\lambda_+,t))+\tau_{\gamma_S}(\mathrm{stretch}(x,\lambda_-,t))=2e^t\tau_{\gamma_S}(x).
\]

\end{remark}

\subsection{Stretch maps for complete finite-area hyperbolic surfaces}

We now explain how we can utilise the expressions for stretch maps on crowned annuli to describe stretch maps for finite-leaf laminations of complete finite-area hyperbolic surfaces with possibly boundary cusps, \ie complete hyperbolic surfaces with finitely many boundaries, each with at most finitely many boundary cusps.

A  finite lamination $\Lambda$ on a complete finite-area hyperbolic surface $(S,x)$ consists of simple closed geodesics and simple bi-infinite geodesics which either spiral to (and from) the aforementioned simple closed geodesics or to cusps. 
As before, we fix an orientation on $S$.
With each closed geodesic $\gamma$, we can associate its length function $\ell_\gamma\colon \teich(S)\to\mathbb{R}_{>0}$ and a  Fenchel--Nielsen twist function $\tau_\gamma\colon \teich(S)\to\mathbb{R}$. 
We observe that each bi-infinite geodesic in $\Lambda$  is the shared edge between two (possibly non-distinct) ideal triangles and hence has a well-defined shearing parameter $s_{\alpha}$. 
The combination of the Fenchel--Nielsen twist parameters and the shearing parameters globally parameterises $\teich(S)$.
(See \cite[Section~7]{CF} and \cite[Section~2.4.1]{Huang-Thesis}.)
We further note that the length parameter $\ell_{\gamma}$ is a linear combinations of the shearing parameters of bi-infinite geodesics spiralling to $\alpha$ (see \cite{Pen-D} or \cite{Huang-Thesis}). Specifically, if the geodesic \emph{rays} (\ie rays consisting of one end of a bi-infinite geodesic) spiralling to a given side of $\gamma$ lie on the bi-infinite geodesics $\alpha_1,\ldots,\alpha_k$, then there is a choice of signs $\epsilon_i\in\{+,-\}$ such that
\begin{align}
\ell_{\gamma}
=
\epsilon_1 s_{\alpha_1}+\ldots+\epsilon_k s_{\alpha_k}.\label{eq:closedleaf}
\end{align}
The upshot here is that the expressions for Thurston's stretch map in \cref{thm:key,lem:pregencrown,lem:gencrown} are given in terms of a natural coordinate system on $\teich(S)$ which is a mixture of Fenchel--Nielsen twist coordinates and the shearing parameters.

The shearing parameter on a leaf $\alpha$ is equal to  the signed length of the geodesic segment on  $\alpha$ in $\Lambda$ bounded by the anchor points of adjoining ideal triangles to $\alpha$. 
Under the action of a $K$-stretch map, these parameters are multiplied by the factor $K$. Likewise, under the action of a $K$-stretch map, the length $\ell_\gamma$ of a simple closed geodesic $\gamma$ in $\Lambda$ is also multiplied by the factor $K$ (note that this agrees with \cref{eq:closedleaf}).

The action of a $K$-stretch map on Fenchel--Nielsen twist parameters is more complex, and is precisely spelled out by \cref{thm:key,lem:pregencrown,lem:gencrown}. To be more concrete: Given a simple closed geodesic $\gamma\subset\Lambda$, consider an arbitrary lift $\tilde{\gamma}$ of $\gamma$ to the universal cover $\tilde{S}$ of $S$, and consider the subset of $\tilde{S}$ consisting of $\tilde{\gamma}$ and all the ideal triangles in the complement of the lift of $\Lambda$ in $\tilde{S}$ which spiral to $\tilde{\gamma}$. The metric completion $\tilde{A}$ of this collection of ideal triangles is invariant under translation along $\tilde{\gamma}$ by the deck transformation in $\pi_1(S)$ corresponding to $\tilde{\gamma}$. The quotient $A$ of $\tilde{A}$ by the aforementioned deck transformation is a crowned annulus around a copy of $\gamma$, and the restriction of $\Lambda$ on $A$ simultaneously takes the form of the laminations described in \cref{thm:key,lem:pregencrown,lem:gencrown} whilst having the same shearing parameters and twist coordinates as corresponding geodesic leaves on $\Lambda$ on $(S,x)$, which allows us to invoke \cref{thm:key,lem:pregencrown,lem:gencrown} to determine the deformation of the Fenchel--Nielsen twist coordinate $\tau_\gamma$ for $\gamma$.

\subsection{Anti-stretch paths}

\begin{definition}[{anti-stretch maps, \cite{The}}]
Although we have hitherto defined $e^t$-stretch maps and stretch paths exclusively for $t\geq0$, the notion of stretch maps and stretch paths are well defined for $t<0$, and are respectively referred to as the \emph{antistretch map} and \emph{antistretch path}.
\end{definition}

\begin{theorem}[back-time convergence {\cite[Theorem~4.1]{The}}]\label{thm:backtime}
Consider a finite-leaf lamination $\Lambda$, and denote its closed leaves by $\gamma_1,\ldots,\gamma_k$. As $t\to\infty$, for any $x\in\teich(S)$, the $e^{-t}$-stretch path with respect to $\Lambda$,  \ie $x_{-t}:=\mathrm{stretch}(x,\Lambda,-t)$, converges to the uniformly weighted projectivised multicurve
\[
[\gamma_1+\gamma_2+\cdots+\gamma_k]\in\pml(S)=\partial_\infty\teich(S),
\] 
where $\pml(S)=\partial_\infty \teich(S)$ denotes the boundary of the Thurston compactification of $\teich(S)$. Note that this is independent of the starting point $x$.
\end{theorem}

\begin{proof}
We only sketch the proof, since the theorem is proved in  \cite{The}. The proof we give here is in the trend of the techniques we have developed so far in this paper.
To begin with, we observe that the shearing lengths assigned to the bi-infinite leaves in $\Lambda$, and the Fenchel--Nielsen parameters attached to $\gamma_1,\ldots,\gamma_k$ suffice to parametrise the Teichm\"uller space $\teich(S)$. As $s\to\infty$, by the $e^{-s}$-stretch map (\ie the anti-stretch map) with respect to $\Lambda$,
\begin{enumerate}
\item
all of the shearing lengths  for the bi-infinite leaves in $\Lambda$ exponentially shrink to $0$,
\item
the Fenchel--Nielsen length parameter for $\gamma_j$ also exponentially shrinks to $0$, and
\item
the Fenchel--Nielsen twist parameter for $\gamma_j$ behaves as $\pm2s+o(1)$.
\end{enumerate}
Point~3 comes from analysing the behaviour of \cref{lem:gencrown} and \cref{rmk:oppositespiral} as $t=-s\to-\infty$ there: every term tends either to $0$ or a constant except for 
\[
\pm2\log\left(1-e^{-e^s\ell_j(x)}\right)=\mp2s+O(1).
\]
The first two points tell us that the geometry of $S\setminus(\gamma_1\cup\cdots\cup\gamma_k)$ converges to a cusped surface of finite area, possibly with multiple connected components.
On every $(S, x_{-t})$, we consider a collar neighbourhood for each of $\gamma_1, \dots, \gamma_k$ whose boundary components have length $2$.
Then, each side of the  collar neighbourhood around $\gamma_j$ is an annulus of width 
\[
\mathrm{arcsinh}\left(\frac{1}{\sinh(\frac{1}{2} e^{-s}\ell_{\gamma_j})}\right)=s+O(1),
\]
and geometrically tends to a cusp neighbourhood bounded by a horocycle of length $2$. 
Since the complement of the collar neighbourhoods around the $\{\gamma_j\}$ converges to a compact surface bounded by horocycles, the complement of the collar neighbourhood must have diameter bounded above by some constant independent of $t$. Therefore, the length of an arbitrary simple closed geodesic in the complement of the $\{\gamma_j\}$ necessarily stabilises and converges as $s$ tends to $\infty$. In contrast, for an arbitrary simple closed geodesic $\gamma$ that intersects  $M>1$ times the $\gamma_1 \sqcup \dots \sqcup \gamma_k$, its length grows due to the following two dominant contributing factors:
\begin{itemize}
\item
the growing width of the collar neighbourhoods around the $\gamma_j$, and
\item
the accumulating Fenchel--Nielsen twists around the $\{\gamma_j\}$ induced by the anti-stretch.
\end{itemize}
The former contributing factor tells us that the length of $\gamma$ as $s\to\infty$ is bounded below by $2Ms+o(1)$. Coupled with the second contributing factor, we easily obtain the coarse upper bound of $4Ms+o(1)$, which we now improve upon. Observe that the direction of the $\mp2s+O(1)$ Fenchel--Nielsen twist is orthogonal to the length $2s+O(1)$ geodesic cutting across the collar neighbourhood surrounding each $\gamma_j$. The hyperbolic Pythagoras' theorem (see, e.g.: \cite[Theorem~2.2.2.i]{B}) therefore tells us that the length of each subsegment of $\gamma$ that cuts across the collar neighbourhood of $\gamma_j$ grows as
\[
2\mathrm{arcosh}(\cosh(s+O(1))\cosh(t+O(1))=2s+O(1),
\]
which shows that the length of $\gamma_j$ is bounded above by $2Ms+O(1)$ --- which equals the previously established lower bound. The embedding of $x_{-s}=\mathrm{stretch}(x,\Lambda,-s)$ in $\mathbb{P}(\mathbb{R}_{\geq0}^{\mathcal{S}})$ can be normalised via division by $2s$ and represented by
\[
\phi:\mathcal{S}\to\mathbb{R}_{\geq0},\quad \gamma\mapsto i(\gamma,\gamma_1+\gamma_2+\cdots+\gamma_k),
\]
where $i(\alpha,\beta)$ denotes the geometric intersection number between $\alpha$ and $\beta$. The limit of $x_{-s}$ in $\pml(S)$ is therefore given by the projective class of $\phi\in\mathbb{R}_{\geq0}^{\mathcal{S}}$, \ie the projective multicurve $[\gamma_1+\gamma_2+\cdots+\gamma_k]\in\pml(S)$.
\end{proof}

\section{Stretch vectors} \label{s:infinitesimal}

We now return to the context where $S=S_{g,n}$ is a general orientable surface of genus $g$ with $n$ cusps. The aim of this subsection is to derive results pertaining to tangent vectors induced by stretch paths.

\begin{definition}[stretch vector]\label{def:infinitesimal}
Consider a point $x\in\teich(S)$ and a complete lamination $\lambda$ on $S$. Consider the geodesic ray $\left\{\operatorname{stretch}(x,\lambda,t)\right\}_{t\geq0}$, where $\operatorname{stretch}(x,\lambda,t)$ is obtained from $x$ by the $e^t$-stretching with respect to $\lambda$. We refer to the tangent vector
\[
v_\lambda:=
\left.\tfrac{\mathrm{d}}{\mathrm{d}t}\right|_{t=0}
\operatorname{stretch}(x,\lambda,t)
\in T_x\teich(S)
\]
as the \emph{stretch vector} at $x$ with respect to $\lambda$.
\end{definition}

We begin by giving  an infinitesimal version of the fact referenced in the second to last sentence of \cref{maxstretch} with a sketch of proof. This claim is also implicitly asserted in the proof of {\cite[Theorem~5.1]{ThS}}.

\begin{lemma}[maximally stretched lamination {\cite[Theorem~5.1]{ThS}}]
\label{thm:infcomparison}
Consider a complete geodesic lamination $\lambda$ with corresponding stretch vector $v_{\lambda}\in T_x\teich(S)$. Then, for any measured lamination $\mu\in\ml(S)$ such that the geodesic lamination $|\mu|$ supporting $\mu$ has a nonempty transverse intersection with $\lambda$, we have 
\[
v_{\lambda}(\log\ell(\mu))<1.
\]
\end{lemma}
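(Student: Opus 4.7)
The plan is to first establish the non-strict bound $v_\lambda(\log\ell(\mu)) \leq 1$ via Thurston's $e^t$-Lipschitz maps, and then upgrade it to a strict inequality using the local geometry of Thurston's stretch map on the ideal triangles complementary to $\lambda$.

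For the non-strict bound, Thurston's construction (\cref{sec:stretch}) provides, for each $t\geq 0$, a homeomorphism $\phi_t\colon (S,x)\to (S,x_t)$ with $\operatorname{Lip}(\phi_t)= e^t$, where $x_t := \operatorname{stretch}(x,\lambda,t)$. Hence for any measured lamination $\mu$,
\[
\ell_{x_t}(\mu) \leq e^t\,\ell_x(\mu).
\]
Differentiating in $t$ at $t=0$ and dividing by $\ell_x(\mu)$ gives $v_\lambda(\log\ell(\mu)) \leq 1$.

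To promote this to strict inequality, we examine $\phi_t$ on a complementary ideal triangle $T$ of $\lambda$. There $\phi_t$ coincides with Thurston's model $e^t$-stretch map: it is the identity on the central stable triangle $T_s$, and on each of the three horocyclic regions of $T\setminus T_s$ it stretches arc-length by exactly $e^t$ along leaves of the stretch-invariant geodesic foliation while strictly contracting along the horocyclic foliation. Consequently the pointwise infinitesimal stretch factor of $\phi_t$ is bounded by $e^t$ everywhere, attains the value $e^t$ only in the direction of $\lambda$ on the leaves of $\lambda$ and along the stretch-invariant geodesic foliation inside each $T$, and is strictly less than $e^t$ in every other direction. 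By \cref{maxstretch}, the leaves of the stretch-invariant geodesic foliation inside any $T$ never extend to complete geodesics on $S$, so no leaf of $|\mu|$ can be tangent to this direction on a set of positive arclength.

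Since $|\mu|$ has nonempty transverse intersection with $\lambda$, there is a short arc $\tau$ transverse to $|\mu|$ lying in the interior of some complementary ideal triangle $T$ such that $\mu(\tau)>0$ and every leaf of $|\mu|$ meeting $\tau$ crosses a neighbourhood of $\tau$ in a direction uniformly bounded away from the $\lambda$-direction and the stretch-invariant geodesic direction of $T$. Writing the first-order variation of length (via Thurston's dual pairing against $\mu$, cf.~\cite[Theorem~5.1]{ThS}) as
\[
\left.\tfrac{d}{dt}\right|_{t=0}\ell_{x_t}(\mu) \;=\; \int \rho\,d\mu,
\]
for a stretch-rate density $\rho$ satisfying $\rho\leq 1$ everywhere, we see that $\rho<1-\epsilon$ uniformly on a neighbourhood of $\tau$ of positive $\mu$-measure, yielding $\int\rho\,d\mu < \ell_x(\mu)$. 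Dividing by $\ell_x(\mu)$ gives $v_\lambda(\log\ell(\mu))<1$.

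The main obstacle is making rigorous the formula $\frac{d}{dt}|_0\ell_{x_t}(\mu)=\int\rho\,d\mu$ with a precise identification of $\rho$ and of the set on which $\rho<1$, i.e., translating the pointwise geometric fact about $\phi_t$ into a genuine first-order statement about the length function on $\mathcal{ML}(S)$. One route is to approximate $\mu$ by simple multicurves and directly differentiate the hyperbolic length of each approximating curve using the horocyclic-foliation parametrisation of a neighbourhood of $\lambda$ (as developed in \cref{s:crown}), then pass to the limit using continuity of length and its derivative on $\mathcal{ML}(S)$; an alternative is to invoke Thurston's identification of stretch vectors with transverse cocycles and compute the pairing directly.
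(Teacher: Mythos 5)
Your proposal is correct and follows essentially the same line as the paper's own sketch: both isolate a positive-$\mu$-measure portion of $|\mu|$ that makes a definite angle with the stretch-invariant geodesic directions inside the complementary ideal triangles of $\lambda$, and conclude that the first-order stretch rate there is bounded strictly below $1$, so the total rate is $<1$. The paper packages this via a length estimate on the deformed quasi-geodesic and a subset $\nu$ of $\mu$-length $L$ meeting the geodesic partial foliation at angle at least $\theta$, rather than via an explicit pointwise density $\rho$ integrated against $\mu$, but the underlying idea and level of rigour (both are avowed sketches, with the same step left informal) match.
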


\begin{proof}
We may assume by multiplying by a scalar that $\ell_x(\mu)=1$, and hence we compute
\begin{align}
v_{\lambda}(\log\ell(\mu))
=\frac{v_{\lambda}(\ell(\mu))}{\ell_x(\mu)}
=v_{\lambda}(\ell(\mu))
=\lim_{t\to0}\frac{\ell_{x_t}(\mu)-\ell_x(\mu)}{t},
\label{eq:infinitesimalcomparison}
\end{align}
where $x_t=\operatorname{stretch}(x,\lambda,t)$. Although the precise value of $\ell_{x_t}(\mu)$ is difficult to compute, it suffices for our purposes to find a small enough upper bound so that \cref{eq:infinitesimalcomparison} approaches a value less than $1$. Specifically, we compute the length of the lamination to which $\mu$ deforms with respect to the change of metric parameterised by $x_t$. To this end, we first show that each ideal triangle that is a complementary component of $S \setminus \lambda$ can be partially foliated by vertical geodesics asymptotic to a leaf of $\lambda$ and horizontal horocyclic leaves outside the central stable triangle. 
The adjectives ``vertial" and ``horizontal" are used to emphasize the fact that the two foliations are perpendicular, which is used in the computation that follows.
The stretching map expands the vertical direction at most by the factor $e^t$ and shrinks the horizontal direction.
Then, there exist some $\theta\in(0,\tfrac{\pi}{2})$ and a subset $\nu$ of $\mu$ of length $L\in(0,1]$ such that the geodesics constituting $\nu$ intersect the vertical geodesic partial foliation  at an angle between $\theta$ and $\tfrac{\pi}{2}$. Computing the change in the norm of each unit tangent vector along the geodesic segments of $\nu$ as one deforms the metric $x=x_0$ to $x_t$ shows that the new norm is smaller than $\cos(\theta)e^t$. This suffices to show that \cref{eq:infinitesimalcomparison} is strictly less than $L\cos(\theta)+(1-L)<1$.
\end{proof}

\begin{lemma}\label{thm:infkey}
Consider a pair of complete geodesic laminations $\Lambda_\pm$ on $(S,x)\in\teich(S)$ which agree everywhere except on a $(1,1)$-crowned annulus contained in $S$ with core geodesic $\gamma_0$ of length $\ell_0$, in which $\Lambda_\pm$ respectively coincide with $\lambda_\pm$ (as by the notation of \cref{thm:key}). Then, the difference between the stretch vectors $v_{\Lambda_\pm}$ is expressed as:
\begin{align}
v_{\Lambda_+}-v_{\Lambda_-}
=
\left(
\frac{4\ell_0 e^{-\ell_0}}{1-e^{-\ell_0}}-4\log(1-e^{-\ell_0})
\right)
E_{\gamma_0},\label{eq:infdiff}
\end{align}
where $E_{\gamma_0}$ is the unit Fenchel--Nielsen twist vector with respect to $\gamma_0$. In particular,
\begin{align}
\|v_{\Lambda_+}-v_{\Lambda_-}\|_{\mathrm{Th}}
=
(4\ell_0e^{-\ell_0}+o(\ell_0e^{-\ell_0}))
\| E_{\gamma_0}\|_{\mathrm{Th}}
\text{, as }\ell_0\to\infty.\label{eq:asympdiff}
\end{align}
\end{lemma}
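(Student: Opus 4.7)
The plan is to leverage \cref{thm:key} directly: differentiate the formulas there at $t=0$ and then argue that, outside the fixed $(1,1)$-crowned annulus, the two complete laminations $\Lambda_\pm$ and their associated stretch maps are indistinguishable, so all other Fenchel--Nielsen and shearing coordinates contribute equally to $v_{\Lambda_+}$ and $v_{\Lambda_-}$ and cancel in the difference.

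First, I would coordinatise $\teich(S)$ near $x$ in a manner adapted to both laminations simultaneously. Since $\Lambda_+$ and $\Lambda_-$ differ only on the embedded $(1,1)$-cusped annulus $A\subset S$ containing $\gamma_0$, they share all bi-infinite leaves in $S\setminus A$ and all closed leaves other than $\gamma_0$. As described in the paragraph following \cref{eq:closedleaf}, one may parametrise $\teich(S)$ using the Fenchel--Nielsen length and twist parameters for $\gamma_0$ (and the other closed leaves of $\Lambda_\pm$) together with the shearing parameters of the common bi-infinite leaves. A $K$-stretch along either of $\Lambda_\pm$ multiplies all shearing parameters and all length parameters by $K$ in an identical manner, and differs only in its effect on the twist parameter $\tau_{\gamma_0}$, because the deformation of $\tau_{\gamma_0}$ is determined solely by the restriction of the lamination to the crowned annulus around $\gamma_0$, which is exactly the $\lambda_\pm$ situation of \cref{thm:key}.

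Second, I would invoke \cref{thm:key} to read off
\[
\tau_{\gamma_0}(\mathrm{stretch}(x,\Lambda_\pm,t)) = e^t\tau_0 \pm 2\bigl(\log(1-e^{-e^t\ell_0}) - e^t\log(1-e^{-\ell_0})\bigr),
\]
and differentiate at $t=0$. A routine chain-rule computation gives
\[
\left.\tfrac{\mathrm{d}}{\mathrm{d}t}\right|_{t=0}\tau_{\gamma_0}(\mathrm{stretch}(x,\Lambda_\pm,t)) = \tau_0 \pm 2\left(\frac{\ell_0 e^{-\ell_0}}{1-e^{-\ell_0}} - \log(1-e^{-\ell_0})\right).
\]
All other coordinate components of $v_{\Lambda_+}$ and $v_{\Lambda_-}$ coincide by the first paragraph, so subtracting produces exactly the $E_{\gamma_0}$ component
\[
v_{\Lambda_+} - v_{\Lambda_-} = \left(\frac{4\ell_0 e^{-\ell_0}}{1-e^{-\ell_0}} - 4\log(1-e^{-\ell_0})\right) E_{\gamma_0},
\]
which is \cref{eq:infdiff}.

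Third, for the asymptotic \cref{eq:asympdiff}, I would expand as $\ell_0\to\infty$. Using the expansions $(1-e^{-\ell_0})^{-1} = 1 + O(e^{-\ell_0})$ and $-\log(1-e^{-\ell_0}) = e^{-\ell_0} + O(e^{-2\ell_0})$, the coefficient of $E_{\gamma_0}$ becomes $4\ell_0 e^{-\ell_0} + 4e^{-\ell_0} + O(\ell_0 e^{-2\ell_0}) = 4\ell_0 e^{-\ell_0}(1+o(1))$. Because $\|\cdot\|_{\mathrm{Th}}$ is a Finsler norm and the scalar factor is eventually positive, pulling it out of the norm yields \cref{eq:asympdiff}. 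The only genuinely nontrivial step in the plan is the first, namely verifying that the coordinate system used is legitimately joint for both $\Lambda_+$ and $\Lambda_-$ and that both stretch maps act identically on every coordinate outside $\tau_{\gamma_0}$; once this is in hand, the remainder is mechanical differentiation and asymptotic expansion.
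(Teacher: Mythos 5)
Your proposal is correct and takes essentially the same approach as the paper: isolate the twist in $\gamma_0$ as the only coordinate where the two stretch paths differ, differentiate \cref{thm:key} at $t=0$ to get the coefficient, and expand (the paper uses L'H\^opital, you Taylor-expand, same result). The only presentational difference is in the isolation step: the paper argues geometrically that the two stretched metrics restricted to $S\setminus\gamma_0$ are isometric (via uniqueness of gluing crowned half-pants to $S\setminus S_0$), whereas you invoke the joint shear/Fenchel--Nielsen coordinate system --- both routes are fine, and the paper's gluing observation is precisely what fills the gap you flagged as "the only genuinely nontrivial step."
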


\begin{proof}
Let $S_0\subset S$ denote the specified $(1,1)$-crowned annulus in $S$ containing $\gamma_0$ as its core geodesic. We first observe that since $\Lambda_+$ and $\Lambda_0$ differ only in the interior of $S_0$,
 the difference between the metrics $\operatorname{stretch}(x,\Lambda_+,t)$ and $\operatorname{stretch}(x,\Lambda_-,t)$ occurs only within $S_0$. In other words, the restriction of the metrics $\operatorname{stretch}(x,\Lambda_{\pm},t)$ to $S\setminus S_0$ are isometric. Moreover, since there is only one way to attach a pair of $(1,0)$-crowned annuli of cuff length $e^t\ell_0$ to each of the two crowned boundaries of $S\setminus S_0$, this further informs us that the geodesically bordered hyperbolic metrics induced by the two stretchings $\operatorname{stretch}(x,\Lambda_{\pm},t)$ on $S\setminus\gamma_0$ are also necessarily isometric. 
 Therefore, the vector $v_{\Lambda_+}-v_{\Lambda_-}$ is a multiple of $E_{\gamma_0}$, the unit Fenchel--Nielsen twist vector with respect to $\gamma_0$. \cref{thm:key} tells us that this factor is
\begin{align*}
\left.\frac{d}{dt}\right|_{t=0}
4\left(\log(1-e^{-e^t\ell_0})-e^t\log(1-e^{-\ell_0})\right)
=
4\left(\frac{\ell_0e^{-\ell_0}}{1-e^{-\ell_0}}-\log(1-e^{-\ell_0})\right).
\end{align*}
This gives \cref{eq:infdiff}, and \cref{eq:asympdiff} follows by L'H\^{o}pital's rule.
\end{proof}

\begin{corollary}
\label{thm:linindep}
The tangent vectors $v_{\Lambda_+}$ and $v_{\Lambda_-}$ at $T_x\teich(S)$ are distinct.
\end{corollary}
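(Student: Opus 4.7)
The plan is to derive this corollary as an immediate consequence of \cref{thm:infkey}, by observing that the scalar coefficient appearing in \cref{eq:infdiff} is strictly positive. Concretely, \cref{thm:infkey} gives us
\[
v_{\Lambda_+}-v_{\Lambda_-}
=
4\left(\frac{\ell_0 e^{-\ell_0}}{1-e^{-\ell_0}}-\log(1-e^{-\ell_0})\right)E_{\gamma_0},
\]
so the task reduces to checking that the scalar factor $c(\ell_0):=\frac{\ell_0 e^{-\ell_0}}{1-e^{-\ell_0}}-\log(1-e^{-\ell_0})$ is nonzero for every admissible $\ell_0>0$, and that $E_{\gamma_0}\in T_x\teich(S)$ is itself nonzero.

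First, I would note that since $\gamma_0$ is an essential simple closed geodesic on $(S,x)$, the Fenchel--Nielsen twist vector $E_{\gamma_0}$ along $\gamma_0$ is a nonzero element of $T_x\teich(S)$ --- this is standard, as twisting along $\gamma_0$ produces a nontrivial one-parameter family of distinct marked hyperbolic structures. Then for the scalar, since $\ell_0>0$ we have $e^{-\ell_0}\in(0,1)$ and hence $1-e^{-\ell_0}\in(0,1)$. Consequently, both summands $\frac{\ell_0 e^{-\ell_0}}{1-e^{-\ell_0}}$ and $-\log(1-e^{-\ell_0})$ are strictly positive, so $c(\ell_0)>0$. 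In particular $c(\ell_0)\neq 0$, so $v_{\Lambda_+}-v_{\Lambda_-}$ is a nonzero scalar multiple of the nonzero vector $E_{\gamma_0}$, and is therefore nonzero itself.

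There is no real obstacle here: the bulk of the work was already done in establishing \cref{thm:infkey}, and this corollary merely records that the resulting coefficient does not accidentally vanish. The only thing to be slightly attentive to is verifying the sign of each summand separately (rather than, say, trying to directly use the asymptotic form \cref{eq:asympdiff}, which only gives nonvanishing for large $\ell_0$); both summands being manifestly positive for every $\ell_0>0$ takes care of this uniformly.
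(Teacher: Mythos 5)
Your argument is essentially identical to the paper's: both show that the scalar coefficient in \cref{eq:infdiff} is strictly positive by observing that $\frac{\ell_0 e^{-\ell_0}}{1-e^{-\ell_0}}>0$ and $-\log(1-e^{-\ell_0})>0$ for all $\ell_0>0$, and then conclude from \cref{thm:infkey}. Your additional remark that $E_{\gamma_0}\neq 0$ is a small but welcome completeness check that the paper leaves implicit.
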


\begin{proof}
Note that 
\begin{align*}
\frac{4\ell_0e^{-\ell_0}}{1-e^{-\ell_0}}-4\log(1-e^{-\ell_0})
=\text{positive term}-(\text{negative term})
>0.
\end{align*}
\cref{eq:infdiff} then tells us that $v_{\Lambda_+}\neq v_{\Lambda_-}$.
\end{proof}

We can give a much more general version of \cref{thm:linindep}. Let $\Lambda_1$ and $\Lambda_2$ be two complete geodesic laminations on $(S,x)$ such that $\Lambda_1\cap\Lambda_2$ contains the boundary of some crowned annulus $T\subset S$, as well as the unique simple closed geodesic $\gamma_T$ contained in the interior of $T$. Further require that $\Lambda_1\cap T$ and $\Lambda_2\cap T$, which are both complete geodesic laminations on $T$, are respectively equal to $\lambda_+$ and $\lambda_-$  as defined in  \cref{lem:gencrown}. Given this setup, we have the following result:

\begin{lemma}
 Suppose that all of the shearing parameters assigned to the bi-infinite geodesic leaves of $\Lambda_1\cap T$ and $\Lambda_2\cap T$ which spiral to $\gamma_T$ are positive. Then, the tangent vectors $v_{\Lambda_1}$ and $v_{\Lambda_2}$ at $T_x\teich(S)$ are distinct.
\end{lemma}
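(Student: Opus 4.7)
The plan is to generalise the $E_{\gamma_T}$-component computation behind \cref{thm:linindep} by comparing the $\tau_{\gamma_T}$-derivatives of the two stretch paths, using the formulas of \cref{lem:gencrown,rmk:oppositespiral} and invoking the positivity hypothesis to prevent cancellation of the extra terms.

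First, I fix a Fenchel--Nielsen coordinate system on $\teich(S)$ whose pants decomposition contains $\gamma_T$. Since $\Lambda_1$ and $\Lambda_2$ coincide outside the crowned annulus $T$, the two stretch paths $\mathrm{stretch}(x,\Lambda_i,t)$ induce identical deformations on $S\setminus T$; their restrictions to $T$ are, up to isotopy fixing $\partial T$, the stretches of $T$ with respect to $\lambda_+$ and $\lambda_-$ respectively. Any two conventions for $\tau_{\gamma_T}$ differ by an additive constant, so
\[
v_{\Lambda_i}(\tau_{\gamma_T}) = \left.\tfrac{\mathrm{d}}{\mathrm{d}t}\right|_{t=0} \tau_{\gamma_T}(\mathrm{stretch}(x,\Lambda_i,t))
\]
is independent of this choice and may be computed using the specific conventions of \cref{lem:gencrown,rmk:oppositespiral}.

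Second, I differentiate at $t=0$. Writing $\ell := \ell_{\gamma_T}(x)$, and, for $X \in \{L,R\}$ and $\epsilon \in \{+,-\}$, setting
\[
S^{X,\epsilon}_k := s^{X,\epsilon}_1 + \cdots + s^{X,\epsilon}_k, \qquad A^{X}_{\epsilon} := 1 + \sum_{k=1}^{n_X - 1} e^{-S^{X,\epsilon}_k},
\]
where $s^{X,\epsilon}_i$ denotes the shearing parameter (for the lamination $\lambda_\epsilon$) of the bi-infinite leaf between the $i$-th and $(i{+}1)$-th ideal triangles on the $X$-side, elementary differentiation of \cref{lem:gencrown,rmk:oppositespiral} yields
\begin{align*}
v_{\Lambda_1}(\tau_{\gamma_T}) - v_{\Lambda_2}(\tau_{\gamma_T})
&= 4 \left( \frac{\ell\, e^{-\ell}}{1 - e^{-\ell}} - \log(1 - e^{-\ell}) \right) \\
&\quad + \sum_{X \in \{L,R\}} \sum_{\epsilon \in \{+,-\}} \left( \log A^X_\epsilon + \frac{\sum_{k=1}^{n_X - 1} S^{X,\epsilon}_k\, e^{-S^{X,\epsilon}_k}}{A^X_\epsilon} \right).
\end{align*}

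Third, I apply the positivity hypothesis. If every shearing parameter $s^{X,\epsilon}_i$ is positive, then every partial sum $S^{X,\epsilon}_k$ is positive, hence $A^X_\epsilon \geq 1$ and $\log A^X_\epsilon \geq 0$, and each summand $S^{X,\epsilon}_k\, e^{-S^{X,\epsilon}_k}$ is positive. Moreover, the coefficient $\tfrac{\ell e^{-\ell}}{1 - e^{-\ell}} - \log(1 - e^{-\ell})$ is strictly positive, as noted in the proof of \cref{thm:linindep}. Consequently the displayed difference is strictly positive, so $v_{\Lambda_1}(\tau_{\gamma_T}) \neq v_{\Lambda_2}(\tau_{\gamma_T})$, and a fortiori $v_{\Lambda_1} \neq v_{\Lambda_2}$.

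The principal subtlety is bookkeeping rather than analysis: the families $\{s^{X,+}_i\}$ and $\{s^{X,-}_i\}$ describe genuinely different bi-infinite leaves --- those of $\Lambda_1 \cap T$ and $\Lambda_2 \cap T$ respectively --- so no identity between them can be exploited. The role of the positivity hypothesis is precisely to force all the ``extra'' terms coming from the $n_X > 1$ strands of the crowns to be non-negative, so that they cannot conspire to cancel the manifestly positive contribution that alone sufficed in the $(1,1)$-case of \cref{thm:linindep}.
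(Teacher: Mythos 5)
Your argument is correct and follows the same route as the paper's: differentiate the twist formulas of Lemma~3.8 and Remark~3.9 at $t=0$, observe that $(v_{\Lambda_1}-v_{\Lambda_2})(\tau_{\gamma_T})$ is a sum of two types of terms, and use the positivity hypothesis to conclude each is nonnegative (with the $\ell_{\gamma_T}$-term strictly positive). Your version is more explicit in writing out the full expression, which the paper merely asserts to be a sum of terms of the two displayed forms.

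One minor imprecision worth correcting: you write that ``any two conventions for $\tau_{\gamma_T}$ differ by an additive constant, so $v_{\Lambda_i}(\tau_{\gamma_T})$ is independent of this choice.'' In fact two Fenchel--Nielsen twist conventions for $\gamma_T$ differ by an analytic function on Teichm\"uller space (not a constant), and each individual $v_{\Lambda_i}(\tau_{\gamma_T})$ \emph{does} depend on the choice. What is well-defined is the \emph{difference} $(v_{\Lambda_1}-v_{\Lambda_2})(\tau_{\gamma_T})$: the discrepancy function between two conventions depends only on length coordinates, and since $\Lambda_1$ and $\Lambda_2$ agree outside $T$ and both contain $\gamma_T\cup\partial T$, the two stretch paths deform all relevant length coordinates identically, so the discrepancy function is annihilated by $v_{\Lambda_1}-v_{\Lambda_2}$. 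This is exactly the observation made in the paper's proof of Lemma~4.5 (well-definedness of twist width) and it is what justifies feeding the possibly-differing conventions of Lemma~3.8 and Remark~3.9 into a single subtraction. The misstatement does not affect the validity of your conclusion.
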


\begin{proof}
We can see from \cref{lem:gencrown} and \cref{rmk:oppositespiral} that the directional derivatives $v_{\Lambda_1}(\tau_{\gamma_T})$ and $v_{\Lambda_2}(\tau_{\gamma_T})$ can be expressed purely in terms of the geometry of $T\subset S$. In particular, the expression of
\[
(v_{\Lambda_1}-v_{\Lambda_2})(\tau_{\gamma_T})
=
v_{\Lambda_1}(\tau_{\gamma_T})-v_{\Lambda_2}(\tau_{\gamma_T})
\]
is a sum of positive multiples of terms of the form
\begin{align*}
\frac{\ell_{\gamma_T} e^{-\ell_{\gamma_T}}}{1-e^{-\ell_{\gamma_T}}}-\log(1-e^{-\ell_{\gamma_T}}),
\end{align*}
and terms of the form
\begin{align*}
&\log\left(1+e^{-s_1}+e^{-(s_1+s_2)}+\cdots+e^{-(s_1+\cdots+ s_{n-1})}\right)\\
&+\frac{s_1e^{-s_1}+(s_1+s_2)e^{-(s_1+s_2)}+\cdots+(s_1+\cdots+ s_{n-1})e^{-(s_1+\cdots+ s_{n-1})}}{1+e^{-s_1}+e^{-(s_1+s_2)}+\cdots+e^{-(s_1+\cdots+ s_{n-1})}}.
\end{align*}
Our previous computation for the proof of \cref{thm:linindep} and  the assumption of positive shearing parameters then ensure that all these terms are positive, and hence $(v_{\Lambda_1}-v_{\Lambda_2})(\tau_{\gamma_T})\neq0$. Therefore, $v_{\Lambda_1}-v_{\Lambda_2}\neq 0$, and these two vectors are distinct.
\end{proof}

\subsection{Non-chain-recurrent laminations}
\label{non-chain}
We first review Thurston's construction (\cite[Theorem~8.5]{ThS}) showing that any two points $x, y \in \teich(S)$ can be jointed by a geodesic with respect to the Thurston metric.
Such a geodesic is obtained by concatenating finitely many stretch paths.
The initial stretch path  in the geodesics is a stretch path with respect to any complete lamination $\Lambda$ which contains the maximal ratio-maximising lamination $\mu(x,y)$  (\cref{defn:maxratmax}) between $x$ and $y$. In particular, one can always take $\Lambda$ to be the unique complete lamination extending a maximal chain-recurrent lamination containing $\mu(x,y)$. 
We continue the stretching along $\Lambda$ from $x$ to $x_t$ until such a time $t$ that the maximal ratio-maximising lamination $\mu(x_t,y)$ changes in topology. 
(Still it is shown that $\mu(x_t,y)$ contains $\mu(x,y)$.)
One may then exchange $\Lambda$ for a complete geodesic lamination containing the updated maximal ratio-maximising lamination. Again, we can choose this new complete geodesic lamination to be a complete lamination extending a maximal chain-recurrent lamination containing the updated maximal ratio-maximising lamination. Repeating this selection procedure suffices to show that one can always join two arbitrary points in Teichm\"uller space via a concatenation of stretch paths with respect to (complete laminations containing) maximal chain-recurrent laminations. When the maximal ratio-maximising lamination $\mu(x,y)$ is a maximal chain-recurrent lamination, there is a unique Thurston geodesic joining $x$ and $y$ given by the stretch path with respect to (the complete lamination containing) the maximal chain-recurrent lamination containing $\mu(x,y)$.

Based on the above observations, we heuristically expect that stretch paths with respect to maximal chain-recurrent laminations are “extremal” among Thurston metric geodesics, which in turn suggests that stretch vectors for stretch paths with respect to maximal chain-recurrent laminations should constitute extreme points in the unit Thurston norm tangent sphere $\mathbf{S}_x\subsetneq T_x\teich(S)$. 
(Here a point $p$ on the boundary $S$ of a convex  body  is called an extreme point when $p$ is not an interior point of a segment contained in $S$.) 
In the case when $S$ is a one-cusped torus or a four-cusped sphere, this conjectural picture is true \cite[Theorem 6.5]{DLRT},
 but is unknown in general (see \cref{maximal ratio-maximising}). 
We note, however, that not all stretch vectors are extreme points if we drop the condition of chain-recurrence.

\begin{lemma}
Let $\Lambda_0$ be a complete geodesic lamination which contains the non-chain-recurrent lamination $\lambda_0$ of \cref{fig:2} as a sublamination. Then, $v_{\Lambda_0}$ is not an extreme point in $\mathbf{S}_x$.
\end{lemma}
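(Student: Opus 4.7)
My plan is to show that $v_{\Lambda_0}$ is the midpoint of a nontrivial chord of the Thurston unit ball in $T_x\teich(S)$, which precludes it from being an extreme point of $\mathbf{S}_x$. The key observation is that the lamination $\lambda_0$ sits symmetrically ``between'' the chain-recurrent laminations $\lambda_+$ and $\lambda_-$ of \cref{fig:2}, and this symmetry lifts to the infinitesimal level.

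First I would fix a regular $(1,1)$-crowned annulus neighborhood $S_0\subset S$ of $\gamma_0\cup\alpha_1\cup\alpha_2$, where $\gamma_0$ is the closed leaf and $\alpha_1,\alpha_2$ are the two bi-infinite leaves of $\Lambda_0$ spiralling to $\gamma_0$ in the same direction (so that $\Lambda_0\cap S_0=\lambda_0$). Next I would construct complete geodesic laminations $\Lambda_+$ and $\Lambda_-$ that agree with $\Lambda_0$ on $S\setminus S_0$ and that, inside $S_0$, flip the spiralling direction of exactly one of $\alpha_1$ or $\alpha_2$, so that $\Lambda_\pm\cap S_0$ realises the laminations $\lambda_\pm$ of \cref{fig:2}. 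Following the same reasoning used to prove \cref{thm:infkey}, one verifies that the restrictions of $\mathrm{stretch}(x,\Lambda_\bullet,t)$ to $S\setminus\gamma_0$ are mutually isometric for $\bullet\in\{0,+,-\}$.

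Consequently $v_{\Lambda_0},v_{\Lambda_+},v_{\Lambda_-}$ agree as derivations on every function depending only on the hyperbolic geometry of $S\setminus\gamma_0$, and the only coordinate on which they can differ is the Fenchel--Nielsen twist $\tau_{\gamma_0}$. Differentiating the formulas of \cref{thm:key} at $t=0$ gives
\[
v_{\Lambda_0}(\tau_{\gamma_0})=\tau_{\gamma_0}(x),\qquad v_{\Lambda_\pm}(\tau_{\gamma_0})=\tau_{\gamma_0}(x)\pm 2\!\left(\frac{\ell_0 e^{-\ell_0}}{1-e^{-\ell_0}}-\log(1-e^{-\ell_0})\right),
\]
hence $v_{\Lambda_0}=\tfrac{1}{2}(v_{\Lambda_+}+v_{\Lambda_-})$. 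By \cref{thm:linindep} the vectors $v_{\Lambda_+}$ and $v_{\Lambda_-}$ are distinct, and all three stretch vectors lie on $\mathbf{S}_x$ because stretch paths are unit-speed Thurston geodesics; therefore $v_{\Lambda_0}$ is the midpoint of a nontrivial chord of the Thurston unit ball whose endpoints lie in $\mathbf{S}_x$, and so it is not an extreme point.

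The principal technical delicacy will be confirming that $\Lambda_+$ and $\Lambda_-$ really are (complete) geodesic laminations on all of $S$, since flipping one spiralling leaf could in principle introduce intersections with other leaves of $\Lambda_0$. This is resolved by taking $S_0$ to be a sufficiently small $(1,1)$-crowned annulus neighborhood of $\gamma_0\cup\alpha_1\cup\alpha_2$: outside $S_0$ the leaves $\alpha_1,\alpha_2$ retain their original geodesic behaviour and the remaining leaves of $\Lambda_0$ are unaffected, so the flipped leaves cannot collide with anything new.
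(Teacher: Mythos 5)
Your proof is correct and takes essentially the same route as the paper's: replace $\lambda_0$ by $\lambda_\pm$ inside the crowned annulus $A$ (the convex hull of $\lambda_0$), observe that the three stretch paths agree on all coordinates except the Fenchel--Nielsen twist around $\gamma_0$, and read off from \cref{thm:key} that the twist coordinates of $\mathrm{stretch}(x,\Lambda_\pm,t)$ average to that of $\mathrm{stretch}(x,\Lambda_0,t)$, giving $v_{\Lambda_0}=\tfrac{1}{2}(v_{\Lambda_+}+v_{\Lambda_-})$. You supply two small details the paper leaves implicit --- the explicit $t=0$ differentiation of the twist formulas and the invocation of \cref{thm:linindep} to guarantee $v_{\Lambda_+}\neq v_{\Lambda_-}$ --- which makes the argument slightly more self-contained but does not change its substance.
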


\begin{proof}
Let 
\begin{itemize}
\item
$\gamma_S$ denote the simple closed geodesic contained in $\lambda_0$, 
\item
$A$ denote the $(1,1)$-crowned annulus in $(S,x)$ containing $\lambda_0$, and
\item
$\Lambda_{\pm}$ be the complete geodesic laminations obtained by replacing $\lambda_0\subset\Lambda_0$ respectively with the $\lambda_\pm$ of \cref{fig:2}.
\end{itemize}
Since $\Lambda_0$, $\Lambda_+$ and $\Lambda_-$ all agree on $S\setminus A$, the $e^t$-stretch maps with respect to these three laminations deform $S\setminus A$ in the same way. 
We can construct two pairs of pants (possibly one or two of its boundary components are cusps) on both sides of $\gamma_S$, the union of which contain the $(1,1)$-crowned annulus $A$.
We let $\gamma$ be the union of their boundary components, including $\gamma_S$.
Then, from \cref{thm:key}, it follows that for any Fenchel--Nielsen coordinate with respect to a pants decomposition containing $\gamma$, the coordinates for the $e^t$-stretch maps with respect to $\Lambda_0$ and $\Lambda_\pm$ completely agree except in the twist parameter for $\gamma_S$. Indeed, \cref{thm:key} tells us that the twist parameters for the stretch maps with respect to $\Lambda_\pm$ average to the twist parameter for the stretch map with respect to $\Lambda_0$. Therefore,
\[
v_{\Lambda_0}=\tfrac{1}{2}\left(v_{\Lambda_+}+v_{\Lambda_-}\right),
\]
where $v_{\Lambda_+}$ and $v_{\Lambda_-}$ are the respective stretch vectors with respect to $\Lambda_+$ and $\Lambda_-$. Therefore, $v_{\Lambda_0}$ is not an extreme point in $\mathbf{S}_x$.
\end{proof}

In \cref{maximal twist}, we shall consider geodesic laminations $\lambda$ on $S=S_{g,n}$ which contain, as a sublamination, (disjoint) simple closed geodesics $\alpha,\beta_1,\beta_2,\beta_3 $ and $\beta_4$ such that the collections $\{\alpha,\beta_1,\beta_2\}$ and $\{\alpha,\beta_3,\beta_4\}$ both bound pairs of pants on $S$ (see \cref{fig:doublespiral}). Our goal is to show that stretch vectors for certain $\lambda$ of the above form maximise and minimise the Fenchel--Nielsen twist parameter for the curve $\alpha$ over the collection of all unit (with respect to the Thurston norm) tangent vectors which ``maximally stretch'' $\alpha,\beta_1,\beta_2,\beta_3,$ and $\beta_4$. 
The precise statement will be given in \cref{lem:maxtwist}.
We presently only consider the setting where the simple closed geodesics $\alpha,\beta_1,\beta_2,\beta_3$ and $\beta_4$ are all distinct. 
However, elementary topological arguments suffice to allow us to extend the main results in this subsection also to cases where some of the aforementioned geodesics are the same, namely (up to symmetry) the cases when
\begin{itemize}
\item
$\beta_1=\beta_2$;

\item
$\beta_1=\beta_2$ and $\beta_3=\beta_4$; or

\item
$\beta_1=\beta_3$ and $\alpha=\beta_2=\beta_4$.

\end{itemize}

\subsection{Construction of a horogeodesic}
\label{horogeodesic}
Let $\Sigma$ be the four-holed sphere bounded by the simple closed geodesics  $\beta_1,\beta_2,\beta_3$ and $\beta_4$.
A key part of our strategy is to study the lengths of simple closed geodesics which lie in the interior of $\Sigma$ and which intersect $\alpha$ precisely twice. 
Let $\gamma_0$ be a shortest one among such simple closed geodesics, and let $\gamma_m$, $m\in\mathbb{Z}$, be the $m$-times iterated (left) Dehn twist  around $\alpha$ of $\gamma_0$. The Hausdorff limits $\lambda_\pm$ of $\gamma_m$ as $m\to\pm\infty$ are geodesic laminations in the interior of $\Sigma$ containing $\alpha$ and two other bi-infinite geodesic leaves $\alpha^L$, $\alpha^R$ spiralling around $\alpha$ in opposite directions from opposite sides (\cref{fig:doublespiral}).

\begin{figure}[h!]
\begin{center}
\includegraphics[scale=0.35]{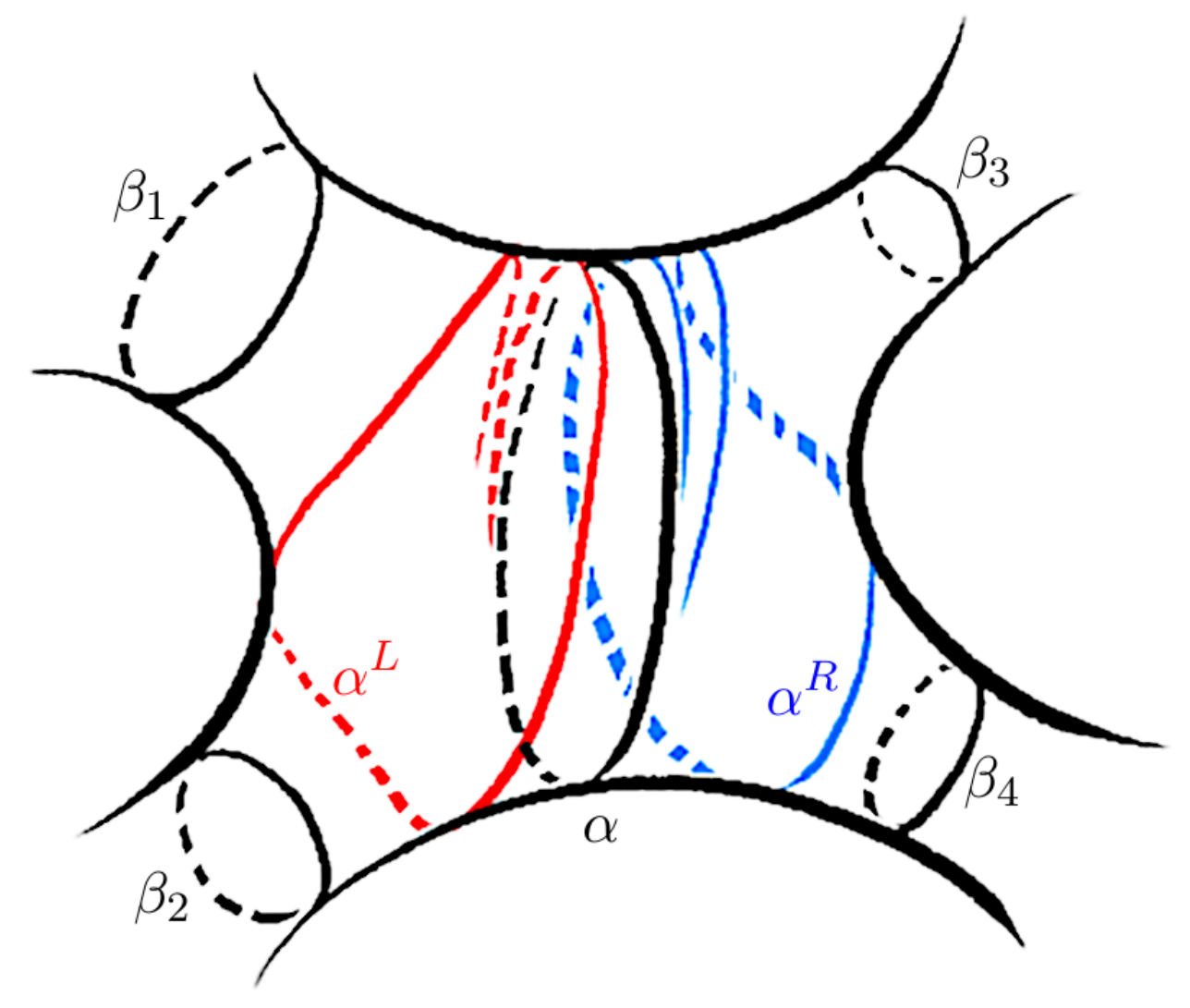}
\caption{The picture of $\Sigma$, where $\Lambda_+$ is the union of $\alpha,\alpha^L,\alpha^R,\beta_1,\beta_2,\beta_3$ and $\beta_4$.}
\label{fig:doublespiral}
\end{center}
\end{figure}

We extend the lamination $\lambda_+\cup\{\beta_1,\beta_2,\beta_3,\beta_4\}$ arbitrarily to a complete geodesic lamination $\Lambda_+$ on $S$ and likewise extend $\lambda_-\cup\{\beta_1,\beta_2,\beta_3,\beta_4\}$ to a complete lamination $\Lambda_-$. 
Note that the length of $\gamma_m$ under stretching by $\Lambda_{\pm}$ depends only on the restriction of $\Lambda_{\pm}$ to $\Sigma$. 
There are (at most) $2^4$ different choices (two choices for each $\beta_i$) for how one extends  $\lambda_{\pm}$ to a complete lamination on $\Sigma$. 
Each of these $2^4$ choices comes from adding geodesic leaves $\sigma_1,\sigma_2,\sigma_3,\sigma_4$ which spiral from $\beta_1,\beta_2,\beta_3,\beta_4$ to $\alpha$.

We first study the behaviour of the length of $\gamma_m$, for $m\gg0$, under the $K$-stretch map with respect to $\Lambda_+$. We shall first homotope $\gamma_m$ to a closed curve  obtained by concatenating segments along leaves of the stretch-invariant horocyclic foliation and those on leaves of  $\Lambda_+$ as follows. For obvious reasons, such a curve is termed \emph{horogeodesic}.
Since $\gamma_m$ intersects $\alpha$ twice, the latter separates $\gamma_m$ into two geodesic segments, which we name $\gamma^L$ and $\gamma^R$. 
When $m$ is sufficiently large, we know that $\gamma_m$ well-approximates $\lambda_+$ and hence the geodesic segment $\gamma^L$ is homotopic (fixing the endpoints) to a path which traverses along:
\begin{itemize}
\item
a segment on a leaf of the stretch-invariant horocyclic foliation, so as to go from $\alpha$ to $\alpha^L$,
\item
a segment on $\alpha^L$, and then 
\item
a segment on a leaf of the stretch-invariant horocyclic foliation, so as to go from $\alpha^L$ back to $\alpha$.
\end{itemize}
We homotope $\gamma^R$ analogously to a horocyclic segment, followed by an $\alpha^R$ segment and then another horocyclic segment. See \cref{fig:holonomy} where the horogeodesic path is depicted on a lift to the universal cover, with the six segments labeled $l^L,\ l^R,\  h_1,\  h_2,\   h_3$, and $ h_4$.

\begin{figure}[h!]
\begin{center}
\includegraphics[scale=0.4]{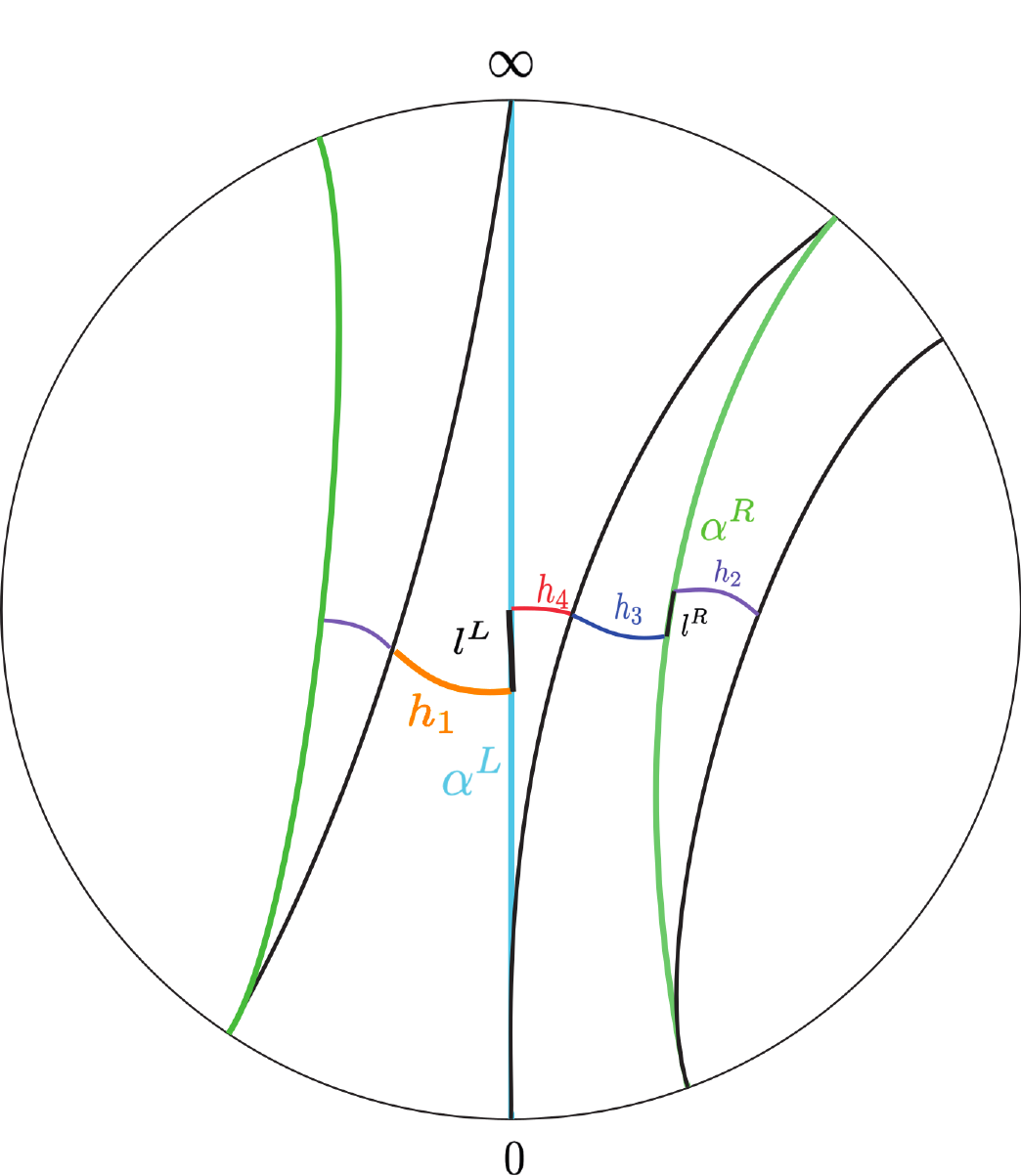}
\caption{The curve $\gamma_m$ is homotopic to a closed horogeodesic obtained by concatenating  $l^L,\ l^R,\  h_1,\  h_2,\   h_3$, and $ h_4$.}
\label{fig:holonomy}
\end{center}
\end{figure}

\subsection{Maximal twist lemma}
\label{maximal twist}
We now express the length of $\gamma_m$ in terms of the lengths of these six segments by computing a $\mathrm{PSL}_2(\mathbb{R})$-conjugacy representative of the holonomy matrix for $\gamma_m$. 

\begin{lemma}
The length $\ell_{\gamma_m}$ of $\gamma_m$  is given in terms of the lengths of $l^L,l^R,h_1,h_2,h_3$ and $h_4$ (we use the same symbols to represent their lengths) by the following formula:
\begin{align}
2\cosh(\tfrac{\ell_{\gamma_m}}{2})=&e^{\frac{1}{2}(l^L+l^R)}(1+h_1h_2)(1+h_3h_4)+e^{\frac{1}{2}(l^L-l^R)}h_1h_4\label{eq:gntrace}\\
&+e^{\frac{1}{2}(l^R-l^L)}h_2h_3+e^{-\frac{1}{2}(l^L+l^R)}.\notag
\end{align}
\end{lemma}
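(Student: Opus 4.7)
The plan is to compute the trace of the $\mathrm{SL}_2(\mathbb{R})$ holonomy of $\gamma$, using the fact that $\mathrm{tr}(\mathrm{hol}(\gamma)) = \pm 2\cosh(\ell_\gamma/2)$ since $\gamma$ is a closed hyperbolic geodesic. To compute this trace, I would lift $\varsigma$ to the universal cover and express the holonomy as a product of $\mathrm{SL}_2(\mathbb{R})$ matrices, one for each of its six segments.

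First, I would normalise coordinates by taking $\tilde\alpha$ to be the imaginary axis in the upper half plane model of $\mathbb{H}^2$, with the attracting endpoint of $\tilde\alpha$ (toward which $\alpha^L$ spirals) at $0$ and the repelling endpoint (toward which $\alpha^R$ spirals) at $\infty$. In these coordinates, the horocyclic arcs $h_1,h_2$ lie on horocycles based at $0$, i.e.\ on orbits of the lower unipotent subgroup $L(t) = \bigl(\begin{smallmatrix}1&0\\t&1\end{smallmatrix}\bigr)$; the arcs $h_3,h_4$ lie on horocycles based at $\infty$, i.e.\ on orbits of the upper unipotent subgroup $U(t) = \bigl(\begin{smallmatrix}1&t\\0&1\end{smallmatrix}\bigr)$; and the geodesic arcs of lengths $l^L, l^R$ lie along hyperbolic one-parameter subgroups conjugate to the diagonal subgroup $G(t) = \mathrm{diag}(e^{t/2},e^{-t/2})$.

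Next, with careful frame-bookkeeping along $\varsigma$ (tracking the $\pi/2$-rotations introduced by the perpendicular turns at each horocycle-geodesic transition), I would establish that $\mathrm{hol}(\gamma)$ is conjugate to the product $A_L\cdot A_R$, where
\[
A_L = G(l^L)\,U(h_1)\,L(h_2) = \begin{pmatrix} e^{l^L/2}(1+h_1h_2) & e^{l^L/2}h_1 \\ e^{-l^L/2}h_2 & e^{-l^L/2} \end{pmatrix}
\]
captures the $\gamma^L$-half of the loop, and
\[
A_R = G(l^R)\,U(h_3)\,L(h_4) = \begin{pmatrix} e^{l^R/2}(1+h_3h_4) & e^{l^R/2}h_3 \\ e^{-l^R/2}h_4 & e^{-l^R/2} \end{pmatrix}
\]
captures the $\gamma^R$-half. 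A direct computation confirms $\det(A_L) = \det(A_R) = 1$.

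Finally, taking the trace of the product,
\begin{align*}
\mathrm{tr}(A_L A_R) &= (A_L)_{11}(A_R)_{11} + (A_L)_{12}(A_R)_{21} + (A_L)_{21}(A_R)_{12} + (A_L)_{22}(A_R)_{22},
\end{align*}
and substituting the explicit entries above produces the four summands $e^{(l^L+l^R)/2}(1+h_1h_2)(1+h_3h_4)$, $e^{(l^L-l^R)/2}h_1h_4$, $e^{(l^R-l^L)/2}h_2h_3$, and $e^{-(l^L+l^R)/2}$, respectively, which is precisely the right-hand side of \cref{eq:gntrace}. The main obstacle is the frame-bookkeeping required to justify the specific three-matrix factorisation of each of $A_L$ and $A_R$: the matrix representative of each segment depends on the chosen developing frame, and the perpendicular turns must be arranged so that the rotational factors cancel to yield exactly $G\cdot U\cdot L$ in each half. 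Once this factorisation is established, the final trace calculation is a routine matrix multiplication.
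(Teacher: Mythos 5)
Your proposal is correct and follows essentially the same route as the paper: both compute $2\cosh(\ell_\gamma/2)$ as the trace of the $\mathrm{PSL}_2(\mathbb{R})$ holonomy of $\gamma$, developing the six-segment loop $\varsigma$ into the universal cover and reading off the holonomy as a product of one matrix per segment. The paper carries out the frame-bookkeeping you flag as the main obstacle explicitly, interposing the rotation-by-$\pi$ matrix $R=\left(\begin{smallmatrix}0&-1\\1&0\end{smallmatrix}\right)$ at each perpendicular turn and keeping every parabolic factor upper-triangular; since $R\left(\begin{smallmatrix}1&-a\\0&1\end{smallmatrix}\right)R=-\left(\begin{smallmatrix}1&0\\a&1\end{smallmatrix}\right)$, the resulting ten-factor product collapses (up to sign and a cyclic shift) to a $G\cdot U\cdot L\cdot G\cdot U\cdot L$ product of the same shape as your $A_LA_R$ and gives the identical trace.
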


\begin{proof}
We  lift $\alpha^L$ to a geodesic with endpoints at $0$ and $\infty$, 
  and $h_4$ to a segment joined to $i$ on the right of the lift of $\alpha^L$ (see \cref{fig:holonomy}). We obtain the holonomy matrix for $\gamma_m$ by composing the matrices corresponding to the following sequence of transformations:
\begin{enumerate}
\item
hyperbolically translate along the lift of $\alpha^L$ by $l^L$, so that the endpoint of the lift of $h_1$ is now placed at $i$;
\item
rotate by $\pi$ around $i$, so that the lift of $h_1$ is now placed to the right of $i$ on the lift of $\alpha^L$;
\item
parabolically translate along the new lift of $h_1$, so that the shared endpoint of $h_1$ and $h_2$ is now placed at $i$;
\item
rotate by $\pi$ around $i$ so that the new lift of $h_2$ is now to the left of $i$;
\item
parabolically translate along the lift of $h_2$, so that the shared endpoint of the respective lifts of $h_2$ and $\alpha^R$ is positioned at $i$;
\item
hyperbolically translate along the lift of $\alpha^R$ by $l^R$, so that the endpoint of the lift of $h_3$ is now placed at $i$;
\item
rotate by $\pi$ around $i$, so that the lift of $h_3$ is now placed to the right of $i$ on the lift of $\alpha^R$;
\item
parabolically translate along the lift of $h_3$, so that the shared endpoint of $h_3$ and $h_4$ is now placed at $i$;
\item
rotate by $\pi$ so that the lift of $h_4$ is now to the left of $i$; and
\item
parabolically translate along the lift of $h_4$, so that the shared endpoint of the respective lifts of $h_4$ and $\alpha^R$ is positioned at $i$. This results in a $\gamma_m$-translate of our initial configuration.
\end{enumerate}

Putting together the matrices (ordered from right to left), we obtain:
\begin{align*}
&\pm
\left[\begin{smallmatrix}
1&h_4\\0&1
\end{smallmatrix}\right]
\left[\begin{smallmatrix}
0&-1\\1&0
\end{smallmatrix}\right]
\left[\begin{smallmatrix}
1&-h_3\\0&1
\end{smallmatrix}\right]\left[\begin{smallmatrix}
0&-1\\1&0
\end{smallmatrix}\right]
\left[\begin{smallmatrix}
e^{\frac{l^R}{2}}&0\\0&e^{\frac{-l^R}{2}}
\end{smallmatrix}\right]
\left[\begin{smallmatrix}
1&h_2\\0&1
\end{smallmatrix}\right]
\left[\begin{smallmatrix}
0&-1\\1&0
\end{smallmatrix}\right]
\left[\begin{smallmatrix}
1&-h_1\\0&1
\end{smallmatrix}\right]
\left[\begin{smallmatrix}
0&-1\\1&0
\end{smallmatrix}\right]
\left[\begin{smallmatrix}
e^{\frac{l^L}{2}}&0\\0&e^{\frac{-l^L}{2}}
\end{smallmatrix}\right]\\
=&\pm\left[
\begin{array}{cc}
e^{\frac{1}{2}(l^L+l^R)}(1+h_1h_2)(1+h_3h_4)+e^{\frac{1}{2}(l^L-l^R)}h_1h_4&*\\
*&e^{\frac{1}{2}(l^R-l^L)}h_2h_3+e^{-\frac{1}{2}(l^L+l^R)}
\end{array}
\right].
\end{align*}
We omit the non-diagonal entries because we only need to know the trace of this matrix, which is equal to $2\cosh(\tfrac{\ell_{\gamma}}{2})$. This yields \cref{eq:gntrace}.
\end{proof}

For $t\geq 0$, we consider the $e^t$-stretch along  $\Lambda_+$.  To simplify notation, we set  \[
l_{\gamma}(t):=\ell_{\gamma_m}(x_t),\ l^L(t):=l^L(x_t),\ l^R(t):=l^R(x_t)
\]
and for $j=1,\ldots, 4$, we let  $h_j(t)$
denote the arc-length of the $h_j$ horocyclic segment for $x_t\in\teich(S)$ under the same stretch. 
Note that the homotopy class of $\gamma_m$ is unaffected by the $e^t$-stretching with respect to $\Lambda_+$, and the structure of its homotopy representative in terms of stretch-invariant segments is preserved over the $e^t$-stretch path $x_t$ with respect to $\Lambda_+$. This tells us that \cref{eq:gntrace} is 
both well defined and true with  $l_\gamma(t), l^L(t)$ and $l^R(t)$.
Since $l^L(t)$ and $l^R(t)$ measure the lengths of segments on a leaf in $\Lambda_+$, they must be $e^t$-stretched and we have:
\[
l^L(t)=l^L(0)e^t\text{ and }
l^R(t)=l^R(0)e^t.
\]
Note in particular that 
\begin{align}
{(l^L)}'(0)=l^L(0)\text{ and }{(l^R)}'(0)=l^R(0).\label{eq:diff1}
\end{align}

\begin{lemma}  
When the surface is $e^t$-stretched along $\Lambda_+$,  we have, with the above notation:

\begin{align}
h_1(t)=\frac{e^{-e^t d_1(0)}+e^{-e^t d_2(0)}}{1-e^{-e^t l_{\gamma}(0)}},\label{eq:h_j}
\end{align}
where $d_1(0)$ and $d_2(0)$ are constants depending only on $h_1$ and $h_2$ respectively.

Furthermore, we have the following formula for the derivative of $h_1$ at $t=0$:
\begin{align}
h_1'(0)=
-\frac
{d_1(0)e^{-d_1(0)}+d_2(0)e^{-d_2(0)}}
{1-e^{-l_{\gamma}(0)}}
-\frac
{l_{\gamma}(0)e^{-l_{\gamma}(0)}(e^{-d_1(0)}+e^{-d_2(0)})}
{(1-e^{-l_{\gamma}(0)})^2}.\label{eq:diff2}
\end{align}
Similar expressions hold for $h_2(t)$, $h_3(t)$ and $h_4(t)$.
\end{lemma}

\begin{proof}

 First observe that the complement of $\Lambda_+$ on the pair of pants bounded by $\{\beta_1,\beta_2,\alpha\}$ consists of two ideal triangles $\triangle_1$ and $\triangle_2$. The set $h_1\cap\triangle_1$  consists of infinitely many connected components all of which are horocyclic segments. These horocyclic segments exponentially decrease in length, the closer a component of $h_1\cap\triangle_1$ is to $\alpha$. Let $d_1(t)$ denote the distance between the longest horocyclic subsegment of $h_1\cap\triangle_1$ and the nearest horocyclic boundary edge of the central stable triangle on $\triangle_1$ with respect to the metric $x_t\in\teich(S)$ restricted to $\triangle_1$.
 Then, the lengths of the segments comprising $h_1\cap\triangle_1$ are given by
\[
e^{-d_1(t)},\ e^{-d_1(t)-l_{\gamma}(t)},\ e^{-d_1(t)-2l_{\gamma}(t)},\ e^{-d_1(t)-3l_{\gamma}(t)},\ldots.
\]
Likewise consider the horocyclic segments which constitute the connected components of $h_1\cap\triangle_2$, and define $d_2(t)$ in the same way as $d_1(t)$.
Then the lengths of the segments  comprising $h_1\cap\triangle_2$ are then given by
\[
e^{-d_2(t)},\ e^{-d_2(t)-l_{\gamma}(t)},\ e^{-d_2(t)-2l_{\gamma}(t)},\ e^{-d_2(t)-3l_{\gamma}(t)},\ldots.
\]
When the surface is $e^t$-stretched along $\Lambda_+$, the horocyclic heights $d_i(0)+jl_{\gamma}(0)$ increase to $e^t(d_i(0)+jl_{\gamma}(0))$  and hence the length of $h_1(t)$, \ie the sum of the lengths of all these horocyclic segments, is given by
\[
h_1(t)=\frac{e^{-e^t d_1(0)}+e^{-e^t d_2(0)}}{1-e^{-e^t l_{\gamma}(0)}},
\]
which is \cref{eq:h_j}.
This is a function which monotonically decreases to $0$ as $t\to\infty$. The derivative of $h_1$ at $t=0$ is then given by
\[
h_1'(0)=
-\frac
{d_1(0)e^{-d_1(0)}+d_2(0)e^{-d_2(0)}}
{1-e^{-l_{\gamma}(0)}}
-\frac
{l_{\gamma}(0)e^{-l_{\gamma}(0)}(e^{-d_1(0)}+e^{-d_2(0)})}
{(1-e^{-l_{\gamma}(0)})^2},
\]
which is \cref{eq:diff2}.

\end{proof}

\begin{lemma}[maximal twist lemma]\label{lem:maxtwist}
Given simple closed geodesics $\beta_1,\beta_2,\beta_3$ and $\beta_4$ which bound a four-holed sphere $\Sigma$ on $(S,x)$ and a simple closed geodesic $\alpha$ in the interior of $\Sigma$, define $\lambda_+$ as above, and let $\Lambda_+$ be any complete geodesic lamination on $S$ which contains $\beta_1,\beta_2,\beta_3,\beta_4$ and $\lambda_+$. Further let $v_{\Lambda_+}\in T_x\teich(S)$ denote the stretch vector for $\Lambda_+$. Then there are no Thurston norm unit vectors $w\in\mathbf{S}_x \subsetneq T_x\teich(S)$ such that all the following conditions are satisfied:
\begin{enumerate}
\item
$w(\ell_{\alpha})=v_{\Lambda_+}(\ell_{\alpha})=\ell_{\alpha}(x)$, where $\ell_{\alpha}:\teich(S)\to\mathbb{R}$ is the hyperbolic length function for $\alpha$;

\item
for each $j=1,2,3,4$, $w(\ell_{\beta_j})=v_{\Lambda_+}(\ell_{\beta_j})=\ell_{\beta_j}(x)$; and

\item
$w(\tau_{\alpha})>v_{\Lambda_+}(\tau_{\alpha})$, where $\tau_{\alpha}:\teich(S)\to\mathbb{R}$ is the Fenchel--Nielsen twist function for $\alpha$.
\end{enumerate}
\end{lemma}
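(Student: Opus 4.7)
The plan is to test $w$ against the sequence $\{\gamma_m\}$ of simple closed geodesics constructed above the statement, and to derive a length-bound contradiction in the limit $m\to\infty$. Assume for contradiction that such a $w$ exists, and set $\Delta\tau := w(\tau_\alpha) - v_{\Lambda_+}(\tau_\alpha) > 0$.

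First I would observe that because $\gamma_m$ is contained in the $4$-holed sphere $P\subset S$ bounded by $\beta_1,\ldots,\beta_4$ and meets only $\alpha$ (and not any $\beta_j$) among the cuffs of the Fenchel--Nielsen pants decomposition $\{\alpha,\beta_1,\beta_2,\beta_3,\beta_4\}$ of $P$, its length $\ell_{\gamma_m}$ is a smooth function of the coordinates $\ell_\alpha,\ell_{\beta_1},\ldots,\ell_{\beta_4},\tau_\alpha$ only (the twists $\tau_{\beta_j}$ and any coordinate on the complement of $P$ do not enter). Conditions~(1) and~(2) equate the directional derivatives of $w$ and $v_{\Lambda_+}$ on $\ell_\alpha$ and each $\ell_{\beta_j}$, so the chain rule yields
\[
w(\ell_{\gamma_m}) - v_{\Lambda_+}(\ell_{\gamma_m}) = \frac{\partial \ell_{\gamma_m}}{\partial \tau_\alpha}\cdot\Delta\tau.
\]

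Next I would apply Wolpert's cosine formula to identify this twist derivative as $\cos\theta_1^{(m)} + \cos\theta_2^{(m)}$, where $\theta_j^{(m)}$ are the angles of intersection of $\gamma_m$ with $\alpha$. Since $\gamma_m$ Hausdorff-converges inside $P$ to $\lambda_+ = \alpha\cup\alpha^L\cup\alpha^R$, and the leaves $\alpha^L,\alpha^R$ spiral tangentially into $\alpha$, the secant directions of $\gamma_m$ at its two intersections with $\alpha$ become tangent to $\alpha$ in the limit; hence $\theta_j^{(m)}\to 0$ and $\partial\ell_{\gamma_m}/\partial\tau_\alpha\to 2$. Combining with the Thurston-norm constraint $\|w\|_{\mathrm{Th}}=1$, which forces $w(\ell_{\gamma_m})\le\ell_{\gamma_m}(x)$, produces
\[
\bigl(\cos\theta_1^{(m)}+\cos\theta_2^{(m)}\bigr)\,\Delta\tau \;\le\; \ell_{\gamma_m}(x) - v_{\Lambda_+}(\ell_{\gamma_m}).
\]

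The main obstacle, and the technical heart of the proof, is to show that the right-hand side vanishes as $m\to\infty$. The plan here is to exploit the exact trace identity \cref{eq:gntrace}, together with the transport rules for its inputs along the $\Lambda_+$-stretch path: the spiral lengths obey $l^L(t)=e^t l^L(0)$ and $l^R(t)=e^t l^R(0)$ (being segments of leaves of $\Lambda_+$), while the horocyclic lengths $h_j(t)$ decay according to \cref{eq:h_j}. As $l^L,l^R\to\infty$ the subdominant contributions $e^{(l^L-l^R)/2}h_1h_4$, $e^{(l^R-l^L)/2}h_2h_3$, and $e^{-(l^L+l^R)/2}$ in \cref{eq:gntrace} are exponentially suppressed, so a careful asymptotic analysis of $2\log F_m(0) - \tfrac{2F_m'(0)}{F_m(0)}$ where $F_m$ denotes the right-hand side of \cref{eq:gntrace} should yield the desired vanishing. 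Once this technical step is completed, passing $m\to\infty$ in the displayed inequality gives $2\Delta\tau\le 0$, contradicting $\Delta\tau>0$ and establishing the lemma.
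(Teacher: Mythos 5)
Your proposal matches the paper's proof in all essentials: the same test curves $\gamma_m$ obtained by Dehn-twisting around $\alpha$, the same reduction via the chain rule in the Fenchel--Nielsen coordinates of the $4$-holed sphere, the same use of Wolpert's cosine formula to get $\partial_{\tau_\alpha}\ell_{\gamma_m}\to 2$, and the same asymptotic analysis of the trace identity \cref{eq:gntrace} together with the stretch transport rules \cref{eq:diff1} and \cref{eq:h_j}. The paper phrases the contradiction as $\|w\|_{\mathrm{Th}}\geq w(\log\ell_{\gamma_m})>1$ for $m$ large rather than as $2\Delta\tau\leq 0$ in the limit, but this is only a cosmetic reorganization of the same underlying inequality $\ell_{\gamma_m}(x)-v_{\Lambda_+}(\ell_{\gamma_m})\to 0$.
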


\begin{proof}
We show that any $w\in T_x\teich(S)$ satisfying the three listed properties necessarily has Thurston norm strictly larger than $1$. In particular, we show that there is some $\gamma_m$, the $m$-times iterated Dehn twist of the shortest closed geodesic $\gamma_0$ intersecting $\alpha$ at two points, for which  $\|w\|_{\mathrm{Th}}\geq w(\log\ell_{\gamma_m})>1$. 

Note that the numbers $\ell_{\beta_1},\ell_{\beta_2},\ell_{\beta_3},\ell_{\beta_4},\ell_{\alpha}$ and $\tau_{\alpha}$ are parameters of the Teichm\"{u}ller space of $\Sigma$.
This implies that $d\ell_{\gamma_m}$ is a linear combination of $d{\ell_\alpha}$, $d\ell_{\beta_1}, d\ell_{\beta_2}, d\ell_{\beta_3}, d\ell_{\beta_4}$ and $d{\tau_\alpha}$.
Since the $d\tau_\alpha$-component of $d\ell_{\gamma_m}$ is equal to $\partial_{\tau_\alpha}(\ell_{\gamma_n})$, we have
\[
w(\log\ell_{\gamma_m})
=\frac{w(\ell_{\gamma_m})}{\ell_{\gamma_m}}
=\frac{v_{\Lambda_+}(\ell_{\gamma_m})+\partial_{\tau_{\alpha}}(\ell_{\gamma_m})\epsilon}{\ell_{\gamma_m}},
\]
for $\epsilon=w(\tau_{\alpha})-v_{\Lambda_+}(\tau_{\alpha})>0$ (by condition $(3)$). This is possible because $w$ and $v_{\Lambda_+}$ agree with regard to their $d{\ell_\alpha},d{\ell_{\beta_1}}, d{\ell_{\beta_2}},d{\ell_{\beta_3}}$, and $d\ell_{\beta_4}$-components (see conditions~{(1)} and {(2)}), and only differ in their $d{\ell_{\tau_\alpha}}$-components (see condition~{(3)}).

We approximate the right-most term using Wolpert's cosine formula \cite[\S3]{Wol}: for $m\gg0$, the geodesic $\gamma_m$ intersects $\alpha$ at an angle close to $0$ (at both points), and hence $\partial_{\tau_{\alpha}}(\ell_{\gamma_m})\approx 2$. 
Thus, we may assume, by setting $m$ sufficiently high, that $\frac{\partial_{\tau_{\alpha}}(\ell_{\gamma_m})\epsilon}{\ell_{\gamma_m}}\approx\tfrac{2\epsilon}{\ell_{\gamma_m}}>\frac{\epsilon}{\ell_{\gamma_m}}$. 
Our goal therefore becomes to show the last inequality of 
\[
\|w\|_{\mathrm{Th}}\geq w(\log\ell_{\gamma_m})>\tfrac{v_{\Lambda_+}(\ell_{\gamma_m})+\epsilon}{\ell_{\gamma_m}(x)}>1.
\]
To this end, we now turn to computing $v_{\Lambda_+}(\ell_{\gamma_m})$.

Observe that $v_{\Lambda_+}(\ell_{\gamma_m})$ can be determined by taking $\ell_{\gamma_m}$ of the $e^t$-stretch path $x_t$, based at $x_0=x$, for  $\Lambda_+$ and computing the  derivative of ${l_{\gamma_m}(t):=\ell_{\gamma_m}(x_t)}$ at $t=0$. 
Therefore, we differentiate \cref{eq:gntrace} and substitute in \cref{eq:diff1} to replace $(l^L)'$ and $(l^R)'$. Due to the comparative complexity of \cref{eq:diff2}, we do not insert it directly into our present computations, but reserve this for a later step in our analysis. In any case, differentiating \cref{eq:gntrace} and rearranging slightly we obtain

\begin{align*}
v_{\Lambda_+}(\ell_{\gamma_m})
=&
\left.\frac{\tfrac{d}{dt} 2\cosh(\tfrac{1}{2}l_{\gamma_m}(t)))}{\sinh(\tfrac{1}{2}l_{\gamma_m}(t))}\right|_{t=0}\\
=&
\tfrac{(l^L(0)+l^R(0))}{2\sinh(\tfrac{1}{2}l_{\gamma_m}(0))}
\left(e^{\frac{1}{2}(l^L(0)+l^R(0))}(1+h_1(0)h_2(0))(1+h_3(0)h_4(0))\right.
\\
&
+\left.e^{\frac{1}{2}(l^L(0)-l^R(0))}h_1(0)h_4(0)
+e^{\frac{1}{2}(l^R(0)-l^L(0))}h_2(0)h_3(0)
+e^{-\frac{1}{2}(l^L(0)+l^R(0))}\right) \left(*\right)
\\
&
+\left.\tfrac{e^{\frac{1}{2}(l^L(0)+l^R(0))}}{\sinh(\tfrac{1}{2}l_{\gamma_m}(0))}
\right((h_1'(0)h_2(0)+h_1(0)h_2'(0))(1+h_3(0)h_4(0))
\\
&
+(1+h_1(0)h_2(0))(h_3'(0)h_4(0)+h_3(0)h_4'(0))
\\
&
+e^{-l^R(0)}(h_1'(0)h_4(0)+h_1(0)h_4'(0))+e^{-l^L(0)}(h_2'(0)h_3(0)+h_2(0)h_3'(0))
\\
&
-l^L(0)e^{-l^L(0)}h_2(0)h_3(0)-l^R(0)e^{-l^R(0)}h_1(0)h_4(0)
\\
&
\left.-(l^L(0)+l^R(0))e^{-(l^L(0)+l^R(0))}\right)  \left(**\right)
.
\end{align*}
The terms in the first two lines (*) in the above expression are positive, whereas the remaining lines (**) are all negative (since $h_j'(0)<0$). In fact, the total sum of the terms (*) is precisely equal to $(l^L(0)+l^R(0))\mathrm{coth}(\tfrac{1}{2}l_{\gamma_m}(0))>l^L(0)+l^R(0)$. 
On the other hand, by the triangle inequality, the difference between $l^L(0)+l^R(0)$ and $l_{\gamma_m}(0)$ is less than $h_1(0)+h_2(0)+h_3(0)+h_4(0)$. Moreover, recall from \cref{eq:h_j} that
\[
h_1(0)
=\frac{e^{-d_1(0)}+e^{-d_2(0)}}{1-e^{-l_{\gamma_m}(0)}}.
\]
As $m\to\infty$, both $d_1(0)$ and $d_2(0)$ grow with order $O(m\ell_{\alpha}(x))$, and we see therefore that $h_1(0)$ behaves as $O(e^{-m\ell_{\alpha}(x)})$ as $m\to\infty$. This order of growth with respect to $m$ holds for all four $h_j(0)$ terms. 
Its immediate consequence is that for $m$ sufficiently large, 
\begin{align}
(l^L(0)+l^R(0))\mathrm{coth}(\tfrac{1}{2}l_{\gamma_m}(0))+\tfrac{\epsilon}{2}
>l^L(0)+l^R(0)+\tfrac{\epsilon}{2}
>l_{\gamma_m}(0).
\end{align}
Let us now turn to the terms in  (**): the linear growth of $d_j$ with respect to $m$ ensures that $h_1'(0)$, as given by \cref{eq:diff2}, behaves as $O(me^{-m\ell_{\alpha}(x)})$. By symmetry, so do all four $h_j'(0)$ terms. This, coupled with the fact that 
\[
\tfrac{e^{\frac{1}{2}(l^L(0)+l^R(0))}}{\sinh(\tfrac{1}{2}l_{\gamma_m}(0))}\to1\text{ as }m\to\infty, 
\]
implies that the dominant term in the (**) summands is of order $O(me^{-m\ell_{\alpha}(x)})$, and this too shrinks to $0$ as $m\to\infty$. Therefore, for sufficiently large $m$,  $\frac{\epsilon}{2}$ suffices to cover the total negativity of the (**) summands. We have therefore shown that for sufficiently large $m$,
\[
v_{\Lambda_+}(\ell_{\gamma_m})+\epsilon=
\text{the sum of}\left(*\right)+\tfrac{\epsilon}{2}+\text{the sum of}\left(**\right)+\tfrac{\epsilon}{2}>\ell_{\gamma_m}(0):=\ell_{\gamma_m}(x),\]
as desired.
\end{proof}

\begin{remark}
\label{rmk:mintwist}
By symmetry, \cref{lem:maxtwist} tells us that there are no Thurston norm unit vectors $w\in\mathbf{S}_x \subsetneq T_x\teich(S)$ such that the following conditions are all satisfied
\begin{enumerate}
\item
$w(\ell_{\alpha})=v_{\Lambda_-}(\ell_{\alpha})=\ell_{\alpha}(x)$, where $\ell_{\alpha}:\teich(S)\to\mathbb{R}$ is the hyperbolic length function for $\alpha$;

\item
for each $j=1,2,3,4$, $w(\ell_{\beta_j})=v_{\Lambda_-}(\ell_{\beta_j})=\ell_{\beta_j}(x)$;

\item
$w(\tau_{\alpha})<v_{\Lambda_-}(\tau_{\alpha})$, where $\tau_{\alpha}:\teich(S)\to\mathbb{R}$ is a Fenchel--Nielsen twist function for $\alpha$.

\end{enumerate}

In particular, this follows directly from \cref{lem:maxtwist} by taking $\alpha^{-1}$ as input instead of $\alpha$.
\end{remark}

\begin{definition}[twist width]
We refer to $(v_{\Lambda_+}-v_{\Lambda_-})(\tau_{\alpha})$ as the \emph{twist width} of $\alpha$ with respect to $\beta_1,\beta_2,\beta_3,\beta_4$.
\end{definition}

\begin{lemma}
\label{lem:width}
The twist width of $\alpha$ with respect to $\beta_1,\beta_2,\beta_3,\beta_4$ is well defined, \ie it does not depend on the choice of the twist coordinate $\tau_\alpha$ or on that of the extension of $\lambda_\pm$ to  $\Lambda_\pm$.
\end{lemma}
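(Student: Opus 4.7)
The plan is to handle the $\tau_\alpha$-independence and the extension-independence separately. For $\tau_\alpha$-independence, I observe that any two Fenchel--Nielsen twist conventions $\tau_\alpha,\tau_\alpha'$ for $\alpha$ relative to a pants decomposition containing $\alpha,\beta_1,\ldots,\beta_4$ differ by a function of adjacent length coordinates, so $d\tau_\alpha-d\tau_\alpha'$ lies in the $\mathbb{R}$-span of $\{d\ell_\alpha,d\ell_{\beta_1},\ldots,d\ell_{\beta_4}\}$. Since both $v_{\Lambda_+}$ and $v_{\Lambda_-}$ uniformly stretch $\alpha$ and the $\beta_i$ by the common factor $e^t$, their difference vanishes on each of $\ell_\alpha,\ell_{\beta_i}$, whence $(v_{\Lambda_+}-v_{\Lambda_-})(\tau_\alpha)=(v_{\Lambda_+}-v_{\Lambda_-})(\tau_\alpha')$.

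For extension-independence, let $\Sigma\subset S$ denote the $4$-holed sphere bounded by $\beta_1,\ldots,\beta_4$. I first argue that $v_{\Lambda_\pm}(\tau_\alpha)$ is insensitive to the choice of extension outside $\Sigma$: the crowned annulus around $\alpha$ constructed at the end of \cref{s:crown} is assembled from exactly those ideal triangles in the complement of $\Lambda_\pm$ whose vertex spirals to $\alpha$, so by \cref{lem:gencrown} and \cref{rmk:oppositespiral} the quantity $v_{\Lambda_\pm}(\tau_\alpha)$ depends only on those leaves of $\Lambda_\pm$ spiralling into $\alpha$. Since the $\beta_i$ separate $\alpha$ from $S\setminus\Sigma$, no such leaf lies outside $\Sigma$, so one may freely modify the extension outside $\Sigma$ without affecting the twist width. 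Next, for extensions of $\Lambda_\pm$ chosen to agree outside a small neighbourhood $U\subset\Sigma$ of $\alpha\cup\alpha^L\cup\alpha^R$, the stretches $\mathrm{stretch}(x,\Lambda_\pm,t)$ coincide on $S\setminus U$, and on each component of $\Sigma\setminus\alpha$ they produce hyperbolic pairs of pants with the common stretched boundary lengths $(e^t\ell_\alpha,e^t\ell_{\beta_i},e^t\ell_{\beta_j})$, which are mutually isometric by the rigidity of hyperbolic pants regardless of the chosen lamination. The only remaining freedom in the stretched metric is the gluing twist along $\alpha$, so $v_{\Lambda_+}-v_{\Lambda_-}=c\,E_\alpha$ for this adapted choice, and $c$ equals the twist width by the $\tau_\alpha$-independence above.

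It remains to show that the twist width equals $c$ for general (non-adapted) extensions, i.e., that further modifications of the triangulating leaves of the pants components of $\Sigma\setminus\alpha$ inside $\Lambda_\pm$ leave $(v_{\Lambda_+}-v_{\Lambda_-})(\tau_\alpha)$ unchanged. This is the main obstacle, since \cref{lem:gencrown} and \cref{rmk:oppositespiral} express $v_{\Lambda_\pm}(\tau_\alpha)$ in terms of shearings of leaves spiralling to $\alpha$ from each side, and these shearings are genuinely sensitive to the triangulation choice. The plan is to verify that the shearing-dependent contributions cancel in the difference $v_{\Lambda_+}(\tau_\alpha)-v_{\Lambda_-}(\tau_\alpha)$: this should follow from the orientation-reversing involution of each pants component fixing the boundary setwise and reversing the orientation of $\alpha$, which pairs any triangulation of the $\Lambda_+$-extension with a mirror triangulation of the $\Lambda_-$-extension so that the shearing contributions enter the two formulas symmetrically and cancel in the difference, leaving only $\ell_\alpha$-dependent and $\lambda_\pm$-intrinsic terms. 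Handling the special cases flagged in \cref{rmk:specialcases} (where some $\beta_i$ coincide) amounts to carrying out the same involutive pairing on each collapsed pants piece.
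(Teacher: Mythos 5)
Your handling of $\tau_\alpha$-independence is sound and in fact more explicit than the paper's (which merely asserts that the derivative of the coordinate-change function "is cancelled out"); identifying that $d\tau_\alpha-d\tau_\alpha'$ lies in the span of $\{d\ell_\alpha,d\ell_{\beta_1},\ldots,d\ell_{\beta_4}\}$ and that $v_{\Lambda_+},v_{\Lambda_-}$ agree on all of those is exactly the right reason. Your observation that extensions outside $\Sigma$ are irrelevant (via the crowned-annulus reduction) is also correct, as is the hyperbolic-pants-rigidity argument showing $v_{\Lambda_+}-v_{\Lambda_-}=cE_\alpha$ in the adapted case.

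However, the extension-independence argument has a genuine gap, and you flag it yourself: "It remains to show that the twist width equals $c$ for general (non-adapted) extensions.\ldots The plan is to verify\ldots" is a promissory note, not a proof. The orientation-reversing involution heuristic does not close it. An involution of a pants component fixing the boundary setwise and reversing $\alpha$ sends a $\lambda_+$-extension to a $\lambda_-$-extension, so at best it relates the twist width for one \emph{specific pair} $(\Lambda_+,\Lambda_-)$ to that for another specific pair $(\Lambda_+',\Lambda_-')$; it does not show that $v_{\Lambda_+}(\tau_\alpha)$ is, on its own, independent of how $\lambda_+$ is extended by leaves spiralling to the $\beta_j$, which is what the lemma actually requires (otherwise you could not compare two different $\Lambda_+$-extensions against a single fixed $\Lambda_-$). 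Moreover the involution is an extended mapping class acting nontrivially on $\teich(S)$ and on the coordinate $\tau_\alpha$, so the claimed "cancellation in the difference" is not a formal consequence of symmetry and would need an explicit computation.

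The paper closes this gap with a structural argument you did not use: the maximal twist lemma (\cref{lem:maxtwist}) and its mirror (\cref{rmk:mintwist}) show that $v_{\Lambda_+}(\tau_\alpha)=\max_{w\in N}w(\tau_\alpha)$ and $v_{\Lambda_-}(\tau_\alpha)=\min_{w\in N}w(\tau_\alpha)$ where $N=N_x([\alpha+\beta_1+\beta_2+\beta_3+\beta_4])$. Since $N$ is intrinsic to the multicurve $\alpha\cup\beta_1\cup\cdots\cup\beta_4$ and the hyperbolic structure $x$, each of $v_{\Lambda_\pm}(\tau_\alpha)$ is individually extension-independent, and the twist width follows. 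This is a cleaner route because it characterises the stretch derivative of $\tau_\alpha$ as an extremum over a face of $\mathbf S_x$ rather than chasing shearing coordinates through \cref{lem:gencrown}. If you want to rescue your computational approach, the shearing-cancellation would need to be carried out explicitly from the formulas in \cref{lem:gencrown} and \cref{rmk:oppositespiral}, and you would also need to address whether changing the triangulating leaves that spiral into the $\beta_j$ alters the twist along $\beta_j$ and thereby feeds back into $\tau_\alpha$.
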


\begin{proof}
Any two choices of Fenchel--Nielsen twist coordinates for $\alpha$ differ by some analytic function on Teichm\"uller space. The derivative of this function is cancelled out when taking the difference between $v_{\Lambda_+}$ and $v_{\Lambda_-}$ and it hence renders twist width unaffected by the choice of the twist coordinate.

Next, we observe that  \cref{lem:maxtwist} tells us that for any complete extension $\Lambda_+$, setting 
\begin{align*}
N:=&N_x([\alpha\sqcup\beta_1\sqcup\beta_2\sqcup\beta_3\sqcup\beta_4])\\
=&\left\{v\in\mathbf{S}_x
\mid
\iota_x([\alpha\sqcup\beta_1\sqcup\beta_2\sqcup\beta_3\sqcup\beta_4])(v)=\|v\|_{\mathrm{Th}}
\right\}, 
\end{align*}
we have
\[
v_{\Lambda_+}(\tau_{\alpha})
=
\max_{w\in N}
w(\tau_{\alpha}),
\]
and hence the value of $v_{\Lambda_+}(\tau_{\alpha})$ is independent of the choice of the extension from $\lambda_+$ to $\Lambda_+$. Likewise, \cref{rmk:mintwist} tells us that 
\[
v_{\Lambda_-}(\tau_{\alpha})
=
\min_{w\in N}
w(\tau_{\alpha}),
\]
and hence is also independent of the choice of the extension of $\lambda_\pm$ to $\Lambda_\pm$. Therefore, the twist width $(v_{\Lambda_+}-v_{\Lambda_-})(\tau_{\alpha})$ is also independent of the extension, and is well defined.
\end{proof}

\begin{definition}[$\epsilon$-slender pairs of pants]\label{defn:slender}
For a pair of pants $P$, denote the $\epsilon$-neighbourhood of $\partial P$ by $B(\partial P,\epsilon)$. For $\epsilon>0$, we say that $P$ is $\epsilon$-slender if and only if $P\setminus B(\partial P,\epsilon)$ consists of two connected components, each of which is either a hypercycle-bounded punctured monogon or triangle (see \cref{fig:34}). We further say that a sequence $\{P_i\}_{i\in\mathbb{N}}$ of pairs of pants is \emph{asymptotically slender} if and only if for every $\epsilon>0$, there is some index $I_\epsilon\in\mathbb{N}$ such that $P_i$ is $\epsilon$-slender for all $i>I_\epsilon$.
\end{definition}

\begin{figure}[h!]
\begin{center}
\includegraphics[scale=0.4]{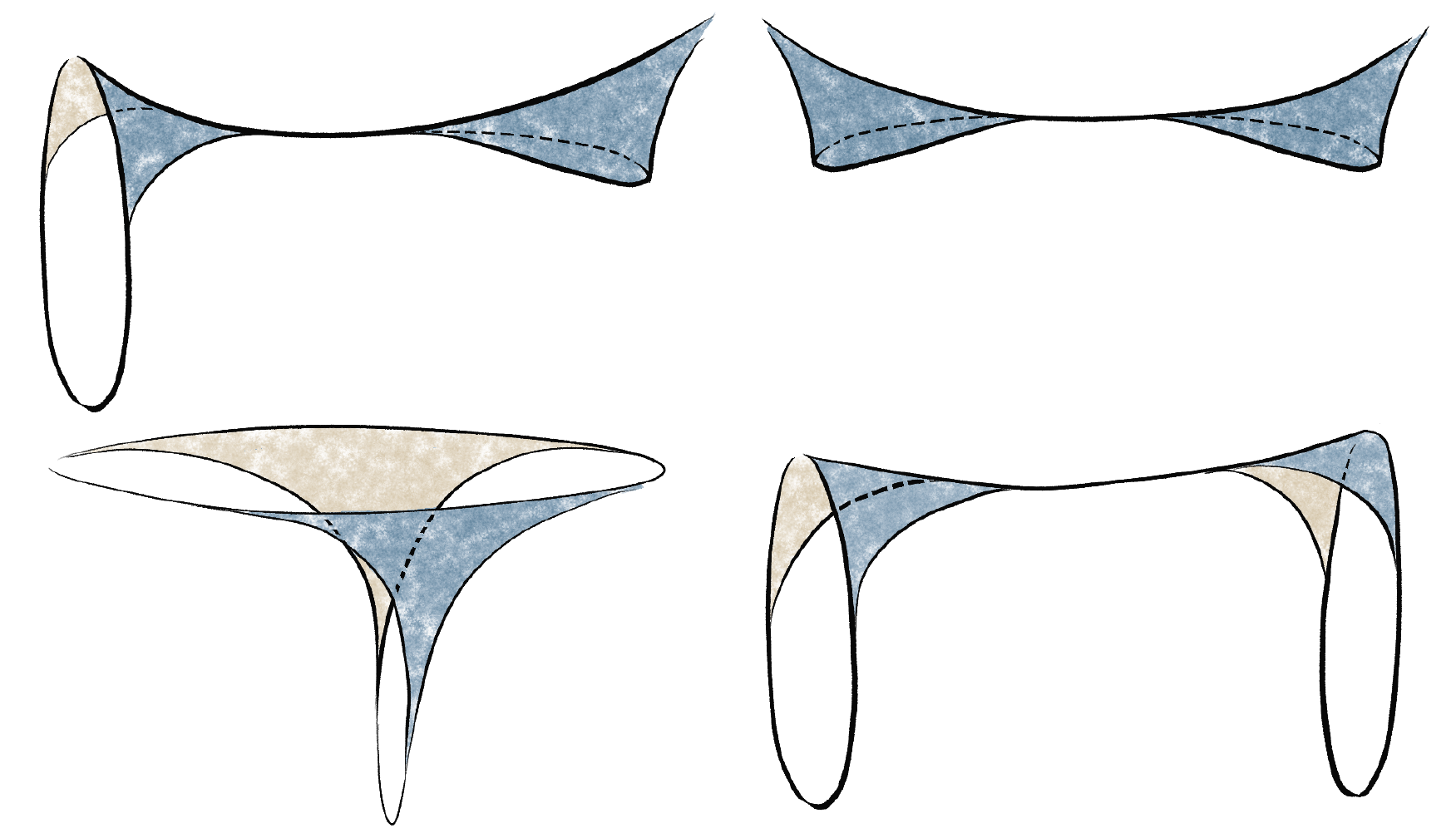}
\caption{The four types of $\epsilon$-slender pairs of pants.}
\label{fig:34}
\end{center}
\end{figure}

\begin{lemma}
\label{lem:shearblowup}
Consider an asymptotically slender sequence of pairs of pants $\{P_i\}_{i\in\mathbb{N}}$ and denote their boundaries as a union of (possibly length $0$) curves $\partial P_i=\alpha_i\sqcup\beta_i\sqcup\gamma_i$. Then, 
\[
\lim_{i\to\infty}
\min\left\{
\left|
\ell_{\alpha_i}
-
\ell_{\beta_i}
-
\ell_{\gamma_i}
\right|,
\left|
\ell_{\alpha_i}
-
\ell_{\beta_i}
+
\ell_{\gamma_i}
\right|,
\left|
\ell_{\alpha_i}
+
\ell_{\beta_i}
-
\ell_{\gamma_i}
\right|
\right\}
=\infty.
\]
\end{lemma}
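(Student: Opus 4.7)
The plan is to translate $\epsilon$-slenderness of a pair of pants into a quantitative bound on its common-perpendicular seam lengths, and then use right-angled hexagon trigonometry to convert short seams into large ``triangle-inequality deficits'' among the boundary lengths.

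First, I would exploit the canonical orientation-reversing isometric involution on $P$, whose fixed locus is the union of the three orthogeodesic seams $s_{\alpha\beta}, s_{\beta\gamma}, s_{\gamma\alpha}$ joining pairs of boundary components (plus the infinite rays running into any cusps). Cutting $P$ along these seams yields two mirror-image right-angled hexagons (possibly with ideal vertices), each having six sides alternating between ``cuff-halves'' of lengths $\ell_\alpha/2, \ell_\beta/2, \ell_\gamma/2$ and the three seams. Within each hexagon, $P\setminus B(\partial P,\epsilon)$ is obtained by removing $\epsilon$-strips along the three cuff-half sides; two such strips overlap precisely when their joining seam has length $<2\epsilon$. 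Comparing with \cref{defn:slender}, the requirement that each of the two components of $P\setminus B(\partial P,\epsilon)$ be a hypercycle-bounded triangle or punctured monogon forces every finite seam of $P$ to have length $<2\epsilon$, with analogous horocyclic constraints in the cusped subcases of \cref{fig:34}. In particular, asymptotic slenderness forces every finite seam of $P_i$ to tend to $0$.

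Next, I would invoke the right-angled hexagon formula
\[
\cosh(s_{\alpha\beta}) \;=\; \frac{\cosh(\ell_\gamma/2) + \cosh(\ell_\alpha/2)\cosh(\ell_\beta/2)}{\sinh(\ell_\alpha/2)\sinh(\ell_\beta/2)},
\]
with cyclic analogues for $s_{\beta\gamma}$ and $s_{\gamma\alpha}$. Using the identity $\cosh(A)\cosh(B)-\sinh(A)\sinh(B) = \cosh(A-B)$, I would rewrite this as
\[
(\cosh(s_{\alpha\beta})-1)\sinh(\ell_\alpha/2)\sinh(\ell_\beta/2) \;=\; \cosh(\ell_\gamma/2) + \cosh\!\left(\tfrac{\ell_\alpha-\ell_\beta}{2}\right).
\]
As $s_{\alpha\beta}\to 0$ the left side vanishes while the right is bounded below by $2$, so $\sinh(\ell_\alpha/2)\sinh(\ell_\beta/2)\to\infty$, which in particular forces $\ell_\alpha+\ell_\beta\to\infty$. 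Extracting the dominant exponentials yields that $\cosh(s_{\alpha\beta})-1$ is asymptotic to a positive combination of $e^{(\ell_\gamma-\ell_\alpha-\ell_\beta)/2}$ and $e^{-\min(\ell_\alpha,\ell_\beta)}$, so $s_{\alpha\beta}\to 0$ forces both $\ell_\alpha+\ell_\beta-\ell_\gamma\to+\infty$ and $\min(\ell_\alpha,\ell_\beta)\to+\infty$. Applying this separately to each of the three seams, all three deficits
\[
\ell_\alpha+\ell_\beta-\ell_\gamma,\qquad \ell_\beta+\ell_\gamma-\ell_\alpha,\qquad \ell_\gamma+\ell_\alpha-\ell_\beta
\]
tend to $+\infty$. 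These are, up to signs, precisely the three expressions inside the $\min$ in the lemma statement, so their minimum tends to $+\infty$ as claimed.

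The hard part is the first step, namely establishing the precise translation of $\epsilon$-slenderness into a sharp bound on the finite seam lengths. For the fully geodesic-bordered case this is essentially a picture-proof: cutting the hexagon by the three $\epsilon$-strips leaves a hypercycle-bounded triangle in its interior exactly when all three seams are shorter than $2\epsilon$. The cusped subcases are more delicate, because some $\ell_i=0$ and some seams are infinite; one must identify how the horocyclic widths of the cusp neighborhoods in each component constrain the boundary-length differences and verify that even when only a subset of the seams is finite, the corresponding combination of the three deficits still diverges. Organising this verification along the four types of $\epsilon$-slender pants depicted in \cref{fig:34} should yield the lemma uniformly.
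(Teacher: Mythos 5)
Your approach is a genuinely different route from the paper's. The paper works with the shearing parameters of a complete geodesic lamination on $P_i$: it invokes the known identity expressing each shearing parameter as $\tfrac12|\pm\ell_{\alpha_i}\pm\ell_{\beta_i}\pm\ell_{\gamma_i}|$, and then argues geometrically that the shearing parameters blow up because the central stable triangles of the two complementary ideal triangles of $P_i$ are pushed into separate components of $P_i\setminus B(\partial P_i,\epsilon)$, hence are arbitrarily far apart as $i\to\infty$. You instead work with the three boundary-to-boundary orthogeodesics (seams) and right-angled hexagon trigonometry. In the fully geodesic-bordered case your argument is correct and self-contained: the slenderness condition does force all three finite seams under $2\epsilon$, and your rearrangement
\[
(\cosh s_{\alpha\beta}-1)\sinh(\ell_\alpha/2)\sinh(\ell_\beta/2)
= \cosh(\ell_\gamma/2)+\cosh\!\bigl(\tfrac{\ell_\alpha-\ell_\beta}{2}\bigr)
\]
cleanly extracts the divergence of $\ell_\alpha+\ell_\beta-\ell_\gamma$ from $s_{\alpha\beta}\to0$; cycling gives all three deficits.

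The genuine gap is in the cusped case, and it is more than a ``delicate verification.'' When $\ell_{\gamma_i}=0$, two of the three orthogeodesics are infinite rays, so only $s_{\alpha_i\beta_i}\to0$ survives from your seam-length translation. That single condition yields $\ell_{\alpha_i}+\ell_{\beta_i}\to\infty$ and $\min(\ell_{\alpha_i},\ell_{\beta_i})\to\infty$, but it does \emph{not} give $|\ell_{\alpha_i}-\ell_{\beta_i}|\to\infty$ — and two of the three quantities in the statement reduce precisely to $|\ell_{\alpha_i}-\ell_{\beta_i}|$ when $\ell_{\gamma_i}=0$. Concretely, a one-cusped pants with $\ell_{\alpha_i}=\ell_{\beta_i}=n\to\infty$ has $s_{\alpha_i\beta_i}\to0$ yet $|\ell_{\alpha_i}-\ell_{\beta_i}|\equiv 0$; the lemma's conclusion fails for this sequence, so one must show this sequence is not asymptotically slender. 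This is exactly what the paper's stable-triangle argument detects — the shearing parameter corresponding to $\tfrac12|\ell_\alpha-\ell_\beta|$ vanishes, so the stable triangles of the two ideal triangles touch rather than separate, and the two components merge. Your appeal to ``horocyclic widths of the cusp neighborhoods'' is where this extra constraint would have to come from, but as written it is a placeholder rather than an argument, and it is not clear it would recover $|\ell_\alpha-\ell_\beta|\to\infty$. The paper's route is more uniform precisely because the shearing-parameter formulas and the stable-triangle picture degenerate continuously at cusps, whereas the right-angled hexagon identity you use loses information once a side length hits zero.
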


\begin{proof}
For each pair of pants $P_i$, let $\sigma_{\alpha_i}$, $\sigma_{\beta_i}$ and $\sigma_{\gamma_i}$ respectively denote the  orthogeodesics on $P_i$ which respectively join $\beta_i$ to $\gamma_i$,  $\alpha_i$ to $\gamma_i$, and $\alpha_i$ to  $\beta_i$. By moving the endpoints of $\sigma_{\alpha_i}$, $\sigma_{\beta_i}$ and $\sigma_{\gamma_i}$ consistently  around $\alpha_i$, $\beta_i$ and $\gamma_i$ in one of the two directions and making them spiral infinitely, we obtain eight complete laminations of $P_i$. 
Let $\varsigma_{\alpha_i}, \varsigma_{\beta_i}$, and $\varsigma_{\gamma_i}$ be  leaves of $P_i$ corresponding to $\sigma_{\alpha_i}$, $\sigma_{\beta_i}$ and $\sigma_{\gamma_i}$ respectively, and $\lambda_i$ a complete geodesic lamination consisting of $\alpha_i, \beta_i, \gamma_i, \varsigma_{\alpha_i}, \varsigma_{\beta_i}$ and $\varsigma_{\gamma_i}$.
Denote the shearing parameters  $\varsigma_{\alpha_i}$, $\varsigma_{\beta_i}$ and $\varsigma_{\gamma_i}$ by $s_{\alpha_i}$, $s_{\beta_i}$ and $s_{\gamma_i}$ respectively. For either of the two choices of spiralling direction for $\alpha_i$, there are four possibilities for spiralling directions for the $\beta_i,\gamma_i$ pair. 
Since the length of a closed geodesic is equal to the absolute value of the sum of  shearing parameters of geodesics spiralling around it, (see e.g. \cite[\S4.2]{PT}), we see that there are four possible values for the shearing coordinate corresponding to $|s_{\alpha_i}|$ depending on spiralling, \ie
\begin{gather*}
\tfrac{1}{2}|\ell_{\alpha_i}+\ell_{\beta_i}+\ell_{\gamma_i}|,
\tfrac{1}{2}|\ell_{\alpha_i}-\ell_{\beta_i}-\ell_{\gamma_i}|,
\tfrac{1}{2}|\ell_{\alpha_i}-\ell_{\beta_i}+\ell_{\gamma_i}|,
\text {and }
\tfrac{1}{2}|\ell_{\alpha_i}+\ell_{\beta_i}-\ell_{\gamma_i}|.
\end{gather*}
The same holds also for $|s_{\beta_i}|$ and $|s_{\gamma_i}|$.
Therefore, by changing the directions of spiralling, it suffices to show that one of $|s_{\alpha_i}|, |s_{\beta_i}|, |s_{\gamma_i}|$ goes to $\infty$.

For small $\epsilon>0$,  the two ideal triangles in $P_i\setminus\lambda_i$ approximately agree with the two components of $P_i\setminus B(\partial P_i,\epsilon)$, and hence the centre of mass for the two central stable triangles lie in distinct components of $P_i\setminus B(\partial P_i,\epsilon)$. See \cref{fig:34} for a list of all possible configurations (up to permuting $\alpha_i,\beta_i$ and $\gamma_i$) for how these central stable triangles can be positioned relative to each other. We show that these central stable triangles may be set to be arbitrarily far apart as $i\to\infty$, which implies that the shearing coordinate of the leaf separating the two ideal triangles goes to $\infty$,  and hence that one of $|s_{\alpha_i}|, |s_{\beta_i}|, |s_{\gamma_i}|$ goes to $\infty$.
For simplicity, we consider the case when the leaf is $\varsigma_{\alpha_i}$.

For any $L>0$, take $\epsilon'>0$ sufficiently small so that the distance between the central stable triangle and the $\epsilon'$-thin part of each ideal triangle in $P_i\setminus\lambda_i$ is substantially larger than $L$. For any $\epsilon>0$, the condition of $\epsilon$-slenderness ensures that for $i>I_\epsilon$, the two components of $P\setminus B(\partial P,\epsilon)$ are separated by one or two orthogeodesics shorter than $2\epsilon$. Taking $\epsilon$ small enough separates the bulk of the $\epsilon'$-thick portion of each ideal triangle in $P_i\setminus\lambda_i$ from each other, thereby ensuring that the distance between their central stable triangles is at least $2L$, and hence that $|s_{\alpha_i}|>L$.
\end{proof}

\begin{theorem}[shrinking twist widths]\label{thm:shrink}
Consider a sequence of collections of curves $\{\alpha_i,\beta^L_i,\gamma^L_i,\beta^R_i,\gamma^R_i\}_{i\in\mathbb{N}}$ such that each of the two sequences $\{\alpha_i,\beta^L_i,\gamma^L_i\}_{i\in\mathbb{N}}$ and $\{\alpha_i,\beta^R_i,\gamma^R_i\}_{i\in\mathbb{N}}$ constitutes the boundary geodesics for an asymptotically slender sequence of pairs of pants $P_i^L, P_i^R$ on $(S,x)$ respectively.
Then the twist width of $\alpha_i$ with respect to $\beta^L_i,\gamma^L_i,\beta^R_i,\gamma^R_i$ tends to $0$.
\end{theorem}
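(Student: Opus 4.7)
The plan is to combine \cref{lem:width} with the explicit stretch map formulas of \cref{s:crown} to reduce the twist width to a concrete expression in $\ell_{\alpha_i}$ and shearing parameters, and then to show that asymptotic slenderness forces this expression to vanish.

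First, I would invoke \cref{lem:width} to choose complete extensions $\Lambda_+, \Lambda_-$ of $\lambda_\pm$ that agree outside $\Sigma_i$, the $4$-holed sphere bounded by $\beta^L_i, \gamma^L_i, \beta^R_i, \gamma^R_i$, and inside $\Sigma_i$ differ only in the spiralling directions around $\alpha_i$. Arguing as in the proof of \cref{thm:infkey}, the difference $v_{\Lambda_+} - v_{\Lambda_-}$ is a multiple of the Fenchel--Nielsen twist vector $E_{\alpha_i}$, and its coefficient can be read off from the crowned annulus around $\alpha_i$ built from the ideal triangles in $\Sigma_i \setminus \Lambda_\pm$ that spiral to $\alpha_i$. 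Differentiating \cref{lem:gencrown} and \cref{rmk:oppositespiral} at $t=0$ yields
\begin{equation*}
(v_{\Lambda_+} - v_{\Lambda_-})(\tau_{\alpha_i}) = 4\left(\frac{\ell_{\alpha_i} e^{-\ell_{\alpha_i}}}{1 - e^{-\ell_{\alpha_i}}} - \log(1 - e^{-\ell_{\alpha_i}})\right) + \Psi_{L,i} + \Psi_{R,i},
\end{equation*}
where $\Psi_{L,i}, \Psi_{R,i}$ are sums of $\log S^\pm + T^\pm/S^\pm$ contributions built from the shearing parameters of the leaves of $\Lambda_\pm$ inside $P^L_i$ and $P^R_i$ that spiral to $\alpha_i$.

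Next, I would observe that asymptotic slenderness forces all five boundary lengths to diverge: each pants decomposes into two right-angled hexagons whose three seams must all shrink simultaneously for slenderness, and the hexagon cosine rule $\cosh s_a = (\cosh(b/2)\cosh(c/2) + \cosh(a/2))/(\sinh(b/2)\sinh(c/2))$ implies that a seam tends to $0$ only when both of its non-adjacent boundary halves diverge. In particular $\ell_{\alpha_i} \to \infty$, whence $\tfrac{\ell_{\alpha_i} e^{-\ell_{\alpha_i}}}{1-e^{-\ell_{\alpha_i}}} \to 0$ and $\log(1-e^{-\ell_{\alpha_i}}) \to 0$, making the leading prefactor vanish. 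For $\Psi_{L,i}$ and $\Psi_{R,i}$, I would invoke \cref{lem:shearblowup} --- which forces the absolute shearing parameters on each asymptotically slender pants to blow up --- together with the extension freedom granted by \cref{lem:width} to arrange that every relevant shearing parameter tends to $+\infty$; then each $e^{-s}$ and $s e^{-s}$ summand vanishes, driving $\Psi_{L,i}, \Psi_{R,i} \to 0$.

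The main obstacle lies in guaranteeing the existence of extensions $\Lambda_\pm$ making all shearing parameters diverge positively, which in principle requires a case analysis on the possible complete extensions of $\lambda_\pm$ to $P^L_i$ and $P^R_i$. An alternative approach bypasses this sign issue by directly estimating $\log S + T/S$: for $s_n \to -\infty$ one has $\log(1 + e^{-s_n}) \sim -s_n$ whilst $s_n e^{-s_n}/(1+e^{-s_n}) \sim s_n$, yielding asymptotic cancellation. A sign-robust summand-by-summand version of this estimate, applied to the full $\Psi_{L,i}, \Psi_{R,i}$, should dispose of the sign concern completely and complete the proof.
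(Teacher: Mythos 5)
Your overall strategy --- reduce the twist width to a concrete expression via \cref{lem:gencrown} and \cref{rmk:oppositespiral}, establish $\ell_{\alpha_i}\to\infty$, and then invoke \cref{lem:shearblowup} --- is aligned with the paper's. However, there are two gaps in the sketch, one minor and one more serious.

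The minor one: asymptotic slenderness does \emph{not} force all five boundary lengths to diverge. \cref{lem:shearblowup} explicitly allows cusps, and a pair of pants with, say, $\beta^L_i$ a cusp can certainly be asymptotically slender. Your hexagon-law argument breaks down for degenerate hexagons with an ideal vertex, so the claim as stated is wrong. What is actually needed, and what is true, is only that $\ell_{\alpha_i}\to\infty$ --- the paper proves this by a shorter route (if a subsequence of $\{\ell_{\alpha_i}\}$ stayed bounded, slenderness would push it to $0$, contradicting discreteness of the simple length spectrum because $\alpha_i$ is an interior geodesic, not a cusp).

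The more serious gap is in the handling of signs. The ``type two'' derivative has the form
\[
g(b_0,b_1,b_2,b_3):=\log\Bigl(\sum_j e^{-b_j}\Bigr)+\frac{\sum_j b_j e^{-b_j}}{\sum_j e^{-b_j}},
\]
with $b_0=0$ and $b_1, b_2, b_3$ the partial sums of the shearing parameters. This expression is \emph{not} a sum of independent summands --- the denominator couples all four terms --- so an estimate ``summand by summand'' is not well-posed. What is true is that $g$ is invariant under a common shift $b_j\mapsto b_j - c$, so one may shift so that the minimum is $0$; but then $g\to 0$ only if \emph{every other} shifted exponent diverges, and this is exactly what fails if two of the $b_j$ stay within bounded distance of each other (e.g.\ if two of them both equal the minimum after shifting, the limit of $g$ is $\log 2$, not $0$). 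So the content you still need to supply is that all the pairwise differences $|b_j - b_k|$ blow up --- and since $b_1=\tfrac12(\ell_\alpha - \ell_\beta - \ell_\gamma)$, $b_2 - b_1 = \ell_\gamma$, $b_3 - b_2 = \ell_\beta$, etc., this is \emph{not} an immediate consequence of \cref{lem:shearblowup} alone. The paper handles this with a four-case analysis (Cases I--IV), together with a deliberately asymmetric choice of twist convention and lamination extension (twists anchored to $\beta^L, \beta^R$ for $\Lambda_+$ and to $\gamma^L, \gamma^R$ for $\Lambda_-$, plus the specific extensions of \cref{fig:37}). That choice is what makes the algebra clean; without it, or without some substitute case analysis, the ``sign-robust'' shortcut as you sketch it does not close the argument.
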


\begin{proof}
Take two sequences of complete geodesic laminations $\{\Lambda_{+,i}\}_{i\in\mathbb{N}}$ and $\{\Lambda_{-,i}\}_{i\in\mathbb{N}}$ which extend $\alpha_i\sqcup \beta^L_i\sqcup \gamma^L_i\sqcup \beta^R_i\sqcup \gamma^R_i$ and  satisfy the conditions of \cref{lem:maxtwist}. Our goal is to show that
\[
\lim_{i\to\infty}
(v_{\Lambda_{+,i}}-v_{\Lambda_{-,i}})(\tau_{\alpha_i})=0.
\]

Each of the laminations $\lambda_{\pm, i}$ consists of $\alpha_i$ and two geodesics spiralling around $\alpha_i$, which we denote by $\sigma^L_{1,\pm}$ for the one on the left  and $\sigma^R_{1,\pm}$ for the one on the right.
We recall from \cref{lem:width} that we may use these $\lambda_{\pm,i}$ and can choose
any extension of $\lambda_{\pm,i}$ to $\Lambda_{\pm,i}$, provided that each of $\Lambda_{\pm,i}$ is complete and contains each of $\lambda_{\pm,i}$ respectively and $\beta^L_i,\gamma^L_i,\beta^R_i,\gamma^R_i$.
Choose the extension of $\lambda_{\pm,i}$ to $\Lambda_{\pm,i}$ as in \cref{fig:37} by adding  geodesics $\sigma^L_{\pm,2}$ spiralling around $\alpha_i, \gamma_i^L$, and $\sigma^L_{\pm,4}$ spiralling around $\alpha_i, \beta_i^L$ on the left side of each of $\lambda_{\pm,i}$, and in the same way, $\sigma^R_{\pm,2}, \sigma^R_{\pm,4}$ on the right side of each $\lambda_{\pm, i}$. 
For each of $\sigma^L_{\pm, j}$ and $\sigma^R_{\pm,j}$ $(j=1,2,4)$, we denote its shearing parameter by $s^L_{\pm,j}$ and $s^R_{\pm,j}$.
Since we need to consider two ends of $\sigma^L_{\pm, 1}$ both spiralling around $\alpha_i$, and  contributions of both, we define $s^L_{\pm, 3}, s^R_{\pm, 3}$ to be the same as $s^L_{\pm,1}, s^R_{\pm,1}$ respectively.

Now, taking a covering associated with $\alpha_i$, we get a $(4,4)$-crowned annulus.
Then, we know from \cref{lem:gencrown} that $(v_{\Lambda_{+,i}}-v_{\Lambda_{-,i}})(\tau_{\alpha_i})$ at $x\in\teich(S)$ is a linear combination of derivatives (with respect to $t$ at $t=0$) of
\begin{itemize}
\item \textbf{type one terms}
\begin{align*}
\log(1-e^{-e^t\ell_{\alpha_i}})-e^t\log(1-e^{-\ell_{\alpha_i}})\text{, and}
\end{align*}
\item \textbf{type two terms}
\begin{gather*}
\begin{align*}
e^t&\log\left(1+e^{-s_1(x)}+e^{-(s_1(x)+s_2(x))}+e^{-(s_1(x)+s_2(x)+s_{3}(x))}\right)\\
-&\log\left(1+e^{-e^t s_1(x)}+e^{-e^t(s_1(x)+s_2(x))}+e^{-e^t(s_1(x)+s_2(x)+s_{3}(x))}\right),
\end{align*}
\end{gather*}
\end{itemize}
where  $s_1(x), s_2(x), s_3(x)$ are one of  $s_{\pm,1}^L(x), s^L_{\pm 2}, s^L_{\pm,3}(x)$ and $s^R_{\pm,1}, s^R_{\pm,2} , s_{\pm,3}^R$, where the signs are the same for all three.

The condition that $\{P_i^L\}$ and $\{P_i^R\}$ are asymptotically slender tells us that either $\ell_{\alpha_i}\to\infty$ or $\ell_{\alpha_i}\to0$. Due to the discreteness of the (simple) length spectrum of $(S,x)$, this can only happen if $\ell_{\alpha_i}$ eventually equals $0$. However, this then contradicts $\alpha_i$ being an interior geodesic rather than a boundary cusp. 
Thus we have  $\ell_{\alpha_i}\to\infty$.
Now employing the newly established fact that $\ell_{\alpha_i}\to\infty$, we see that the derivatives at $t=0$ of the type one terms, which take the form 
\begin{align}
\frac{\ell_{\alpha_i} e^{-\ell_{\alpha_i}}}{1-e^{-\ell_{\alpha_i}}}-\log(1-e^{-\ell_{\alpha_i}}),\label{eq:termtype1}
\end{align}
necessarily tend to $0$ as $i\to\infty$.

We next consider the type two terms. 
%
%
%
We work with the left pair of pants bordered by $\alpha_i,\beta^L_i,\gamma^L_i$.
The right pair of pants is similarly dealt with. 
Choose the extension of $\lambda_{\pm,i}$ to $\Lambda_{\pm,i}$ as explained before fixing orientations on $\beta_i^L, \gamma_i^L$ and making $\sigma^L_{\pm,2}$ and $\sigma^L_{\pm,4}$ spiral towards the given orientations.
Then, we have
\begin{align*}
s_{+,2}^L&=\ell_{\gamma^L_i},\quad
s_{+,4}^L=\ell_{\beta^L_i},\quad
s_{+,1}^L=s_{+,3}^L=\tfrac{1}{2}(\ell_{\alpha_i}-\ell_{\beta^L_i}-\ell_{\gamma^L_i}),\text{ and}\\
s_{-,2}^L&=-\ell_{\gamma^L_i},\quad
s_{-,4}^L=-\ell_{\beta^L_i},\quad
s_{-,1}^L=s_{-,3}^L=\tfrac{1}{2}(\ell_{\alpha_i}+\ell_{\beta^L_i}+\ell_{\gamma^L_i}).
\end{align*}

We first analyse the $\Lambda_{-,i}$ case. For $\Lambda_{-,i}$ (see \cref{fig:37}), the shearing coordinates in question correspond to
\begin{align*}
s_{-,1}^L&=\tfrac{1}{2}(\ell_{\alpha_i}+\ell_{\beta^L_i}+\ell_{\gamma^L_i}),\\
s_{-,1}^L+s_{-,2}^L&=\tfrac{1}{2}(\ell_{\alpha_i}-\ell_{\beta^L_i}+\ell_{\gamma^L_i}),\\
s_{-,1}^L+s_{-,2}^L+s_{-,3}^L&=\ell_{\alpha_i}+\ell_{\gamma^L_i}.
\end{align*}

\begin{figure}[h!]
\begin{center}
\includegraphics[scale=1]{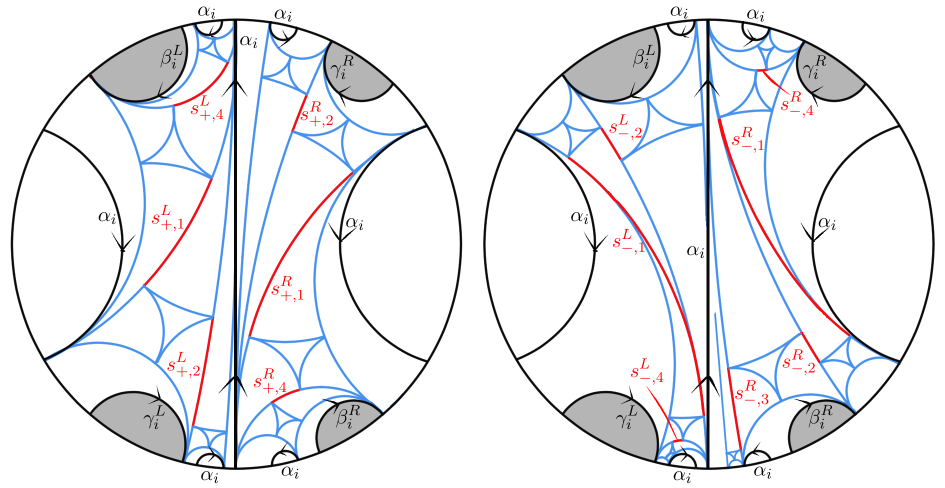}
\caption{A depiction of the shearing parameters  $s_{\pm,j}^L$ and  $s_{\pm,j}^R$ for  $\Lambda_{+,i}$ (left) and $\Lambda_{-,i}$ (right).}
\label{fig:37}
\end{center}
\end{figure}

Both $s_{-,1}^L$ and $s_{-,1}^L+s_{-,2}^L+s_{-,3}^L$ are positive and tend to $\infty$ (because $\ell_{\alpha_i}\to\infty$). By \cref{lem:shearblowup}, the absolute value of $s_{-,1}^L+s_{-,2}^L$ tends to $\infty$ as $i\to\infty$. If $s_{-,1}^L+s_{-,2}^L$ is positive, then the derivative of the type two terms for $\Lambda_{-,i}$ takes the form
\begin{equation}
\begin{split}
&\log\left(1+e^{-s_{-,1}^L}+e^{-(s_{-,1}^L+s_{-,2}^L)}+e^{-(s_{-,1}^L+s_{-,2}^L+s_{-,3}^L)}\right)\\
&+\frac{s_{-,1}^Le^{-s_{-,1}^L}+(s_{-,1}^L+s_{-,2}^L)e^{-(s_{-,1}^L+s_{-,2}^L)}+(s_{-,1}^L+s_{-,2}^L+ s_{-,3}^L)e^{-(s_{-,1}^L+s_{-,2}^L+s_{-,3}^L)}}{1+e^{-s_{-,1}^L}+e^{-(s_{-,1}^L+s_{-,2}^L)}+e^{-(s_{-,1}^L+s_{-,2}^L+s_{-,3}^L)}},
\label{eq:derivative}
\end{split}
\end{equation}
and $s_{-,1}^L, s_{-,1}^L+s_{-,2}^L$ and $s_{-,1}^L+s_{-,2}^L+s_{-,3}^L$ go to $\infty$. 

If $s_{-,1}^L+s_{-,2}^L$ is negative, then by adding  $e^t(s_{-,1}^L+s_{-,2}^L)$ to each term, which cancels out after the subtraction, the type two terms take the form
\begin{equation}
\begin{split}
e^t&\log\left(1+e^{-(s_{-,1}^L-(s_{-,1}^L+s_{-,2}^L))}+e^{(s_{-,1}^L+s_{-,2}^L)}+e^{-((s_{-,1}^L+s_{-,2}^L+s_{-,3}^L)-(s_{-,1}^L+s_{-,2}^L))}\right)\\
-&\log\left(1+e^{-e^t(s_{-,1}^L-(s_{-,1}^L+s_{-,2}^L))}+e^{e^t(s_{-,1}^L+s_{-,2}^L)}+e^{-e^t((s_{-,1}^L+s_{-,2}^L+s_{-,3}^L)-(s_{-,1}^L+s_{-,2}^L))}\right),
\end{split}
\end{equation}
and note that $(s_{-,1}^L-(s_{-,1}^L+s_{-,2}^L)), -(s_{-,1}^L+s_{-,2}^L),((s_{-,1}^L+s_{-,2}^L+s_{-,3}^L)-(s_{-,1}^L+s_{-,2}^L))$  all go to $\infty$ as $i \to\infty$. Thus, each of the derivatives at $t=0$ of the type two terms may always be written in the form 
\begin{align}
\log\left(1+e^{-u}+e^{-v}+e^{-w}\right)+\frac{ue^{-u}+ve^{-v}+we^{-w}}{1+e^{-u}+e^{-v}+e^{-w}},\label{eq:genericform}
\end{align}
for $u,v,w$ which go to $\infty$ as $i\to\infty$, and hence each derivative term also tends to $0$.

We finally consider the case of $\Lambda_{+,i}$, which is more delicate than that of $\Lambda_{-,i}$. Our present goal is to show also in this setting that the derivatives of the type two terms tends to $0$. \cref{fig:37} tells us that
\begin{align*}
s_{+,1}^L&=\tfrac{1}{2}(\ell_{\alpha_i}-\ell_{\beta^L_i}-\ell_{\gamma^L_i}),\\
s_{+,1}^L+s_{+,2}^L&=\tfrac{1}{2}(\ell_{\alpha_i}-\ell_{\beta^L_i}+\ell_{\gamma^L_i}),\\
s_{+,1}^L+s_{+,2}^L+s_{+,3}^L&=\ell_{\alpha_i}-\ell_{\beta^L_i}.
\end{align*}
It is possible for any of these terms to be positive or negative, and hence we invoke \cref{lem:shearblowup} to see that precisely one of the following options holds for sufficiently large $i$.
\begin{enumerate}[I.]
\item
$\ell_{\alpha_i}\gg\ell_{\beta^L_i}+\ell_{\gamma^L_i}$;
\item
$\ell_{\beta^L_i}\gg\ell_{\alpha_i}+\ell_{\gamma^L_i}$;
\item
$\ell_{\gamma^L_i}\gg\ell_{\alpha_i}+\ell_{\beta^L_i}$;
\item
$\ell_{\alpha_i}\ll\ell_{\beta^L_i}+\ell_{\gamma^L_i}$, $\ell_{\beta^L_i}\ll\ell_{\alpha_i}+\ell_{\gamma^L_i}$, and $\ell_{\gamma^L_i}\ll\ell_{\alpha_i}+\ell_{\beta^L_i}$,
\end{enumerate} 
and to hence deal with this problem on a case-by-case basis.\medskip

For Case~I, we leave the expression for the type two terms unchanged. All three sums $s_{+,1}^L$, $s_{+,1}^L+s_{+,2}^L$ and $s_{+,1}^L+s_{+,2}^L+s_{+,3}^L$ tend to $\infty$ as $i\to\infty$, which leads to \cref{eq:derivative}, with the $-$-subscripts changed to $+$,  tending to $0$ as $i \to \infty$.

For Case~II, by adding $e^t(\ell_{\alpha_i}-\ell_{\beta^L_i})$ to each term, which cancels out after the subtraction, the type two terms  are re-expressed as:
\begin{align*}
e^t&\log\left(1
+e^{-\tfrac{1}{2}(\ell_{\beta^L_i}-\ell_{\alpha_i}-\ell_{\gamma^L_i})}
+e^{-\tfrac{1}{2}(\ell_{\beta^L_i}-\ell_{\alpha_i}-\ell_{\gamma^L_i})-\ell_{\gamma^L_i}}
+e^{-(\ell_{\beta^L_i}-\ell_{\alpha_i}-\ell_{\gamma^L_i})-\ell_{\gamma^L_i}}\right)\\
-&\log\left(1
+e^{-\tfrac{e^t}{2}(\ell_{\beta^L_i}-\ell_{\alpha_i}-\ell_{\gamma^L_i})}
+e^{-\tfrac{e^t}{2}(\ell_{\beta^L_i}-\ell_{\alpha_i}-\ell_{\gamma^L_i})-e^t\ell_{\gamma^L_i}}
+e^{-e^t(\ell_{\beta^L_i}-\ell_{\alpha_i}-\ell_{\gamma^L_i})-e^t\ell_{\gamma^L_i}}\right).
\end{align*}
As before, its derivative at $t=0$ is expressed in the form of \cref{eq:genericform} with $u,v,w\to\infty$ as $i\to\infty$, and hence tends to $0$ as $i\to\infty$.

For Case~III, we likewise re-express the type two terms as:
\begin{align*}
e^t&\log\left(1
+e^{-\tfrac{1}{2}(\ell_{\gamma^L_i}-\ell_{\alpha_i}-\ell_{\beta^L_i})-\ell_{\beta^L_i}}
+e^{-\tfrac{1}{2}(\ell_{\gamma^L_i}-\ell_{\alpha_i}-\ell_{\beta^L_i})-(\ell_{\alpha_i}+\ell_{\beta^L_i})}
+e^{-(\ell_{\gamma^L_i}-\ell_{\alpha_i}-\ell_{\beta^L_i})-(\ell_{\alpha_i}+\ell_{\beta^L_i})}\right)\\
-&\log\left(1
+e^{-\tfrac{e^t}{2}(\ell_{\gamma^L_i}-\ell_{\alpha_i}-\ell_{\beta^L_i})-e^t\ell_{\beta^L_i}}
+e^{-\tfrac{e^t}{2}(\ell_{\gamma^L_i}-\ell_{\alpha_i}-\ell_{\beta^L_i})-e^t(\ell_{\alpha_i}+\ell_{\beta^L_i})}
+e^{-e^t(\ell_{\gamma^L_i}-\ell_{\alpha_i}-\ell_{\beta^L_i})-e^t(\ell_{\alpha_i}+\ell_{\beta^L_i})}\right).
\end{align*}
This again suffices to ensure that the derivative at $t=0$ of this term tends to $0$ as $i\to\infty$.

Finally, for Case~IV, we re-express the type two terms as:
\begin{align*}
e^t&\log\left(1
+e^{-\tfrac{1}{2}(\ell_{\beta^L_i}+\ell_{\gamma^L_i}-\ell_{\alpha_i})}
+e^{-\tfrac{1}{2}((\ell_{\alpha_i}+\ell_{\gamma^L_i}-\ell_{\beta^L_i})-(\ell_{\beta^L_i}+\ell_{\gamma^L_i}-\ell_{\alpha_i}))}
+e^{-\tfrac{1}{2}(\ell_{\alpha_i}+\ell_{\gamma^L_i}-\ell_{\beta^L_i})}\right)\\
-&\log\left(1
+e^{-\tfrac{e^t}{2}(\ell_{\beta^L_i}+\ell_{\gamma^L_i}-\ell_{\alpha_i})}
+e^{-\tfrac{e^t}{2}((\ell_{\alpha_i}+\ell_{\gamma^L_i}-\ell_{\beta^L_i})-(\ell_{\beta^L_i}+\ell_{\gamma^L_i}-\ell_{\alpha_i}))}
+e^{-\tfrac{e^t}{2}(\ell_{\alpha_i}+\ell_{\gamma^L_i}-\ell_{\beta^L_i})}\right).
\end{align*}
And this again suffices to ensure that the derivative at $t=0$ of this term tends to $0$ as $i\to\infty$.

We therefore see that all the type one and type two terms associated with the left pair of pants tend to $0$. By symmetry, this is true also for the terms associated with the right pair of pants. Therefore, the twist width of $\alpha_i$ with respect to $\beta^L_i,\gamma^L_i,\beta^R_i,\gamma^R_i$ tends to $0$.
\end{proof}

 \section{The geometry of convex bodies}
\label{sec:convexbodies}
Throughout this section, we consider a convex body, \ie a compact convex subset with non-empty interior, of an $n$-dimensional vector space $\mathbb V$.
We shall introduce  several geometric/combinatorial structures associated with  points on the boundary of the convex body $D$. These include notions such as the \emph{dimension} of a point, \emph{face-dimension}, \emph{adherence},  \emph{adherence-dimension},  \emph{codimension} and a few others. Several among these concepts are new, they apply to arbitrary compact convex bodies in finite-dimensional vector spaces, and may be of general interest. We later apply them to points in $\mathbf{S}_x^*:=\iota_x(\pml(S))\subsetneq T^*_x\teich(S)$ and its dual unit sphere in the tangent space, thereby laying the groundwork for establishing topological rigidity, \cref{first part}. At the same time, we shall recall some classical notions from convexity theory.
We start by recalling the important notion of support hyperplane for the convex body $D$.

\begin{definition}[support hyperplane]
\label{defn:hyperplane}
For a point $P$ in the boundary $\partial D$  of a convex bodyt $D\subsetneq \mathbb{V}$, an affine hyperplane $\pi\subset\mathbb{V}$ is a \emph{support hyperplane} to $\partial D$ at $P$ if the following hold:
\begin{itemize}
\item
$\pi$ contains $P$ and 
\item
one of the two closed half-spaces bounded by $\pi$ contains $D$.
\end{itemize}
\end{definition}

\subsection{Dimension and faces}

\begin{definition}[dimension of a point]
\label{defn:dimension}
For any point $P$ on $\partial D$, let  $\mathbb{A}_P$ denote a maximal-dimensional affine subspace of $\mathbb{V}$ such that $P$ is contained in the interior of the set $\mathbb{A}_P\cap \partial D$, where the latter is regarded as a subset of $\mathbb{A}_P$ (endowed with the standard topology on affine subspaces). We define the \emph{dimension of $P$ (with respect to $\partial D$)} as the dimension of $\mathbb{A}_P$:
\[
\dim_{\partial D}(P)
:=
\dim \mathbb{A}_P.
\]
\end{definition}

\begin{remark}[Example]
Consider the convex body $D=[0,2]\times[0,2]\subset\mathbb{R}^2$. We first note (somewhat trivially) that the dimension of any point on $\partial D$ must be strictly less than $2$ as they are not in the interior of $D$. For the point $P=(0,1)$, the $y$-axis $\{x=0\}$ is a $1$-dimensional (affine) subspace such that the intersection $\{x=0\}\cap \partial D$ is a closed interval containing $P$ as an interior point and hence $\dim_{\partial D}(P)=1$. For $Q=(0,0)$, the intersection $\mathbb{L}\cap \partial D$ of every line $\mathbb{L}$ through $Q$, besides the $x$ and $y$ axes, with $\partial D$ is simply equal to $\{Q\}$, which is a nowhere-dense subset of $\mathbb{L}$. The two exceptional lines which pass through $Q$ are the $x$ and $y$ axes, and their intersections with $\partial D$ is a closed interval having $Q$ as a boundary (and non-interior) point. Therefore, $Q$ has dimension strictly less than $1$, \ie $\dim_{\partial D}(Q)=0$.

\end{remark}

\begin{lemma}[alternative formulation of dimension]
\label{lem:alternative}
\cref{defn:dimension} may be alternatively, and equivalently, formulated by replacing $\mathbb{A}_P\cap \partial D$ with $\mathbb{A}_P\cap D$ in the statement of this definition.
\end{lemma}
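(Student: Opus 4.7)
The plan is to verify the two characterizations by proving mutual inequality: writing $d_1(P)$ for the dimension defined via $\mathbb{A}\cap\partial D$ and $d_2(P)$ for the dimension defined via $\mathbb{A}\cap D$, I will show $d_1(P)=d_2(P)$. The inequality $d_1(P)\le d_2(P)$ is immediate, since $D$ is compact and hence closed, so $\partial D\subset D$, and any affine subspace $\mathbb{A}$ through $P$ witnessing a relatively open neighborhood of $P$ inside $\mathbb{A}\cap\partial D$ trivially has that same neighborhood sitting inside $\mathbb{A}\cap D$.

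The reverse inequality $d_2(P)\le d_1(P)$ is the substantive content, and it is here that one uses the hypothesis $P\in\partial D$. Given an affine subspace $\mathbb{A}$ realizing $d_2(P)$, there is a relatively $\mathbb{A}$-open ball $B$ around $P$ contained in $\mathbb{A}\cap D$. Convexity of $D$ together with $P\in\partial D$ supplies a support hyperplane $\pi$ to $D$ at $P$ in the sense of \cref{defn:hyperplane}, so that $D\subset\pi^+$ for one of the two closed half-spaces bounded by $\pi$. Since $B\subset D\subset\pi^+$ and $P\in\pi$, I plan to invoke the elementary observation that an affine subspace containing a relative neighborhood of an on-$\pi$ point inside $\pi^+$ must itself lie in $\pi$: otherwise $\mathbb{A}\cap\pi$ would be a proper codimension-one affine subspace of $\mathbb{A}$, and $\mathbb{A}$ would transversally cross $\pi$ at $P$, forcing arbitrarily close points of $B$ onto the open half-space opposite $\pi^+$. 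Hence $\mathbb{A}\subset\pi$.

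Because $\pi$ is a support hyperplane, every point of $\pi\cap D$ is a boundary point of $D$: any $\mathbb{V}$-neighborhood of such a point contains points on the side of $\pi$ opposite $\pi^+$, which necessarily lie outside $D$. Consequently $B\subset\mathbb{A}\cap D\subset\pi\cap D\subset\partial D$, exhibiting $P$ as an $\mathbb{A}$-interior point of $\mathbb{A}\cap\partial D$, and so $\dim\mathbb{A}\le d_1(P)$. Combining this with the reverse inequality gives the claimed equivalence. The main obstacle is the passage from $B\subset D$ to $B\subset\partial D$; it is precisely at this step that the support hyperplane theorem is needed to pin $\mathbb{A}$ inside a hyperplane which itself meets $D$ only in $\partial D$.
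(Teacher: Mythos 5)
Your proposal is correct and follows essentially the same strategy as the paper's proof: both directions hinge on the support hyperplane at $P$, with the easy inclusion giving one inequality and the hyperplane argument forcing $\mathbb{A}\cap D$ into $\partial D$ for the other. Your version is organized slightly more cleanly — explicitly isolating the claim $\mathbb{A}\subset\pi$ before invoking that a support hyperplane meets $D$ only in $\partial D$ — whereas the paper phrases the same content as a dichotomy on whether $\mathbb{A}'\cap D$ contains an interior point of $D$, but the underlying argument is identical.
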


\begin{proof}
Let $\mathbb{A}$ denote a maximal-dimensional affine subspace of $\mathbb{V}$ such that $P$ is contained in the interior of the set $\mathbb{A}\cap \partial D$, where the latter is regarded as a subset of $\mathbb{A}$. We first note that, by definition, there is an open ball $B$ in $\mathbb{A}$ so that
\[
P\in B\subset \mathbb{A}\cap \partial D\subseteq \mathbb{A}\cap D,
\]
and $P$ is an interior point of $\mathbb{A}\cap D$ (regarded as a subset $\mathbb{A}$). Hence the notion of dimension defined in \cref{defn:dimension} is smaller than or equal to the alternative proposed here.

Conversely, consider any maximal-dimensional affine subspace $\mathbb{A}'$ containing $P$ as an interior point of $\mathbb{A}'\cap D$, with $\mathbb{A}'\cap D$ regarded as a subset of $\mathbb{A}'$. 
This means that there is some open ball $B'\subset \mathbb{A}'\cap D$ in $\mathbb{A}'$ containing $P$. The existence of a supporting hyperplane to $D$ at $P\in\partial D$ asserts that there is some linear function $w^*:\mathbb{V}\to\mathbb{R}$ such that $P\in\ker w^*$ and the interior of $D$ is a subset of the preimage $(w^*)^{-1}((0,\infty))$. Considering the restriction of $w^*$ on $\mathbb A'$, this implies that either 
$
\mathbb{A}'\cap D=\mathbb{A}'\cap \partial D,
$
or $\mathbb{A}'\cap D$ contains an interior point $Q\in D\setminus \partial D$ of $D$, and hence $Q$ is not an element of the closed affine half-space $(w^*)^{-1}((-\infty,0])$. 
This  means that $P$ lies in the affine half-space $(w^*)^{-1}((-\infty,0])\cap \mathbb{A}'$ but $Q$ does not, thereby indicating that $P$ is a boundary point of $\mathbb{A}'\cap D$. This contradicts the original assumption that $P$ is an interior point of $\mathbb{A}'\cap D$, and hence excludes the latter possibility. Therefore, we have
$
P\in B'\subset \mathbb{A}'\cap D=\mathbb{A}'\cap \partial D,
$
and we see that the two notions of dimension are equal.
\end{proof}

\begin{definition}[face for a point]
\label{defn:face}
Given a point $P\in\partial D$, we refer to the intersection $\mathbb{A}_P\cap\partial D$ of any maximal dimensional affine subspace $\mathbb{A}_P$ (constructed as in \cref{defn:dimension}) with $\partial D$ as a \emph{face (of $\partial D$) for $P\in \partial D$}. We shall also use the following terminology:
\begin{itemize}
\item The dimension of the face $\mathbb A_P \cap \partial D$ is defined to be the dimension of $\mathbb A_P$.
\item
We refer to the collection of faces $\mathbb{A}_P\cap\partial D$ for $P\in\partial D$ as the collection of faces on $\partial D$.
\item
We refer to any face that  is a subset of another face as a \emph{subface}.
\item
For each face $F$ of $\partial D$, we call the subset of points in $F$ which have an open-ball neighbourhood (with respect to the topology induced on the face) contained in $F$, the \emph{interior} of the face.
\end{itemize}
\end{definition}
It will turn out in \cref{lem:uniqueface} that for any $P \in \partial D$, its face is unique, independent of the choice of $\mathbb A_P$.

We next show that the above notion of face is equivalent to the one found in the literature on convex geometry  (see, e.g. \cite[p. ~162]{Rock}):

\begin{definition}[face]
\label{defn:classicalface}
A convex subset $F\subset D$ is called a \emph{face} of $D$ if and only if for every $x\in F$ and every $y,z\in D$ such that $x$ lies on the open interval between $y$ and $z$, we have $y,z\in F$.
A face $F$ is said to be of $P$ when $P$ is an interior point of $F$, \ie for any $Q \in F$ there is a segment with an endpoint at $Q$ which contains $P$ in its interior.
\end{definition}

We shall show that the two notions are equivalent, \ie a face in the sense of \cref{defn:classicalface} is always a face of some $P \in \partial D$ in the sense of \cref{defn:face}  if we ignore the special case where $F=D$, which is permitted under \cref{defn:classicalface} but not \cref{defn:face}.

\begin{proof}[Proof that the two notions of faces agree]
Consider a face $F\subset \partial D$ in the sense of \cref{defn:face}, and let $P\in F$ be an interior point of $F$. We assume (for a proof by contradiction) that $F$ is not a face in the classical sense (\cref{defn:classicalface}) and so there is some $x\in F$ and $y,z\in D$ such that $x$ lies on the open interval between $y$ and $z$, but neither $y$ nor $z$ lies in $F$.
Note that it is impossible to have  $y\in F$ and $z\notin F$ (or $z \in F$ and $y\notin F$) because the fact that $x$ lies on the line joining $y$ and $z$ would then mean that $z$ lies on a line generated by two elements of $F$ and hence would lie on the affine space $\mathbb{A}$ generated by $F$ and hence would lie in $F=\mathbb{A}\cap D$.

Suppose that neither $y$ nor $z$ is contained in $F$.
Then the convex hull of $F\cup\{y,z\}$ is strictly larger than $F$.
Let $\mathbb{A}'$ be the affine space generated by (the convex hull of) $F\cup\{y,z\}$. 
Then, $\mathbb{A}'$ properly contains $\mathbb{A}$, and hence $\mathbb{A}'$ has dimension strictly greater than $\mathbb{A}$ by $1$ (as the line joining $y$ and $z$ transverse to $F$ is only $1$-dimensional). This in turn means that the convex hull of $F\cup\{y,z\}$ is a topological closed ball of one dimension higher than $F$, and  hence $\mathbb{A}'\cap D$, which necessary contains the convex hull of $F\cup \{y,z\}$, must have the same dimension as $\mathbb{A}'$. In particular, $P$ must be an interior point of $\mathbb{A}'\cap D$ (or it would be a boundary point of $F$, thereby contradicting \cref{defn:face}). This contradicts the maximality of $\mathbb{A}$, thereby showing that our assumption for contradiction is false, and that $F$ must be a face in the classical sense.

Conversely, given a face $F\subset D$ in the classical sense (\cref{defn:classicalface}) which is not equal to $D$ itself, let $\mathbb{A}$ denote the affine subspace generated by $F$. We first note that $F$ is necessarily equal to $\mathbb{A}\cap D$; for otherwise, we would be able to produce an interval with one endpoint based at an interior point of $F$ and the other endpoint in $\mathbb{A}\cap D\setminus F$, thereby contradicting \cref{defn:classicalface}. Since $F$ is (by definition) convex and closed (a consequence of the definition), it is a topological closed ball of the same dimension as $\mathbb{A}$. 
Let $P\in F$ be an interior point of $F$, regarded as a subset of $\mathbb{A}$. We need to show that there is no affine space $\mathbb{A}'$ strictly larger than $\mathbb{A}$ such that $\mathbb{A}'\cap D$ contains $P$ as an interior point of $\mathbb{A}'\cap D$, the latter regarded as a subset of $\mathbb{A}'$. If there is such an affine space $\mathbb A'$, then (using the fact that $P$ is such an interior point) we can find a small interval in $\mathbb{A}'\cap D$ which has non-empty transverse intersection with $F=\mathbb{A}\cap D$. This then contradicts the classical definition (\cref{defn:classicalface}) of a face, and hence $\mathbb{A}$ is indeed maximal. Therefore, $F$ is a face for $P$, in the sense of \cref{defn:face}.
\end{proof}

\begin{remark}\label{faceintersection}
It is clear from either \cref{defn:face} or \cref{defn:classicalface} that any non-trivial intersection of a collection of faces is itself a face.
\end{remark}

We now show that every point $P$ on $\partial D$ has a unique face for $P$. This will form the basis for the next notion of dimension which we wish to introduce. 

\begin{lemma}[face uniqueness]
\label{lem:uniqueface}
For every $P\in \partial D$, there is a unique face for $P$ in the sense of \cref{defn:face}. 
\end{lemma}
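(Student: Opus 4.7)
The strategy is to reduce uniqueness to a standard fact about classical faces of convex sets, using the equivalence of \cref{defn:face} and \cref{defn:classicalface} just established. That equivalence, combined with \cref{lem:alternative}, says exactly: a face for $P$ in the sense of \cref{defn:face} is a classical face $F$ of $D$ in whose \emph{relative} interior $P$ lies (i.e.\ $P$ is an interior point of $F = \mathbb{A}_P \cap D$, viewed as a subset of its affine span $\mathbb{A}_P$). So it suffices to prove the following generic statement: if $F_1, F_2 \subseteq D$ are classical faces and $P$ lies in the relative interior of each, then $F_1 = F_2$. Given this, the maximality clause of \cref{defn:face} becomes automatic, because every legal choice of $\mathbb{A}_P$ yields the same classical face (hence the same affine span, hence the same dimension).

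\textbf{The uniqueness argument.} Take an arbitrary $x \in F_1$; I will show $x \in F_2$. Because $P$ is in the relative interior of $F_1$ and $x, P \in F_1$, the displacement $P - x$ lies in the affine span of $F_1$, so for all sufficiently small $\epsilon > 0$ the point $y := (1+\epsilon) P - \epsilon x$ still lies in $F_1 \subseteq D$. By construction, $P$ lies strictly between $x$ and $y$ on the open segment $(x,y)$, and $x, y \in D$. Since $P \in F_2$ and $F_2$ is a classical face of $D$, \cref{defn:classicalface} forces both endpoints $x, y$ to lie in $F_2$. Hence $F_1 \subseteq F_2$; exchanging the roles of $F_1$ and $F_2$ yields the reverse inclusion.

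\textbf{Where the only subtlety lies.} Everything rests on correctly translating \cref{defn:face} into the language of relative interiors of classical faces, which is immediate from the equivalence proof preceding the lemma (together with \cref{lem:alternative}). No new convex-geometric input beyond what the paper has already developed is needed; in particular, no appeal to support hyperplanes or to the theory of exposed faces is required, which is fortunate because a face in the sense of \cref{defn:face} need not be exposed. I expect this to yield a short and self-contained proof of the lemma.
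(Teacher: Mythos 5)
Your proof is correct, and it takes a genuinely different route from the paper's.

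The paper proves uniqueness directly at the level of affine subspaces, without detouring through the classical definition: given two candidate subspaces $\mathbb{A}_P$ and $\mathbb{A}_P'$ each containing $P$ in the relative interior of its intersection with $D$, it forms the affine span $\mathbb{A}_P''$ of $\mathbb{A}_P\cup\mathbb{A}_P'$, observes that the convex hull of the two open balls $B\subset\mathbb{A}_P\cap D$ and $B'\subset\mathbb{A}_P'\cap D$ around $P$ is an open ball of $\mathbb{A}_P''$ contained in $D$ by convexity, and concludes from the maximality clause in \cref{defn:dimension} that $\mathbb{A}_P=\mathbb{A}_P''=\mathbb{A}_P'$. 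Your proof instead leans on the just-established equivalence between \cref{defn:face} and \cref{defn:classicalface} and then invokes the standard convex-geometry fact that a point cannot lie in the relative interior of two distinct classical faces, proved by the segment trick ($x\mapsto y=(1+\epsilon)P-\epsilon x$, with $P$ on the open interval $(x,y)$, so \cref{defn:classicalface} forces $x\in F_2$). Both arguments are short and self-contained; the paper's is closer to the raw definition and does not require the equivalence proof as a prerequisite, which is perhaps why the authors chose it, while yours makes the connection to textbook convexity theory more transparent and, as a bonus, extracts the reusable statement that distinct faces have disjoint relative interiors (which is also the content of the paper's later \cref{thm:convextoface}). One small point worth making explicit in a final write-up: you implicitly need $\mathbb{A}_P$ to equal the affine span of $F=\mathbb{A}_P\cap D$ so that ``interior point of $\mathbb{A}_P\cap D$ in the topology of $\mathbb{A}_P$'' really is the relative interior of $F$; this holds because $F$ contains an open ball of $\mathbb{A}_P$ around $P$, but it deserves a sentence.
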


\begin{proof}
Let $\mathbb{A}_P$ and $\mathbb{A}_P'$ be affine subspaces of $\mathbb{V}$ which satisfy the conditions stated in the alternative formulation of \cref{defn:dimension} obtained by replacing $\partial D$ with $D$ via \cref{lem:alternative}. Let $\mathbb A_P''$ be the convex hull of $\mathbb{A}_P \cup \mathbb{A}_P'$. 
This is an affine subspace, and we now show that $\mathbb{A}_P''$ also satisfies the conditions for (the alternative formulation of) \cref{defn:dimension}. 
By assumption, there exist sets $B\subset \mathbb{A}_P\cap D$ and $B'\subset\mathbb{A}_P'\cap D$ which contain $P$ and are open subsets respectively in $\mathbb{A}_P$ and $\mathbb{A}_P'$. In particular, we may choose $B$ and $B'$ to be convex open balls in $\mathbb A_P$ and $\mathbb A_{P'}$ respectively. This means that the convex hull of $B\cup B'$, which we denote by $B''$, is a convex open ball in $\mathbb{A}_P''$ containing $P$. Furthermore, since $D$ is convex, it must contain $B''$, and hence $B''$ is a subset of $\mathbb{A}_P''\cap D$, which is open in $\mathbb{A}_P''$. 
If $\mathbb A_{P''}$ has dimension higher than that of $\mathbb A_P$ (resp. $\mathbb A_{P'}$), it contradicts the maximality of the dimension of $\mathbb A_P$ (resp. $\mathbb A_{P'}$) contained in \cref{defn:dimension}.
Therefore, we have
$
\mathbb{A}_P=\mathbb{A}_P''=\mathbb{A}_P',
$
which implies the uniqueness of $\mathbb{A}_P$, and hence of the face $\mathbb{A}_P\cap \partial D=\mathbb{A}_P\cap D$.
\end{proof}

\begin{definition}[convex stratification]\label{defn:convexstrat}
\cref{lem:uniqueface} canonically partitions $\partial D$ into the interiors of faces. We refer to this decomposition into strata (\ie the interiors of faces) as the \emph{convex stratification} of $\partial D$. Note that the strata are convex and hence are open cells. 
\end{definition}

\begin{remark}
\label{rem:uniqueaffine}
The proof of \cref{lem:uniqueface} shows that not only is the face for $P$ unique, but the affine subspace $\mathbb{A}_P$ containing the face for $P$ is also unique. In particular, the fact that $P$ is an interior point of $\mathbb{A}_P\cap \partial D$ with respect to the topology on $\mathbb{A}_P$ implies that $\mathbb{A}_P$ is the affine subspace generated by the face for $P$.
\end{remark}

\begin{lemma}
\label{thm:facesupport}
Every support hyperplane at $P$ contains the face for $P$.
\end{lemma}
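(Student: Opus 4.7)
The plan is to translate the support hyperplane condition into a linear inequality and then exploit the fact that $P$ is an interior point of its face $F$ in the affine hull $\mathbb{A}_P$ to push the equality condition onto every other point of $F$.

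Concretely, fix a support hyperplane $\pi$ to $\partial D$ at $P$. By \cref{defn:hyperplane}, there is a non-zero linear functional $w^*\in\mathbb{V}^*$ and a constant $c\in\mathbb{R}$ such that
\[
\pi=\{v\in\mathbb{V}\mid w^*(v)=c\},\qquad w^*(v)\leq c\ \text{ for all }v\in D,\qquad w^*(P)=c.
\]
The first step is to record the following useful consequence: if $w^*(v)=c$ and $v$ lies in the relative interior of a segment $[y,z]\subset D$, then $w^*(y)=w^*(z)=c$. Indeed, writing $v=ty+(1-t)z$ with $t\in(0,1)$, the bound $w^*(y),w^*(z)\leq c$ together with $tw^*(y)+(1-t)w^*(z)=c$ forces equality in both cases.

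Next, let $F=\mathbb{A}_P\cap\partial D$ denote the (unique, by \cref{lem:uniqueface}) face for $P$, and let $Q\in F$ be arbitrary. Using \cref{defn:dimension} and the equivalent formulation of \cref{lem:alternative}, the point $P$ is an interior point of $\mathbb{A}_P\cap D$ in the affine topology on $\mathbb{A}_P$. Hence the open segment of the line through $P$ and $Q$ in $\mathbb{A}_P$ extends slightly past $P$ to a point $Q'\in \mathbb{A}_P\cap D\subseteq D$, and $P$ lies in the relative interior of the segment $[Q,Q']\subset D$. Applying the observation above to this segment (with $v=P$) yields $w^*(Q)=w^*(Q')=c$, so that $Q\in\pi$. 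Since $Q\in F$ was arbitrary, we conclude $F\subseteq\pi$.

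The argument is essentially forced by the definitions, so I do not anticipate a genuine obstacle; the only subtle point is to ensure that the extension $Q'$ of the segment past $P$ actually lies in $D$, which is precisely why we invoke the alternative formulation of dimension (\cref{lem:alternative}) guaranteeing that $P$ sits in the interior of $\mathbb{A}_P\cap D$ rather than merely in $\mathbb{A}_P\cap\partial D$.
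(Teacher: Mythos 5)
Your argument is correct and takes essentially the same route as the paper: both proofs exploit (via \cref{lem:alternative}) that $P$ is an interior point of $\mathbb{A}_P\cap D$ to produce a segment in $D$ through $P$, then apply the linear constraint coming from the support hyperplane. The paper packages this as a contradiction establishing the (formally stronger, but equivalent) inclusion $\mathbb{A}_P\subseteq\pi$, whereas you argue directly point-by-point that each $Q\in F$ lies in $\pi$; the underlying idea is identical.
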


\begin{proof}
 Let $\mathbb{A}_P$ be the subspace passing through $P$ used to define the face for $P$ in \cref{defn:dimension}, and $\mathbb{A}_P^0$ its translate under the translation sending $P$ to $0$, \ie $\mathbb A_P^0:=\mathbb A_P- P$. We claim that any arbitrary support hyperplane $\pi$ at $P$ necessarily contains $\mathbb{A}_P$. 
 We also consider the translate $\pi^0:=\pi-P$ of $\pi$.
 We have only to show that $\pi^0$ contains $\mathbb A_P^0$.
 If not, then there is some vector $v\in\mathbb{A}_P^0\setminus\pi^0$, having all of its non-zero multiples in $\mathbb{A}_P^0$ but not in $\pi^0$. In particular, we may replace $v$ with its sufficiently small scalar multiple so that both $\pm v$ are contained in an open ball around $\vec{0}$ in  $F-P=(\mathbb{A}_P\cap D)-P$. However, this contradicts the assumption of $\pi$ being a support hyperplane, for $P+v$ and $P-v$ must lie on different sides of $\pi$. Therefore, $\pi$ contains $\mathbb{A}_P$, hence the face $\mathbb{A}_P\cap D$ of $P$.
\end{proof}

\begin{definition}[{exposed face (see, e.g. \cite[p. ~162]{Rock})}]
\label{defn:exposedface}
A set which is the intersection of $D$ with one of its support hyperplanes (recalled in \cref{defn:hyperplane}) is called an \emph{exposed face}. 
\end{definition}

\begin{remark}
When $D$ is a polytope (that is, the intersection of a finite number of half-spaces), the notion of face coincides with that of an exposed face. For general convex bodies, this is false: exposed faces are always faces (see, e.g. \cite[p. ~162]{Rock}), but the converse is false in general.  Consider the following example: take $D$ to be the $\epsilon$-neighbourhood of any convex polygon in $\mathbb{R}^2$. Then, $\partial D$ is made up of straight edges joined to circular arcs, and a point at which one joins a straight edge and a circular arc is a face, but not an exposed face. 
\end{remark}

\begin{proposition}
\label{thm:convextoface}
For any convex set $C\subset \partial D$, there exists a unique smallest face of $\partial D$ containing $C$.
\end{proposition}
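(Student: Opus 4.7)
My plan is to take $F_C$ to be the intersection of all faces of $D$ that contain $C$, and to verify that (a) this family of faces is non-empty, (b) the resulting intersection is itself a face in the sense of \cref{defn:face}, and (c) it is the smallest face of $\partial D$ containing $C$. Uniqueness and minimality come for free from the definition as an intersection, so the only real content lies in (a) and (b).

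The step I expect to require the most care is (a), i.e., producing a single witness face of $\partial D$ containing $C$. To do this I would pick a relative interior point $P_0$ of $C$ (which exists for any non-empty convex set; the empty case is vacuous) and let $F_0 := \mathbb{A}_{P_0} \cap \partial D$ be the unique face for $P_0$ furnished by \cref{lem:uniqueface}. I then want to argue $C \subseteq F_0$. For an arbitrary $q \in C$, relative interiority of $P_0$ in $C$ lets me extend the segment from $q$ through $P_0$ a little past $P_0$ while staying inside $C$, producing a point $q' \in C \subseteq D$ with $P_0$ in the open interval between $q$ and $q'$. The classical face property of $F_0$ (\cref{defn:classicalface}) then forces both $q$ and $q'$ into $F_0$, yielding $C \subseteq F_0$.

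Once (a) is in hand, set $F_C := \bigcap F$, where the intersection ranges over all faces of $D$ containing $C$. The intersection is non-empty because it contains $C$, and a direct appeal to \cref{defn:classicalface} shows it is a face in the classical sense: if $x \in F_C$ lies on an open interval between $y, z \in D$, then the same holds in each $F$ of the family, so $y, z \in F$ for every such $F$, and hence $y, z \in F_C$. Since $F_C \subseteq F_0 \subseteq \partial D$ we have $F_C \neq D$, so the equivalence between \cref{defn:face} and \cref{defn:classicalface} established earlier promotes $F_C$ to a face in the sense of \cref{defn:face}. Any face of $\partial D$ containing $C$ is one of the sets being intersected, and hence contains $F_C$, which establishes that $F_C$ is the unique smallest such face.
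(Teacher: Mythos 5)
Your proof is correct, and the key step — producing a witness face via \cref{lem:uniqueface} at a relative interior point $P_0$ of $C$ and then using the segment-through-$P_0$ argument together with \cref{defn:classicalface} to conclude $C\subseteq F_0$ — is the same mechanism as in the paper's own proof. Where you differ is in the packaging: the paper argues directly that $F_0$ is the smallest face (because $C$ contains interior points of $F_0$, while any proper subface sits in the boundary of $F_0$), whereas you take $F_C$ to be the intersection of all faces containing $C$ and then verify that this intersection is a face. That intersection route is exactly the alternative flagged in the remark following the proposition (which also records, in \cref{faceintersection}, that intersections of faces are faces); the remark notes that the paper's chosen method has the small bonus of making visible the extra fact that $C$ contains interior points of $F_C$, an observation your packaging does not surface. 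On the other hand, your step (a) is a slight refinement of the paper's argument: you handle an arbitrary $q\in C$ in one shot, whereas the paper first treats interior points of $C$ and then appeals to closedness of $F_0$ to absorb the rest of $C$.
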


\begin{proof}
Since $C$ is a convex set, its interior, as a subset of $\partial D$, is a convex $k$-dimensional open ball $\mathrm{int}(C)$. By \cref{lem:uniqueface}, for each point $P\in\mathrm{int}(C)$, the face for  $P$ is  obtained as an intersection of the form $\mathbb{A}_P\cap\partial D$. For a point $Q$ in $\mathrm{int}(C)$, by the convexity of $\mathrm{int}(C)$, there is a segment on $\partial D$ with endpoint $Q$ containing $P$ in its interior.
Therefore by \cref{defn:classicalface}, $Q$ must be contained in the face for $P$. Therefore, there is a unique face $F_C$ which is the face for every interior point $P\in\mathrm{int}(C)$. Since faces are closed, the face $F_C$ must contain all of $C$. In fact, $F_C$ must be the smallest face of $\partial D$ that contains $C$ as a subset because $C$ contains interior points of $F_C$, whereas subfaces of $F_C$ are necessarily on the boundary (see \cref{defn:classicalface}.)
\end{proof}

\begin{remark}
It is possible to prove \cref{thm:convextoface} without invoking \cref{lem:uniqueface}. First use the hyperplane separation theorem to show that there must be some exposed face containing $C$, and hence show that the set of faces containing $C$ is non-empty. Then take the intersection of all faces containing $C$, and show (by definition) that this intersection is a face. The advantage of the method provided is that we see that $C$ must contain interior points of $F_C$.
\end{remark}

\subsection{Face-dimension}

The goal of this subsubsection is to use the unique association of faces to points established in \cref{lem:uniqueface} to introduce a new notion of dimension for points in $\partial D$ which we call face-dimension (\cref{def:face-dimension}).

\begin{definition}[adherence]
\label{defn:adherence}
We say that a  face $F$ of $\partial D$ is \emph{adherent} to a face $F'\supseteq F$ if for any face $F''$ containing $F$ there is a face which contains both $F'$ and $F''$ as subfaces.
See \cref{fig-face} for an example.
We also introduce the following related notions:
\begin{itemize}
\item Each face $F$ is adherent to a unique maximal face (possibly $F$ itself, and the uniqueness is justified below). We refer to this maximal face as the \emph{adherence closure of $F$}.
We say that $F$ is \emph{adherence-closed} if its adherence closure is the face $F$ itself.
Adherence closures are necessarily adherence-closed as shown in \cref{closure closed}.
\item For an adherence-closed face $F$, the union of its interior and the interiors of all subfaces whose adherence closures coincide with $F$ is called the \emph{adherence core} of $F$. 
\item An adherence-closed face $F$ is said to be \emph{adherence-complete} if it is the adherence closure of each of its subfaces. 
\end{itemize}
\end{definition}

\begin{proposition}[adherence closure uniqueness]
Every face $F$ has a unique adherence closure.
\end{proposition}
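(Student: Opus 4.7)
The plan is to prove both existence and uniqueness of the adherence closure by showing that the collection
\[
\mathcal{A}(F) := \{\, F' \supseteq F \mid F \text{ is adherent to } F' \,\}
\]
has a unique maximal element, extracted from a pairwise join-closure property together with the finite-dimensionality of $\mathbb{V}$.

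First I would record that $\mathcal{A}(F)$ is non-empty: taking $F'=F$, for any face $F''\supseteq F$ the face $F''$ itself contains both $F$ and $F''$, so $F$ is adherent to $F$. Next I would observe, using \cref{thm:convextoface}, that any two faces $F'_1,F'_2$ of $\partial D$ admit a unique smallest enclosing face; call it the \emph{join} $F'_1 \vee F'_2$.

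The main step is the following lemma, which is the only place the adherence hypothesis is genuinely used:
\emph{If $F'_1,F'_2 \in \mathcal{A}(F)$, then $F'_1 \vee F'_2 \in \mathcal{A}(F)$.}
To prove this, let $F''$ be an arbitrary face containing $F$. Since $F$ is adherent to $F'_1$, there is a face $H_1$ containing both $F'_1$ and $F''$; in particular $H_1\supseteq F'_1 \supseteq F$. Applying the adherence of $F'_2$ to the face $H_1$ (which contains $F$), there is a face $H_2$ containing both $F'_2$ and $H_1$. Thus $H_2$ contains $F'_1 \cup F'_2 \cup F''$, so by the minimality of $F'_1 \vee F'_2$ we get $H_2 \supseteq (F'_1\vee F'_2)$ and $H_2 \supseteq F''$. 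Hence $F$ is adherent to $F'_1 \vee F'_2$, proving the lemma.

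Finally, I would combine the join-closure lemma with the dimension bound $\dim F' \leq \dim \mathbb{V}$ to conclude. Any strictly ascending chain $F \subsetneq F'_1 \subsetneq F'_2 \subsetneq \cdots$ in $\mathcal{A}(F)$ must terminate, so $\mathcal{A}(F)$ contains a maximal element $\widehat F$; this gives existence. For uniqueness, if $\widehat F$ and $\widehat F{}'$ were two maximal elements of $\mathcal{A}(F)$, then by the lemma their join $\widehat F \vee \widehat F{}' \in \mathcal{A}(F)$ would contain both of them, forcing $\widehat F = \widehat F \vee \widehat F{}' = \widehat F{}'$ by maximality.

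The main obstacle is conceptual rather than technical: one must notice that the definition of adherence is strong enough to allow the two-step chaining $F \rightsquigarrow F'_1 \rightsquigarrow H_1 \rightsquigarrow H_2$ that proves the join lemma. Once that join-closure is in hand, both existence and uniqueness are immediate from the finite-dimensionality of $\mathbb{V}$; no finer convex-geometric input (e.g., hyperplane separation or the classification of exposed faces) is needed.
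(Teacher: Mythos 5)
Your proof is correct, and the heart of it --- chaining the adherence hypothesis twice to produce a single face containing $F'_1$, $F'_2$, and an arbitrary $F''\supseteq F$ --- is precisely the computation in the paper's own proof. Where you diverge is in the packaging: you isolate this step as a \emph{join-closure} lemma for $\mathcal{A}(F)$, and then derive both existence (nonemptiness plus the dimension bound) and uniqueness (join two maximal elements) from it, whereas the paper addresses only uniqueness directly, constructing the enclosing face $\hat F$ for two hypothetical adherence closures $F_1,F_2$ and invoking their maximality. Your organization is a bit cleaner and has the virtue of explicitly supplying the existence half of the claim, which the paper leaves implicit. One small imprecision to repair: your preliminary observation that \emph{any} two faces of $\partial D$ admit a join is false in general --- e.g.\ two distinct points of the round circle $\partial D \subset \mathbb R^2$ lie in no common proper face --- and \cref{thm:convextoface} does not directly apply, since $F'_1\cup F'_2$ need not be convex. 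This does no harm to your argument: the join is only ever invoked for $F'_1,F'_2\in\mathcal{A}(F)$, and in that case adherence (apply the definition with $F''=F'_2$, say) already produces a face containing $F'_1\cup F'_2$; one then defines $F'_1\vee F'_2$ as the intersection of all faces containing $F'_1\cup F'_2$, which is a face by \cref{faceintersection}. That is exactly how the paper manufactures $\hat F$, so you should state the join's existence conditionally rather than unconditionally.
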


\begin{proof}
Let $F_1$ and $F_2$ be both adherence closures to $F$. The facts that $F$ is adherent to $F_1$ and  that $F_2$ contains $F$ imply that there is some face in $\partial D$ containing both $F_1$ and $F_2$. 
Let $\hat{F}$ denote the intersection of every face that contains both $F_1$ and  $F_2$. Note that $\hat{F}$ is a face. We show that $\hat{F}$ is the  adherent closure of $F$. Let $F'$ be an arbitrary face containing $F$, then there is a face $F''$ that contains both $F'$ and $F_1$. Also, since $F$ is adherent to $F_2$, there is a face $F'''$ that contains both $F_2$ and $F''$, hence $F_1\cup F_2\cup F'\subset F'''$. By the definition of $\hat F$, this means that $\hat{F}\subset F'''$, and $F'\subset F'''$ as shown above, and hence $F$ is adherent to $\hat F$. The assumption that $F_1$ and $F_2$ are maximal faces to which $F$ is adherent, combined with the fact that $\hat F$ contains both $F_1$ and $F_2$ by definition, implies that $F_1=\hat{F}=F_2$.
\end{proof}

\begin{proposition}
\label{closure closed}
The adherence closure $\widehat{F}$ of a face $F$ is adherence-closed.
\end{proposition}

\begin{proof}
Let $\overline{F}$ denote the adherence closure of $\widehat{F}$. Consider an arbitrary face $F'$ containing $F$, and let $F''$ denote a face containing both $F'$ and $\widehat{F}$. Since $\widehat{F}$ is adherent to $\overline{F}$,  there is a face $F'''$ containing both $F''$ and $\overline{F}$, and hence $F'''$ contains both $F'$ and $\overline{F}$. This shows that $F$ is adherent to $\overline{F}$, and by the maximality of $\widehat{F}$, we see that $\widehat{F}=\overline{F}$, as desired.

\end{proof}

\begin{definition}[face-dimension for points]\label{def:face-dimension}
For a point $P$ on $\partial D$, let $F$ denote the face for $P$. We refer to the dimension of the adherence closure of $F$ as the \emph{face-dimension of $P$}, and denote it by $\fdim(P)$.
\end{definition}

The following is immediately obtained from the definition.
\begin{proposition}[dimension vs. face-dimension]
\label{thm:dimvsfdim}
For any point $P\in\partial D$,
\[
\fdim (P)
\geq 
\dim (P),
\] 
with equality if and only if the face for $P$ is adherence-closed. 
\end{proposition}

\begin{remark}
It is possible for face-dimension to be strictly greater than dimension.
In the right figure of \cref{fig-face}, the point $x$ is a face of dimension $0$, whereas its face-dimension is $2$.
\end{remark}

\begin{figure}[h!]
\begin{center}
\includegraphics[height=4.5cm, valign=t]{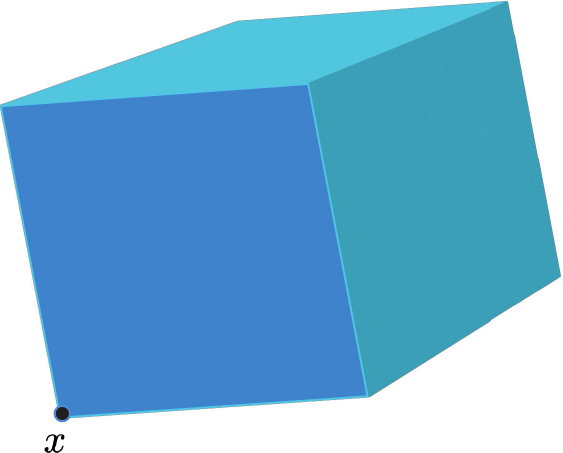}
\includegraphics[height=5cm, valign=t]{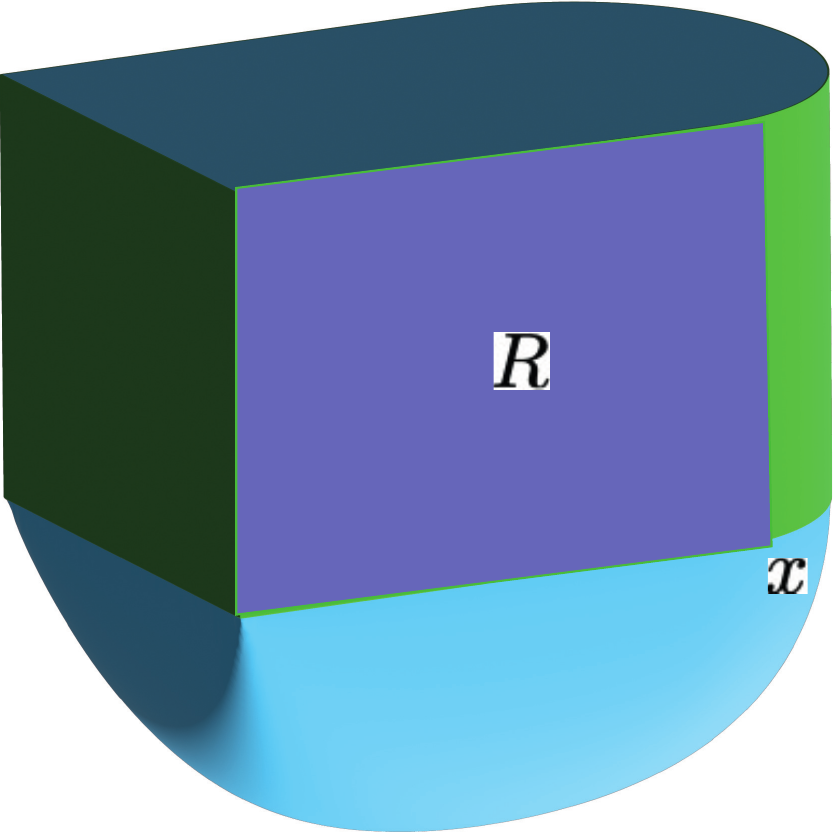}
\caption{Two different examples of two-dimensional convex spheres. 
The adherence closure of $x$ is $x$  itself in the left figure, whereas the adherence closure of $x$ is the rectangle $R$ in the right figure.}
\label{fig-face}
\end{center}
\end{figure}

 \subsection{Adherence-dimension}
 
 \begin{definition}[adherence-dimension for faces]
 \label{defn:adim}
We call a chain of faces 
\[
F_1\subsetneq F_2\subsetneq\ldots\subsetneq F_{k-1}\subsetneq F_k
\] 
an \emph{F-dim ascending chain} if the face-dimensions of the $\{F_i\}_{i=1,\ldots,k}$ are all distinct, \ie they are strictly increasing.
We also introduce the following related notions:
\begin{itemize}
\item
We call an F-dim ascending chain  $\{F_i\}_{i=1, \dots , k}$ \emph{maximal} if there is no other F-dim ascending chain containing $\{F_i\}_{i=1, \dots , k}$  under the imposed condition (such as starting with or ending with a given face).
\item
We define the \emph{adherence height} of $F$ to be the minimum of the lengths of maximal F-ascending chains \emph{ending} with $F$.
\item
We define the \emph{adherence depth} of $F$ to be the minimum of the lengths of maximal F-ascending chains \emph{starting} with $F$.
\item
We define the \emph{adherence-dimension} $\adim (F)$ of $F$ to be the sum of its adherence height and its adherence depth subtracted by $2$.
\end{itemize}
\end{definition}

\begin{remark}
The underlying concept for adherence height, depth and dimension is motivated by the notion of Krull dimension, and there is a certain level of flexibility in how they might be defined --- for example: one can choose to take the \emph{maximum} rather than the minimum length of maximal F-ascending chains starting/ending with $F$. Our choice is because
\begin{itemize}
\item
we need to take the minimum rather than maximum length for adherence height as this is needed for the proof of \cref{adherence dim},
\item
the same choice leads us to also take the minimum rather than the maximum length for defining adherence depth, to ensure that the adherence-dimension of a face $F$ is necessarily greater than or equal to the adherence-dimensions of the subfaces of $F$.
\end{itemize}
\end{remark}


\begin{definition}[Adherence-dimension for points]
We define the \emph{adherence-dimension for a point} $P\in\partial D$ as the adherence-dimension of the face $F$ for $P$. In particular, we denote this by $\adim_{\partial D}(P):=\adim(F)$.
\end{definition}

\subsection{Duality and codimension}
We have hitherto worked with an arbitrary convex body $D\subset\mathbb{V}$, but now consider the setting where $0\in\mathbb{V}$ is an interior point of $D$. 

\begin{definition}[dual convex body]
Define the {\em dual convex body} $D^*\subset \mathbb{V}^*$ in the dual space $\mathbb{V}^*$ by
\begin{align*}
D^*:=
&
\left\{
w^*\in\mathbb{V}^* \mid
\forall v\in D,\ w^*(v)\leq 1
\right\}\\
=
&
\left\{
w^*\in\mathbb{V}^* \mid
\sup_{v\in D}
\ w^*(v)\leq 1
\right\}\\
=
&
\left\{
w^*\in\mathbb{V}^* \mid
\sup_{v\in \partial D}
\ w^*(v)\leq 1
\right\}.
\end{align*}
We refer to $\partial D^*$ as the dual sphere to the sphere $\partial D$, and note that positive homothety ensures that
\begin{align*}
\partial D^*=\left\{
w^*\in\mathbb{V}^*
\mid
\sup_{v\in\partial D} w^*(v)=1
\right\}.
\end{align*}
\end{definition}

The classical notion of support hyperplane (\cref{defn:hyperplane}) establishes a geometric relation between the two dual pictures.
%
In \cref{codim of convex}, we shall define the codimension for a point $P$ as the dimension of the space of support hyperplanes at $P$. To make this precise, we parametrise support hyperplanes via normal vectors:

\begin{definition}[normal and positive normal vectors]\label{def:normal}
We say that a non-zero vector $w^* \in \mathbb{V}^*$ is \emph{normal} to an affine hyperplane $\pi\subset \mathbb{V}$ if  for any two points $P, Q\in\pi$,  
\[
w^*(P-Q)=0. 
\]
When $\pi$ is a support hyperplane of a convex subset $D\subset\mathbb{V}$, we say that a dual vector $w^* \in\mathbb{V}^*$ is a \emph{positive normal vector} to $\pi$ if  $w^*$ is normal to $\pi$ and points to the side of $\pi$ disjoint from $D$, \ie if $R$ is contained in the side disjoint from $D$, then $w^*(R-P)>0$ for any $P \in \pi$.
\end{definition}

\begin{remark}
Elementary linear algebra tells us that every support hyperplane  $\pi$ through $P\in \partial D$ takes the form
\begin{align*}
\pi=\left\{v\in \mathbb{V} \mid \ w^*(v)=c\right\},\quad c=w^*(P),
\end{align*}
for some positive normal vector $w^*\in\mathbb{V}^*$. In particular, any two positive normal vectors to the same hyperplane are positive scalar multiples of each other and may be uniquely normalised so that $c=w^*(P)=1$. 
\end{remark}

\begin{definition}[codimension]
\label{codim of convex}
For any point $P\in\partial D$, we define the following subset of $\partial D^*$:
\begin{align*}
N_P:=
\left\{
w^*\in \mathbb{V}^*\;
\begin{array}{|l}
w^*\text{ is positive normal to some}\\
\text{support  hyperplane to $\partial D$ at $P$}\\
\text{and }w^*(P)=1
\end{array}
\right\}.
\end{align*}
We refer to the dimension of $N_P$, as a subset of the $(m-1)$-sphere $\partial D^*$, as the \emph{codimension} of $P$, and denote it by $\codim_{\partial D}(P)$.
\end{definition}

\begin{remark}
In the above, we define codimension of $P$ as the dimension of the space of support hyperplanes for $P$, only phrased in terms of unit normal vectors to these hyperplanes.
\end{remark}

\begin{corollary}
\label{sum}
For any point $P$ on the boundary $\partial D$ of a convex full-dimensional subset $D\subset \mathbb{V}$, we have 
\[
\dim_{\partial D}(P)+\codim_{\partial D}(P) \leq \dim \mathbb{V}-1.
\]
\end{corollary}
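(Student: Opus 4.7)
The plan is to bound $\codim_{\partial D}(P)$ by using \cref{thm:facesupport} to confine every element of $N_P$ to an affine subspace of $\mathbb{V}^*$ whose dimension can be read off directly. Set $m:=\dim \mathbb{V}$ and $d:=\dim_{\partial D}(P)$, and let $\mathbb{A}_P$ be the affine subspace from \cref{defn:dimension}, uniquely determined by \cref{lem:uniqueface} and \cref{rem:uniqueaffine}. Let $V_P:=\mathbb{A}_P-P$ denote its linearisation, a $d$-dimensional linear subspace of $\mathbb{V}$.

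The first step is to translate \cref{thm:facesupport} into an algebraic constraint. That lemma says every support hyperplane at $P$ contains the face $F$ for $P$, and by \cref{rem:uniqueaffine}, $\mathbb{A}_P$ is precisely the affine subspace generated by $F$, so every such hyperplane in fact contains all of $\mathbb{A}_P$. Therefore any positive normal vector $w^*$ to such a hyperplane must vanish on all of $V_P$, i.e.\ $w^*\in V_P^{\perp}\subset \mathbb{V}^*$, the annihilator, which has linear dimension $m-d$. This gives
\[
N_P\;\subset\;V_P^{\perp}\cap\{w^*\in\mathbb{V}^*:w^*(P)=1\}.
\]

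The second step is to check that the normalisation $w^*(P)=1$ is a genuinely codimension-one affine condition on $V_P^{\perp}$. Since $0\in\mathrm{int}(D)$ (the standing assumption of the subsection), for any support hyperplane $\pi=\{w^*=c\}$ at $P$ we have $0=w^*(0)<c=w^*(P)$, so $w^*(P)>0$ and normalisation is always achievable. In particular $w^*\mapsto w^*(P)$ is a non-zero linear functional on $V_P^{\perp}$, so its level set $\{w^*\in V_P^{\perp}:w^*(P)=1\}$ is an affine subspace of dimension $(m-d)-1$. Combining with the inclusion above yields
\[
\codim_{\partial D}(P)=\dim N_P\;\leq\;m-d-1\;=\;\dim\mathbb{V}-\dim_{\partial D}(P)-1,
\]
which is the desired inequality. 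I do not foresee any substantial obstacle: the conceptual content is entirely captured by \cref{thm:facesupport,rem:uniqueaffine}, and the rest is elementary linear algebra with the full-dimensionality of $D$ providing the transversality needed for the normalisation condition to drop dimension by exactly one.
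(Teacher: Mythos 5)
Your argument is correct and follows essentially the same route as the paper's proof: both use \cref{thm:facesupport} to place positive normal vectors in the annihilator of the linearised face-affine-subspace (of dimension $\dim\mathbb{V}-\dim_{\partial D}(P)$), and then observe that the normalisation $w^*(P)=1$ cuts this down by one more dimension. Your extra step verifying that $w^*\mapsto w^*(P)$ is genuinely nonzero on $V_P^\perp$ (via $0\in\mathrm{int}(D)$ forcing $w^*(P)>0$) is a welcome clarification of a point the paper leaves implicit.
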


\begin{proof}
We again consider to translate the picture  by $-P$.  
\cref{thm:facesupport} may be interpreted as saying that any positive normal vector $w^*$ contains $\mathbb{A}_P^0:=\mathbb A_P-P$ in its kernel. The rank-nullity theorem tells us that the space of arbitrary (\ie not just positive normal) dual vectors  which take the value $0$ on $\mathbb{A}_P^0$ is itself of dimension $\dim\mathbb{V}-\dim\mathbb{A}_P^0=\dim\mathbb V-\dim\mathbb A_P$, and imposing the normalisation condition for positive normal vectors tells us that $N_P$ has dimension at most $\dim\mathbb{V}-\dim\mathbb{A}_P-1$, and hence
\[
\codim_{\partial D}(P)
=\dim N_P \leq 
\dim \mathbb{V}-\dim\mathbb{A}_P-1
=
\dim \mathbb{V}-\dim_{\partial D}(P)-1.
\]\end{proof}

We conclude this subsection by establishing several properties of the sets $N_P$ in \cref{codim of convex}. We first show that inclusion for the $N_P$ satisfies the following  contravariance property:

\begin{lemma}
\label{lem:preface}
If $F$ is the face for $P\in\partial D$, \ie if $P$ is an interior point of $F$, then for any $Q\in F$, we have
\[
N_{P}\subseteq N_{Q}\subset\partial D^*.
\] 
In particular, if $P$ and $Q$ are interior points of the same face $F\subset \partial D$, then
\[
N_P=N_Q.
\]
\end{lemma}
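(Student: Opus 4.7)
The plan is to deduce both statements by combining \cref{thm:facesupport} (every support hyperplane at $P$ contains the face for $P$) with the elementary observation that a normalised positive normal vector is determined up to its values on the hyperplane it normalises.

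First, I would take an arbitrary $w^* \in N_P$, so that $w^*(P)=1$ and $w^*$ is positive normal to some support hyperplane $\pi$ to $\partial D$ at $P$. Writing $\pi = \{v \in \mathbb{V} \mid w^*(v) = 1\}$ (using the normalisation $w^*(P) = 1$), the half-space containing $D$ is $\{v \mid w^*(v) \leq 1\}$. Since $F$ is the face for $P$, \cref{thm:facesupport} gives $F \subseteq \pi$, hence $Q \in \pi$, so $w^*(Q) = 1$. Because $\pi$ is unchanged and $D$ still lies in the same closed half-space bounded by $\pi$, the hyperplane $\pi$ is also a support hyperplane at $Q$, and $w^*$ remains a positive normal to $\pi$ satisfying $w^*(Q) = 1$. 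Therefore $w^* \in N_Q$, proving $N_P \subseteq N_Q$.

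For the ``in particular'' claim, if both $P$ and $Q$ are interior points of the same face $F$, then by face uniqueness (\cref{lem:uniqueface}), $F$ is simultaneously the face for $P$ and the face for $Q$. Applying the first part with the roles of $P$ and $Q$ exchanged yields $N_Q \subseteq N_P$, and combining the two inclusions gives $N_P = N_Q$.

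There is no real obstacle here: the lemma is essentially a direct corollary of \cref{thm:facesupport}. The only subtlety worth spelling out is why the normalisation $w^*(Q)=1$ comes for free, which follows because $w^*$ is constant on $\pi$ and both $P$ and $Q$ lie on $\pi$ once $F \subseteq \pi$ is established.
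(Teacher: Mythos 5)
Your proof is correct and follows essentially the same route as the paper's: the paper's (very terse) argument is that any support hyperplane at $P$ is also a support hyperplane at any $Q\in F$, which is precisely the consequence of \cref{thm:facesupport} that you spelled out, together with the observation that $w^*(Q)=1$ is automatic once $Q\in\pi$. You have simply made explicit the normalisation check and the symmetry argument for the ``in particular'' clause, both of which the paper leaves implicit.
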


\begin{proof}
If a point $P\in\partial D$ is contained in the interior of a face $F$, then any support hyperplane to $\partial D$ at $P$ is also a support hyperplane to any other point of the face (e.g. $Q\in F$). 
The claims of the lemma immediately follow from this fact.
\end{proof}

We further see that each $N_P$ is actually an exposed face in the dual sphere for $\partial D$.

\begin{theorem}
\label{thm:nface}
For every $P\in\partial D$, the set $N_P\subset \mathbb{V}^*$ is an exposed face (see \cref{defn:exposedface}) of $\partial D^*$. In particular,  this means that $N_P$ is a (non-empty) convex and compact subset of $D^*$. 
\end{theorem}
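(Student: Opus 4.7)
The plan is to realise $N_P$ as the intersection of $D^*$ with a single specific support hyperplane, namely the affine hyperplane cut out by the evaluation functional at $P$. In doing so, the compactness, convexity and non-emptiness of $N_P$ will drop out automatically from general properties of exposed faces.

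First I would rewrite $N_P$ more concretely as $\{w^* \in D^* : w^*(P) = 1\}$. Indeed, for any positive normal vector $w^*$ to a support hyperplane $\pi = \{v : w^*(v) = c\}$ at $P$, the definition of $\pi$ being a support hyperplane gives $w^*(v) \leq c = w^*(P)$ for all $v \in D$; the assumption $\vec 0 \in \mathrm{Int}(D)$ rules out $w^*(P)=0$ (otherwise $w^*$ would be non-positive on an open neighbourhood of the origin, forcing $w^* \equiv 0$), so rescaling to $w^*(P)=1$ is valid and exhibits $w^*$ as an element of $D^*$ with $w^*(P)=1$. Conversely, any $w^* \in D^*$ with $w^*(P)=1$ furnishes via $\{v : w^*(v)=1\}$ a support hyperplane to $D$ at $P$, whose positive normal with the correct normalisation is $w^*$ itself.

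Next I would consider the linear evaluation functional $e_P \colon \mathbb{V}^* \to \mathbb{R}$ defined by $e_P(w^*) := w^*(P)$, and the affine hyperplane $H := \{w^* \in \mathbb{V}^* : e_P(w^*)=1\}$. Since $P \in D$, every $w^* \in D^*$ satisfies $e_P(w^*) = w^*(P) \leq 1$, so $D^* \subset \{e_P \leq 1\}$. The supporting hyperplane theorem, applied at the boundary point $P \in \partial D$, produces a normal covector which (by the normalisation discussed in the previous paragraph) can be taken to lie in $D^* \cap H$, so the intersection $D^* \cap H$ is non-empty. Hence $H$ is a genuine support hyperplane to $D^*$, and by the first step $N_P = D^* \cap H$ is by definition an exposed face of $D^*$ in the sense of \cref{defn:exposedface}.

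Finally, since $N_P$ is the intersection of two closed convex sets it is closed and convex; and being a closed subset of the compact set $D^*$, it is compact. The non-emptiness was shown above. Because $N_P$ lies entirely on the support hyperplane $H$ while $D^*$ lies in the half-space $\{e_P \leq 1\}$, every point of $N_P$ is a boundary point of $D^*$, giving $N_P \subset \partial D^*$. The only subtle point in this whole argument is the normalisation step in the first paragraph, where one must use $\vec 0 \in \mathrm{Int}(D)$ to exclude the degenerate case $w^*(P)=0$; once this is handled, the result follows almost tautologically from duality.
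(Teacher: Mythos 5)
Your proof is correct and follows essentially the same approach as the paper: both realise $N_P$ as the intersection of the dual ball $D^*$ with the affine hyperplane $\{w^* : w^*(P)=1\}$, verify that this hyperplane is a support hyperplane for $D^*$, and conclude that $N_P$ is an exposed face. You are slightly more careful on two points the paper's proof leaves implicit --- the non-emptiness of $N_P$ via the supporting hyperplane theorem at $P$, and the use of $\vec 0 \in \mathrm{Int}(D)$ to rule out $w^*(P)=0$ so that positive normals can always be normalised --- but the core identification is the same.
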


\begin{proof}
We first claim that
\begin{align*}
&N'_{P}:=
\left\{
w^*\in \mathbb{V}^* \mid
\sup_{v\in\partial D} w^*(v)=w^*(P)=1
\right\}
\end{align*}
is equal to $N_P$. To see this, we first note that any $w'^*\in N'_P$ defines an affine hyperplane of the form
\begin{align*}
\Pi:=\left\{
v\in\mathbb{V}\mid w'^*(v)=w'^*(P)=1
\right\}.
\end{align*}
The equality $\sup_{v\in\partial D} w'^*(v)=w'^*(P)$ implies that $\partial D$ (and hence $D$) lies on one side of $\Pi$ where the value of $w'^*$ is at most $1$. Therefore, if $w'^* \in N'_P$, then it is a positive normal vector to a support hyperplane of $D$ at $P$, and hence $N'_P\subseteq N_P$.

Conversely, given a positive normal vector $w^*\in N_P$, the fact that $\partial D$ lies on the side of the support hyperplane normal to $w^*$ where $w^*$ takes the value at most $1$ tells us that
\[
w^*(P)\geq\sup_{v\in\partial D}w^*(v)\geq w^*(P),\text{ and hence } \sup_{v\in\partial D}w^*(v)= w^*(P).
\]
We also know, from the definition of $N_P$, that $w^*(P)=1$, therefore $N_P\subseteq N'_P$, and hence we have $N_P=N'_P$.

We next observe that $N_P$ is the intersection of the following hyperplane (in $\mathbb{V}^*$)
\begin{align*}
\left\{
w^*\in \mathbb{V}^* \mid
w^*(P)=1
\right\}
\end{align*}
and the sphere 
\begin{align*}
\partial D^*
=
\left\{
w^*\in \mathbb{V}^* \mid
\sup_{v\in\partial D} w^*(v)=1
\right\},
\end{align*}
a  hence $N_P$ is a exposed face of $\partial D^*$ in $\mathbb{V}^*$. In particular, this is an intersection of two closed convex sets, where one is bounded, thus their intersection is convex and compact. 
\end{proof}

\begin{remark}
A consequence of the proof of \cref{thm:nface} is that each $N_P$ is expressed as
\[
N_P
=
\left\{
w^*\in \mathbb{V}^* \mid
\sup_{v\in\partial D} w^*(v)=w^*(P)=1
\right\}.
\]
This has the interpretation that $N_P$ consists of all ``unit'' dual vectors $w^*$ that attain their maximal value (on $\partial D$) at $P$.
\end{remark}

\subsection{Linear invariants}
\label{sec:linvariants}
The goal of this final subsection is to show that the concepts we have defined in this section are invariant under invertible linear maps. To begin with, we point out that the image of a convex set $D\subset \mathbb{V}$ under a linear map $f\colon\mathbb{V}\to\mathbb{W}$ is necessarily convex. Moreover, if $f$ is invertible, then the boundary of $D$ is sent to the boundary of $f(D)$:
\[
f(\partial D)=\partial f(D)\subset\mathbb{W}.
\]

\begin{theorem}[linear invariants]
\label{thm:linvariance}
Consider an invertible linear map $f\colon \mathbb{V}\to\mathbb{W}$. For every point $P\in \partial D$, 
\begin{enumerate}
\item
$f$ maps the face for $P$ on $\partial D$ to the face for $f(P)$ on $\partial f(D)$, and hence takes the convex stratification on $\partial D$ to the convex stratification on $\partial f(D)$;

\item
the dimension of $P$ is a linear invariant:
\[
\dim_{\partial D}(P)=\dim_{\partial f(D)}(f(P));
\]

\item
the relation of adherence is preserved under $f$;

\item
the face-dimension of $P$ is a linear invariant:
\[
\fdim_{\partial D}(P)=\fdim_{\partial f(D)}(f(P));
\]

\item
 adherence height and  adherence depth are linear invariants;

\item
the adherence-dimension of $P$ is a linear invariant:
\begin{align*}
\adim_{\partial D}(P)=\adim_{\partial f(D)}(f(P));
\end{align*}

\item
the codimension of $P$ is a linear invariant:
\begin{align*}
\codim_{\partial D}(P)=\codim_{\partial f(D)}(f(P)).
\end{align*}

\end{enumerate}

\end{theorem}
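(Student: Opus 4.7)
The plan is to observe that every notion introduced in this section is defined purely in terms of affine subspaces, convexity, topological interiors, and combinatorial relations among faces, all of which are preserved under an invertible linear map $f$. Indeed $f$ is a homeomorphism that bijectively takes affine subspaces of $\mathbb{V}$ to affine subspaces of $\mathbb{W}$ of the same dimension, takes convex sets to convex sets, and sends $\partial D$ onto $\partial f(D)$; dually, it is matched by the linear isomorphism $\Phi:=(f^{-1})^*\colon\mathbb{V}^*\to\mathbb{W}^*$.

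First I would establish (1) and (2) together. If $\mathbb{A}_P$ realises the definition of $\dim_{\partial D}(P)$, then $f(\mathbb{A}_P)$ is an affine subspace of $\mathbb{W}$ of the same dimension, and since $f$ is a homeomorphism carrying $\partial D$ onto $\partial f(D)$, the point $f(P)$ lies in the interior of $f(\mathbb{A}_P)\cap\partial f(D)$ regarded as a subset of $f(\mathbb{A}_P)$. Applying the same argument to $f^{-1}$ yields the reverse inequality, so $\dim_{\partial f(D)}(f(P))=\dim_{\partial D}(P)$. By the face-uniqueness lemma (\cref{lem:uniqueface}), $f$ carries the face for $P$ to the face for $f(P)$; since $f^{-1}$ does the reverse, we obtain an inclusion-preserving bijection between the faces of $\partial D$ and those of $\partial f(D)$, proving (1). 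Assertions (3)--(6) then follow for free: the adherence relation, the adherence closure, and the F-dim ascending chains are all defined in terms of face inclusions and dimensions of faces, hence they are transported verbatim along this bijection. In particular, face-dimension, adherence height, adherence depth and adherence-dimension are linear invariants.

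For (7) I would use duality. Given $w^*\in N_P$, set $\tilde w^*:=\Phi(w^*)=w^*\circ f^{-1}\in\mathbb{W}^*$. Then $\tilde w^*(f(P))=w^*(P)=1$, and since $f(\partial D)=\partial f(D)$ we compute
\[
\sup_{u\in\partial f(D)}\tilde w^*(u)=\sup_{v\in\partial D}w^*(v)=1,
\]
so $\tilde w^*\in N_{f(P)}$. Applying the same to $f^{-1}$ shows $\Phi$ restricts to a bijection $N_P\to N_{f(P)}$. Because $\Phi$ is a linear isomorphism and $N_P$, $N_{f(P)}$ are exposed faces of $\partial D^*$, $\partial f(D)^*$ (see \cref{thm:nface}) which are themselves related by the linear isomorphism $\Phi$, their dimensions as subsets of the respective dual spheres coincide, giving $\codim_{\partial D}(P)=\codim_{\partial f(D)}(f(P))$.

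The main potential pitfall is to make sure, in (7), that the normalisation $w^*(P)=1$ is tracked correctly under dualisation and that one really uses the characterisation of $\partial D^*$ as the set of dual vectors of unit co-norm, rather than just the supporting-hyperplane formulation; once this bookkeeping is done, the theorem is essentially the observation that every definition in this section is functorial with respect to invertible linear maps.
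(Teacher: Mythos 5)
Your proof is correct and follows essentially the same route as the paper's: both transport the face structure through $f$ to get (1)--(2), deduce (3)--(6) as purely combinatorial corollaries of the face-inclusion bijection, and prove (7) by passing to the dual. The only cosmetic differences are that you establish (1) via the affine-subspace definition (\cref{defn:face}) together with \cref{lem:uniqueface}, whereas the paper checks the interval characterisation (\cref{defn:classicalface}) directly, and in (7) you make the dual isomorphism $(f^{-1})^*\colon N_P\to N_{f(P)}$ explicit where the paper speaks of the map ``induced by $f$'' on support hyperplanes.
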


\begin{proof}
The classical definition of face (\cref{defn:classicalface}) combined with the fact that $f$ and its inverse take intervals to intervals imply $(1)$. More precisely, the linear image of a face $F$ (which is convex) is necessarily convex, and so we need only verify that for every $\xi\in f(F)$ and $\eta,\zeta\in f(D)$ such that $\xi$ lies on the open interval between $\eta$ and $\zeta$, both $\eta$ and $\zeta$ lie in $f(F)$. 
In this situation,  $f^{-1}(\xi)\in F$ lies on the open interval between $f^{-1}(\eta)$ and $f^{-1}(\zeta)$. Since $F$ is a face, we see that $f^{-1}(\eta),f^{-1}(\zeta)\in D$, and hence $\eta=f(f^{-1}(\eta)),\zeta=f(f^{-1}(\zeta))\in f(D)$. We see therefore that faces are preserved under invertible linear transformations, and since $f$ is a homeomorphism, any interior point $P\in F$ is sent to an interior point $f(P)\in f(F)$. This suffices to establish $(1)$. The claim $(2)$ then follows immediately from $(1)$.

We next observe that since faces are preserved, if a face $F$ is a subface of $F'$, then $f(F)$ is a subface of $f(F')$. This means that $f$ preserves the relation of adherence, \ie the claim $(3)$ is true, and hence the adherence closure of $F$ is mapped to the adherence closure of $f(F)$. This yields claim $(4)$. Furthermore, since adherence is preserved, adherence height, adherence depth and adherence-dimension must all be preserved: the claims $(5)$ and $(6)$ both hold.

Finally, we shall prove that $f$ preserves codimension. By linearity and invertibility, the map $f$ takes each support hyperplane $\pi$ at $P\in\mathbb{V}$ to a support hyperplane $f(\pi)$ at $f(P)\in\mathbb{W}$, hence $f$ induces a map from the face $N_P\subset\mathbb{V}^*$ to the face $N_{f(P)}\subset\mathbb{W}^*$. This is in fact a homeomorphism, with its inverse induced by $f^{-1}$ taking support hyperplanes at $f(P)$ to support hyperplanes at $\pi$. Therefore, the dimensions of $N_P$ and $N_{f(P)}$ agree,  hence the codimensions of $P$ and $f(P)$ must be the same, \ie  the claim $(7)$ is true.
\end{proof}

\section{The convex geometry of Thurston metric spheres}
\label{s:unitball}

We now take the various linear invariants developed in \cref{sec:linvariants} and consider them specifically in the context of convex bodies associated with the Thurston metric. In particular, the convex body $D$ in question will, unless otherwise stated, be the closed ball bounded by
\[
\mathbf{S}_x^*:=\iota_x(\pml(S))\subset T_x^*\teich(S).
\]
The theory of convex bodies (\cref{sec:convexbodies}) which we have hitherto seen tells us general properties such as the fact that the dimension of an arbitrary point $P=\iota_x([\lambda])$ is less than or equal to its face-dimension (\cref{thm:dimvsfdim}):
\[
\dim_{\mathbf{S}_x^*}(\iota_x([\lambda]))
\leq
\fdim_{\mathbf{S}_x^*}(\iota_x([\lambda])).
\]
We now consider the geometry of $\mathbf{S}_x^*$ in this specific context, and in particular establish relationships between the faces on $\mathbf{S}_x^*$ and the topological structure of the support of the measured laminations encoded by each face. We aim to access topological structure from the convex geometry of $\mathbf{S}_x^*$, and one application is in our proofs of the \emph{topological rigidity} (\cref{sec:topological}) of Thurston norm isometries.

As another example, consider the following reformulation of \cref{lem:preface} in this specific context. First recall that
\[
N_x([\lambda])
:=
N_{\iota_x([\lambda])}
=
\left\{ 
v\in \mathbf{S}_x
\mid 
\iota_x([\lambda])(v)
=\left\|v\right\|_{\mathrm{Th}}
 \right\}.
\]

\begin{lemma}
\label{faces}
 If $\iota_x([\lambda])$ is contained in the interior of a face $F$, \ie if $F$ is the face for $\iota_x([\lambda])$, then for any projective lamination $\mu$ with $\iota_x([\mu]) \in F$, we have 
 \[
 N_x([\lambda]) \subseteq N_x([\mu])\subset T_x\teich(S). 
 \]
 In particular, if  $\iota_x([\lambda])$ and  $\iota_x([\mu])$ are in the interior of the same face, then 
 \[
N_x([\lambda])=N_x([\mu])\subset T_x\teich(S).
 \]
\end{lemma}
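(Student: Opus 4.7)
The plan is to realise this statement as a direct instantiation of the purely convex-geometric \cref{lem:preface} in the specific convex body furnished by Thurston's theorem \cref{Thurston}. The only work, really, is to reconcile the two definitions of $N_{\bullet}$ — the one from \cref{codim of convex}, which lives in the dual space of support hyperplane normals, and the one $N_x([\lambda])\subset \mathbf{S}_x\subsetneq T_x\teich(S)$ defined in the introduction via the norm-attaining condition $\iota_x([\lambda])(v)=\|v\|_{\mathrm{Th}}$.

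First I would set up the bridge. Take $\mathbb{V}:=T_x^*\teich(S)$ and let $D\subset\mathbb{V}$ denote the closed unit co-norm ball, whose boundary is $\mathbf{S}_x^*=\iota_x(\pml(S))$; by \cref{Thurston} this is a compact convex body with the origin in its interior, so the machinery of \cref{sec:convexbodies} applies. Under the canonical identification $\mathbb{V}^{**}\cong\mathbb{V}^*=T_x\teich(S)$, the set $N_{\iota_x([\lambda])}$ from \cref{codim of convex} becomes
\[
N_{\iota_x([\lambda])}
=\left\{v\in T_x\teich(S)\;\Big|\;\sup_{w^*\in \mathbf{S}_x^*}w^*(v)=\iota_x([\lambda])(v)=1\right\}.
\]
By the dual norm identity \cref{eq:norm}, the supremum appearing here is exactly $\|v\|_{\mathrm{Th}}$, so the normalisation $\iota_x([\lambda])(v)=1$ forces $v$ to lie on the unit tangent sphere $\mathbf{S}_x$ and to satisfy $\iota_x([\lambda])(v)=\|v\|_{\mathrm{Th}}$. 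This recovers precisely the definition of $N_x([\lambda])$ from the introduction, so the two notations agree.

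Having made this identification, the proof is immediate from \cref{lem:preface}: if $\iota_x([\lambda])$ lies in the interior of a face $F$ of $\partial D=\mathbf{S}_x^*$ and $\iota_x([\mu])\in F$, then $N_{\iota_x([\lambda])}\subseteq N_{\iota_x([\mu])}$, i.e.\ $N_x([\lambda])\subseteq N_x([\mu])$. The symmetric ``in particular'' statement follows by applying the same containment in both directions: if both $\iota_x([\lambda])$ and $\iota_x([\mu])$ are interior points of $F$, then each is an interior point relative to the other's face (by the face uniqueness \cref{lem:uniqueface}, they share the same face $F$), so the two inclusions combine to give $N_x([\lambda])=N_x([\mu])$.

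The only conceptual subtlety — and it is essentially bookkeeping rather than a genuine obstacle — is the duality swap: the face $F$ lives inside $\mathbf{S}_x^*\subset T_x^*\teich(S)$ while $N_x([\lambda])$ lives in the dual sphere $\mathbf{S}_x\subset T_x\teich(S)$. Once the identification above is clearly stated, the lemma is formally a corollary of the general-position result \cref{lem:preface} applied to the convex body $D$, with no further geometry of Teichmüller space required.
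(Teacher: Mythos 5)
Your proof is correct and matches the paper's route exactly: the paper states Lemma \cref{faces} as a reformulation of \cref{lem:preface} after defining $N_x([\lambda]):=N_{\iota_x([\lambda])}$, and your unwinding of the duality --- that $N_{\iota_x([\lambda])}\subset\mathbb{V}^*=(T_x^*\teich(S))^*\cong T_x\teich(S)$ coincides with the norm-attaining set via \cref{eq:norm} --- is precisely the bookkeeping the paper leaves implicit. One small slip: you wrote $\mathbb{V}^{**}\cong\mathbb{V}^*$, but the relevant canonical identification is $(T_x^*\teich(S))^*\cong T_x\teich(S)$, i.e.\ the double-dual isomorphism of $T_x\teich(S)$, not an isomorphism between $\mathbb{V}^{**}$ and $\mathbb{V}^*$.
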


\subsection{Minimal supporting surface and support closure}
A measured lamination or a projective lamination is decomposed into components. For each component, we can define as follows its minimal supporting surface.

We shall use  the notion of \emph{incompressible} subsurface $\Sigma$ of $S$. This is a surface with boundary embedded in $S$ such that all its boundary components are essential simple closed curves.
We also regard $S$ itself as being incompressible.

\begin{definition}[minimal supporting surface]
Let $\lambda$ be a minimal measured geodesic lamination on $S$ or the projective class of such a lamination. A \emph{minimal supporting surface} for $\lambda$ is an incompressible subsurface $\Sigma$ of $S$ containing $\lambda$ such that for any other incompressible subsurface $\Sigma'$ containing $\lambda$, we have $\Sigma \subset \Sigma'$ after moving $\Sigma$ by an isotopy. See \cref{fig:supporting} for a depiction.

\end{definition}

From the above definition, it follows that the minimal supporting surface of $\lambda$ is unique up to isotopy and depends only on $|\lambda|$. We now define the support closure of a measured/projective measured lamination. This will play an important role later.

\begin{definition}[support closure]
\label{defn:supportclosure}
For a measured lamination or a projective measured lamination $\lambda$, we consider the minimal supporting surface of every component of $\lambda$ that is not a simple closed curve.
We add to $|\lambda|$ all boundary components of the minimal supporting surfaces and straighten the curves so as to obtain a geodesic lamination. We call this geodesic lamination the \emph{support closure} of $\lambda$ and denote it by $\widehat{|\lambda|}$ (see \cref{fig:supporting}).
\end{definition}

\begin{figure}[h!]
\begin{center}
\includegraphics[scale=1.8]{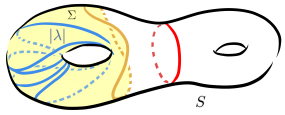}
\caption{The subsurface $\Sigma$ is the minimal supporting surface for $|\lambda|$, and $\widehat{|\lambda|}$ is obtained by adding the geodesic representative for the boundary curve of $\Sigma$ to $|\lambda|$. }
\label{fig:supporting}
\end{center}
\end{figure}

\subsection{Faces for measures of equal support}

We first establish some new notation through the following lemma.

\begin{lemma}
\label{lem:supportset}
Given a measured lamination $\lambda$ on $S$, let $|\lambda|$ denote its support geodesic lamination. The image, under $\iota_x$ for $x \in \teich(S)$, of the set of projective measured laminations whose supports are contained in $|\lambda|$ is a convex set, and there is a unique minimal face on $\mathbf S_x^*$ containing this convex set. We denote this face by $F_{|\lambda|}$.
\end{lemma}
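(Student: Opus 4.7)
The plan is to establish the lemma in two stages: first verify the convexity claim by a direct computation using the linearity of the length function under addition of measured laminations with compatible supports, and then invoke the abstract convex-body machinery developed earlier in the paper to extract the minimal face.

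For the convexity claim, I would argue as follows. Given two projective measured laminations $[\mu_1],[\mu_2]\in\pml(S)$ with $|\mu_1|,|\mu_2|\subseteq |\lambda|$, the key observation is that since both supports lie in the single geodesic lamination $|\lambda|$ they intersect no leaves transversely, so for any non-negative scalars $a_1,a_2\geq 0$ the formal sum $a_1\mu_1+a_2\mu_2$ is again a (compactly supported) measured lamination with support contained in $|\lambda|$. Moreover, on the cone of measures supported on $|\lambda|$ the length function $\ell_x\colon\ml(S)\to\mathbb{R}_{\geq0}$ is linear, and hence so is its differential $d\ell_{(\cdot)}$ valued in $T_x^*\teich(S)$. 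Given $t\in[0,1]$, I would set
\[
a_1:=\frac{t}{\ell_x(\mu_1)},\quad a_2:=\frac{1-t}{\ell_x(\mu_2)},\quad \mu:=a_1\mu_1+a_2\mu_2,
\]
so that $\ell_x(\mu)=t+(1-t)=1$ and therefore
\[
\iota_x([\mu])=\frac{d\ell_{(\cdot)}(\mu)_x}{\ell_x(\mu)}=a_1\,d\ell_{(\cdot)}(\mu_1)_x+a_2\,d\ell_{(\cdot)}(\mu_2)_x=t\iota_x([\mu_1])+(1-t)\iota_x([\mu_2]).
\]
Since $|\mu|\subseteq|\mu_1|\cup|\mu_2|\subseteq|\lambda|$, this exhibits the convex combination as lying in the given set, establishing convexity.

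For the minimal face statement, the convex set just described sits inside $\mathbf{S}_x^*=\partial D$ by construction (every $\iota_x([\mu])$ is a unit covector by \cref{Thurston}). I would then apply \cref{thm:convextoface} directly to $C:=\iota_x(\{[\mu]\in\pml(S)\mid|\mu|\subseteq|\lambda|\})\subseteq\partial D$ to produce a unique minimal face $F_{|\lambda|}\subset\mathbf{S}_x^*$ containing $C$, which is precisely the face required. Since only the support $|\lambda|$ enters the description of $C$, the face $F_{|\lambda|}$ depends only on $|\lambda|$, justifying the notation.

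I do not anticipate any serious obstacle: the only conceptual point that warrants care is the linearity of $d\ell_{(\cdot)}$ on the subcone of measures carried by $|\lambda|$, which is an immediate consequence of Thurston's continuous and positively homogeneous extension of the length function recalled at the start of \S\ref{s:stretch}; once this is in hand the convex combination is a one-line verification, and the minimal face then drops out of \cref{thm:convextoface} with no further work.
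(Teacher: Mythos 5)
Your proof is correct and follows essentially the same route as the paper: reduce to measured-lamination representatives, use the linearity of $\ell_x$ (and hence of $d\ell_{(\cdot)}$) on the cone of measures carried by $|\lambda|$ to show that the normalized convex combination is realized by another measured lamination supported in $|\lambda|$, and then apply \cref{thm:convextoface}. The only cosmetic difference is that the paper normalizes to unit measured laminations up front rather than building the normalization into the coefficients $a_1,a_2$ as you do.
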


\begin{proof}
Instead of working with projective measured laminations $[\mu]\in\pml(S)$ with support in $|\lambda|$, we use a measured lamination representative $\mu\in\ml(S)$. Since $d\log \len(\mu)=\frac{d\len(\mu)}{\len_x(\mu)}$  is homogeneous with respect to $\mu\in\ml(S)$, we assume without loss of generality that $\mu$ lies on the unit sphere of $\ml(S)$, \ie in the subset of $\ml(S)$ defined by $\len_x(\mu)=1$. We refer to such measured laminations as \emph{unit measured laminations}.

For two unit measured laminations $\lambda_1$ and $\lambda_2$ whose supports are contained in  $|\lambda|$, the linear combination 
\[
\lambda_3=t\lambda_1+(1-t)\lambda_2,\quad\text{ for }t\in [0,1],
\]
 is another unit measured lamination with support contained in $|\lambda|$, which satisfies  
\begin{align*}
\iota_x([\lambda_3])&=d\len(t\lambda_1+(1-t)\lambda_2)\\
&=td\len(\lambda_1)+(1-t)d\len(\lambda_2)=t\iota_x([\lambda_1])+(1-t)\iota_x([\lambda_2]).
\end{align*}
Thus the image, under $\iota_x$, of the set of unit measured laminations with support contained in  $|\lambda|$, is a convex set. By \cref{thm:convextoface}, there is a unique minimal face of $\mathbf S_x^*$ containing this convex set. 
\end{proof}

The following lemmas make use of the notation which we introduced in \cref{lem:supportset}, and illustrate some first applications of the notion of support closure (see \cref{defn:supportclosure}). They are key lemmas which will feature in the proofs of results to come in subsequent subsections.
Recall that a geodesic lamination is said to be recurrent if any half-leaf starting at any point $x$ returns arbitrarily near to $x$.

\begin{lemma}
\label{lem:intersection}
A recurrent geodesic lamination $\xi$ has non-empty transverse intersection with $|\lambda|$ if and only if it has non-empty transverse intersection with $\widehat{|\lambda|}$.
\end{lemma}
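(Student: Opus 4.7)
The forward direction is immediate from $|\lambda|\subseteq\widehat{|\lambda|}$, so I focus on the converse. Suppose $\xi$ transversely intersects $\widehat{|\lambda|}$ but, for contradiction, not $|\lambda|$. By \cref{defn:supportclosure}, $\widehat{|\lambda|}\setminus|\lambda|$ is a disjoint union of boundary components of minimal supporting surfaces of the non-closed-curve minimal components of $|\lambda|$, so the transverse crossing must occur along some $\gamma\subset\partial\Sigma$, where $\Sigma$ is the minimal supporting surface of a component $\lambda_0\subseteq|\lambda|$. Two geodesic laminations without any transverse crossings are either disjoint or share leaves; since $\gamma$ is disjoint from $|\lambda|$ whereas a leaf of $\xi$ crosses $\gamma$ transversely, no leaf of $\xi$ can coincide with a leaf of $|\lambda|$, and hence $\xi$ and $|\lambda|$ are disjoint as subsets of $S$.

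I then approximate $\xi$ by multicurves. Since recurrence entails chain-recurrence, \cref{lem:curveapprox} produces a multicurve $c$ that approximates $\xi$ arbitrarily closely in the Hausdorff topology. As $\xi$ and $|\lambda|$ are disjoint compact sets they sit at positive distance, so a sufficiently close $c$ remains disjoint from $|\lambda|$; and since transverse crossing is an open condition, $c$ inherits a transverse crossing with $\gamma$. Selecting a component $c_0\subseteq c$ that crosses $\gamma$, we obtain a simple closed geodesic $c_0$ disjoint from $\lambda_0$ with $i(c_0,\gamma)\geq 1$, since for simple closed geodesics the geometric intersection number equals the isotopy-minimal intersection number.

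Finally, I use the filling property of the minimal supporting surface to derive a contradiction. By the definition of $\Sigma$ as minimal, $\lambda_0$ fills $\Sigma$ in the sense that every essential properly embedded arc in $\Sigma$ (arc with endpoints on $\partial\Sigma$, not isotopic rel endpoints into $\partial\Sigma$) transversely intersects $\lambda_0$; were it otherwise, cutting $\Sigma$ along such an arc and retaining the component containing $\lambda_0$ would produce a strictly smaller incompressible subsurface still containing $\lambda_0$, contradicting minimality. Each component of $c_0\cap\Sigma$ is a properly embedded arc in $\Sigma\setminus\lambda_0$, hence is isotopic rel endpoints into $\partial\Sigma$. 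Performing all these isotopies on $c_0$, followed by a small push off $\partial\Sigma$, produces a representative of $c_0$ disjoint from $\gamma$, giving $i(c_0,\gamma)=0$ and contradicting the previous paragraph.

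The principal obstacle lies in justifying the strong filling property of $\lambda_0$ in $\Sigma$ cleanly from the minimality of $\Sigma$; in particular, one must rule out the possibility that some complementary piece of $\lambda_0$ in $\Sigma$ is an essential annulus connecting two different boundary components of $\partial\Sigma$, which would allow arcs in $\Sigma\setminus\lambda_0$ connecting distinct boundary components. This is handled by the same cut-and-discard argument invoking minimality.
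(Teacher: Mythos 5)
Your overall strategy — assume $\xi$ crosses $\widehat{|\lambda|}$ but not $|\lambda|$, locate a crossing with a boundary curve $\gamma$ of a minimal supporting surface $\Sigma$, and contradict minimality of $\Sigma$ — matches the paper's, but your route to the contradiction is genuinely different: the paper uses the recurrent train-track structure directly to produce a geodesic arc $\alpha \subset \xi\cap\Sigma$ with both endpoints on $\partial\Sigma$, shows $\alpha$ is essential because there are no geodesic bigons, and then cuts $\Sigma$ along $\alpha$; you instead pass to a Hausdorff-approximating multicurve $c$ via \cref{lem:curveapprox}, show arcs of $c_0\cap\Sigma$ are boundary-parallel using a filling property of $\lambda_0$ in $\Sigma$, and contradict $i(c_0,\gamma)\geq 1$.

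The gap is in your disjointness step. From ``a leaf of $\xi$ crosses $\gamma$ transversely'' you conclude ``no leaf of $\xi$ can coincide with a leaf of $|\lambda|$,'' but this does not follow: a recurrent lamination need not be connected or minimal, so while the particular leaf crossing $\gamma$ is certainly not contained in $|\lambda|$, other components of $\xi$ may perfectly well coincide with components of $|\lambda|$ (for instance $\xi$ could contain $\lambda_0$ as a sublamination alongside a separate piece crossing $\gamma$). Without genuine set-theoretic disjointness you lose the positive Hausdorff distance between $\xi$ and $|\lambda|$, and then an approximating multicurve $c$ need not stay off $|\lambda|$ — ``non-transverse'' is not an open condition, so proximity to $\xi$ does not protect $c$ from transversely crossing $\lambda_0$. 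The paper sidesteps this entirely by never leaving $\xi$: the recurrence of $\xi$ yields an arc of $\xi\cap\Sigma$ between boundary points, which is automatically disjoint from $|\lambda|$ by the non-transversality hypothesis, after which the no-bigon observation replaces your isotopy-off-$\gamma$ argument. Your filling-property observation is sound and could be used in tandem with the no-bigon argument to finish along those lines, but the multicurve approximation as written is not justified.
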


\begin{proof}
One direction is obvious, and we need only show that if $\xi$ has nonempty transverse intersection with $\widehat{|\lambda|}$, then it also has nonempty transverse intersection with $|\lambda|$. Assume the contrary. Then, first recall that $\widehat{|\lambda|}$ differs from $|\lambda|$ by a collection of simple closed geodesics which are homotopic to the boundaries of the minimal supporting surfaces of the components of  $\lambda$. The fact that $\xi$ has nonempty transverse intersection with $\widehat{|\lambda|}$ and not $|\lambda|$, combined with the fact that $\xi$ is recurrent, means that there must be the minimal component $\Sigma$ of a component $\lambda_0$ of $\lambda$ and  some geodesic arc $\alpha$ in $\xi\cap\Sigma$ which starts and ends on the boundary of $\Sigma$. Moreover, such a geodesic arc $\alpha$ cannot be homotopic to a boundary arc of $\Sigma$ as that would form a hyperbolic geodesic bigon. This in turn contradicts the fact that $\Sigma$ is the minimal supporting surface of $\lambda_0$, as a component of $\Sigma\setminus \alpha$ retracts onto a surface which supports $\lambda_0$, but $\Sigma$ cannot be homotoped into $\Sigma\setminus\alpha$.
\end{proof}

\begin{lemma}[non-transversality]
\label{case 1}
Suppose that  $\iota_x([\lambda])$ is contained in the face for $\iota_x([\mu])$.
Then $|\mu|$ and $\widehat{|\lambda|}$ cannot have non-empty transverse intersection.
\end{lemma}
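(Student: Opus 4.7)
The plan is to argue by contradiction via stretch vectors. Suppose $|\mu|$ and $\widehat{|\lambda|}$ have non-empty transverse intersection. Since $|\mu|$ is the support of a measured lamination, it is chain-recurrent (hence recurrent), so \cref{lem:intersection} immediately upgrades this to: $|\mu|$ and $|\lambda|$ transversely intersect. Meanwhile, the hypothesis that $\iota_x([\lambda])$ lies in the face for $\iota_x([\mu])$, combined with \cref{faces} (with the roles of $\lambda$ and $\mu$ interchanged, since $\iota_x([\mu])$ is by definition interior to its own face and $\iota_x([\lambda])$ is a point of that face), yields the key inclusion
\[
N_x([\mu]) \subseteq N_x([\lambda]).
\]

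Next I would manufacture a witness vector inside $N_x([\mu])$. Extend $|\mu|$ to a complete (maximal) geodesic lamination $\Lambda$ on $(S,x)$. The associated stretch vector $v_\Lambda \in T_x\teich(S)$ from \cref{def:infinitesimal} has unit Thurston norm (by \cite[Theorem~5.1]{ThS}, encapsulated also in \cref{thm:infcomparison}), so $v_\Lambda \in \mathbf{S}_x$. Since $|\mu| \subseteq \Lambda$ and the stretch map multiplies the length of any measured lamination supported on $\Lambda$ by the stretch factor $e^t$, we compute
\[
v_\Lambda(\log \ell(\mu))
=\left.\frac{d}{dt}\right|_{t=0}\log\!\left(e^t\ell_x(\mu)\right)
=1
=\|v_\Lambda\|_{\mathrm{Th}},
\]
which places $v_\Lambda$ in $N_x([\mu])$. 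The inclusion above then forces $v_\Lambda \in N_x([\lambda])$, i.e.\ $v_\Lambda(\log\ell(\lambda))=1$.

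The contradiction is now immediate from \cref{thm:infcomparison}: because $|\lambda|$ transversely intersects $|\mu|\subseteq\Lambda$, it transversely intersects the maximal lamination $\Lambda$, and \cref{thm:infcomparison} gives the strict inequality $v_\Lambda(\log \ell(\lambda))<1$, contradicting what we just derived. The main subtlety to verify carefully is the passage from the transverse intersection of $|\lambda|$ with $|\mu|$ to a transverse intersection with all of $\Lambda$ (so that \cref{thm:infcomparison} applies): this is automatic since any crossing of $|\mu|$ by a leaf of $|\lambda|$ is a crossing of a leaf of $\Lambda$, and the transversality of the crossing is preserved. The only other point requiring a line of justification is $\|v_\Lambda\|_{\mathrm{Th}}=1$, which is standard from Thurston's construction of stretch paths as unit-speed Thurston geodesics.
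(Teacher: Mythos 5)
Your proof is correct and follows essentially the same route as the paper's: reduce from $\widehat{|\lambda|}$ to $|\lambda|$ via \cref{lem:intersection}, produce a stretch vector $v_\Lambda\in N_x([\mu])$ for a complete lamination $\Lambda\supseteq|\mu|$, push it into $N_x([\lambda])$ via \cref{faces}, and contradict \cref{thm:infcomparison}. The only cosmetic difference is that the paper routes the inclusion $v_\Lambda\in N_x([\lambda])$ through the exposed face $F_{|\mu|}$ of \cref{lem:supportset} rather than applying \cref{faces} directly to the face for $\iota_x([\mu])$ as you do; your phrasing is if anything slightly cleaner.
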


\begin{proof}
Assume that $|\mu|$ and $\widehat{|\lambda|}$ have non-empty transverse intersection. 
Then by \cref{lem:intersection}, so do $|\mu|$ and $|\lambda|$.
We consider a complete geodesic lamination $\nu$ containing $|\mu|$.
The stretch map along $\nu$ on $(S,x)\in\teich(S)$ defines a stretch vector $v_\nu\in N_x([\mu])\subset T_x\teich(S)$. By \cref{faces}, the vector $v_\nu$ is contained in $N_x([\mu'])$ for any point $\iota_x([\mu'])$ in the face for $\iota_x([\mu])$, and hence in particular we have $v_\nu\in N_x([\lambda])$. However, this is impossible because the infinitesimal Thurston stretch map along $\nu$ maximally stretches precisely the laminations which lie in $\nu$ (this fact is asserted in \cite[Theorem~5.1]{ThS}, see \cref{thm:infcomparison} for a sketch-of-proof and \cref{maxstretch} for some discussion), whereas $\nu$ has non-empty transverse intersection with $|\lambda|$ (see \cref{lem:intersection}) and hence $v_\nu$ cannot maximally stretch $[\lambda]$.
\end{proof}

\begin{theorem}[faces for measures of equal support]
\label{thm:equisupport}
Let $[\lambda]\in\pml(S)$ be a projective measured lamination with support $|\lambda|$. Then, for $x \in \teich(S)$, the face $F_{|\lambda|}\subset T_x^*\teich(S)$ is an exposed face consisting of precisely the set of $\iota_x$ images of projective measured laminations with support contained in $|\lambda|$. In particular, the interior of $F_{|\lambda|}$ consists only of $\iota_x$-images of  projective laminations whose supports are equal to $|\lambda|$.
\end{theorem}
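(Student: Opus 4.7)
The plan is to identify the face $F_{|\lambda|}$ with the convex subset
\[
C_{|\lambda|}:=\iota_x\bigl(\{[\mu]\in\pml(S):|\mu|\subseteq|\lambda|\}\bigr),
\]
show that this subset is already a face of $\mathbf{S}_x^*$ in the classical sense of \cref{defn:classicalface}, and then exhibit a support hyperplane realising it as an exposed face. The interior description will then follow immediately from the projective-simplex structure of $C_{|\lambda|}$.

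First, I would verify the classical face property directly. Suppose $\iota_x([w])\in C_{|\lambda|}$ lies in the open interval between $\iota_x([u]),\iota_x([v])\in\mathbf{S}_x^*$ with parameter $t\in(0,1)$. Since $\iota_x([w])$ is an interior point of a chord whose endpoints lie on $\mathbf{S}_x^*$, the convex-ball structure of Thurston's embedding forces the entire chord to lie on $\mathbf{S}_x^*=\iota_x(\pml(S))$, parametrised as $s\mapsto\iota_x([\mu_s])$ for a continuous path in $\pml(S)$. Normalising so that $\len_x(u)=\len_x(v)=1$ and using linearity of $d\len\colon\ml(S)\to T_x^*\teich(S)$ on simplices of compatible-support measured laminations, the requirement that $(1-s)\,d\len(u)+s\,d\len(v)=d\len(\mu_s)$ holds for every $s\in[0,1]$ forces $|u|\cup|v|$ to constitute a geodesic lamination. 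Injectivity of $d\len$ on $\ml(S)$ then gives $\mu_s=(1-s)u+sv$ as measured laminations, and in particular $|w|=|u|\cup|v|\subseteq|\lambda|$, so $|u|,|v|\subseteq|\lambda|$. Thus $\iota_x([u]),\iota_x([v])\in C_{|\lambda|}$, confirming $C_{|\lambda|}$ is a face; the minimality defining $F_{|\lambda|}$ (\cref{lem:supportset}) then yields $F_{|\lambda|}=C_{|\lambda|}$.

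To establish exposedness, pick any tangent vector $v$ in the relative interior of the compact convex set $N_x([\lambda])\subseteq\mathbf{S}_x$, which is non-empty (it contains every stretch vector $v_\Lambda$ for a complete $\Lambda\supseteq|\lambda|$) and is itself an exposed face of $\mathbf{S}_x$ by \cref{thm:nface}. Every $\iota_x([\mu])\in F_{|\lambda|}$ satisfies $v\in N_x([\mu])$ by \cref{faces}. Conversely, if $\iota_x([\mu])\notin F_{|\lambda|}$, then $\iota_x([\mu])$ lies outside the face of $\iota_x([\lambda])$, so the standard convex-geometric duality between faces and their normal fans places $N_x([\lambda])\cap N_x([\mu])$ in a proper face of $N_x([\lambda])$, which is disjoint from its relative interior; hence $v\notin N_x([\mu])$. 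Consequently the exposed face $\{w^*\in\mathbf{S}_x^*:w^*(v)=1\}$ coincides with $F_{|\lambda|}$. Finally, the projective simplex $C_{|\lambda|}$ has interior consisting precisely of those projective measures assigning positive weight to every component of $|\lambda|$, i.e., those with support equal to $|\lambda|$.

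The main obstacle is the compatibility step in the face verification: establishing that a chord in $T_x^*\teich(S)$ joining two points of $\iota_x(\pml(S))$ and passing through a third boundary point forces the underlying measured laminations to combine into a single geodesic lamination. This rests on the PL structure of Thurston's embedding $\iota_x$ and on the injectivity of the length derivative $d\len\colon\ml(S)\to T_x^*\teich(S)$, both inherited from Thurston's foundational results on $\pml(S)$.
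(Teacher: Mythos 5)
Your proposal contains two related gaps, both at steps where you implicitly appeal to structure that is not available without the key dynamical input the paper uses, namely \cref{thm:infcomparison} applied to a carefully chosen complete lamination.

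First, in the face-verification step: after correctly arguing that the chord between $\iota_x([u])$ and $\iota_x([v])$ lies entirely on $\mathbf{S}_x^*$, you assert that ``the requirement that $(1-s)\,d\len(u)+s\,d\len(v)=d\len(\mu_s)$ holds for every $s\in[0,1]$ forces $|u|\cup|v|$ to constitute a geodesic lamination.'' No argument is given for why this is forced, and it is not an easy consequence of the convex-ball structure. Indeed, ruling out transversality between $|u|$ and $|v|$ when the chord lies on the boundary is essentially the content of \cref{case 1} (the non-transversality lemma), which is proved \emph{using} \cref{thm:equisupport}; so as stated the step begs the question. The paper avoids this by not attempting to verify the face property directly; instead it exhibits the set $\{[\mu]: |\mu|\subseteq|\lambda|\}$ as the full exposed face cut out by a single cleverly chosen support hyperplane.

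Second, in the exposedness step: you select $v$ in the relative interior of $N_x([\lambda])$ and then invoke ``the standard convex-geometric duality between faces and their normal fans'' to conclude that $\iota_x([\mu])\notin F_{|\lambda|}$ implies $v\notin N_x([\mu])$. For polytopes this duality is an anti-isomorphism of face lattices, but $\mathbf{S}_x^*$ is not a polytope, and for general convex bodies the set of $w^*\in\partial D^*$ with $w^*(v)=1$ for $v$ in the relative interior of $N_P$ is the conjugate (exposed) face of $N_P$, which can strictly contain the face of $P$ whenever that face is not exposed. So the duality statement you invoke \emph{is} the exposedness of $F_{|\lambda|}$, which is what you are trying to prove; the argument is circular. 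What rescues the situation in the paper is that one does not take an arbitrary $v$ in the relative interior of $N_x([\lambda])$; one takes a \emph{stretch vector} $v_\xi$ for a complete lamination $\xi$ containing $|\lambda|$ but with no closed geodesic leaves in $\xi\setminus|\lambda|$, so that $\xi$ supports transverse measures only on sublaminations of $|\lambda|$. Then \cref{thm:infcomparison} guarantees $\iota_x([\mu])(v_\xi)=1$ forces $|\mu|\subseteq\xi$, hence $|\mu|\subseteq|\lambda|$, giving the exposing hyperplane directly. Your proof mentions stretch vectors only to observe that $N_x([\lambda])$ is non-empty, but never uses the selectivity of stretch vectors that is the actual engine of the argument.

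The closing interior description via the affine simplex structure of the cone of transverse measures on $|\lambda|$ is a clean alternative to the paper's contradiction argument (which uses a transversal complete lamination), and would be fine once $F_{|\lambda|}=C_{|\lambda|}$ is established. But the two earlier steps need to be reworked along the lines of the paper's stretch-vector argument before the result is proved.
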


\begin{proof} For $x$ in $\teich(S)$, let $(S,x)$ be a hyperbolic structure on $S$.
Since $(S,x)\setminus |\lambda|$ is a bordered hyperbolic surface, where we regard $x \in \teich(S)$ as a hyperbolic structure on $S$, we may endow $(S,x)\setminus|\lambda|$ with a finite complete geodesic lamination with no closed geodesic leaves. In particular, adding this lamination to $|\lambda|$ yields a complete geodesic lamination $\xi$ on $(S,x)$ which can only support measured laminations on sublaminations of $|\lambda|\subset\xi$. The stretch vector $v_\xi\in\mathbf{S}_x\subset T_x\teich(S)$ defines a support hyperplane of the form:
\[
\pi_\xi
=
\left\{
w^*\in T_x^*\teich(S)
\mid
\sup_{v\in \mathbf{S}_x} w^*(v)
=
w^*(v_\xi)=1
\right\}.
\]
We first observe that the $\iota_x$ image of every projective measured lamination supported on $|\lambda|$ lies in this support hyperplane because $v_\xi$ stretches along $|\lambda|$. Also, the minimality of the face $F_{|\lambda|}$ ensures that it is a subset of the exposed face $\pi_\xi\cap \mathbf{S}_x^*$.

Conversely, \cref{thm:infcomparison} and the fact that $|\lambda|$ is the largest possible (transverse) measure supporting sublamination of $\xi$ implies that $\pi_\xi\cap \mathbf{S}_x^*$ can only contain $\iota_x$-images of projective measured laminations with support contained in $|\lambda|$. This also shows that $F_{|\lambda|}=\pi_\xi\cap \mathbf{S}_x^*$, and that $F_{|\lambda|}$ is an exposed face as claimed since $\pi_\xi$ is a support hyperplane.

To finish the proof, we show that the interior points of $F_{|\lambda|}$ consist only of $\iota_x$-images of projective measured laminations whose supports are precisely equal to $|\lambda|$. Consider a projective lamination $[\mu]$ such that $\iota_x([\mu])$ is an interior point for $F_{|\lambda|}=\pi_\xi\cap\mathbf{S}_x^*$. We have shown that $|\mu|\subseteq|\lambda|$, and we now suppose that $|\mu|$  is contained in  $|\lambda|$ as a proper subset. Since $\iota_x([\mu])$ is an interior point and $\iota_x([\lambda])$ lies in $F_{|\lambda|}$, \cref{faces} tells us that 
\begin{align}
N_x([\mu])= N_x([\lambda]).
\label{eq:case3}
\end{align}
Now take $\xi$ to be a complete lamination containing $|\mu|$ that intersects $|\lambda|\setminus |\mu|$ transversely. The stretch vector $v_{\xi}$, with respect to $\xi$, is contained in $N_x([\mu])$, but not in $N_x([\lambda])$ as $\xi$ intersects $|\lambda|$ transversely (\cref{thm:infcomparison}). This contradicts \cref{eq:case3}, and hence $|\mu|$ cannot be a proper subset of $|\lambda|$ and must instead be equal to $|\lambda|$.
\end{proof}

\begin{remark}
\cref{thm:equisupport} claims that any interior point of $F_{|\lambda|}$ corresponds to a projective lamination whose support is $|\lambda|$, but it does not say that every lamination whose support is precisely $|\lambda|$ is mapped into the interior of $F_{|\lambda|}$.
\end{remark}

\begin{corollary}
\label{inclusion}
For two arbitrary projective laminations $[\lambda],[\mu]\in\pml(S)$, 
\[
|\mu|\subsetneq|\lambda|\text{ if and only if }
F_{|\mu|}
\subsetneq
F_{|\lambda|}.
\]
\end{corollary}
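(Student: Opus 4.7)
My plan is to deduce this corollary directly from \cref{thm:equisupport}, which is the real work: it characterises $F_{|\lambda|}$ as exactly the $\iota_x$-image of all projective measured laminations supported on (sublaminations of) $|\lambda|$, with interior consisting precisely of those whose support equals $|\lambda|$.

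For the forward implication, suppose $|\mu|\subsetneq|\lambda|$. Any projective measured lamination supported on $|\mu|$ is in particular supported on a sublamination of $|\lambda|$, so \cref{thm:equisupport} immediately yields $F_{|\mu|}\subseteq F_{|\lambda|}$. For strictness, invoke \cref{thm:equisupport} again: there exist projective measured laminations with support exactly $|\lambda|$ (e.g.\ $[\lambda]$ itself, or more subtly any projective measure of full support on $|\lambda|$), and their $\iota_x$-images lie in $F_{|\lambda|}$ but cannot lie in $F_{|\mu|}$, since every point of $F_{|\mu|}$ corresponds to a projective lamination whose support sits inside $|\mu|\subsetneq|\lambda|$.

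For the reverse implication, suppose $F_{|\mu|}\subsetneq F_{|\lambda|}$. Pick any interior point of $F_{|\mu|}$; by the second half of \cref{thm:equisupport} it is of the form $\iota_x([\mu'])$ for some $[\mu']$ with support exactly $|\mu|$. The containment $F_{|\mu|}\subseteq F_{|\lambda|}$ places this point in $F_{|\lambda|}$, so applying the first half of \cref{thm:equisupport} gives $|\mu|=|\mu'|\subseteq|\lambda|$. It remains to promote this to strict containment: were we to have $|\mu|=|\lambda|$, the faces $F_{|\mu|}$ and $F_{|\lambda|}$, each being unambiguously determined by the underlying support lamination via \cref{lem:supportset}, would coincide, contradicting the hypothesis $F_{|\mu|}\subsetneq F_{|\lambda|}$.

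The proof is essentially a direct unpacking of \cref{thm:equisupport}, and I expect no real obstacle here beyond carefully separating the ``interior vs.\ boundary'' usage: \cref{thm:equisupport} tells us that containment of $\iota_x$-images is equivalent to containment of supports \emph{only} because it pins down both the whole face and its interior, so each direction of the corollary uses a different half of that theorem. The well-definedness of $F_{|\lambda|}$ purely in terms of $|\lambda|$, as established in \cref{lem:supportset}, is what lets us conclude that the equality case on the support side corresponds exactly to the equality case on the face side.
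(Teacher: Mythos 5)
Your proof is correct and matches the obvious intended route: the paper states the corollary immediately after \cref{thm:equisupport} without explicit proof, and your argument is the natural unpacking of that theorem together with the well-definedness of $F_{|\lambda|}$ as a function of the support established in \cref{lem:supportset}.

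One small simplification is available in the reverse direction: you do not actually need to pass through an interior point of $F_{|\mu|}$. Since $\iota_x([\mu])\in F_{|\mu|}\subseteq F_{|\lambda|}$ and $|\mu|$ is the support of $[\mu]$, the first half of \cref{thm:equisupport} applied to $F_{|\lambda|}$ already gives $|\mu|\subseteq|\lambda|$ directly, and then strictness follows exactly as you argue from $F_{|\mu|}=F_{|\lambda|}$ whenever $|\mu|=|\lambda|$. Your interior-point detour is logically fine (the relative interior of the nonempty convex face $F_{|\mu|}$ is nonempty, and \cref{thm:equisupport} pins down the support of any interior point as exactly $|\mu|$), but the ``interior'' half of \cref{thm:equisupport} turns out to be unnecessary for this particular corollary.
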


\begin{corollary}
\label{dimension}
The dimension of $\iota_x([\lambda])$ is at most equal to the dimension of the cone of transverse measures on $|\lambda|$ subtracted by $1$. 
\end{corollary}

\begin{corollary}
\label{thm:bound}
Given an arbitrary point $\iota_x([\lambda])\in\mathbf{S}_x^*$, then the dimension $\dim_{\mathbf{S}_x^*}(\iota_x([\lambda]))$ satisfies
\[
\dim_{\mathbf{S}_x^*}(\iota_x([\lambda]))\leq
3g-4+n.
\]
\end{corollary}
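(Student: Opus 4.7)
The plan is to invoke the immediately preceding corollary: this reduces the problem to showing that the cone of transverse measures supported on $|\lambda|$ has real dimension at most $3g-3+n$, since projectivization drops the dimension by $1$ and yields the desired bound of $3g-4+n$.

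To establish the structural bound on the cone, I would use the canonical decomposition of a geodesic lamination into its minimal sublaminations. Each minimal sublamination of $|\lambda|$ is either a single simple closed geodesic or a lamination filling a properly embedded incompressible subsurface of $S$; these minimal components are pairwise disjoint, and every transverse measure on $|\lambda|$ splits uniquely as a direct sum of transverse measures on the individual minimal components. Hence the dimension of the full cone equals the sum of the dimensions of the component cones. Each simple closed curve component contributes a one-dimensional ray, and for a minimal filling sublamination supported on a subsurface $\Sigma_j$ of type $(g_j,n_j)$, the classical theorem of Katok bounds the dimension of its cone of invariant transverse measures by $3g_j - 3 + n_j$.

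The remaining step is Euler-characteristic bookkeeping: assigning an annular neighborhood to each simple closed curve component of $|\lambda|$ and invoking additivity of $-\chi$ across the resulting pairwise-disjoint incompressible subsurfaces of $S$, one checks that
\[
\sum_j (3g_j - 3 + n_j) \;\leq\; 3g - 3 + n.
\]
Combined with the preceding corollary, this gives $\dim_{\mathbf{S}_x^*}(\iota_x([\lambda])) \leq 3g-4+n$. The main potential obstacle lies in this last combinatorial step, as simple closed leaves of $|\lambda|$ may coincide with boundary components of the supporting subsurfaces and distinct supporting subsurfaces may share boundary curves; a careful case analysis of these coincidences is required to convert the $-\chi$ accounting into the desired inequality. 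The Katok dimension bound is an external input which I would simply cite.
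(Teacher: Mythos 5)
Your proposal takes a genuinely different route from the paper. The paper's proof is a two-line citation: \cref{thm:equisupport} identifies the face for $\iota_x([\lambda])$ as an affine image of the space of projective transverse measures on $|\lambda|$, and the bound $3g-4+n$ on the dimension of that space is quoted directly from \cite[Corollaire, p.~133]{Papa}. You instead attempt to re-derive this cited bound from scratch by decomposing $|\lambda|$ into minimal components, bounding each component's cone of measures, and summing. This is a legitimate alternative strategy (and likely mirrors how the result in \cite{Papa} was itself proved), but there are two substantive issues as written.

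First, the attribution to ``the classical theorem of Katok'' is not accurate: Katok's theorem (and the Veech sharpening) bounds the number of ergodic invariant measures for a minimal \emph{oriented} foliation of a \emph{closed} genus-$g_j$ surface by $g_j$, not by $3g_j-3+n_j$. Geodesic laminations are unoriented and the minimal supporting subsurfaces here have boundary and punctures, so you need a statement adapted to that setting; the natural references are Levitt \cite{Lev} (which the paper already invokes nearby for the ergodic-extreme-point correspondence) or \cite{Papa} itself --- and if you cite \cite{Papa} for the component bound, you have not avoided the paper's external input at all. Second, your displayed inequality $\sum_j(3g_j-3+n_j)\leq 3g-3+n$ omits the simple closed curve components of $|\lambda|$; since you (correctly) count each such curve as contributing a one-dimensional ray to the cone, the inequality you actually need is $m+\sum_j(3g_j-3+n_j)\leq 3g-3+n$, where $m$ is the number of simple closed leaves. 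Relatedly, $3g-3+n$ is not a linear function of $-\chi=2g-2+n$ (it depends on $n$ separately), so ``additivity of $-\chi$'' is the wrong accounting device; the cleaner mechanism is additivity of the complexity $\xi(S)=3g-3+n$ under cutting along disjoint essential simple closed curves --- one realizes the $m$ simple closed leaves together with a pants decomposition of each $\Sigma_j$ as a collection of pairwise disjoint, pairwise non-isotopic essential curves in $S$, and counts --- which also disposes of the boundary-coincidence cases you flag at the end.
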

\begin{proof}
By \cref{dimension}, the face of $\iota_x([\lambda])$ has the same dimension as the space of projective transverse measures on $|\lambda|$.
The dimension of the latter space is bounded by $3g-4+n$ as is shown in  \cite[Corollaire, p. 133]{Papa}.
\end{proof}

\begin{remark}\label{rmk:upperbound}
The upper bound on dimension bounds the face-dimension of every $\iota_x([\lambda])$ by $3g-4+n$ since the face dimension is the dimension of the adherence closure of the face for $\iota_x([\lambda])$, which is again a face, and hence its dimension is that of its interior point.
\end{remark}

\begin{corollary}[embedded curve complex]\label{cor:embeddedcx}
The convex stratification of $\mathbf{S}_x^*$ contains the curve complex for $S$ as a subcomplex. We denote this subcomplex as $\mathbf{C}_x^*$.
\end{corollary}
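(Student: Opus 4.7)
The plan is to exhibit a simplicial map from $\mathcal{C}(S)$ to the convex stratification of $\mathbf{S}_x^*$ via the face-assigning map $F_{(\cdot)}$ already constructed in \cref{lem:supportset} and studied in \cref{thm:equisupport,inclusion}. Concretely, I send each $k$-simplex of the curve complex---a multicurve $\Gamma=\{\gamma_0,\ldots,\gamma_k\}$ of $k+1$ distinct disjoint essential simple closed curves---to the face $F_{|\Gamma|}$, where $|\Gamma|:=\gamma_0\cup\cdots\cup\gamma_k$. By \cref{thm:equisupport}, $F_{|\Gamma|}$ is an exposed face consisting precisely of $\iota_x$-images of projective measured laminations supported on $|\Gamma|$; since every such measured lamination is a nonnegative linear combination of the $\gamma_i$'s, this face is the convex hull of the points $\iota_x([\gamma_0]),\ldots,\iota_x([\gamma_k])$.

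The heart of the proof is then to check that these $k+1$ vertices are affinely independent, so that $F_{|\Gamma|}$ is a genuine geometric $k$-simplex. I expect this to be the main obstacle; I would argue as follows. Each $\gamma_i$ admits a unique projective transverse measure, so by \cref{thm:equisupport} the face $F_{\gamma_i}$ is the single point $\{\iota_x([\gamma_i])\}$. Suppose, for contradiction, that $\iota_x([\gamma_{i_0}])$ lies in the convex hull of the remaining vertices. Then $\iota_x([\gamma_{i_0}])\in F_{|\Gamma|\setminus\gamma_{i_0}}$, so by the face-uniqueness statement \cref{lem:uniqueface}, the unique face containing this point in its interior---namely $F_{\gamma_{i_0}}$---must be a subface of $F_{|\Gamma|\setminus\gamma_{i_0}}$. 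Invoking the covariant labelling \cref{inclusion}, this forces $\gamma_{i_0}\subseteq|\Gamma|\setminus\gamma_{i_0}$, which is absurd. Hence the $\iota_x([\gamma_i])$ are affinely independent and $F_{|\Gamma|}$ is a $k$-simplex whose vertices are exactly the $\iota_x([\gamma_i])$.

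To conclude, the subcomplex structure follows immediately from \cref{inclusion}: whenever $\Gamma'\subsetneq\Gamma$ is a sub-multicurve, $F_{|\Gamma'|}$ is a proper subface of $F_{|\Gamma|}$, which is exactly the geometric sub-simplex spanned by the corresponding subset of vertices $\{\iota_x([\gamma_i])\mid\gamma_i\in\Gamma'\}$. Setting
\[
\mathbf{C}_x^*:=\bigcup_{\Gamma\text{ multicurve on }S} F_{|\Gamma|}\subset\mathbf{S}_x^*,
\]
the assignment $\Gamma\mapsto F_{|\Gamma|}$ yields a bijection from the simplices of $\mathcal{C}(S)$ to the simplices comprising $\mathbf{C}_x^*$ that preserves inclusion of simplices; that is, a simplicial isomorphism of $\mathcal{C}(S)$ onto the subcomplex $\mathbf{C}_x^*$ of the convex stratification of $\mathbf{S}_x^*$.
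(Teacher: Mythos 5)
Your overall plan matches the paper's: describe the face for the multicurve $\Gamma$ as $F_{|\Gamma|}$ via \cref{thm:equisupport}, identify it as a simplex with vertices $\iota_x([\gamma_i])$, and get the subcomplex structure from \cref{inclusion}. You also correctly identify that the affine independence of the $\iota_x([\gamma_i])$ is the crux (the paper is quite terse on precisely this point). However, your argument for affine independence has a genuine logical gap.

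You argue ``suppose, for contradiction, that $\iota_x([\gamma_{i_0}])$ lies in the convex hull of the remaining vertices,'' and the contradiction you derive is sound. But this only shows each $\iota_x([\gamma_i])$ is an \emph{extreme point} of the polytope $F_{|\Gamma|}$, which is strictly weaker than affine independence of the full vertex set. For instance, the four vertices of a planar quadrilateral are all extreme and none lies in the convex hull of the others, yet they are affinely dependent and the quadrilateral is not a simplex. Nothing in your argument rules out $F_{|\Gamma|}$ being, say, a $(k-1)$-dimensional polytope with $k+1$ vertices. The phrase ``Hence the $\iota_x([\gamma_i])$ are affinely independent'' does not follow from what was shown.

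The fix is both shorter and more direct, and uses an ingredient you already invoked in spirit but not explicitly: Thurston's injectivity theorem (\cref{Thurston}). By \cref{lem:supportset}, the map $\iota_x$ restricted to the $k$-simplex of unit (or projective) measured laminations supported in $|\Gamma|$ is \emph{affine}; by \cref{Thurston} it is \emph{injective}. If $\iota_x([\gamma_0]),\ldots,\iota_x([\gamma_k])$ were affinely dependent, there would be coefficients $a_i$, not all zero, with $\sum a_i=0$ and $\sum a_i\,\iota_x([\gamma_i])=0$; splitting $a_i=a_i^+-a_i^-$ into positive and negative parts produces two distinct projective multicurves (with disjoint ``positive'' and ``negative'' supports) in $\pml(S)$ that $\iota_x$ sends to the same point, contradicting injectivity. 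An injective affine map on a $k$-simplex therefore has image a genuine $k$-simplex, which is what you wanted. Once this is in place, the remainder of your argument (the subcomplex structure via \cref{inclusion}) is correct as written.
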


\begin{proof}
Following \cref{thm:equisupport}, we have a complete topological description of the face of $\iota_x([\Gamma])$ for an arbitrary projective multicurve $[\Gamma]$: when $|\Gamma|$ consists of $k$ simple closed geodesics, the face of $\iota_x([\Gamma])$ is a $k-1$ simplex whose boundary consists of lower-dimensional simplices corresponding to the faces of projective weighted multicurves supported on a proper subset of closed geodesics in $|\Gamma|$. 
In fact, we can easily show that for any projective positive weight given on $|\Gamma|$, its image under $\iota_x$ lies in the inteior of $F_{|\Gamma|}$.
This is precisely a geometric realisation of the curve complex, and we denote it by $\mathbf{C}_x^*$.
\end{proof}

\begin{corollary}
\label{dim of mc}
For any projectively weighted multicurve with $k$ components  $[\Gamma]\in\pml(S)$, the dimension of $\iota_x([\Gamma])$ is $k-1$.
\end{corollary}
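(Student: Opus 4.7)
The plan is to derive this corollary from Theorem \ref{thm:equisupport} together with the classical fact that the length differentials of a multicurve are linearly independent cotangent vectors. First, I would note that by the definition of dimension (Definition \ref{defn:dimension}) and Lemma \ref{lem:uniqueface}, the dimension of $\iota_x([\Gamma])$ equals the dimension of the affine span of its face, provided $\iota_x([\Gamma])$ lies in the interior of its face. So the task reduces to identifying the face of $\iota_x([\Gamma])$ and computing its affine dimension.

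Writing $|\Gamma| = \gamma_1 \sqcup \cdots \sqcup \gamma_k$, Theorem \ref{thm:equisupport} tells us that $F_{|\Gamma|}$ consists precisely of the $\iota_x$-images of projective measured laminations supported on $|\Gamma|$, and that interior points of $F_{|\Gamma|}$ correspond to projective measured laminations with support equal to (rather than strictly contained in) $|\Gamma|$. Since each $\gamma_i$ is a component of $\Gamma$ with positive weight, the support of $\Gamma$ is exactly $|\Gamma|$, so $\iota_x([\Gamma])$ lies in the interior of $F_{|\Gamma|}$, and thus $F_{|\Gamma|}$ is the face for $\iota_x([\Gamma])$.

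Next I would compute the affine dimension of $F_{|\Gamma|}$. The measured laminations supported on $|\Gamma|$ form a $k$-dimensional cone in $\ml(S)$, parametrised by $(a_1, \ldots, a_k) \in \mathbb{R}_{\geq 0}^k$ via $\mu = \sum_i a_i \gamma_i$. Since length is homogeneous and additive on disjoint simple closed curves, the map $\mu \mapsto d\ell_\mu$ sends this cone linearly to the cone $\{\sum_i a_i \, d\ell_{\gamma_i}\}$ in $T_x^*\teich(S)$. Projectivising (equivalently, restricting to $\sum_i a_i \ell_x(\gamma_i) = 1$ to obtain $\iota_x$) then yields $F_{|\Gamma|}$ as the convex hull of the $k$ points $\frac{1}{\ell_x(\gamma_i)} d\ell_{\gamma_i}$. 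So $F_{|\Gamma|}$ is affinely $(k-1)$-dimensional provided that $d\ell_{\gamma_1}, \ldots, d\ell_{\gamma_k}$ are linearly independent in $T_x^*\teich(S)$.

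The main (and essentially only nontrivial) step is thus establishing this linear independence, but this is classical: since $\gamma_1, \ldots, \gamma_k$ are pairwise disjoint and non-homotopic simple closed curves, we have $k \leq 3g-3+n$ and the collection can be extended to a pants decomposition $\gamma_1, \ldots, \gamma_{3g-3+n}$. The Fenchel--Nielsen coordinates then provide a real-analytic diffeomorphism $\teich(S) \to \mathbb{R}_{>0}^{3g-3+n} \times \mathbb{R}^{3g-3+n}$ whose first $3g-3+n$ coordinates are the length functions $\ell_{\gamma_i}$, so their differentials form part of a coordinate cobasis at $x$ and are in particular linearly independent. Combining these observations yields $\dim_{\mathbf{S}_x^*}(\iota_x([\Gamma])) = \dim F_{|\Gamma|} = k-1$, as claimed.
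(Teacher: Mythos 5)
Your argument is correct and follows essentially the same route as the paper: the result is derived from \cref{thm:equisupport} together with the identification of the face of $\iota_x([\Gamma])$ as a $(k-1)$-simplex, which the paper records in the proof of \cref{cor:embeddedcx}. You make the simplex description explicit by proving linear independence of $d\ell_{\gamma_1}, \ldots, d\ell_{\gamma_k}$ via Fenchel--Nielsen coordinates, a step the paper leaves implicit; that is a useful elaboration.

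One caution about the exposition in your second paragraph: you infer that $\iota_x([\Gamma])$ lies in the interior of $F_{|\Gamma|}$ \emph{because} $|\Gamma|$ is the full support of $\Gamma$, citing \cref{thm:equisupport}. That theorem only establishes the converse direction (interior points of $F_{|\lambda|}$ have support equal to $|\lambda|$), and the remark immediately following it warns against precisely the inference you make. For multicurves the claim is nonetheless true, but the clean justification is to first establish, as you do in your third and fourth paragraphs, that $F_{|\Gamma|}$ is the $(k-1)$-simplex with vertices $\iota_x([\gamma_1]),\ldots,\iota_x([\gamma_k])$, and then observe that $\iota_x([\Gamma]) = \sum_i \tfrac{a_i\,\ell_x(\gamma_i)}{\ell_x(\Gamma)}\,\iota_x([\gamma_i])$ is a strictly positive convex combination of the vertices, hence a relative-interior point of that simplex. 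Swapping the order of these steps removes the apparent circularity; the mathematical content of your proposal is otherwise complete and matches the paper.
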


\begin{corollary}[contravariant labelling]
\label{contravariant}
For two arbitrary projective multicurves $[\Gamma_1],[\Gamma_2]\in\pml(S)$, denote their respective supports by $|\Gamma_1|$ and $|\Gamma_2|$. Then,
\begin{align}
|\Gamma_1|\subseteq |\Gamma_2|
\textit{ if and only if }
N_x([\Gamma_2])\subseteq N_x([\Gamma_1]),
\end{align}
with equality on the left if and only if we have equality on the right.
\end{corollary}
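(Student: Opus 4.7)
The plan is to derive this as an essentially immediate consequence of \cref{inclusion} combined with \cref{faces}, bridged by the observation that for any projective multicurve $[\Gamma]$, the point $\iota_x([\Gamma])$ lies in the \emph{interior} of its associated face $F_{|\Gamma|}$. The latter holds because \cref{dim of mc} tells us the dimension of $\iota_x([\Gamma])$ equals $k-1$ where $k$ is the number of components of $|\Gamma|$, and this matches the dimension of the $(k-1)$-simplex $F_{|\Gamma|}$ identified in the proof of \cref{cor:embeddedcx}.

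For the forward direction, suppose $|\Gamma_1|\subseteq|\Gamma_2|$. Then $[\Gamma_1]$ is a projective measured lamination supported on $|\Gamma_2|$, so by \cref{thm:equisupport} we have $\iota_x([\Gamma_1])\in F_{|\Gamma_2|}$. Since $\iota_x([\Gamma_2])$ is an interior point of $F_{|\Gamma_2|}$, applying \cref{faces} with $\lambda=\Gamma_2$ and $\mu=\Gamma_1$ yields $N_x([\Gamma_2])\subseteq N_x([\Gamma_1])$.

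For the converse I would argue the contrapositive. Suppose $|\Gamma_1|\not\subseteq|\Gamma_2|$ and fix a component $\gamma$ of $|\Gamma_1|$ not contained in $|\Gamma_2|$. The goal is to exhibit a stretch vector $v_\xi$ in $N_x([\Gamma_2])\setminus N_x([\Gamma_1])$, arising from a complete geodesic lamination $\xi$ that contains $|\Gamma_2|$ as a sublamination while transversely intersecting $\gamma$. If $\gamma$ already intersects $|\Gamma_2|$ transversely, any completion $\xi\supseteq|\Gamma_2|$ will automatically meet $\gamma$ transversely (as $\gamma$ cannot appear as a leaf of $\xi$). If $\gamma$ is disjoint from $|\Gamma_2|$, then $\gamma$ lies in some component $\Sigma$ of $S\setminus|\Gamma_2|$; because $\gamma$ is an essential simple closed geodesic and, by uniqueness of geodesic representatives within isotopy classes, not isotopic in $S$ to any component of $|\Gamma_2|$, the subsurface $\Sigma$ has sufficient topological complexity to admit an ideal triangulation transversely intersecting $\gamma$. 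Completing all components of $S\setminus|\Gamma_2|$ this way and adjoining $|\Gamma_2|$ produces the required $\xi$. With $\xi$ in hand, $v_\xi\in N_x([\Gamma_2])$ because $|\Gamma_2|\subseteq\xi$ and so the $e^t$-stretch along $\xi$ uniformly expands $[\Gamma_2]$ by $e^t$; meanwhile $v_\xi\notin N_x([\Gamma_1])$ by \cref{thm:infcomparison}, since the transverse intersection of $\xi$ with the component $\gamma$ of $|\Gamma_1|$ forces $v_\xi(\log\ell_{(\cdot)}(\Gamma_1))<1=\|v_\xi\|_{\mathrm{Th}}$.

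The equality clause then follows by applying the equivalence in both orientations: $|\Gamma_1|=|\Gamma_2|$ iff both inclusions $|\Gamma_j|\subseteq|\Gamma_{j'}|$ hold, iff both corresponding inclusions $N_x([\Gamma_{j'}])\subseteq N_x([\Gamma_j])$ hold, iff $N_x([\Gamma_1])=N_x([\Gamma_2])$. The main technical point is the topological construction of $\xi$ in the disjoint case of the converse; the remaining inferences are routine assemblies of the cited results.
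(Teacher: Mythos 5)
Your argument is correct, and it essentially follows the paper's route (Theorem \ref{thm:equisupport} to recast support inclusion as incidence of $\iota_x([\Gamma_1])$ with the face $F_{|\Gamma_2|}$, then Lemma \ref{faces} to translate into the dual $N_x$ statement), but it is more explicit about the converse direction than the published proof. The paper writes ``$|\Gamma_1|\subseteq|\Gamma_2|$ if and only if $\iota_x([\Gamma_1])\in F_{|\Gamma_2|}$. \cref{faces} then yields the desired result,'' but \cref{faces} by itself only gives the implication $\iota_x([\Gamma_1])\in F_{|\Gamma_2|}\Rightarrow N_x([\Gamma_2])\subseteq N_x([\Gamma_1])$; the reverse implication does not follow from \cref{faces} in general convex geometry (for example, on a stadium--shaped convex body in $\mathbb{R}^2$, the junction point and an interior point of a straight edge have equal $N_P$ yet distinct faces). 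What saves the paper is that $F_{|\Gamma_2|}$ is an \emph{exposed} face cut out by a specific stretch vector $v_\xi$ as in the proof of Theorem \ref{thm:equisupport}, which is exactly the ingredient you make visible by explicitly producing a complete lamination $\xi\supseteq|\Gamma_2|$ transverse to the stray component $\gamma$ and then applying \cref{thm:infcomparison}. So your proof is not merely a restatement: it supplies the justification for the converse that the published proof leaves implicit, at the cost of a slightly longer topological construction of $\xi$. One small caution in your presentation: the phrase ``sufficient topological complexity to admit an ideal triangulation transversely intersecting $\gamma$'' is informal; the precise point (and the one used in the proof of Theorem \ref{thm:equisupport}) is that each component of $S\setminus|\Gamma_2|$ can be given a complete finite geodesic lamination with no closed leaves, so its complement is a union of ideal triangles and punctured monogons, none of which can contain the closed geodesic $\gamma$, forcing transverse intersection.
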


\begin{proof}
Since $[\Gamma_1]$ and $[\Gamma_2]$ are projective multicurves, the points $\iota_x([\Gamma_1])$ and $\iota_x([\Gamma_2])$ respectivly lie in the interior of $F_{|\Gamma_1|}$ and $F_{|\Gamma_2|}$. Therefore, $|\Gamma_1|\subseteq |\Gamma_2|$ if and only if $\iota_x([\Gamma_1])$ is contained in the face $F$ of $\iota_x([\Gamma_2])$. \cref{faces} then yields the desired result.
\end{proof}

\subsection{Closure correspondence}

\begin{theorem}[closure correspondence]
\label{measure} Let $x$ be a fixed hyperbolic structure on $S$, and consider a projective lamination $[\lambda]$ with  support $|\lambda|$. Let $\widehat{F}$ be the adherence closure of the face $F$ for $\iota_x([\lambda])$.
Then the following hold:
\begin{enumerate}[(a)]
\item The face $\widehat{F}$ is equal to the (exposed) face $F_{\widehat{|\lambda|}}$ consisting of the images of all projective laminations whose supports are contained in $\widehat{|\lambda|}$.

\item The interior points of $\widehat{F}$ are $\iota_x$-images of projective laminations with support equal to $\widehat{|\lambda|}$. 

\item The $\iota_x$-image of the set of all projective laminations whose support closures are $\widehat{|\lambda|}$ coincides with the adherence core of $\widehat{F}$.
\end{enumerate}
\end{theorem}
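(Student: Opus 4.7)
The plan is to prove (a) first; parts (b) and (c) will then follow cleanly. Part (a) is the substantive content, and amounts to identifying $\widehat{F}$ with $F_{\widehat{|\lambda|}}$ by establishing the two inclusions.

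\emph{Adherence direction} ($F_{\widehat{|\lambda|}} \subseteq \widehat{F}$). We show directly that $F$ is adherent to $F_{\widehat{|\lambda|}}$. Let $F''$ be any face containing $F$. By \cref{thm:equisupport}, $F'' = F_{|\mu|}$ for some projective measured lamination $[\mu]$, and by \cref{inclusion} we have $|\lambda| \subseteq |\mu|$. Lemma \ref{case 1} then ensures $|\mu|$ and $\widehat{|\lambda|}$ have no transverse intersection, whence the union $|\mu| \cup \widehat{|\lambda|}$ is a geodesic lamination supporting the sum of $\mu$ with the counting measure on the simple closed curves of $\widehat{|\lambda|} \setminus |\lambda|$. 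The associated face $F_{|\mu| \cup \widehat{|\lambda|}}$ contains both $F''$ and $F_{\widehat{|\lambda|}}$, verifying adherence.

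\emph{Maximality direction} ($\widehat{F} \subseteq F_{\widehat{|\lambda|}}$). We argue contrapositively: assume $F_{|\nu|}$ contains $F$ with $|\nu| \not\subseteq \widehat{|\lambda|}$ and produce a face $F'' \supseteq F$ incompatible with $F_{|\nu|}$. Pick a leaf $\ell$ of $|\nu|$ not lying in $\widehat{|\lambda|}$. Since $|\lambda| \subseteq |\nu|$, the leaf $\ell$ does not transversely intersect $|\lambda|$; moreover every minimal non-closed-curve component $\lambda_i$ of $|\lambda|$ fills its minimal supporting surface $\Sigma_i$, so that the complement of $\lambda_i$ in $\Sigma_i$ is a union of ideal polygons and crowns. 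Consequently $\ell$ cannot lie inside any $\Sigma_i \setminus \lambda_i$ except within a crown, where it necessarily spirals toward $\partial \Sigma_i$. In either scenario one constructs a simple closed geodesic $\gamma \subseteq S$ disjoint from $|\lambda|$ (though possibly crossing curves of $\widehat{|\lambda|} \setminus |\lambda|$) which transversely intersects $\ell$. Setting $|\mu| := |\lambda| \cup \gamma$, we obtain a measured lamination support and an inclusion $F \subseteq F_{|\mu|}$; but $|\mu|$ and $|\nu|$ intersect transversely at a point of $\gamma \cap \ell$, so $|\mu| \cup |\nu|$ is not a geodesic lamination. Hence no single face can contain both $F_{|\mu|}$ and $F_{|\nu|}$, contradicting adherence.

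Part (b) follows by combining (a) with \cref{thm:equisupport}, which identifies interior points of $F_{\widehat{|\lambda|}}$ with $\iota_x$-images of projective measured laminations with support exactly $\widehat{|\lambda|}$. For (c): by (a), the adherence closure of a subface $F_{|\mu|} \subseteq \widehat{F}$ equals $\widehat{F} = F_{\widehat{|\lambda|}}$ if and only if $F_{\widehat{|\mu|}} = F_{\widehat{|\lambda|}}$, equivalently $\widehat{|\mu|} = \widehat{|\lambda|}$. Gathering the interiors of all such subfaces and using \cref{thm:equisupport} to match each interior point bijectively with the projective measured lamination of that support identifies the adherence core of $\widehat{F}$ with the $\iota_x$-image of the set of projective measured laminations whose support closure equals $\widehat{|\lambda|}$.

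The main obstacle will be the maximality direction in (a). The existence of the transverse simple closed curve $\gamma$ requires a careful topological analysis of the possible placements of a leaf $\ell \in |\nu| \setminus \widehat{|\lambda|}$ relative to the supporting surfaces $\Sigma_i$, invoking the filling property of minimal laminations to rule out $\ell$ lying genuinely inside some $\Sigma_i$, and then exploiting the topological room gained by permitting $\gamma$ to cross the boundary curves $\partial \Sigma_i \subseteq \widehat{|\lambda|} \setminus |\lambda|$ without creating transverse intersections with $|\lambda|$.
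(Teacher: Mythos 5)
Your proposal is essentially correct and establishes (a), with (b) and (c) following as you describe, but the route differs from the paper's in both directions of the key inclusion, so let me compare.

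\emph{Adherence direction.} The paper runs a three-way case analysis on the interior lamination $[\mu]$ of an arbitrary face $F''\supseteq F$ (transverse, disjoint with $\widehat{|\lambda|}\setminus|\mu|\neq\emptyset$, or $\widehat{|\lambda|}\subseteq|\mu|$), rules out the first two, and in the third uses $F_{|\mu|}$ directly as the common superface. You sidestep the case analysis by invoking non-transversality once and then building the common superface as $F_{|\mu|\cup\widehat{|\lambda|}}$, with the explicit measure $\mu$ plus counting measure on the added boundary curves. This is a clean shortcut; it does require (and you implicitly use) that $|\lambda|\subseteq|\mu|$ so the resulting support is $|\mu|\cup\widehat{|\lambda|}$, which follows from $\iota_x([\lambda])\in F''\subseteq F_{|\mu|}$ by \cref{thm:equisupport} — though note that \cref{thm:equisupport} gives only $F''\subseteq F_{|\mu|}$, not $F''=F_{|\mu|}$ as you write (see the remark after \cref{thm:equisupport}), and the citation to \cref{inclusion} is not the right one; neither imprecision damages the argument.

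\emph{Maximality direction.} The paper perturbs $\widehat{|\lambda|}$ ``outward'' — choosing $\nu$ with $\widehat{|\lambda|}\subseteq|\nu|$ transversely crossing $|\mu|$ — whereas you perturb $|\lambda|$ ``inward'', taking $|\mu|:=|\lambda|\cup\gamma$ with $\gamma$ disjoint from $|\lambda|$ but crossing a leaf $\ell\in|\nu|\setminus\widehat{|\lambda|}$. Both lead to a pair of faces whose interior laminations transversely intersect and hence cannot share a superface. Your argument should, however, be phrased against $\widehat{F}$ rather than $F_{|\nu|}$: the logical point that closes the gap is that any face $G$ containing both $F''$ and $\widehat{F}$ would contain $\iota_x([\mu])$ and $\iota_x([\nu])$ (the latter because $\widehat{F}$'s interior point lives there), which by \cref{thm:equisupport} is impossible; this contradicts $F$ being adherent to $\widehat{F}$, which is the property you need to break. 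As written, showing $F$ is not adherent to $F_{|\nu|}$ alone is not literally what is required when $\widehat{F}\subsetneq F_{|\nu|}$, but the underlying incompatibility argument does deliver the stronger conclusion.

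\emph{One incorrect topological claim.} You assert that $\ell$ could lie in a crown component of $\Sigma_i\setminus\lambda_i$ and would there ``necessarily spiral toward $\partial\Sigma_i$''. Leaves of the support of a measured lamination cannot spiral onto a closed geodesic — that is incompatible with recurrence and with the leaf belonging to a minimal component of $|\nu|$. In fact, since $\widehat{|\lambda|}\subseteq|\nu|$, the minimal component of $|\nu|$ carrying $\ell$ is disjoint from each $\partial\Sigma_i$ and from $\lambda_i$, and hence lies entirely outside every $\Sigma_i$; the crown scenario simply does not occur. This error is harmless to the proof (only the ``outside'' case matters, and there your construction of $\gamma$ does go through), but the stated case analysis should be corrected.

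Parts (b) and (c) follow as you say from (a) together with \cref{thm:equisupport}; just note that in (c) you should apply (a) to the face of an arbitrary interior point of a subface of $\widehat{F}$ rather than restricting attention to subfaces of the form $F_{|\mu|}$.
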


\begin{proof} We shall show that the adherence closure $\widehat{F}$ of $F$ coincides with $F_{\widehat{|\lambda|}}$. The fact that $F_{\widehat{|\lambda|}}$ is an exposed face, \ie that it is the intersection of a support hyperplane with $\mathbf{S}_x^*$ (see \cref{thm:equisupport}), which contains $\iota_x([\lambda])$, combined with \cref{thm:facesupport} ensures that $F$ is a subface of $F_{\widehat{|\lambda|}}$. We first show that  $F$ is adherent to $F':=F_{\widehat{|\lambda|}}$. Suppose that another face $F''$ contains $F$ as a subface, and take $\iota_x([\mu])$ in the interior of $F''$. There are three mutually exclusive possibilities covering all options for the relationship between $\widehat{|\lambda|}$ and $[\mu]$:
\begin{enumerate}[(1)]
\item
$\widehat{|\lambda|}$ has nonempty transverse intersection with $|\mu|$;
 \item
$\widehat{|\lambda|}$ has empty transverse intersection with $|\mu|$, but  $\widehat{|\lambda|}\setminus |\mu| \neq \emptyset$;
 \item
$\widehat{|\lambda|}$ is properly contained in $|\mu|$.
 \end{enumerate}
 
 \medskip
\noindent
\textbf{Case (1):}
This is a situation where $F$, which is the face for $|\lambda|$, is contained in $F''$ which is the face for $|\mu|$.
Then, \cref{case 1} tells us that $|\mu|$ intersects $|\widehat{\lambda}|$ transversely, which renders Case (1) impossible.

\noindent
\textbf{Case~(2):} Next suppose that $\widehat{|\lambda|}\setminus |\mu| \neq \emptyset$. We choose a complete lamination $\xi$ containing $|\mu|$ which intersects $\widehat{|\lambda|}$ transversely, and consider the stretch vector $v_\xi$ along $\xi$. 
Then $v_\xi$ is contained in $N_x([\mu])$, but not in $N_x([\lambda])$.
On the other hand, since $F''$ which is the face for $\iota_x([\mu])$ contains $\iota_x([\lambda])$, by \cref{faces}, we have $N_x([\mu]) \subset N_x([\lambda])$.
This is a contradiction.
%

\noindent
\textbf{Case~(3):} The only remaining case is  when $\widehat{|\lambda|}$ is a proper subset of $|\mu|$. Take a unit measured lamination $\lambda'$ such that $\iota_x([\lambda'])$ is contained in the interior of $F_{\widehat{|\lambda|}}$, and note that \cref{thm:equisupport} tells us that $|\lambda'|=\widehat{|\lambda|}$. Since both $\lambda'$ and $\mu$ are measured laminations, their supports are both unions of their minimal components.
Therefore, we have a decomposition of $\mu$ (assumed to be a unit measured lamination) into disjoint sublaminations $\mu_1 \sqcup \mu_2$ such that $|\mu_1|=|\lambda'|=\widehat{|\lambda|}$. Since $|\mu|$ contains $|\lambda'|=\widehat{|\lambda|}$, we can define a path of unit measured lamination $t \lambda' +(1-t)\mu$, for $t\in[0,1]$, which is mapped to an affine segment on $\mathbf{S}_x^*$ connecting $\iota_x([\mu])$ with $\iota_x([\lambda'])$. 
Since $|t \lambda' +(1-t)\mu|\subset |\mu_1|\cup |\mu|=|\mu|$, the path is contained in $F_{|\mu|}$.
The face $F_{|\mu|}$ contains both $F'=F_{\widehat{|\lambda|}}$ and $F''$, and hence $F$ is adherent to $F'=F_{\widehat{|\lambda|}}$.

We next show that $F_{\widehat{|\lambda|}}$ is maximal among the faces to which $F$ is adherent. 
Since the adherence closure of $F$ is maximal in this sense, what we need to show is that there is no face properly containing $F_{\widehat{|\lambda|}}$ to which $F$ is adherent.
If $F$ is adherent to $F'\supseteq F_{\widehat{|\lambda|}}$, then for any interior point $\iota_x([\mu])\in F'$, our arguments for Cases~(1) and (2) show that $|\mu|$ necessarily contains $\widehat{|\lambda|}$ as a subset. 
Suppose that $\widehat{|\lambda|}\subsetneq|\mu|$. Then there is a measured lamination $\nu$ whose support contains  $\widehat{|\lambda|}$ as a sublamination and which intersects $\mu$ transversely. Let $F''=F_{|\nu|}$. Then, \cref{thm:equisupport} tells us that $F''\supset F_{\widehat{|\lambda|}}$. Since $F_{\widehat{|\lambda|}}$ contains $F$ as was explained at the beginning of the proof, $F$ is a subface of $F''$. On the other hand, since $|\mu|$ and $|\nu|$ have non-empty transverse intersection, by \cref{thm:equisupport}, they cannot lie in the same face in $\mathbf{S}_x^*$ --- any point in the interior of such a face would be the $\iota_x$ image of a projective measured lamination with both $|\mu|$ and $|\nu|$ in its support, which is impossible. This implies that there is no face (in $\mathbf{S}_x^*$) containing both $F'$ and $F''=F_{|\nu|}$, and hence contradicts the assumption that $F$ is adherent to $F'$. Therefore the possibility that $\widehat{|\lambda|}\subsetneq|\mu|$  is excluded, which implies that $\widehat{|\lambda|}=|\mu|$ and thus $F'=F_{\widehat{|\lambda|}}$, as desired. 
Thus we have shown that $F_{\widehat{|\lambda|}}$ is the adherence closure of the face for $\iota_x([\lambda])$, and we have completed the proof of (a). \cref{thm:equisupport} then gives (b).

We finally turn to proving (c). Consider a projective lamination $[\lambda']\in\pml(S)$ whose support closure is $\widehat{|\lambda|}$. Then part (a) asserts that the adherence closure of the face for $\iota_x([\lambda'])$ is precisely $F_{\widehat{|\lambda|}}$. Therefore, $\iota_x([\lambda'])$ is contained in the adherence core of $\widehat{F}=F_{\widehat{|\lambda|}}$. Conversely, suppose that $\iota_x([\mu])$ is in the adherence core of $\widehat{F}$, \ie it is contained in the interior of a face $\tilde{F}$ whose adherence closure is $F_{\widehat{|\lambda|}}$. Then part (a) shows that the adherence closure of $\tilde{F}$ is $F_{\widehat{|\mu|}}$, hence that $F_{\widehat{|\mu|}}=F_{\widehat{|\lambda|}}$. Part (b) then asserts that $\widehat{|\mu|}=\widehat{|\lambda|}$, therefore the support closure of $[\mu]$ is indeed equal to $\widehat{|\lambda|}$, as desired.
\end{proof}

\cref{measure} implies the following corollaries.

\begin{corollary}
\label{adinclusion}
For any two projective laminations $[\lambda],[\mu]\in\pml(S)$, 
\[
\text{if }|\widehat{\mu}|\subsetneq|\widehat{\lambda}|\text{, then }
F_{\widehat{|\mu|}}
\subsetneq
F_{\widehat{|\lambda|}}.
\]
\end{corollary}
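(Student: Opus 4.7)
The plan is to derive this corollary as a direct consequence of the preceding closure correspondence (\cref{measure}) together with the covariant labelling (\cref{inclusion}). First I would verify the monotonicity of support closure: if $|\mu|\subseteq|\lambda|$, then $\widehat{|\mu|}\subseteq\widehat{|\lambda|}$. This follows because a measured lamination decomposes into finitely many minimal components, so every minimal sublamination of $|\mu|$ is a minimal sublamination of $|\lambda|$; consequently the minimal supporting surfaces of the non-closed-curve components of $|\mu|$ form a sub-collection of those for $|\lambda|$, and the boundary curves they contribute to $\widehat{|\mu|}$ lie among those contributing to $\widehat{|\lambda|}$. Together with the base inclusion $|\mu|\subseteq|\lambda|$, this gives $\widehat{|\mu|}\subseteq\widehat{|\lambda|}$.

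Having monotonicity, I would apply \cref{inclusion} to the pair $\widehat{|\mu|}\subseteq\widehat{|\lambda|}$ (both geodesic laminations support measured laminations, as any positive weight on the added boundary curves combined with the original measure produces a valid transverse measure). The non-strict direction of \cref{inclusion} then immediately yields $F_{\widehat{|\mu|}}\subseteq F_{\widehat{|\lambda|}}$.

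The main obstacle is promoting this to strict inclusion. By the strict direction of \cref{inclusion}, it suffices to establish $\widehat{|\mu|}\subsetneq\widehat{|\lambda|}$. Given a minimal component $c$ of $|\lambda|$ that is not in $|\mu|$, the delicate step is ensuring that $c$ is not already absorbed into $\widehat{|\mu|}$ as the boundary of a minimal supporting surface for some non-closed-curve component of $|\mu|$. I would attack this subtle case by exploiting \cref{measure}(b): the interior of $F_{\widehat{|\lambda|}}$ consists precisely of images of projective laminations whose support equals $\widehat{|\lambda|}$, and producing such a representative whose support strictly enlarges $\widehat{|\mu|}$ would place an $\iota_x$-image inside $F_{\widehat{|\lambda|}}$ but outside $F_{\widehat{|\mu|}}$, witnessing the strict containment. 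Once $\widehat{|\mu|}\subsetneq\widehat{|\lambda|}$ is in hand, the strict form of \cref{inclusion} concludes that $F_{\widehat{|\mu|}}\subsetneq F_{\widehat{|\lambda|}}$, as desired.
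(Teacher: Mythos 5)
Your argument for the non-strict inclusion is correct and is essentially the route the paper intends: minimal supporting surfaces depend only on the individual component, so $|\mu|\subseteq|\lambda|$ forces $\widehat{|\mu|}\subseteq\widehat{|\lambda|}$, and \cref{inclusion} (applied to $\widehat{|\mu|}$ and $\widehat{|\lambda|}$, both of which do support transverse measures) converts this to $F_{\widehat{|\mu|}}\subseteq F_{\widehat{|\lambda|}}$.

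Your attempt to upgrade this to strict inclusion, however, is circular, and in fact the strict statement fails. You correctly identify the obstruction — a component $c$ of $|\lambda|\setminus|\mu|$ may already lie in $\widehat{|\mu|}$ as a boundary curve of a minimal supporting surface — but the proposed remedy via \cref{measure}(b) presupposes exactly what is to be shown: a projective lamination with support $\widehat{|\lambda|}$ lying outside $F_{\widehat{|\mu|}}$ exists only if $\widehat{|\mu|}\subsetneq\widehat{|\lambda|}$. And this can genuinely fail. Take an embedded once-holed torus $\Sigma\subsetneq S$ with geodesic boundary $\partial\Sigma$, let $c'$ be a connected minimal filling lamination on $\Sigma$, and set $\mu = c'$ and $\lambda = c' + \partial\Sigma$. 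Then $|\mu| = |c'| \subsetneq |c'|\cup\partial\Sigma = |\lambda|$, yet $\widehat{|\mu|} = |c'|\cup\partial\Sigma = \widehat{|\lambda|}$ (the boundary curve contributed by the minimal supporting surface of $c'$ is already present in $|\lambda|$, and the simple-closed-curve component $\partial\Sigma$ contributes nothing further to the support closure), so $F_{\widehat{|\mu|}} = F_{\widehat{|\lambda|}}$. The conclusion of \cref{adinclusion} should be read as $F_{\widehat{|\mu|}}\subseteq F_{\widehat{|\lambda|}}$; that non-strict form is exactly what your first two steps establish, and it is all that the natural-transformation remarks near \cref{q:covariant} require.
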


\begin{corollary}
The face-dimension of $\iota_x([\lambda])$ is equal to the dimension of the cone of transverse measures on $\widehat{|\lambda|}$ subtracted by $1$. 
\end{corollary}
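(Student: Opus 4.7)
The plan is to combine the closure correspondence (\cref{measure}) with the already-established parametrisation of the face $F_{\widehat{|\lambda|}}$ by the cone of transverse measures on $\widehat{|\lambda|}$, and then observe that the $\iota_x$-parametrisation is affine and injective, so that projectivising merely drops the dimension by one.

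First, I would apply part (a) of the closure correspondence (\cref{measure}) to identify the adherence closure $\widehat{F}$ of the face $F$ for $\iota_x([\lambda])$ with the exposed face $F_{\widehat{|\lambda|}}$. Invoking \cref{def:face-dimension}, this reduces the claim to the equality
\[
\dim F_{\widehat{|\lambda|}} \;=\; \dim C\bigl(\widehat{|\lambda|}\bigr) - 1,
\]
where $C\bigl(\widehat{|\lambda|}\bigr)$ denotes the cone of (compactly supported) transverse measures on $\widehat{|\lambda|}$.

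Second, I would set up the natural parametrisation explicitly. By \cref{thm:equisupport}, $F_{\widehat{|\lambda|}}$ is precisely the set of images $\iota_x([\mu])$ as $[\mu]$ ranges over projective measured laminations supported on $\widehat{|\lambda|}$. Consider the map
\[
L \colon C\bigl(\widehat{|\lambda|}\bigr) \longrightarrow T_x^*\teich(S), \qquad \mu \longmapsto d\ell_{(\cdot)}(\mu)_x,
\]
which is linear because the length function $\ell_x$ is linear in the transverse measure and differentiation is linear. Its image $L\bigl(C(\widehat{|\lambda|})\bigr)$ is a convex cone in $T_x^*\teich(S)$, and $F_{\widehat{|\lambda|}}$ is precisely the intersection of this cone with the unit co-norm sphere $\mathbf{S}_x^*$ (equivalently, the projectivisation of this cone, since $\iota_x([\mu]) = L(\mu)/\ell_x(\mu)$).

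Third, I would check that $L$ is injective on $C\bigl(\widehat{|\lambda|}\bigr)$. Suppose $L(\mu_1) = L(\mu_2)$ with $\mu_1,\mu_2 \neq 0$. Dividing by the respective hyperbolic lengths gives $\ell_x(\mu_1)\,\iota_x([\mu_1]) = \ell_x(\mu_2)\,\iota_x([\mu_2])$; since both $\iota_x([\mu_i])$ lie on the unit co-norm sphere $\mathbf{S}_x^*$, the positive scalar relating them is forced to be $1$, so $\ell_x(\mu_1)=\ell_x(\mu_2)$ and $\iota_x([\mu_1])=\iota_x([\mu_2])$. Thurston's embedding theorem (\cref{Thurston}) now gives $[\mu_1]=[\mu_2]$ in $\pml(S)$, and combined with the equality of $x$-lengths we conclude $\mu_1=\mu_2$.

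Since $L$ is linear and injective, its image is a cone of linear dimension equal to $\dim C\bigl(\widehat{|\lambda|}\bigr)$. Intersecting this cone with the affine hyperplane $\{w^*\in T_x^*\teich(S) \mid \sup_{v\in\partial D}w^*(v) = 1\}$ that defines the relevant portion of $\mathbf{S}_x^*$ yields an affine slice whose dimension is exactly one less than that of the cone, proving
\[
\fdim_{\mathbf{S}_x^*}\bigl(\iota_x([\lambda])\bigr)
\;=\; \dim F_{\widehat{|\lambda|}}
\;=\; \dim C\bigl(\widehat{|\lambda|}\bigr) - 1.
\]
The only non-routine step is the injectivity of $L$, but this is essentially immediate from Thurston's embedding theorem together with the positive homogeneity of $\ell_x$ in the transverse measure; the rest is bookkeeping with the definitions of face, adherence closure, and face-dimension already in place.
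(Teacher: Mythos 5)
Your proposal is correct and follows the same route the paper intends: identify the adherence closure with $F_{\widehat{|\lambda|}}$ via \cref{measure}(a), then read off its dimension from the affine, injective parametrisation by the cone of transverse measures supplied by \cref{lem:supportset}, \cref{thm:equisupport}, and Thurston's embedding theorem. The only wrinkle is cosmetic: the set $\{w^*\mid \sup_{v\in\partial D}w^*(v)=1\}$ you slice by is $\mathbf{S}_x^*$ itself rather than an affine hyperplane, but since $F_{\widehat{|\lambda|}}$ is an exposed face this slice does coincide with an affine slice of the cone, so the dimension drop by one goes through as you claim.
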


\begin{corollary}
\label{fdim of mc}
If $[\Gamma]\in\pml(S)$ is a projectively weighted multicurve with $k$ components, then the face-dimension of $\iota_x([\Gamma])$ is $k-1$.
\end{corollary}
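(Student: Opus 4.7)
The plan is to derive this corollary as an almost immediate consequence of the preceding corollary (which says $\fdim_{\mathbf{S}_x^*}(\iota_x([\lambda]))$ equals the dimension of the cone of transverse measures on $\widehat{|\lambda|}$, minus one) combined with the observation that for a multicurve $\Gamma$, the support closure $\widehat{|\Gamma|}$ coincides with $|\Gamma|$ itself.

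First I would unpack the definition of support closure (\cref{defn:supportclosure}) in the special case of a multicurve: the recipe instructs us to take the minimal supporting surfaces of the components of $\Gamma$ that are \emph{not} simple closed curves, and add their boundaries to $|\Gamma|$. Since every component of a multicurve is by hypothesis a simple closed geodesic, this set of ``extra'' boundary curves is empty, and hence $\widehat{|\Gamma|}=|\Gamma|$.

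Next I would identify the cone of transverse measures on $|\Gamma|$. Because the $k$ components of $|\Gamma|$ are pairwise disjoint simple closed geodesics, an arbitrary transverse measure is determined by independently assigning a non-negative weight to each component, so the cone of transverse measures on $|\Gamma|$ is (linearly isomorphic to) $\mathbb{R}_{\geq 0}^k$, which is a $k$-dimensional cone. Invoking the preceding corollary therefore yields
\[
\fdim_{\mathbf{S}_x^*}(\iota_x([\Gamma]))
= \dim(\mathbb{R}_{\geq 0}^k) - 1
= k-1.
\]

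This argument is essentially routine once the closure correspondence (\cref{measure}) is in hand, so I do not anticipate a genuine obstacle; the only subtlety to keep in mind is to flag explicitly that the closed-curve components of a measured lamination are excluded from the support-closure recipe, which is precisely what makes $\widehat{|\Gamma|}=|\Gamma|$ for multicurves. As a consistency check, one can also note that \cref{dim of mc} gives $\dim_{\mathbf{S}_x^*}(\iota_x([\Gamma]))=k-1$ and \cref{thm:dimvsfdim} then forces the face for $\iota_x([\Gamma])$ to already be adherence-closed, recovering the same conclusion from a different angle.
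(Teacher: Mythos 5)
Your argument is correct and is essentially the route the paper intends: apply the preceding corollary together with the elementary observations that $\widehat{|\Gamma|}=|\Gamma|$ for a multicurve (since the support-closure recipe in \cref{defn:supportclosure} only touches non-closed-curve components) and that the cone of transverse measures on $k$ disjoint simple closed geodesics is $\mathbb{R}_{\ge0}^k$. The consistency check via \cref{dim of mc} and \cref{thm:dimvsfdim} is a sensible sanity check, though note it confirms rather than independently reproves the statement (it shows that the face for $\iota_x([\Gamma])$ is adherence-closed, which is indeed the content of $\widehat{|\Gamma|}=|\Gamma|$ filtered through \cref{measure}).
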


\subsection{Distinguishing laminations}

One strength of \cref{measure} is that the property of being adherence-closed is linearly invariant. Results in this subsection show us how we might distinguish certain types of projective measured laminations based on linearly invariant properties (such as adherence-closedness) of the face for the $\iota_x$-images of the aforementioned laminations.

We say that a (projective class of a) measured geodesic lamination is \emph{maximal} if its support is not contained strictly in the support of any other (projective class of a) measured geodesic lamination.

\begin{theorem}
\label{characterising mc}
Let $[\lambda]$ be a projective class of a measured lamination. Then the following two properties are equivalent:
\begin{enumerate}
\item
 $[\lambda]$ is either the projective class of a weighted multicurve or it is maximal and uniquely ergodic;
 \item 
 every subface of the face for $\iota_x([\lambda])$ is adherence-closed.
 \end{enumerate}
\end{theorem}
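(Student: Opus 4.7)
The plan is to prove the equivalence by directly translating the condition ``every subface is adherence-closed'' into topological conditions on $|\lambda|$ via \cref{thm:equisupport} and \cref{measure}. Throughout, write $F$ for the face of $\iota_x([\lambda])$.

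For $(1)\Rightarrow(2)$, we treat the two clauses of (1) separately. When $[\lambda]$ is a weighted multicurve with $k$ components and all weights positive, $\iota_x([\lambda])$ lies in the interior of the $(k-1)$-simplex $F_{|\lambda|}$ (\cref{dim of mc}), so $F=F_{|\lambda|}$. Every subface then has the form $F_{|\mu|}$ for a sub-multicurve $|\mu|\subseteq |\lambda|$, and since all components of such $|\mu|$ are curves, $\widehat{|\mu|}=|\mu|$; \cref{measure} then gives that $F_{|\mu|}$ is adherence-closed. When $[\lambda]$ is maximal and uniquely ergodic, unique ergodicity forces $F_{|\lambda|}$ (and hence $F$) to be a single point with no proper subfaces, and maximality gives $\widehat{|\lambda|}=|\lambda|$, so \cref{measure} makes $F$ adherence-closed.

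For $(2)\Rightarrow(1)$, start by observing that $F$ is trivially a subface of itself, hence adherence-closed by (2). \cref{measure}(a) identifies its adherence closure as $F_{\widehat{|\lambda|}}$, so $F=F_{\widehat{|\lambda|}}$, and \cref{thm:equisupport} applied to the interior point $\iota_x([\lambda])\in F$ gives $|\lambda|=\widehat{|\lambda|}$ and therefore $F=F_{|\lambda|}$. Now assume $[\lambda]$ is not a multicurve; then $|\lambda|$ contains a non-curve minimal component $K$. The sublamination $K$ carries a measure restricted from $\lambda$, so by \cref{inclusion} the face $F_K\subseteq F_{|\lambda|}=F$ is a subface. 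If $K\subsetneq|\lambda|$, then $F_K\subsetneq F$ is proper and adherence-closed by (2), forcing $\widehat{K}=K$ via \cref{measure}(a), which means $K$ fills $S$. But a filling lamination leaves no room for any disjoint essential component (its complement is a union of simply-connected ideal polygons), contradicting the existence of other components in $|\lambda|$. Hence $|\lambda|=K$ is a single non-curve minimal component that fills $S$, so $[\lambda]$ is maximal.

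It remains to establish unique ergodicity. Suppose the cone of transverse measures on $|\lambda|$ has dimension $\geq 2$, so $F_{|\lambda|}$ is a polytope of dimension $\geq 1$. Pick an ergodic transverse measure $\mu$ on $|\lambda|$; minimality gives $|\mu|=|\lambda|$, and $P:=\iota_x([\mu])$ is an extreme point of $F_{|\lambda|}$. Extreme points of a face of $\mathbf{S}_x^*$ are extreme points of $\mathbf{S}_x^*$, so $\{P\}$ is a $0$-dimensional face of $\mathbf{S}_x^*$; by \cref{lem:uniqueface} it is the face for $P$, and it is a proper subface of $F$. By (2) it must be adherence-closed, yet \cref{measure}(a) computes its adherence closure as $F_{\widehat{|\mu|}}=F_{\widehat{|\lambda|}}=F_{|\lambda|}\supsetneq\{P\}$, a contradiction. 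The principal subtlety is exactly this last step: verifying that an extreme point of the polytope $F_{|\lambda|}$ gives rise to a genuine $0$-dimensional face of $\mathbf{S}_x^*$, which rests on the transitivity of extremality between faces of a convex body.
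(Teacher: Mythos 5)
Your proof is correct and follows essentially the same strategy as the paper's: reduce via \cref{measure} and \cref{thm:equisupport} to the identity $|\lambda|=\widehat{|\lambda|}$ and $F=F_{|\lambda|}$, handle the multicurve and uniquely-ergodic cases directly for $(1)\Rightarrow(2)$, and for $(2)\Rightarrow(1)$ show that a non-multicurve $\lambda$ must be a single filling minimal component and then use the ergodic-measures-are-extreme-points fact to force unique ergodicity. The only organizational difference is that you derive connectedness and filling in one stroke from the non-curve component $K$, whereas the paper splits $(2)\Rightarrow(1)$ into the cases ``non-maximal or disconnected'' and ``maximal and connected''; your extreme-point version of the unique ergodicity step is also slightly more explicit than the paper's ``cannot all lie in the interior'' phrasing, but rests on the same facts.
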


\begin{proof}
Let $F$ be the face for $\iota_x([\lambda])$.
If $[\lambda]$ is a projectively weighted multicurve, then by \cref{measure},  every subface $\tilde{F}$ of $F$ is the face of a projective multicurve supported on a subset of $|\lambda|$. In particular, since multicurves are their own support closures, $\tilde{F}$ is adherence-closed. On the other hand, if $[\lambda]$ is maximal and uniquely ergodic, then $F$ consists of just one point $\iota_x([\lambda])$ and $F=\left\{\iota_x([\lambda])\right\}$ is the only subface of itself. Moreover, since $[\lambda]$ is maximal, then, by \cref{measure}, there is no face properly containing $F$, and hence $F$ is again adherence-closed.

Suppose next that $[\lambda]$ is neither a projectively weighted multicurve nor a maximal and uniquely ergodic lamination. We show that there is a subface of $F$ whose adherence closure is not itself. First note that if the adherence closure of $F$ is not $F$, then we are done, and thus we assume that $F$ is adherence-closed. In particular, since $\iota_x([\lambda])$ is an interior point of $F$, we know by \cref{measure} that $F=F_{\widehat{|\lambda|}}$. We cover the remainder of the proof with two cases:
\begin{enumerate}[(1)]
\item
$[\lambda]$ is either non-maximal or is disconnected,
\item
$[\lambda]$ is both maximal and connected.
\end{enumerate}
\medskip

\noindent
Case~(1): If $[\lambda]$ is either non-maximal or is disconnected, then $\lambda$ has a component $\lambda_0$ which is not a weighted simple closed curve and whose supporting surface is not all of $S$. This means that the support closure $\widehat{|\lambda_0|}$ properly contains $|\lambda_0|$, hence \cref{measure} tells us that the face $\tilde{F}$ for  $\iota_x([\lambda_0])$ is not adherence-closed. Indeed, \cref{measure} tells us that the adherence closure of $\tilde{F}$ is $F=F_{\widehat{|\lambda|}}$, hence it gives a subface of the latter which is not adherence-closed. This covers the case when $F$ is either non-maximal or is disconnected.\medskip

\noindent
Case~(2): If $[\lambda]$ is both maximal and connected, then, by the assumption that $[\lambda]$ is not uniquely ergodic, the space of transverse measures on $|\lambda|$ constitutes a convex set whose extreme points correspond to the ergodic measures.
 (The existence of such extreme points follows from a classical theorem on convex sets which can be found in  \cite{KM}.
 The proof of the fact that the ergodic measures constitute the extreme points can be found in \cite{Kat, Lev}.)
 This implies that the image of the entire set of projective transverse measures on $|\lambda|$ cannot be the interior of $F_{|\lambda|}$,  therefore  there is a projective lamination $[\mu]$ supported on $|\lambda|$ such that the face $\tilde{F}$ for $\iota_x([\mu])$ is not adherence-closed (by \cref{measure}). Since $F=F_{\widehat{|\lambda|}}$ is the adherence closure of $\tilde{F}$, the latter is a subface of $F$ which fails to be adherence-closed, as desired. This completes the proof.
\end{proof}

\begin{proposition}
\label{thm:maximality}
If a projective measured lamination $[\lambda]$ is maximal, then the adherence closure of the face for $\iota_x([\lambda])$ is maximal, \ie it is not a subface of any other face.
\end{proposition}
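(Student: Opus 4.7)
My plan is to argue by contradiction, applying the closure correspondence (\cref{measure}) and the non-transversality lemma (\cref{case 1}).

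The first step reduces the proposition to showing that $F_{|\lambda|}$ is a maximal face. By \cref{measure}(a), the adherence closure of the face for $\iota_x([\lambda])$ equals $F_{\widehat{|\lambda|}}$, so I must verify $\widehat{|\lambda|} = |\lambda|$ when $[\lambda]$ is maximal. For each component of $|\lambda|$, the boundary components of its minimal supporting surface do not meet $|\lambda|$ transversely, so adjoining any such curve not already in $|\lambda|$ would produce a strictly larger geodesic lamination. Maximality precludes this, so $\widehat{|\lambda|} = |\lambda|$.

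Next, I would suppose toward contradiction that $F_{|\lambda|} \subsetneq F'$ for some face $F'$ of $\mathbf{S}_x^*$, and pick an interior point $P$ of $F'$. Since $\iota_x \colon \pml(S) \to \mathbf{S}_x^*$ is a bijection, write $P = \iota_x([\mu])$; then $F'$ is the face of $\iota_x([\mu])$. The inclusion $\iota_x([\lambda]) \in F_{|\lambda|} \subsetneq F'$ places $\iota_x([\lambda])$ in the face of $\iota_x([\mu])$, so \cref{case 1} asserts that $|\mu|$ and $\widehat{|\lambda|} = |\lambda|$ have no transverse intersection. Hence $|\mu| \cup |\lambda|$ is a geodesic lamination, and the maximality of $|\lambda|$ forces $|\mu| \subseteq |\lambda|$.

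To close the contradiction, \cref{thm:equisupport} places $\iota_x([\mu]) \in F_{|\lambda|}$. Any face containing a point contains that point's face (which follows from \cref{defn:classicalface} by extending a short segment through the interior point $\iota_x([\mu])$ of the face of $\iota_x([\mu])$), so $F' \subseteq F_{|\lambda|}$, contradicting $F_{|\lambda|} \subsetneq F'$. The only step that demands genuine thought is the first one, establishing $\widehat{|\lambda|} = |\lambda|$ for maximal $[\lambda]$; the remainder of the argument is a clean application of the closure correspondence and the non-transversality lemma.
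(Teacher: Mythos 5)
Your argument is correct, and it shares the paper's crucial opening observation that maximality of $[\lambda]$ forces $\widehat{|\lambda|}=|\lambda|$. Where you diverge is in how you then show $F_{|\lambda|}$ admits no proper superface: the paper invokes the closure correspondence (\cref{measure}) together with \cref{inclusion} to write $F_{|\lambda|}\subseteq F\subseteq F_{\widehat{|\lambda'|}}$ and appeal to maximality of $F_{|\lambda|}$ among faces of the form $F_{|\mu|}$; you instead go back to the building blocks of that theorem, applying the non-transversality lemma (\cref{case 1}) directly to the interior point $\iota_x([\mu])$ of the purported superface, using maximality of $|\lambda|$ (as a geodesic lamination) to force $|\mu|\subseteq|\lambda|$, and then \cref{thm:equisupport} plus the standard minimal-face property to collapse $F'$ back into $F_{|\lambda|}$. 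The effect is the same, but your route is more self-contained, as it avoids quoting the full closure correspondence and instead re-runs the relevant piece of its proof; the paper's version is marginally more compact but trades on heavier machinery. One small remark: your step 1 could be tightened by observing that a maximal measured lamination has complementary regions consisting only of ideal triangles (and possibly once-punctured monogons), so there is simply no room for the additional boundary curves that a support closure would add.
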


\begin{proof}
Since $[\lambda]$ is maximal, its support closure is $\widehat{|\lambda|}=|\lambda|$. 
Consider a face $F$ containing $F_{|\lambda|}$, and let $\iota_x([\lambda'])$ be an interior point of $F$. Then \cref{measure} tells us that 
\[
F_{|\lambda|}
\subseteq
F
\subseteq F_{\widehat{|\lambda'|}}.
\]
By \cref{inclusion} and the maximality of $[\lambda]$, we see therefore that $F_{|\lambda|}=F_{\widehat{|\lambda'|}}$, and hence $F_{|\lambda|}=F$. Since every face containing $F_{|\lambda|}$ is equal to $F_{|\lambda|}$ itself, the face $F_{\widehat{|\lambda|}}=F_{|\lambda|}$, which is the adherence closure of the face for $\iota_x([\lambda])$, must be maximal.
\end{proof}

\begin{remark}
The converse to \cref{thm:maximality} is false: simply take $[\lambda]$ to be a non-maximal lamination such that $\widehat{|\lambda|}$ is maximal.
\end{remark}

\begin{theorem}
\label{connectedness}
A projective lamination $[\lambda]$ is maximal and connected if and only if the adherence closure of the face for $\iota_x([\lambda])$ is maximal (that is, it is not a subface of any other face), and adherence-complete (\cref{defn:adherence}).
\end{theorem}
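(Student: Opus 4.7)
The plan is to leverage the closure correspondence (\cref{measure}) which identifies the adherence closure $\widehat F$ of the face for $\iota_x([\lambda])$ as the exposed face $F_{\widehat{|\lambda|}}$. This translates statements about adherence into statements about the support and support closure of geodesic laminations. The key observation that drives both directions is that, in view of \cref{measure}(a), the adherence closure of any subface $\tilde F \subseteq \widehat F$ with interior point $\iota_x([\mu])$ is precisely $F_{\widehat{|\mu|}}$; hence adherence completeness of $\widehat F$ amounts to the combinatorial statement $\widehat{|\mu|} = \widehat{|\lambda|}$ for every proper measure-supporting sublamination $|\mu| \subsetneq \widehat{|\lambda|}$.

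For the forward direction, suppose $[\lambda]$ is maximal and connected, where connectedness is read in the measure-theoretic sense that $|\lambda|$ has a single minimal component (so every nonempty sublamination of $|\lambda|$ supporting a measure equals $|\lambda|$). Maximality of $|\lambda|$ as a topological geodesic lamination forces $\widehat{|\lambda|} = |\lambda|$, since any boundary curve of the minimal supporting surface of a non-closed-curve component would otherwise provide a strict geodesic extension of $|\lambda|$. \cref{thm:maximality} then gives that $\widehat F = F_{|\lambda|}$ is maximal as a face. To verify adherence completeness, I pick an arbitrary subface $\tilde F$ of $\widehat F$ with interior point $\iota_x([\mu])$. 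The inclusion $|\mu|\subseteq |\lambda|$ together with the minimality of $|\lambda|$ forces $|\mu|=|\lambda|$, and maximality yields $\widehat{|\mu|} = |\lambda|$. By \cref{measure}(a), the adherence closure of $\tilde F$ is $F_{|\lambda|} = \widehat F$, so every subface adheres back to $\widehat F$.

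For the reverse direction, assume $\widehat F$ is maximal and adherence complete. Since $\widehat F = F_{\widehat{|\lambda|}}$, the maximality of $\widehat F$ combined with \cref{inclusion} forces $\widehat{|\lambda|}$ to be maximal among supports of measured laminations; this is essentially the converse statement to \cref{thm:maximality} and gives that $[\lambda]$ is maximal. To derive connectedness, I argue by contradiction: suppose $|\lambda|$ has at least two distinct minimal components $\lambda_1$ and $\lambda_2$. By \cref{inclusion}, $F_{|\lambda_1|}$ is a strict subface of $\widehat F$, and adherence completeness combined with \cref{measure}(a) forces $\widehat{|\lambda_1|} = \widehat{|\lambda|}$. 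Since $|\lambda_2|$ is disjoint from $|\lambda_1|$, the containment $|\lambda_2| \subseteq \widehat{|\lambda_1|}$ forces $|\lambda_2|$ to be contained in the boundary of the minimal supporting surface of $\lambda_1$, in particular $|\lambda_2|$ must be a multicurve. Now applying the same adherence completeness argument to the subface $F_{|\lambda_2|}$ yields $\widehat{|\lambda_2|} = |\lambda_2| = \widehat{|\lambda|}$ (using $\widehat{|\lambda_2|}=|\lambda_2|$ for multicurves), which contradicts $\lambda_1 \subseteq \widehat{|\lambda|}$ being a non-multicurve component. This rules out multiple minimal components and proves $[\lambda]$ is connected.

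The main subtlety will be calibrating the notion of connectedness so that both directions mesh consistently. A maximal topologically connected lamination can still decompose into several minimal measure-theoretic components (for instance, a minimal component together with simple closed curves on the boundary of its minimal supporting surface), and in such configurations the forward direction fails. The reverse direction actually forces the stronger condition that $|\lambda|$ is itself minimal, so the clean statement requires adopting this stronger reading of ``connected.'' The main obstacle is handling the case analysis for the reverse direction uniformly when components are simple closed curves versus higher-complexity minimal laminations; the combinatorial trick is to iterate the adherence completeness condition across several different subfaces associated with different components until a contradiction is produced.
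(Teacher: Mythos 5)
The forward direction of your proposal is essentially the paper's. The reverse direction, however, has a genuine gap: you claim that maximality of $\widehat F$ alone ``gives that $[\lambda]$ is maximal'' by invoking the converse of \cref{thm:maximality}, but the paper explicitly records (in the remark immediately after \cref{thm:maximality}) that this converse is \emph{false}. Indeed, maximality of $\widehat{F}=F_{\widehat{|\lambda|}}$ does show that $\widehat{|\lambda|}$ is a maximal lamination, but this does not imply $|\lambda|=\widehat{|\lambda|}$: take two minimal laminations $\lambda_1,\lambda_2$ filling the two components of the complement of a separating curve $c$, with $\lambda_1\cup c\cup\lambda_2$ maximal; then $[\lambda]=[\lambda_1+\lambda_2]$ is disconnected and non-maximal but $\widehat{|\lambda|}$ is maximal. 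To rule this out you must already invoke adherence completeness, which is logically downstream of where you placed the maximality step. Your connectedness argument also leaves some case analysis implicit (the step ``$|\lambda_2|$ is contained in the boundary of the minimal supporting surface of $\lambda_1$'' silently assumes $\lambda_1$ is not a closed leaf), though as you flag, this can be cleaned up.

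The paper's treatment of this direction is both simpler and avoids your gap by arguing the contrapositive in a unified way: if $[\lambda]$ is non-maximal \emph{or} disconnected, then some component has minimal supporting surface $\neq S$, whose boundary curve $c$ satisfies $\widehat{|c|}=|c|$; either $|c|=|\lambda|$ and $\widehat F$ is a point (hence not maximal, given $S\neq S_{1,1},S_{0,4}$), or $F_{|c|}$ is a proper subface of $\widehat F$ whose adherence closure is itself, contradicting adherence completeness. This sidesteps any appeal to a (false) converse of \cref{thm:maximality} and folds the multicurve and non-multicurve cases into a single observation. Finally, the worry you raise about calibrating ``connected'' is unfounded: for a geodesic lamination supporting a transverse measure, distinct minimal components are disjoint closed sets, so topological connectedness coincides with minimality, which is the reading the paper uses via \cite[Cor~1.7.3]{HP}.
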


\begin{proof}
We first show the \lq\lq only if'' part.
We have already seen from \cref{thm:maximality} that if $[\lambda]$ is maximal, then the adherence closure $F'=F_{\widehat{|\lambda|}}$ of the face for $\iota_x([\lambda])$ is maximal. We need to prove adherence-completeness from connectedness. Take a point $\iota_x([\lambda'])$ lying in the interior of $F'$. By \cref{measure}, given an interior point $\iota_x([\mu])$ of an arbitrary face in $F'$, the lamination $[\mu]$ has support $|\mu|$ contained in $|\lambda'|=\widehat{|\lambda|}=|\lambda|$ (the last equality follows from the maximality of $|\lambda|$). Since $\lambda$ is connected, this implies that $|\mu|=|\lambda|$ (see e.g. \cite[Cor~1.7.3]{HP}), and by \cref{measure}, the adherence closure of the face for $\iota_x([\mu])$ is also $F'$. Therefore, $F'$ is adherence-complete.

Now, to prove the \lq\lq if'' part, suppose that $[\lambda]$ is either non-maximal or disconnected.
Then there is a component of $\lambda$ whose minimal supporting surface is not the entire $S$.
 Let $c$ denote a boundary curve of such a minimal supporting surface. Let $F'$ be the adherence closure of the face for $\iota_x([\lambda])$.
 Then, by \cref{measure}, $\iota_x([c])$ is contained in $F'$. We immediately exclude the case when $|c|=|\lambda|$; for, then $F'$ consists of one point, and cannot be a maximal face (recall that $S\neq S_{1,1}$, $S_{0,4}$). Thus, $|c|\neq |\lambda|$, and the adherence closure of the face for $\iota_x([c])$ is itself (\cref{measure}), hence $F'$ is not adherence-complete.
\end{proof}

\subsection{Tangential adherence}

In part~(c) of \cref{measure}, we described the image of the set of projective measured laminations whose support closures are $\widehat{|\lambda|}$ as a subset on $F_{\widehat{|\lambda|}}\subset\mathbf{S}_x^*$. We now present an analogous result for the set of projective measures supported precisely on $|\lambda|$.

\begin{definition}[tangential adherence]
\label{tangentially adherent}
A subface $F$ of a face $F'\subset \mathbf{S}_x^*$ is said to be \emph{tangentially adherent} to $F'$ if the following condition is satisfied:

Given $[\lambda],[\mu]\in\pml(S)$ such that $F$ is the face for $\iota_x([\lambda])$  and $F'$ is the face for $\iota_x([\mu])$, every stretch vector $v$, along a complete geodesic lamination which  is contained in $N_x([\lambda])$ is also contained in $N_x([\mu])$.
\end{definition}

\begin{proposition}
\label{set of measures}
Let $[\lambda]$ be a projective measured lamination on $S$, and consider the face $F_{|\lambda|}$ consisting of all $\iota_x$ images of projective measured laminations supported in $|\lambda|$ (see \cref{lem:intersection} and \cref{thm:equisupport}). The union of the interior of $F_{|\lambda|}$ and the interiors of all subfaces that are tangentially adherent to $F$ coincides with the $\iota_x$-image of the set of projective transverse measures supported precisely on $|\lambda|$. 
\end{proposition}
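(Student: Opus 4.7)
The plan is to reduce both inclusions to a single characterization: by \cref{thm:infcomparison}, the stretch vector $v_\nu$ associated with a complete geodesic lamination $\nu$ lies in $N_x([\sigma])$ exactly when $|\sigma|\subseteq\nu$, so the collection of stretch vectors contained in $N_x([\sigma])$ depends only on the geodesic lamination $|\sigma|$. This identifies tangential adherence (which is phrased in terms of stretch vectors) with a statement about supports, and lets me handle everything support-theoretically.

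For the inclusion of the $\iota_x$-image into the union, take $[\mu]$ with $|\mu|=|\lambda|$. By \cref{lem:supportset}, $\iota_x([\mu])\in F_{|\lambda|}$; let $F''$ be the face whose interior contains $\iota_x([\mu])$. If $F''=F_{|\lambda|}$, we land in the interior of $F_{|\lambda|}$ and are done. Otherwise I verify tangential adherence of $F''$ to $F_{|\lambda|}$ directly: for any interior point $\iota_x([\lambda'])$ of $F_{|\lambda|}$, \cref{thm:equisupport} forces $|\lambda'|=|\lambda|=|\mu|$, so the characterization above says $N_x([\mu])$ and $N_x([\lambda'])$ contain the very same stretch vectors. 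Conversely, suppose $\iota_x([\mu])$ is interior to a subface that is either $F_{|\lambda|}$ or tangentially adherent to it. The first case is immediate from \cref{thm:equisupport}. In the second case, \cref{thm:equisupport} identifies the subface as $F_{|\mu|}$ and \cref{inclusion} gives $|\mu|\subseteq|\lambda|$. I then argue by contradiction: assuming $|\mu|\subsetneq|\lambda|$, I pick a minimal component $c$ of $|\lambda|$ not contained in $|\mu|$ and extend $|\mu|$ to a complete geodesic lamination $\nu$ which avoids $c$. Since $\nu$ fills $S$ and $c$ is a recurrent lamination disjoint from $|\mu|$ but not contained in $\nu$, the lamination $c$ must intersect $\nu$ transversally, and consequently $|\lambda|\nsubseteq\nu$. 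Applying the characterization twice produces a stretch vector $v_\nu$ lying in $N_x([\mu])$ but not in $N_x([\lambda'])$ for any interior $[\lambda']$ of $F_{|\lambda|}$, contradicting tangential adherence and forcing $|\mu|=|\lambda|$.

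The main obstacle is the construction of $\nu$ extending $|\mu|$ while avoiding $c$. Given the abundance of complete geodesic laminations foliating the complementary regions of $S\setminus|\mu|$, this should be routine: in the complementary component of $S\setminus|\mu|$ containing $c$ one chooses a triangulating collection of finitely many ideal arcs different from any that would force $c$ to appear as a sublamination. Modulo this standard construction, the rest of the argument is a direct application of \cref{thm:infcomparison}, \cref{thm:equisupport}, and \cref{inclusion}.
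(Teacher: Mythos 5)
Your proposal is correct and follows essentially the same route as the paper: both directions hinge on the observation that the stretch vectors lying in $N_x([\sigma])$ are exactly those $v_\nu$ with $|\sigma|\subseteq\nu$, and the converse direction is closed by producing a complete lamination $\nu\supseteq|\mu|$ transversely meeting $|\lambda|\setminus|\mu|$ (the paper invokes the same construction without elaboration). One small inaccuracy worth flagging: in the converse direction you write that \cref{thm:equisupport} \emph{identifies} the subface as $F_{|\mu|}$, but this is not quite what \cref{thm:equisupport} gives -- the face for $\iota_x([\mu])$ is only guaranteed to be a \emph{subface} of $F_{|\mu|}$ (see the remark after \cref{thm:equisupport}: not every lamination with support $|\mu|$ maps into the interior of $F_{|\mu|}$). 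However, the conclusion $|\mu|\subseteq|\lambda|$ is still immediate, by the cleaner route the paper itself uses: tangential adherence presupposes the subface lies inside $F_{|\lambda|}$, so $\iota_x([\mu])\in F_{|\lambda|}$ and \cref{thm:equisupport} directly yields $|\mu|\subseteq|\lambda|$, with no appeal to \cref{inclusion} or to any identification of the subface. With that substitution, the rest of your contradiction argument goes through unchanged.
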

\begin{proof}
Assume without loss of generality that $[\lambda]$ is contained in the interior of $F_{|\lambda|}$. Let $[\lambda']$ be an arbitrary projective lamination with support $|\lambda'|=|\lambda|$. By \cref{thm:equisupport}, $\iota_x([\lambda'])$ is contained in $F_{|\lambda|}$, hence the face $F$ for $\iota_x([\lambda'])$ is a subface of the exposed face $F_{|\lambda|}$ (\cref{thm:facesupport}). If $F=F_{|\lambda|}$, then $\iota_x([\lambda'])$ (trivially) lies in the interior of a face tangentially adherent to $F_{|\lambda|}$. Now suppose otherwise, that the face for $\iota_x([\lambda'])$ is a proper subface $F$ of $F_{|\lambda|}$. Given an arbitrary stretch vector $v_\nu\in N_x([\lambda'])$, the vector $v_\nu$  maximally stretches  a complete geodesic lamination $\nu$ which contains $|\lambda'|=|\lambda|$. Thus, $v_\nu$ also maximally stretches $\lambda$ and so $v_\nu$ is also contained in $N_x([\lambda])$. This shows that the face $F$ is tangentially adherent to $F_{|\lambda|}$, hence an arbitrary point $\iota_x([\lambda'])$ for $\lambda'$ with support equal to $|\lambda|$ always lies in the interior of some tangentially adherent subface of $F_{|\lambda|}$.

Conversely, suppose that the face $F$ for some point $\iota_x([\mu])$ is tangentially adherent to $F_{|\lambda|}$. By \cref{thm:equisupport}, $|\mu|$ is a subset of $|\lambda|$. If $|\mu|$ is properly contained in $|\lambda|$, then we can find a complete geodesic lamination $\nu$ containing $|\mu|$ but intersecting $|\lambda|$ transversely. Then the stretch vector $v_\nu$ along $\nu$ is contained in $N_x([\mu])$ but not in $N_x([\lambda])$, which contradicts the assumption that $F$ is tangentially adherent to $F_{|\lambda|}$. We conclude that every projective lamination $[\mu]$ whose image under $\iota_x$ is contained in the interior of $F$ has support precisely equal to $|\lambda|$.
\end{proof}

A neat statement is the following:

\begin{corollary}\label{naturaltransformation}
For any projective measured lamination $[\lambda]$,
\[
\widehat{F_{|\lambda|}}
=
F_{\widehat{|\lambda|}}.
\]
\end{corollary}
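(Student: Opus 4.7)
The plan is to deduce the corollary directly from Theorem~\ref{measure}(a) by locating a projective measured lamination whose $\iota_x$-image sits in the relative interior of $F_{|\lambda|}$ and whose support is precisely $|\lambda|$. The identity then follows by translating the statement of \cref{measure}(a) into the $F_{(\cdot)}$-notation.

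First I would verify that $F_{|\lambda|}$ has non-empty relative interior meeting the $\iota_x$-image of $\pml(S)$. By \cref{lem:supportset}, $F_{|\lambda|}$ is the unique minimal face of $\mathbf{S}_x^*$ containing the convex set $C := \iota_x(\{[\mu]\in\pml(S): |\mu|\subseteq |\lambda|\})$. The proof of \cref{thm:convextoface} shows that any convex subset $C$ of $\partial D$ meets the interior of the minimal face it generates; hence there exists $[\lambda'] \in \pml(S)$ with $|\lambda'| \subseteq |\lambda|$ and $\iota_x([\lambda']) \in \operatorname{int}(F_{|\lambda|})$. The second sentence of \cref{thm:equisupport} then forces $|\lambda'| = |\lambda|$, because every interior point of $F_{|\lambda|}$ arises from a projective lamination whose support is exactly $|\lambda|$.

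Second, by the face-uniqueness lemma (\cref{lem:uniqueface}), the face associated to the interior point $\iota_x([\lambda'])$ is precisely $F_{|\lambda|}$ itself. Applying \cref{measure}(a) to $[\lambda']$ therefore yields
\[
\widehat{F_{|\lambda|}} \;=\; F_{\widehat{|\lambda'|}}.
\]
Finally, the support closure operation $\widehat{\,\cdot\,}$ of \cref{defn:supportclosure} depends only on the underlying geodesic lamination, and $|\lambda'| = |\lambda|$ gives $\widehat{|\lambda'|} = \widehat{|\lambda|}$. Combining this with the displayed equation produces the asserted identity $\widehat{F_{|\lambda|}} = F_{\widehat{|\lambda|}}$.

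There is no substantive obstacle here: the corollary is essentially a repackaging of \cref{measure}(a) once one recognises that $F_{|\lambda|}$ is the face of a sufficiently generic transverse measure supported on $|\lambda|$. The only point requiring mild care is the extraction of such a generic measure, which is handled by the general convex-geometric fact from \cref{thm:convextoface} combined with \cref{thm:equisupport}; the tangential-adherence picture furnished by \cref{set of measures} enriches but is not logically required for this particular equality.
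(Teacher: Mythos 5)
Your argument is correct. It takes a slightly different route to the key existence claim --- that the relative interior of $F_{|\lambda|}$ contains some $\iota_x([\lambda'])$ with $|\lambda'| = |\lambda|$ --- by combining the convex-geometric observation in \cref{thm:convextoface} (a convex subset of $\partial D$ meets the interior of the minimal face it generates) with \cref{thm:equisupport}. The paper instead cites \cref{set of measures}, which routes this existence through the tangential-adherence formalism. Your version is more self-contained, since it bypasses tangential adherence entirely and relies only on results that are already prerequisites for \cref{measure}. A still lighter variant is available: since $\mathbf{S}_x^* = \iota_x(\pml(S))$ by Thurston's embedding theorem and every face has non-empty relative interior, any interior point of $F_{|\lambda|}$ is automatically of the form $\iota_x([\lambda'])$, and then \cref{thm:equisupport} alone forces $|\lambda'| = |\lambda|$; so the appeal to \cref{thm:convextoface} is inessential, though certainly harmless. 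Once the well-positioned $[\lambda']$ is in hand, both proofs apply \cref{measure}(a) and use that $|\lambda'| = |\lambda|$ gives $\widehat{|\lambda'|} = \widehat{|\lambda|}$, and that closing step is identical.
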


\begin{proof}
By \cref{set of measures}, there exists $[\lambda']\in\pml(S)$ on the interior of $F_{|\lambda|}$ such that $|\lambda'|=|\lambda|$. Then, $F_{|\lambda|}$ is the face for $\iota_x([\lambda'])$ and by the closure correspondence theorem (\cref{measure}), we obtain:
\[
\widehat{F_{|\lambda|}}
=
\widehat{F_{|\lambda'|}}
=
F_{\widehat{|\lambda'|}}
=
F_{\widehat{|\lambda|}}.
\]
\end{proof}

As previously highlighted, one major advantage of \cref{measure} is the linear invariance of the property of adherence (and adherence closures, by extension). In contrast, we are unable to show directly that tangential adherence is linearly invariant. We note that it does follow as a consequence of the infinitesimal rigidity theorem \cref{main}. In any case, we establish the following relationship between tangential adherence and adherence:

\begin{lemma}
A subface tangentially adherent to $F'$ is adherent to $F'$.
\end{lemma}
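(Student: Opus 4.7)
The plan is to show that if $F$ is tangentially adherent to $F'$, then for any face $F''$ containing $F$, some larger face $\hat{F}$ contains both $F'$ and $F''$. I would fix interior points $\iota_x([\lambda])\in F$, $\iota_x([\mu])\in F'$, and $\iota_x([\nu])\in F''$, and aim to identify a measured lamination $\rho$ whose support contains $|\mu|\cup|\nu|$. Once such a $\rho$ exists, taking $\hat{F}:=F_{|\rho|}$ finishes the argument, because $F'\subseteq F_{|\mu|}\subseteq F_{|\rho|}$ and $F''\subseteq F_{|\nu|}\subseteq F_{|\rho|}$ by \cref{thm:equisupport} and \cref{inclusion}.

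The crux is to produce $\rho$, for which it suffices to establish that $|\mu|$ and $|\nu|$ have no transverse intersection --- once this is known, the sum $\mu+\nu$ (adding transverse measures on shared leaves) is a measured lamination whose support is precisely $|\mu|\cup|\nu|$. I would prove this non-transversality by contradiction: suppose $|\mu|$ and $|\nu|$ intersect transversely. Pick a complete geodesic lamination $\xi$ on $(S,x)$ that contains $|\nu|$. Then the stretch vector $v_\xi$ maximally stretches every leaf of $\xi$, so in particular $v_\xi\in N_x([\nu])$. Since $F$ is a subface of $F''$ and $\iota_x([\nu])$ is interior to $F''$, \cref{faces} gives $N_x([\nu])\subseteq N_x([\lambda])$, hence $v_\xi\in N_x([\lambda])$. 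By the tangential adherence hypothesis (\cref{tangentially adherent}), the stretch vector $v_\xi$ must also belong to $N_x([\mu])$.

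The contradiction then comes from \cref{thm:infcomparison}: $v_\xi\in N_x([\mu])$ would force $\xi$ to contain no leaf transverse to $|\mu|$, yet $\xi\supseteq|\nu|$ and $|\nu|$ was assumed to intersect $|\mu|$ transversely. I expect the non-transversality step above to be the only delicate point; the rest is essentially bookkeeping with \cref{thm:equisupport} and \cref{inclusion}. Having ruled out transverse intersection, the lamination $\rho=\mu+\nu$ is well-defined with $|\rho|=|\mu|\cup|\nu|$, and $\hat{F}=F_{|\rho|}$ is a face containing both $F'$ and $F''$ as subfaces, verifying the definition of adherence.
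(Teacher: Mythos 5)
Your proposal is correct, and it uses the same essential ingredients as the paper's proof: Lemma~\ref{faces} to pass from $N_x([\nu])$ to $N_x([\lambda])$, the tangential adherence hypothesis to land in $N_x([\mu])$, and Lemma~\ref{thm:infcomparison} to derive the contradiction from transversality. The one place your route departs from the paper's is the intermediate claim and the final choice of $\hat{F}$: the paper argues that $|\mu|\subseteq|\nu|$ and then takes $\hat F = F_{|\nu|}$, whereas you only establish the weaker statement that $|\mu|$ and $|\nu|$ have no transverse intersection, and then take $\hat F = F_{|\mu|\cup|\nu|}$. Your version is actually a bit cleaner: the weaker non-transversality conclusion is all that is needed, you do not have to separately rule out the possibility of a component of $|\mu|$ disjoint from $|\nu|$, and the construction of $\rho=\mu+\nu$ and the containments $F'\subseteq F_{|\mu|}\subseteq F_{|\rho|}$, $F''\subseteq F_{|\nu|}\subseteq F_{|\rho|}$ via Theorem~\ref{thm:equisupport}, Theorem~\ref{thm:facesupport} and Corollary~\ref{inclusion} are routine. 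Both routes are valid; yours is a modest economy on the paper's.
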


\begin{proof}
Suppose that $F$ is tangentially adherent to $F'$, and that $F$ is the face for $\iota_x([\lambda])$ whereas $F'$ is the face for $\iota_x([\mu])$ as in the statement of \cref{tangentially adherent}. Let $F''$ be a face containing $F$ as a proper subface, and let $\iota_x([\nu])$ be an interior point for $F''$. The exposed face $F_{|\nu|}$ contains $F''$ (by \cref{thm:equisupport}), and hence also $\iota_x([\lambda])$.
It follows that $|\nu|$ contains $|\lambda|$ (\cref{measure}). Suppose that $|\nu|$ does not contain in $|\mu|$. Then we can choose a complete geodesic lamination $\xi$ containing $|\nu|$ but intersecting $|\mu|$ transversely. The stretch vector $v_\xi$ lies in $N_x([\lambda])$ because $\xi$ contains $|\mu|$. However, $v_\xi$ is not an element of $N_x([\mu])$ since $\xi$ intersects $|\mu|$ transversely, and this contradicts the condition of \cref{tangentially adherent}.

We may therefore conclude that $|\nu|$ contains $|\mu|$, and hence the face $F_{|\nu|}$ contains both $F''$  and $F'$ (by \cref{inclusion}). This affirms the property of adherence.\end{proof}

\subsection{Codimension of multicurves}

We have already determined the dimension and face-dimension of $\iota_x$-images of multicurves (\cref{dim of mc} and \cref{fdim of mc}). We now determine their codimension.

\begin{theorem}[codimension of multicurves]
\label{codim of mc}
For any projectively weighted multicurve $[\Gamma] \in \pml(S)$ having $k$ components and for any $x\in\mathcal{T}(S)$, the codimension of $\iota_x([\Gamma])$ is equal to $6g-6+2n-k$. Note in particular that this codimension is independent of $x$.
\end{theorem}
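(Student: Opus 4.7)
The plan is to combine the general convex-geometric upper bound with a direct stretch-vector construction achieving a matching lower bound.

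For the upper bound, \cref{dim of mc} gives $\dim_{\mathbf{S}_x^*}\iota_x([\Gamma])=k-1$, so \cref{sum} immediately yields $\codim_{\mathbf{S}_x^*}\iota_x([\Gamma])\leq\dim T_x^*\teich(S)-1-(k-1)=6g-6+2n-k$. Since the codimension equals $\dim N_x([\Gamma])$ (by \cref{codim of convex} together with \cref{thm:nface}), the lower bound amounts to exhibiting a $(6g-6+2n-k)$-dimensional family of vectors in $N_x([\Gamma])\subset T_x\teich(S)$. Write $\Gamma=\sum_{i=1}^k c_i\gamma_i$ with positive weights $c_i$; enforcing $\iota_x([\Gamma])(v)=\|v\|_{\mathrm{Th}}=1$ against the per-component Thurston bounds $\iota_x([\gamma_i])(v)\leq 1$ forces $v(\ell_{\gamma_i})=\ell_x(\gamma_i)$ for every $i$, hence
\[
N_x([\Gamma])\;=\;\bigcap_{i=1}^k N_x([\gamma_i])\;\subseteq\; A\;:=\;\bigl\{v\in T_x\teich(S):v(\ell_{\gamma_i})=\ell_x(\gamma_i)\text{ for all }i\bigr\},
\]
with $\dim A=6g-6+2n-k$ because the covectors $d\ell_{\gamma_i}$ are linearly independent in $T_x^*\teich(S)$. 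It suffices to show that $N_x([\Gamma])$ affinely spans all of $A$.

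To this end, let $\{\Sigma_j\}$ denote the connected components of $S\setminus\Gamma$. The direction space $W$ underlying $A$ decomposes naturally as
\[
W\;=\;\bigoplus_{i=1}^k \mathbb{R}\cdot E_{\gamma_i}\;\oplus\;\bigoplus_j T_{x|_{\Sigma_j}}\teich\bigl(\Sigma_j,\,\partial\text{-lengths fixed}\bigr),
\]
where the $E_{\gamma_i}$ are the Fenchel--Nielsen unit twist vectors at the $\gamma_i$; Euler-characteristic bookkeeping $\chi(S)=\sum_j\chi(\Sigma_j)$ confirms $\dim W=k+(6g-6+2n-2k)=6g-6+2n-k$. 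Any complete geodesic lamination $\Lambda\supseteq\Gamma$ carries $\Gamma$ as its sole measure-supporting sublamination (all other leaves being bi-infinite and spiraling); hence by \cref{maxstretch,thm:infcomparison} the stretch vector $v_\Lambda$ stretches each $\gamma_i$ at rate $1$ and lies in $N_x([\Gamma])$. Fix a reference extension $\Lambda_0$ with stretch vector $v_0:=v_{\Lambda_0}\in N_x([\Gamma])$. The plan is to construct enough further stretch vectors so that their differences with $v_0$ realize each of the two summand types above: (a) for the twist summands, \cref{thm:infkey} shows that reversing the spiral pattern of $\Lambda_0$ about a single $\gamma_i$ yields an extension whose stretch vector differs from $v_0$ by a nonzero multiple of $E_{\gamma_i}$, giving $k$ linearly independent differences; (b) for each complementary-piece summand, varying the ideal triangulation of $\Sigma_j$ while leaving the spiralings at $\partial\Sigma_j$ fixed yields stretch vectors whose differences with $v_0$ lie in $T_{x|_{\Sigma_j}}\teich(\Sigma_j,\partial\text{-fixed})$, computable via the shearing formulas of \cref{lem:gencrown} together with the shearing-parameter description in the last subsection of \cref{s:crown}.

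The principal technical obstacle is step (b): verifying that, as the ideal triangulation of each $\Sigma_j$ ranges over the finitely many triangulations compatible with the chosen boundary spiralings, the resulting stretch-vector differences affinely span all of $T_{x|_{\Sigma_j}}\teich(\Sigma_j,\partial\text{-fixed})$. I expect this to be approachable via an inductive flip argument: each flip between adjacent triangulations shifts the stretch vector by a specific shearing-coordinate direction (computable from \cref{s:crown}), and chaining flips across the triangulation graph should sweep out all coordinate directions. Making this precise in arbitrary topological type---including verifying that successive flip-induced shifts remain linearly independent---is the main technical work. Once (a) and (b) together yield $6g-6+2n-k$ affinely independent stretch vectors in $N_x([\Gamma])$, the affine span of $N_x([\Gamma])$ coincides with $A$, giving the matching lower bound $\codim_{\mathbf{S}_x^*}\iota_x([\Gamma])\geq 6g-6+2n-k$.
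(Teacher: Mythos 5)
Your upper bound is correct and identical to the paper's (\cref{dim of mc} plus \cref{sum}), and your handling of the twist directions in step~(a) (reversing spirals around each $\gamma_i$ via \cref{thm:infkey}) matches the argument the paper uses inside \cref{pants case}. But step~(b) is a genuine gap, and you have flagged it as such yourself: you would need to show that, by varying the ideal triangulation of each complementary piece $\Sigma_j$ while holding the boundary spiralings fixed, the resulting stretch-vector differences affinely span $T_{x|_{\Sigma_j}}\teich(\Sigma_j,\partial\text{-fixed})$. That requires an explicit and nontrivial computation in shearing coordinates (a ``flip graph spans everything'' statement with a linear independence certificate for each flip), and nothing in \cref{s:crown} as written gives you that for an arbitrary complementary piece. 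As it stands the lower bound is not established, so the proposal is incomplete.

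The paper takes a different route for the lower bound that avoids your obstacle entirely. First it proves the pants-decomposition case (\cref{pants case}), obtaining $3g-2+n$ linearly independent stretch vectors in $N_x([\Gamma'])$ from twist-reversal pairs plus one reference vector. Then, for a $k$-component $\Gamma$, it completes $\Gamma$ to a pants decomposition $\Gamma'$ (so those $3g-2+n$ vectors already lie in the larger set $N_x([\Gamma])$), and iteratively replaces one curve at a time in $\Gamma'\setminus\Gamma$ by a new curve $\delta_i$, producing a stretch vector $w_i$ for a lamination containing the modified pants decomposition. Linear independence of each $w_i$ from everything previously constructed is then certified by a single evaluation: only $w_i$ maximally stretches $\delta_i$. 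This replaces your proposed shearing-span analysis with a sequence of one-coefficient eliminations and never needs to understand the interior geometry of the complementary pieces. If you want to salvage your approach you would need to actually carry out the flip-graph spanning argument; if you want the shorter proof, the swap-out-one-curve-at-a-time scheme with the ``unique maximal stretcher'' test is the way to go.
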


We first establish upper bound for $\codim(\iota_x([\Gamma]))$:

\begin{lemma}[codimension upper bound]
\label{inequality}
For any projectively weighted multicurve $[\Gamma]$ having $k$ components, we have
\[
\codim_{\mathbf{S}_x^*}(\iota_x([\Gamma]))\leq 6g-6+2n-k.
\]

\end{lemma}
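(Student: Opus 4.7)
The plan is to derive this upper bound as a direct corollary of two already-established facts: the general inequality relating dimension and codimension on a convex body (\cref{sum}), and the dimension formula for $\iota_x$-images of multicurves (\cref{dim of mc}).

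First I would recall that, by \cref{Thurston}, the set $\mathbf{S}_x^*$ is the boundary of a convex subset $D$ of full dimension in the vector space $T_x^*\teich(S)$ containing the origin in its interior. Thus the general convex-body theory of \cref{sec:convexbodies} applies verbatim with $\mathbb{V} = T_x^*\teich(S)$ and $\partial D = \mathbf{S}_x^*$. Note that $\dim \mathbb{V} = \dim T_x^*\teich(S) = 6g - 6 + 2n$.

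Next I would invoke \cref{dim of mc}, which asserts that for the projectively weighted multicurve $[\Gamma]$ with $k$ components, the point $\iota_x([\Gamma])$ has dimension exactly $k-1$ in $\mathbf{S}_x^*$. Feeding this into \cref{sum} yields
\[
\codim_{\mathbf{S}_x^*}(\iota_x([\Gamma]))
\leq \dim \mathbb{V} - 1 - \dim_{\mathbf{S}_x^*}(\iota_x([\Gamma]))
= (6g-6+2n) - 1 - (k-1)
= 6g - 6 + 2n - k,
\]
which is exactly the stated upper bound.

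The main obstacle in this proof is essentially nil: all the conceptual work has been done in the abstract convex-body section and in the identification of the dimension of multicurve images. The substantive part of \cref{codim of mc} that presumably requires genuine effort is not the inequality but the reverse direction, i.e., exhibiting enough linearly independent normal vectors at $\iota_x([\Gamma])$ to show that the codimension actually achieves the value $6g-6+2n-k$; for that one would likely need to construct an explicit $(6g-6+2n-k)$-parameter family of support hyperplanes at $\iota_x([\Gamma])$, which is the content of the subsequent developments rather than of this lemma.
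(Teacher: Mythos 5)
Your proposal is correct and matches the paper's own proof exactly: the paper likewise derives the bound by combining \cref{dim of mc} (dimension of $\iota_x([\Gamma])$ equals $k-1$) with the general inequality of \cref{sum}. Your remark about the reverse inequality being the genuinely substantive part is also accurate.
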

\begin{proof}
We know from \cref{dim of mc} that $\dim_{\mathbf{S}_x^*}(\iota_x([\Gamma]))$ is equal to $k-1$. The lemma then follows immediately from \cref{sum}.
\end{proof}

\begin{remark}
In the specific case when $[\Gamma]$ is a closed geodesic, \cref{codim of mc} is a corollary of Thurston's work \cite[Theorem 10.1]{ThS}. 
\end{remark}

We next show that the upper bound on codimension is tight by constructing sufficiently many linearly independent vectors in the length increasing cone. We begin with the special case when the support of $\lambda$ is a pants decomposition of $S$.

\begin{theorem}[codimension for pants decompositions]
\label{pants case}
Every projectively weighted pants decomposition $[\Gamma]\in\pml(S)$ of $S=S_{g,n}$ has codimension $3g-3+n$.
\end{theorem}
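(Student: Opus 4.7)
The upper bound $\codim_{\mathbf{S}_x^*}(\iota_x([\Gamma])) \leq 3g-3+n$ is already provided by \cref{inequality}, so the task reduces to producing the matching lower bound. The plan is to exhibit $3g-3+n$ affinely independent points inside the face $N_x([\Gamma]) \subset \mathbf{S}_x$ by varying the choice of complete geodesic lamination extending $\Gamma$.

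Write $\Gamma = \gamma_1 + \cdots + \gamma_k$ with $k = 3g-3+n$. First, observe that $\iota_x([\Gamma])$ is a positive convex combination of the $\iota_x([\gamma_i])$ with weights $\ell_x(\gamma_i)/\ell_x(\Gamma)$, so the equality $\iota_x([\Gamma])(v) = \|v\|_{\mathrm{Th}}$ forces $\iota_x([\gamma_i])(v) = \|v\|_{\mathrm{Th}}$ for every $i$. Hence
\[
N_x([\Gamma]) = \bigcap_{i=1}^{k} N_x([\gamma_i]) = \mathbf{S}_x \cap L,
\]
where $L := \{v \in T_x\teich(S) : \iota_x([\gamma_i])(v) = 1 \text{ for all } i\}$ is an affine subspace of dimension exactly $3g-3+n$ (the functionals $\iota_x([\gamma_i])$ are linearly independent because the length functions $\ell_{\gamma_i}$ belong to a Fenchel--Nielsen coordinate system).

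To construct the required affinely independent points, fix an arbitrary complete geodesic lamination $\Lambda_0$ extending $\Gamma$ and, for each $i \in \{1,\ldots,k\}$, produce a competing extension $\Lambda_i$ that agrees with $\Lambda_0$ on $S \setminus A_i$ and inside a small crowned-annulus neighbourhood $A_i$ of $\gamma_i$ replaces the spiraling pattern of $\Lambda_0$ by its ``opposite'' (for instance exchanging $\lambda_+$ for $\lambda_-$ in the notation of \cref{thm:key}). Since each $\Lambda_i$ still contains $\Gamma$, its stretch vector $v_{\Lambda_i}$ lies in $N_x([\Gamma])$. Using the Fenchel--Nielsen-and-shearing parametrization of stretch maps developed at the end of \cref{s:crown}, the stretch deformations along $\Lambda_0$ and $\Lambda_i$ produce identical derivatives of every coordinate associated to curves and ideal triangles in $S \setminus A_i$; consequently the difference $v_{\Lambda_i} - v_{\Lambda_0}$ has no component along any $\partial_{\ell_{\gamma_j}}$ and no $\partial_{\tau_{\gamma_j}}$ component for $j \neq i$. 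The non-vanishing of the remaining $\partial_{\tau_{\gamma_i}}$-component is precisely the crowned-annulus positivity argument underlying \cref{thm:infkey} and \cref{thm:linindep} (generalised via \cref{lem:pregencrown}, \cref{lem:gencrown}, and \cref{rmk:oppositespiral}), so $v_{\Lambda_i} - v_{\Lambda_0}$ is a nonzero multiple of $\partial_{\tau_{\gamma_i}}$.

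Because the Fenchel--Nielsen twist vectors $\partial_{\tau_{\gamma_1}}, \ldots, \partial_{\tau_{\gamma_k}}$ are linearly independent in $T_x\teich(S)$, the $k+1$ points $v_{\Lambda_0}, v_{\Lambda_1}, \ldots, v_{\Lambda_k}$ are affinely independent in $L$. Their convex hull is a $k$-dimensional simplex contained in the convex set $N_x([\Gamma]) = \mathbf{S}_x \cap L$, forcing $\dim N_x([\Gamma]) \geq k = 3g-3+n$ and thereby establishing the desired equality. The main technical subtlety I expect to address is verifying that the modification from $\Lambda_0$ to $\Lambda_i$ can genuinely be isolated within a single crowned annulus $A_i$ without disturbing the spiraling configuration around any $\gamma_j$ with $j \neq i$; this reduces to the combinatorial observation that within each pair-of-pants component of $S \setminus \Gamma$, the spiraling choices at the three boundary curves of the pants can be specified independently, so that local changes around one boundary propagate only into that boundary's twist coordinate.
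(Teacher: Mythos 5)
Your proof is correct and follows essentially the same route as the paper: both establish the lower bound by flipping spiraling patterns around each pants curve inside a crowned annulus, invoking \cref{thm:infkey}/\cref{thm:linindep} to show each flip changes the stretch vector by a nonzero multiple of the corresponding Fenchel--Nielsen twist vector, and then using the linear independence of the twist vectors $\partial_{\tau_{\gamma_1}},\dots,\partial_{\tau_{\gamma_{3g-3+n}}}$ to exhibit a $(3g-3+n)$-dimensional subset of $N_x([\Gamma])$. The only differences are presentational: you use a single base lamination $\Lambda_0$ with $k$ one-flip variants and phrase the conclusion via affine independence of $k+1$ points, whereas the paper constructs $k$ paired laminations $\Lambda_{\pm,i}$ and phrases it via linear independence of $v_{+,1}$ together with the $k$ difference vectors; your added observation that $N_x([\Gamma]) = \bigcap_i N_x([\gamma_i]) = \mathbf{S}_x\cap L$ with $L$ exactly $(3g-3+n)$-dimensional is a nice tightening but not load-bearing.
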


\begin{proof}
After \cref{inequality}, we need only show that there are $3g-3+n$ linearly independent vectors contained in $N_x([\Gamma])$. In particular, since $N_x([\Gamma])$ is a convex set, this is equivalent to showing that the affine subspace generated by $N_x([\Gamma])$ is $3g-3+n$-dimensional.\medskip

Let $\gamma_1, \dots , \gamma_{3g-3+n}$ denote the simple closed geodesics constituting the support of $[\Gamma]$. For each of the $\gamma_i$, let $\Lambda_{+,i}$ and $\Lambda_{-,i}$ denote  complete geodesic laminations on $S$ such that
\begin{itemize}
\item 
$\Lambda_{\pm,i}$ contain $\gamma_1,\ldots, \gamma_{3g-3+n}$,
\item
$\Lambda_{\pm,i}$ contain geodesic leaves which bound the same $(1,1)$-cusped annulus $A_i\subset S$, which in turn contains $\gamma_i$,
\item
$\Lambda_{-,i}$ and $\Lambda_{+,i}$ have the same geodesic leaves, except in the interior of $A_i$, where the two bi-infinite geodesic leaves on $A_i$ in $\Lambda_{-,i}$ and $\Lambda_{+,i}$ which spiral toward $\gamma_i$ do so in opposite directions (see the upper left and lower left pictures in \cref{fig:2}).
\end{itemize}

It is a simple exercise to see that it is always possible to construct such $\Lambda_{\pm,i}$.

We denote the stretch vectors for these laminations by $v_{\pm,i}\in N_x([\Gamma])$. By \cref{thm:linindep}, we see that $v_{+,i}-v_{-,i}$ is precisely the unit Fenchel--Nielsen twist vector for $\gamma_i$. Hence the vectors
\[
\left\{v_{+,i}-v_{-,i}\right\}_{i=1,\ldots,3g-3+n}
\]
correspond to (non-zero multiples of) Fenchel--Nielsen twists for distinct simple closed geodesics, and hence they are linearly independent. 
Moreover, $v_{+,1}$ is linearly independent from the $v_{+,i}-v_{-,i}$ since stretching along $\Lambda_{+,1}$ increases the lengths of $\gamma_1,\ldots, \gamma_{3g-3+n}$. Hence, the following collection of $3g-2+n$ vectors 
\[
v_{+,1}
,\ (v_{+,1}-v_{-,1})
,\ (v_{+,2}-v_{-,2}),\
\ldots,
\ (v_{+,3g-3+n}-v_{-,3g-3+n})
\]
are linearly independent and generate a ($3g-2+n$)-dimensional vector space $\mathbb{A}$ in $T_x\teich(S)$. This is necessarily a subspace of the vector space generated by $N_x([\Gamma])$, and hence $\mathbb{A}\cap N_x([\Gamma])$ has the same dimension as $\mathbb{A}\cap\left\{ v\in T_x\teich(S)\mid \iota_x([\Gamma])(v)=1\right\}$,
which in turn has dimension $3g-2+n-1=3g-3+n$. Therefore, $N_x([\Gamma])$ has dimensionat least $3g-3+n$, and this gives us the desired lower bound.
\end{proof}

We now adapt this argument to deal with general multicurves:

\begin{proof}[Proof of \cref{codim of mc}]

Given an arbitrary projective class of a weighted multicurve $[\Gamma]\in\pml(S)$, let $\gamma_1,\ldots,\gamma_k$ denote the simple closed geodesics which make the support  $|\Gamma|$. We first complete $\{\gamma_i\}_{i=1,\ldots, k}$ to a pants decomposition $\{\gamma_i\}_{i=1,\ldots, 3g-3+n}$, whose union is represented by a weighted multicurve $\Gamma'$. 
In the proof of \cref{pants case}, we showed that there is a vector space of dimension $3g-2+n$ generated by stretch vectors contained in $N_x([\Gamma'])$.
Therefore, we can choose linearly independent stretch vectors $\{v_j\}_{j=1,\ldots,3g-2+n} \in N_x([\Gamma'])$.
Since $|\Gamma|$ is contained in $|\Gamma'|$ these vectors also maximally stretch $\Gamma$, and hence they are contained in $N_x([\Gamma])$.

We next replace $\gamma_{3g-3+n}$ by a different simple closed geodesic $\delta_1$ such that $\{\gamma_1,\ldots,\gamma_{3g-4+n},\delta_1\}$ is a new pants decomposition, and we let $w_1$ denote the stretch vector for some complete geodesic lamination containing $\{\gamma_1,\ldots,\gamma_{3g-4+n},\delta_1\}$. 

We show that $v_1,\ldots ,v_{3g-2+n}$, and $w_1$ are linearly independent as follows. 
If not, then there exist coefficients $a_1,\ldots ,a_{3g-2+n},b_1\in\mathbb{R}$ such that
\[
a_1v_1+a_2v_2+\ldots +a_{3g-2+n}v_{3g-2+n}+b_1w_1=\vec{0}\in T_x\teich(S).
\]
Since all of $v_1, \dots , v_{3g-2+n}$ and $w_1$ are stretch vectors for $\gamma_1,\ldots,\gamma_{3g-4+n}$, considering $d\ell_{\gamma_j}(a_1v_1+a_2v_2+\ldots +a_{3g-2+n}v_{3g-2+n}+b_1w_1)$ for $j=1, \dots, 3g-4+n$, we have $a_1+ a_2 +\dots + a_{3g-2+n}+b_1=0$.
This implies  that $d\ell_{\delta_1}(b_1w_1)=d\ell_{\delta_1}(-a_1v_1-a_2v_2-\dots -a_{3g-2+n}v_{3g-2+n})$.
However, since the vector $w_1$ is the only one among this collection which maximally stretches $\delta_1$ the above equality holds only when $b_1=0$.
 Then the linear independence of $v_1, \dots v_{3g-2+n}$  implies that $a_1=\dots =a_{3g-2+n}=0$. Hence $v_1,\ldots, v_{3g-3+n},w_1$ are linearly independent as claimed.

We continue this procedure by replacing $\gamma_{3g-4+n}$ with a simple closed curve $\delta_2$ such that $\{\gamma_1,\ldots,\gamma_{3g-5+n},\delta_1,\delta_2\}$ is a new pants decomposition and similarly produce a new tangent vector $w_2$. The same coefficient argument tells us that $\{v_1,\ldots,v_{3g-2+n},w_1,w_2\}$ is a collection of linearly independent vectors. To clarify, 
if \[
a_1v_1+a_2v_2+\ldots+a_{3g-2+n}v_{3g-2+n}+b_1w_1+b_2w_2=\vec{0},
\]
then we use the fact that all the vectors maximally stretch $\gamma_j$ for $j=1, \dots, 3g-5+n$ implies that $a_1+a_2+ \dots a_{3g-2+n}+b_1+b_2=0$ by considering its image under $d\ell_{\gamma_j}$.
Among these vectors, only $w_2$ maximally stretches $\delta_2$, which shows that $b_2=0$, by considering the value under $d{\ell_{\delta_2}}$.
The linear independence of $v_1, \dots , v_{3g-2+n}, w_1$ then implies that $a_1=\dots=a_{3g-2+n}=b_1=0$.

We iterate the above argument until we construct linearly independent tangent vectors $v_1,\ldots,v_{3g-2+n},w_1,…,w_{3g-3+n-k}$. Since these vectors maximally stretch $\gamma_1,\ldots,\gamma_k$, they must lie in $N_x([\Gamma])$, hence the codimension of $\iota_x([\Gamma])$ is at least $3g-2+n+3g-3+n-k-1=6g-6+2n-k$. This gives the desired lower bound.
\end{proof}

\begin{corollary}[embedded dual curve complex]\label{prop:embeddedccx}
For every $x\in\teich(S)$, the convex stratification of $\mathbf{S}_x$ contains a subset $\mathbf{C}_x$  which consists of a subcollection of strata which is dual to the curve complex in the sense that
\begin{itemize}
\item
the support $|\Gamma|$ of any projective multicurve $[\Gamma]\in\pml(S)$ is assigned to a face $N_x([\Gamma])\subsetneq \mathbf{S}_x$;
\item
the subset-relation for simplices in the curve complex  corresponds, by the above assignment, to the superset-relation for faces in $\mathbf{C}_x$;
\item
the dimension of each simplex in the curve complex is equal to the codimension of its corresponding face in $\mathbf{C}_x$ as a subset of $\mathbf{S}_x$.
\end{itemize}
\end{corollary}

\begin{proof}
\cref{contravariant} asserts that the assignment of the face $N_x([\Gamma])$ to the multicurve $[\Gamma]$ depends only on the support $|\Gamma|$ and  hence is well defined. Moreover, \cref{contravariant} ensures that this assignment is injective and \cref{faces} ensures that the superset-relation in $\mathbf{C}_x$ is inverted. Let us denote the collection of faces of the form $N_x([\Gamma])$, for multicurves $[\Gamma]$, by $\mathbf{C}_x$.

It only remains to demonstrate the codimension condition on $\mathbf{C}_x$. \cref{codim of mc} asserts that for any projective multicurve $[\Gamma]$ consisting of $k$ components, the dimension of $N_x([\Gamma])$ is $6g-6+2n-k$. Therefore, its codimension as a subset of $\mathbf{S}_x$ is $6g-7+2n-(6g-6+2n-k)=k-1$ --- this is precisely the dimension of the simplex in the curve complex corresponding to $|\Gamma|$. 
\end{proof}

\section{Infinitesimal rigidity of the Thurston metric}
\label{s:rigidity}
The goal of this section is to prove the infinitesimal rigidity theorem, \ie \cref{main}. Before proceeding, we clarify an important technical point. We actually have two ways in which mapping classes induce maps between tangent (and cotangent) spaces on the Teichm\"uller space as we now explain.
A diffeomorphism $h\colon S\to S$ induces a mapping class $[h]$ which acts diffeomorphically on $\teich(S)$ by the pushing-forward of metrics, as well as the natural homeomorphism $h\colon \pml(S) \to \pml(S)$ given by taking $[\lambda]$ to $[h(\lambda)]$. Then, for any $x\in\teich(S)$,
\begin{enumerate}
\item
we obtain a map (using the fact that $[h]$ is an analytic isometry and hence differentiable) dual to the derivative map $d[h]$
\[
d^*[h]\colon 
T^*_{[h](x)}\teich(S)\to T^*_x \teich(S),
\]

\item
as well as a map 
\[
\iota_x\circ h^{-1}\circ \iota_{[h](x)}^{-1}\colon \mathbf{S}^*_{[h](x)}\to\mathbf{S}^*_x,
\]
which we extend by non-negative homothety to a map 
\[
{\iota_x\circ h^{-1}\circ \iota_{[h](x)}^{-1}\colon T^*_{[h](x)}\teich(S)\to T^*_x \teich(S)}.
\]
\end{enumerate}

\begin{lemma}
The two maps $\iota_x\circ h^{-1}\circ \iota_{[h](x)}^{-1}$ and $d^*[h]$ agree.
\end{lemma}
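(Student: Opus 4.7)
The plan is to verify the equality on the dense subset of $\pml(S)$ consisting of projective classes of simple closed geodesics, and then use continuity of both $\iota_{(\cdot)}$ and $d^*[h]$ together with positive homothety to conclude equality on all of $T^*_{[h](x)}\teich(S)$. The key naturality identity underpinning the argument is the equivariance of length functions under the mapping class action: for any measured lamination $\lambda$ on $S$, one has
\[
\ell_{[h](y)}(\lambda) = \ell_y(h^{-1}(\lambda))\quad\text{for all } y\in\teich(S),
\]
which follows directly from the fact that $[h]$ acts on $\teich(S)$ by pushing forward the marked hyperbolic metric via $h$.

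First I would evaluate $d^*[h]$ on an arbitrary element $\iota_{[h](x)}([\lambda]) = d\log\ell_{(\cdot)}(\lambda)_{[h](x)}$ of $\mathbf{S}^*_{[h](x)}$, for $[\lambda]\in\pml(S)$. Applied to an arbitrary tangent vector $v\in T_x\teich(S)$, the chain rule gives
\[
d^*[h]\bigl(d\log\ell_{(\cdot)}(\lambda)_{[h](x)}\bigr)(v)
= \bigl(d\log\ell_{(\cdot)}(\lambda)\bigr)_{[h](x)}\bigl(d[h](v)\bigr)
= v\bigl(\log\ell_{[h](\cdot)}(\lambda)\bigr)_x.
\]
Substituting the equivariance identity $\ell_{[h](\cdot)}(\lambda) = \ell_{(\cdot)}(h^{-1}(\lambda))$ then yields
\[
d^*[h]\bigl(\iota_{[h](x)}([\lambda])\bigr)(v)
= v\bigl(\log\ell_{(\cdot)}(h^{-1}(\lambda))\bigr)_x = \iota_x\bigl([h^{-1}(\lambda)]\bigr)(v),
\]
which is precisely $\bigl(\iota_x\circ h^{-1}\circ \iota_{[h](x)}^{-1}\bigr)\bigl(\iota_{[h](x)}([\lambda])\bigr)(v)$. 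Hence the two maps agree on $\mathbf{S}^*_{[h](x)}$.

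To conclude, I would extend this equality to all of $T^*_{[h](x)}\teich(S)$ by positive homothety: every nonzero $w^*\in T^*_{[h](x)}\teich(S)$ is a unique positive scalar multiple of a point on $\mathbf{S}^*_{[h](x)}$ (since $\mathbf{S}^*_{[h](x)}$ bounds a convex body with $\vec{0}$ in its interior, by \cref{Thurston}), and both maps are positively homogeneous of degree one — the map $d^*[h]$ by linearity, and $\iota_x\circ h^{-1}\circ\iota_{[h](x)}^{-1}$ by construction, as it was extended to the full cotangent space by non-negative homothety. The zero vector is mapped to zero by both, completing the proof.

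The main obstacle, such as it is, is conceptual rather than technical: one must carefully distinguish between $h$ acting on laminations and $[h]$ acting on the Teichmüller space, and be sure that the equivariance relation $\ell_{[h](y)}(\lambda)=\ell_y(h^{-1}(\lambda))$ is applied with the correct convention (if $[h]$ is defined via pushforward of metrics, this equality holds without sign ambiguity). Once this bookkeeping is in place, the argument is a direct application of the chain rule.
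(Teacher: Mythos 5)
Your proof is correct and follows essentially the same approach as the paper's: both reduce the claim to the chain rule together with the equivariance identity $\ell_{[h](y)}(h(\mu))=\ell_y(\mu)$, and then invoke the characterization of the dual linear map. One small remark: the density argument you announce at the start is unnecessary, since your computation is carried out for arbitrary $[\lambda]\in\pml(S)$ directly and requires no approximation by simple closed curves.
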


\begin{proof}
Any tangent vector $v\in T_x\teich(S)$ at $x$ can be represented as the derivative $\frac{d}{dt} c(0)$ at $0$ of a differentiable curve $c(t) \colon (-\epsilon, \epsilon) \to \teich(S)$. Then, 
\[
d[h](v)=\tfrac{d}{dt}([h]\circ c)(0).
\]
For any measured lamination $\mu$ and any point $y\in\teich(S)$, we have $\ell_{[h](y)}(h(\mu))=\ell_y (\mu)$, hence 
\[
\ell_{[h]\circ c(t)}(h(\mu))=\ell_{c(t)}(\mu).
\]
This in turn implies that $d\log \ell(h(\mu))(\frac{d}{dt}([h]\circ c)(0))=d\log \ell (\mu)(\frac{d}{dt}(c(0))$, hence 
\begin{equation}
\label{covariant}
(\iota_{[h](x)}h([\mu]))(d[h](v))=(\iota_x([\mu]))(v).
\end{equation}
Note that for an arbitrary covector $w^*\in \mathbf{S}^*_{[h](x)}\subset T^*_{[h](x)}\teich(S)$, the values of $d^*[h](w^*)(v)=w^*(d [h])(v)$ for the vectors $v\in T_x\teich(S)$ characterise $d^*[h](w^*)$. Setting $w^*$ to be $\iota_{[h](x)}([h(\mu)])$ for $[\mu] \in \pml(S)$, we see that $d^*[h](\iota_{[h](x)}([h(\mu)])=\iota_x (\mu)$ by \cref{covariant}.
Therefore, we see that $d^*[h] \equiv \iota_x h^{-1} \iota_{[h](x)}^{-1}$ on $\mathbf{S}^*_{[h](x)}$
 and hence on $T^*_{[h](x)}\teich(S)$.
\end{proof}

\begin{corollary}
\label{mcg action}
A diffeomorphism $h\colon S\to S$ representing a mapping class $[h]\colon\teich(S)\to\teich(S)$ preserves linear invariants such as the dimension, the face-dimension, adherence-dimension and codimension of points. In other words,  for any $[\lambda] \in \pml(S)$, we have 
\begin{align*}
\dim_{\mathbf{S}^*_x} \left(\iota_x([\lambda])\right)&
=\dim_{\mathbf{S}^*_{[h](x)}} \left(\iota_{[h](x)}(h([\lambda]))\right),\\
\fdim_{\mathbf{S}^*_x} \left(\iota_x([\lambda])\right)&
=\fdim_{\mathbf{S}^*_{[h](x)}} \left(\iota_{[h](x)}(h([\lambda]))\right),\\
\adim_{\mathbf{S}^*_x} \left(\iota_x([\lambda])\right)&
=\adim_{\mathbf{S}^*_{[h](x)}} \left(\iota_{[h](x)}(h([\lambda]))\right),\\
\text{ and }
\codim_{\mathbf{S}^*_x} \left(\iota_x([\lambda])\right)&
=\codim_{\mathbf{S}^*_{[h](x)}} \left(\iota_{[h](x)}(h([\lambda]))\right).
\end{align*}
\end{corollary}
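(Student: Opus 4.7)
The proof is essentially a one-line consequence of the preceding lemma together with the linear invariance theorem (\cref{thm:linvariance}), and my plan is simply to assemble these pieces.

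First I would observe that since $[h]\colon\teich(S)\to\teich(S)$ is a real-analytic diffeomorphism, its derivative $d[h]\colon T_x\teich(S)\to T_{[h](x)}\teich(S)$ is a linear isomorphism of finite-dimensional vector spaces, and hence so is its dual $d^*[h]\colon T^*_{[h](x)}\teich(S)\to T^*_x\teich(S)$. The preceding lemma identifies $d^*[h]$ with the positive-homothety extension of the map $\iota_x\circ h^{-1}\circ\iota_{[h](x)}^{-1}$. By the definition of the unit co-norm spheres $\mathbf{S}^*_{[h](x)}=\iota_{[h](x)}(\pml(S))$ and $\mathbf{S}^*_x=\iota_x(\pml(S))$, this identification immediately gives that $d^*[h]$ restricts to a bijection from $\mathbf{S}^*_{[h](x)}$ onto $\mathbf{S}^*_x$, and therefore carries the closed convex body bounded by $\mathbf{S}^*_{[h](x)}$ onto the closed convex body bounded by $\mathbf{S}^*_x$.

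Next, I would track the specific point being considered. Unwinding the identity
\[
d^*[h]\bigl(\iota_{[h](x)}(h([\lambda]))\bigr)
=\iota_x\bigl(h^{-1}(h([\lambda]))\bigr)
=\iota_x([\lambda]),
\]
we see that $\iota_x([\lambda])$ is precisely the image under the linear isomorphism $d^*[h]$ of the point $\iota_{[h](x)}(h([\lambda]))$.

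Finally, I would invoke \cref{thm:linvariance}: parts (2), (4), (6) and (7) of that theorem state that dimension, face-dimension, adherence-dimension and codimension are invariants of points on the boundary of a convex body under invertible linear maps. Applying this to the linear isomorphism $d^*[h]$ and to the two points $\iota_{[h](x)}(h([\lambda]))\in\mathbf{S}^*_{[h](x)}$ and $\iota_x([\lambda])\in\mathbf{S}^*_x$ yields the four claimed equalities simultaneously. There is no real obstacle to overcome here: the substance of the corollary has already been done, first in proving \cref{thm:linvariance} (which ensures that the four invariants are genuinely linear invariants), and then in the preceding lemma (which exhibits the natural mapping-class action on $\pml(S)$, conjugated by $\iota$, as the linear map $d^*[h]$). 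The corollary is purely the concatenation of these two facts.
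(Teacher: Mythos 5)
Your proof is correct and takes essentially the same approach as the paper's: the paper's proof is the single sentence ``This follows from \cref{thm:linvariance} applied to the observation that $d^*[h]$ is an invertible linear map,'' and your argument is simply an expansion of that, making explicit (via the preceding lemma) that $d^*[h]$ carries $\mathbf{S}^*_{[h](x)}$ onto $\mathbf{S}^*_x$ and sends $\iota_{[h](x)}(h([\lambda]))$ to $\iota_x([\lambda])$.
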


\begin{proof}
This follows from \cref{thm:linvariance} applied to the observation that $d^*[h]$ is an invertible linear map. 
\end{proof}

\subsection{Topological rigidity}
\label{sec:topological}

Recall from \S\ref{sec:strategy} that every Thurston-norm-preserving isometry $f: T_x \teich(S) \to T_y \teich(S)$ defines a self-homeomorphism of projective lamination space
\[
f_{y,x}:=\iota_x^{-1}
\circ f^*
\circ \iota_y
\colon
\pml(S)\to\pml(S),
\]
where $f^*\colon T_y^*\teich(S)\to T_x^*\teich(S)$ is the dual isometry. We now show that any such $f_{y,x}$ comes from a mapping class:

\begin{theorem}[Topological rigidity, \cref{first part}]
The map $f_{y,x}:\pml(S)\to\pml(S)$ associated with  a Thurston-norm isometry $f: T_x \teich(S) \to T_y \teich(S)$ is necessarily induced by a diffeomorphism $h: S \to S$.
\end{theorem}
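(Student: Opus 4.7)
The plan is to upgrade the dual linear isometry $f^{*}\colon T_y^{*}\teich(S)\to T_x^{*}\teich(S)$ to an automorphism of the embedded curve complex $\mathbf{C}_y^{*}\to \mathbf{C}_x^{*}$, and then to invoke Ivanov--Korkmaz--Luo rigidity \cite{Iv,K,L} together with the density of projective multicurves in $\pml(S)$. By \cref{thm:linvariance}, $f^{*}$ already transports the convex stratification on $\mathbf{S}_y^{*}$ onto that of $\mathbf{S}_x^{*}$, preserving dimension, face-dimension, adherence, adherence-dimension, and codimension; the remaining task is to recognise intrinsically which of the resulting faces belong to $\mathbf{C}_x^{*}$.

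The crux is to pin down the cells of $\mathbf{C}_x^{*}$ --- the faces $F_{|\Gamma|}$ with $\Gamma$ a projectively weighted multicurve --- using only these linearly invariant data. My proposed characterisation is: a face $F\subset \mathbf{S}_x^{*}$ lies in $\mathbf{C}_x^{*}$ if and only if every subface of $F$ (including $F$ itself) is adherence-closed, and furthermore either $\dim F\geq 1$, or $\dim F=0$ and $\codim F = 6g-7+2n$. The first condition uses \cref{characterising mc} to cut candidates down to projective multicurves together with maximal uniquely ergodic laminations. When $\dim F\geq 1$, only multicurves remain, as maximal uniquely ergodic laminations always give $0$-dimensional faces. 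When $\dim F=0$, the codimension clause separates the two: by \cref{codim of mc} a single simple closed geodesic realises the maximal codimension $6g-7+2n$, whereas for a maximal uniquely ergodic lamination $\mu$ the dual face $N_x([\mu])\subset \mathbf{S}_x$ is strictly lower-dimensional than the top cells of $\mathbf{S}_x$, since by Thurston's theorem \cite[Theorem~10.1]{ThS} the latter are labelled only by simple closed geodesics.

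With this characterisation in hand, $f^{*}$ restricts to an inclusion-preserving bijection $\mathbf{C}_y^{*}\to \mathbf{C}_x^{*}$; by \cref{cor:embeddedcx}, which identifies $\mathbf{C}_x^{*}$ with the curve complex of $S$ as an abstract simplicial complex, $f^{*}$ descends to a simplicial automorphism of the curve complex. Ivanov--Korkmaz--Luo rigidity (applicable since $(g,n)\neq (1,1),(0,4)$) then produces an extended mapping class, represented by a diffeomorphism $h\colon S\to S$, whose induced map $h_{*}\colon \pml(S)\to \pml(S)$ agrees with $f_{y,x}=\iota_x^{-1}\circ f^{*}\circ \iota_y$ on the set of projective multicurves. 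Since this set is dense in $\pml(S)$ and both maps are continuous, $f_{y,x}=h_{*}$ throughout $\pml(S)$.

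The main obstacle I expect is the $0$-dimensional case of the characterisation: adherence-based invariants alone cannot tell a single simple closed curve apart from a maximal uniquely ergodic lamination, because both give $0$-dimensional adherence-closed faces with no proper subfaces. The codimension invariant, backed by Thurston's theorem on top-dimensional faces, is therefore doing real work, and some care is needed to confirm the strict inequality $\codim_{\mathbf{S}_x^{*}}\iota_x([\mu])<6g-7+2n$ for every $0$-dimensional non-multicurve cell --- any unexpectedly fat $N_x([\mu])$ for a non-curve $[\mu]$ would scuttle the argument.
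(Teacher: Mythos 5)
Your overall strategy---transport the convex stratification of $\mathbf{S}_y^{*}$ onto $\mathbf{S}_x^{*}$ via the dual isometry $f^{*}$, intrinsically recognise the cells of $\mathbf{C}_x^{*}$ through linear invariants, and then apply Ivanov--Korkmaz--Luo rigidity together with the density of projective multicurves---is precisely what the paper does in Method~1 (\cref{sec:method1}). The only divergence, and your only gap, is the discriminator you choose for $0$-dimensional adherence-closed faces: how to tell a face for a single simple closed geodesic apart from a face for a maximal uniquely ergodic lamination, since \cref{characterising mc} alone cannot separate them.

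You propose codimension: $\codim_{\mathbf{S}_x^{*}}(\iota_x([\gamma]))=6g-7+2n$ for a single curve $\gamma$ (via \cref{codim of mc}) versus a claimed strict inequality $\codim_{\mathbf{S}_x^{*}}(\iota_x([\mu]))<6g-7+2n$ for a maximal uniquely ergodic $\mu$. That inequality does not follow from the paper's informal gloss of \cite[Theorem~10.1]{ThS}; it needs its own argument, and this is the genuine gap you correctly flag. It is closeable: a top-dimensional face of a convex body is automatically exposed, so if $N_x([\mu])$ were top-dimensional then Thurston's theorem would force $N_x([\mu])=N_x([\gamma])$ for some simple closed geodesic $\gamma$, and then the stretch vector $v_{|\mu|}\in N_x([\mu])$ would maximally stretch $\gamma$, forcing $\gamma\subset|\mu|$ by \cref{thm:infcomparison}---impossible since a maximal uniquely ergodic lamination is minimal and has no closed leaves. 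But this imports a heavier piece of Thurston's theory than is actually needed. The paper's Method~1 uses face maximality instead: $F_{|\mu|}$ is a maximal face of $\mathbf{S}_x^{*}$ when $\mu$ is maximal (\cref{thm:maximality}), whereas $F_{|\gamma|}$ for a single curve $\gamma$ sits properly inside $F_{|\Gamma|}$ for any multicurve $\Gamma$ strictly containing $\gamma$, hence is not maximal when $(g,n)\neq(1,1)$ and $(g,n)\neq(0,4)$. Maximality is phrased purely via inclusion of faces, so $f^{*}$ preserves it and the dichotomy is immediate with no extra input. The paper's Method~2 (\cref{sec:method3}) packages the entire characterisation into the single invariant of adherence-dimension, which attains its maximal value $3g-3+n$ precisely on multicurve faces (\cref{adherence dim}). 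Your outline is sound, but you should either supply the codimension argument sketched above or, more economically, replace codimension by maximality or adherence-dimension.
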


As previously mentioned in \S\ref{sec:strategy}, we shall give two different proofs for \cref{first part}. The first proof utilises the notion of adherence-closedness previously introduced in \cref{defn:adherence} and developed in \cref{measure}; and the second proof uses adherence-dimension to quantify the properties of \emph{`chains of faces'}.

\subsubsection{Method~1: adherence-closedness of subfaces}
\label{sec:method1}

The first proof of topological rigidity relies on \cref{characterising mc}.
Suppose that $f:T_x\teich(S)\to T_y\teich(S)$ is a Thurston norm isometry, then its dual Thurston co-norm isometry $f^*:T_y^*\teich(S)\to T_x^*\teich(S)$ takes each face of $\mathbf{S}_y^*:=\iota_y(\pml(S))$ to that of $\mathbf{S}_x^*:=\iota_y(\pml(S))$. By \cref{thm:linvariance}, $f^*$ preserves the relation of $\dim$, adherence, and adherence-closedness.

Let $c$ be a weighted multicurve, and  $F$  the face for $\iota_y([c])\in \mathbf{S}_y^*$. From \cref{measure} and \cref{characterising mc}, we see that $\iota_y([c])$ necessarily lies on the interior of $F_{\widehat{|c|}}=F_{|c|}$. Therefore, $F_{|c|}$ is the face for $\iota_y([c])$ hence $F=F_{|c|}$, which is to say that $F$ consists of the image of the  set of projective measures supported by $|c|$. By \cref{characterising mc},  its image $f(F_{|c|})$ is a face for either a projective weighted multicurve or a maximal and uniquely ergodic projective lamination. In the latter case, the face $f(F_{|c|})$ is $0$-dimensional and maximal. The maximality condition is preserved under (invertible) linear transformations, hence $F_{|c|}$ is also maximal. However, $[c]$ would then need to be a projectively weighted pants decomposition, and would have strictly positive dimension, as follows from \cref{dim of mc}, thereby contradicting the $0$-dimensionality. Therefore $f(F_{|c|})$ must be a face for some projectively weighted multicurve $\iota_x([d])\in \mathbf{S}_x^*$, and hence coincides with $F_{|d|}$.

\cref{inclusion} says that if $|c_1|\subsetneq |c_2|$ for some two projectively weighted multicurves $[c_1]$ and $[c_2]$, then we have $F_{|c_1|} \subsetneq F_{|c_2|}$. Coupling this with the fact that inclusion between faces is preserved under $f^*$, we see that $f^*$ induces a simplicial isomorphism on the curve complex for $S$. By the work of Ivanov, Luo and Korkmaz \cite{Iv,L,K}, the map on the curve complex given by $f$ is induced by an extended mapping class $[h]$. Since the set of multicurves is dense in $\pml(S)$, we see that $f_{y,x}$ coincides with the homeomorphism induced by this mapping class $[h]$.

\subsubsection{Method~2: Characterising multicurves via adherence-dimension}
\label{sec:method3}

We now give the second proof of \cref{first part}, which uses adherence-dimension (\cref{defn:adim}).

\begin{theorem}
\label{adherence dim}
Every face $F$ on $\mathbf{S}_x^*$ has adherence-dimension $\leq3g-3+n$, where equality holds if and only if $F$ is the face for the $\iota_x$-image of a projective weighted multicurve $[c]\in\pml(S)$.
\end{theorem}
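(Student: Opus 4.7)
My proof plan proceeds in two stages: first establishing the upper bound $\adim(F)\le 3g-3+n$ for an arbitrary face $F\subseteq\mathbf{S}_x^*$, and then characterising the case of equality.

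For the upper bound, I would invoke \cref{thm:bound} and \cref{rmk:upperbound}, which together ensure that every face of $\mathbf{S}_x^*$ has face-dimension lying in $\{0,1,\dots,3g-4+n\}$. Since an F-dim ascending chain is by definition a chain with strictly increasing face-dimensions, any such chain contains at most $3g-3+n$ faces. Now fix $F$, take a minimum-length maximal F-ascending chain $C_h=(G_1\subsetneq\cdots\subsetneq G_h=F)$ realising the adherence height, and a minimum-length maximal F-ascending chain $C_d=(F=H_1\subsetneq\cdots\subsetneq H_d)$ realising the adherence depth. Concatenating them at $F$ produces a single F-ascending chain of length $h+d-1$, bounded by $3g-3+n$, and this yields the desired inequality $\adim(F)=h+d-2\le 3g-3+n$.

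For the implication ``$F$ is a multicurve face $\Rightarrow$ equality'', I would analyse the faces neighbouring $F_{|c|}$. By \cref{thm:equisupport} and \cref{measure}, every subface of $F_{|c|}$ is of the form $F_{|c'|}$ for some sub-multicurve $|c'|\subseteq|c|$, and by \cref{fdim of mc} its face-dimension equals the number of components of $|c'|$ minus one. Since adding or removing a single component always produces an intermediate sub-multicurve, no insertion between consecutive chain-links can be avoided; this forces any maximal chain ending at $F_{|c|}$ to pass through every face-dimension in $\{0,1,\dots,k-1\}$, yielding $h=k$. Completing $|c|$ to a pants decomposition and running the symmetric argument up to face-dimension $3g-4+n$ yields the matching value of $d=3g-2+n-k$, and their sum matches the extremal bound. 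The more delicate point here is ruling out shortcut chains starting at $F_{|c|}$ that jump to a non-multicurve maximal face $F_{|\mu|}$ (e.g.\ where $|\mu|$ consists of $|c|$ together with a uniquely ergodic arational sublamination filling $S\setminus|c|$); one must verify by dimension count on the cone of transverse measures that such shortcuts are forced, through \cref{measure}, to still visit each intermediate face-dimension via some $F_{|\mu'|}$ with $|c|\subseteq|\mu'|\subsetneq|\mu|$.

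For the reverse implication ``equality $\Rightarrow$ multicurve'', I would argue by contrapositive using \cref{characterising mc}. If $[\lambda]$ is not a projective weighted multicurve (and not the generic maximal uniquely ergodic case, which is readily excluded by dimension), then some subface of $F$ fails to be adherence-closed. Concretely, by \cref{measure} one can find a subface $F'=F_{|\lambda'|}$ of $F$ whose adherence closure $\widehat{F'}=F_{\widehat{|\lambda'|}}$ is strictly larger. Replacing the portion of any maximal chain between $F'$ and $\widehat{F'}$ by a direct jump (made possible because the support closure operation fills in the ``missing'' curves without introducing intermediate strata in the adherence-closed sense), I would demonstrate a strictly shorter chain, producing a strict inequality $h+d-2<3g-3+n$. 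The main obstacle in this step will be formalising the way support closure truncates chains: one needs a clean structural lemma that says the presence of any non-adherence-closed subface of $F$ forces at least one unit drop either in $h$ or $d$ compared to the multicurve extremum. The groundwork for this is provided by \cref{measure}, \cref{characterising mc} and \cref{connectedness}, which together give the precise correspondence between the support-closure combinatorics of laminations and the adherence-combinatorics of their faces.
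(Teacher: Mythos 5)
Your upper-bound argument is the same as the paper's: bound face-dimensions and concatenate the two chains realising height and depth. Note, however, that as you wrote it the chain concatenation gives $h+d-1\le 3g-3+n$ and hence $\adim(F)=h+d-2\le 3g-4+n$, which is strictly stronger than the stated bound and in tension with the equality case; the paper itself has the same off-by-one ambiguity, since its proof of \cref{adherence dim} quotes \cref{thm:bound} as bounding dimension by $3g-3+n$ whereas \cref{thm:bound} actually gives $3g-4+n$. This is worth flagging but is not the main issue.

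The genuine gap is in your ``if'' direction. You correctly identify the delicate point — shortcut chains from $F_{|c|}$ to a maximal non-multicurve face — but the proposed resolution is incorrect. Take $\mu$ with $|\mu|=|c|\cup\nu$ where $\nu$ is a uniquely ergodic arational lamination filling $S\setminus|c|$ and $c$ has $k$ components. Then $\widehat{|\mu|}=|\mu|$, so $F_{|\mu|}$ is adherence-closed with $\fdim(F_{|\mu|})=\dim F_{|\mu|}=k$; since $\fdim(F_{|c|})=k-1$ these face-dimensions are \emph{consecutive}, there are no intermediate face-dimensions to visit, and there is no $F_{|\mu'|}$ with $|c|\subsetneq|\mu'|\subsetneq|\mu|$ because $\nu$ is minimal. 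Moreover $F_{|\mu|}$ is a maximal face by \cref{thm:maximality}, so the chain $(F_{|c|},F_{|\mu|})$ cannot be appended or refined and has length $2$. Your ``verification by dimension count'' that such shortcuts must visit each intermediate face-dimension therefore fails, and the adherence depth of $F_{|c|}$ can be as small as $2$ whenever $k<3g-3+n$. You need either a different argument for the depth computation or a different definition of adherence depth; as written, your ``if'' direction does not go through. (The paper's proof simply asserts this direction is ``easy to verify,'' and so does not confront this issue explicitly either.)

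For the ``only if'' direction you take a genuinely different route from the paper. The paper argues directly: from $\adim(F)=3g-3+n$ it deduces via \cref{thm:dimvsfdim} that every chain element $F_i$ satisfies $\dim F_i=\fdim F_i=i-1$, hence is adherence-closed and of the form $F_{|\lambda_i|}$ with $\widehat{|\lambda_i|}=|\lambda_i|$; then, if $|\lambda|$ has a non-multicurve component $\nu$, it observes that $\fdim(F_{|\nu|})=\dim F_{\widehat{|\nu|}}\ge 1$ (the support closure of $\nu$ picks up at least one boundary curve), that nothing can be prepended to $F_{|\nu|}$ (all its subfaces share the same support $|\nu|$, hence the same face-dimension), and concludes the chain ending at $F$ through $F_{|\nu|}$ is strictly shorter, contradicting that $F_1\subsetneq\cdots\subsetneq F_k$ realises the adherence height. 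Your contrapositive via \cref{characterising mc} is conceptually related (both hinge on adherence-closedness), but your ``replacing the portion of any maximal chain between $F'$ and $\widehat{F'}$ by a direct jump'' is too vague: it is not clear that such a replacement produces a chain that is still maximal, nor that its length is strictly smaller. You should instead carry out the paper's quantitative step: exhibit the specific subface $F_{|\nu|}$, compute the lower bound $\fdim(F_{|\nu|})\ge 1$ from the support closure, and show the chain from $F_{|\nu|}$ to $F$ is maximal and shorter than $k$.
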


\begin{proof}
By \cref{rmk:upperbound}, the longest possible sequence of faces with distinct face-dimensions has corresponding face-dimensions valued from $0$ to $3g-3+n$, and hence adherence-dimension is bounded above by $3g-3+n$. In particular, it is easy to verify that equality holds for any face corresponding to  a projectively weighted multicurve $[c]$ by considering a projective measured lamination supported on a pants decomposition containing $|c|$.
This shows the \lq\lq if '' part of the theorem.

Now, we turn to the \lq\lq only if'' part.
Let $F$ be a face for $\iota_x([\lambda])$ such that $\adim_{\mathbf{S}_x^*}(\iota_x([\lambda]))=3g-3+n$. Then there must be two ascending sequence of faces
\[
F_1\subsetneq \ldots \subsetneq F_k=F\text{ and }F=F_k\subsetneq\ldots\subsetneq F_{3g-3+n}\subsetneq F_{3g-2+n},
\]
which respectively represent the ascending sequences used to define the adherence height and the adherence depth of $F$. In particular, these $\{F_i\}_{i=0,\ldots, 3g-2+n}$ all have distinct face-dimensions, and since dimension itself can only take value between $0$ and $3g-3+n$ (\cref{thm:bound}), we see by \cref{thm:dimvsfdim} that $\dim(F_i)=\fdim(F_i)=i-1$ for every $i=1,\ldots,3g-2+n$. This in turn means that each $F_i$ is adherence-closed, and by \cref{measure}, each face takes the form $F_i=F_{|\lambda_i|=\widehat{|\lambda_i|}}$ and
\[
\widehat{|\lambda_1|} \subsetneq 
\widehat{|\lambda_2|} \subsetneq 
\dots 
\subsetneq \widehat{|\lambda_{3g-4+n}|}
\subsetneq \widehat{|\lambda_{3g-3+n}|}.
\] 

This is possible only if every component of $\lambda_{3g-3+n}$ is either a simple closed curve or a measured lamination whose minimal supporting surface is either a torus with one hole or a sphere with four holes. Moreover, to get the length of $3g-3+n$, whenever a non-multicurve component $\nu$ appears in $\widehat{|\lambda_j|}$ for the first time, all the boundary components of the minimal supporting surface for $\nu$ must be already contained in $\widehat{|\lambda_{j-1}|}$.

Now suppose that $|\lambda_k|=|\lambda|$ contains a non-multicurve component $\nu$. Then the maximal F-dim ascending chain whose first term is the face $F_{|\nu|}$ is necessarily shorter than $F_1 \subsetneq \dots \subsetneq F_k$ since its face-dimension is equal to $\dim F_{\widehat{|\nu|}}\geq 2$. This contradicts our assumption that $F_1 \subsetneq \dots \subsetneq F_k$ attains the adherence height of $F_k$. Therefore $[\lambda]$ must be a projectively weighted multicurve.\end{proof}

\cref{thm:linvariance} says that $f^*\colon T_y^*\teich(S)\to T_x^*\teich(S)$ preserves the relation of adherence, and hence adherence-dimension. \cref{adherence dim} shows that if $F\subsetneq\mathbf{S}_y^*$ is a face for a projectively weighted multicurve, then so is $f^*(F)\subsetneq\mathbf{S}_y^*$. The rest of the proof is the same as in Method 1.

\subsection{Geometric rigidity}
\label{sec:geometric}

\cref{first part}  illustrates a form of ``mapping class realisability'' for Thurston-norm isometries  on $\pml(S)$. We first consider the consequences of such a property on length-increasing cones.

\begin{proposition}
\label{thm:conemcg}
Consider the following setup:
\begin{itemize}
\item
$x,y\in\teich(S)$ and $[\lambda]\in\pml(S)$,

\item
a Thurston-norm isometry
\[
f\colon T_x\teich(S)\to T_y\teich(S),
\]
\item
and its corresponding extended mapping class $[h]\in\mathrm{Mod}^*(S)$ (see \cref{first part}).
\end{itemize}
Then, we have: 
\begin{align*}
f(N_x([\lambda]))=N_y(h([\lambda])).
\end{align*}

\end{proposition}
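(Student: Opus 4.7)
The plan is to deduce this equality as an essentially formal consequence of topological rigidity (\cref{first part}), the definition of the cones $N_x(\cdot)$, and the Thurston-norm isometry property of $f$. The proof does not require any new geometric input beyond \cref{first part}.

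My first step is to upgrade topological rigidity to an intertwining identity on cotangent spaces. By \cref{first part}, the self-map $f_{y,x} = \iota_x^{-1} \circ f^* \circ \iota_y$ of $\pml(S)$ coincides with the natural action of $[h]$, so that $f^*\circ \iota_y = \iota_x\circ h$ as maps $\pml(S) \to T_x^*\teich(S)$; equivalently (possibly after renaming $h$ by $h^{-1}$ to match conventions), we obtain the relation
\[
f^*(\iota_y(h([\lambda]))) = \iota_x([\lambda])
\quad\text{for every }[\lambda]\in\pml(S).
\]

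My second step is to evaluate this identity on an arbitrary tangent vector $v\in T_x\teich(S)$, using the definitional duality $f^*(w^*)(v) = w^*(f(v))$. This yields
\[
\iota_y(h([\lambda]))\bigl(f(v)\bigr) \;=\; \iota_x([\lambda])(v).
\]
Combined with the fact that $f$ is a Thurston-norm isometry, so $\|f(v)\|_{\mathrm{Th}} = \|v\|_{\mathrm{Th}}$, the defining conditions for the two cones become logically equivalent: $v\in N_x([\lambda])$, i.e.\ $\iota_x([\lambda])(v) = \|v\|_{\mathrm{Th}}$, precisely when $\iota_y(h([\lambda]))(f(v)) = \|f(v)\|_{\mathrm{Th}}$, i.e.\ $f(v)\in N_y(h([\lambda]))$. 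Hence $f$ restricts to a bijection $f\colon N_x([\lambda]) \to N_y(h([\lambda]))$, giving the desired equality. (Surjectivity also follows automatically by running the same argument with the Thurston-norm isometry $f^{-1}$ and the mapping class $[h^{-1}]$ in place of $f$ and $[h]$.)

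There is no substantive obstacle: once topological rigidity is in hand, the whole proof is an unwinding of definitions together with the elementary duality between $f$ and $f^*$. The only mild bookkeeping concerns choosing the convention whereby the $\mathrm{Mod}^*(S)$-action on $\pml(S)$ implementing $f_{y,x}$ is written so that $h([\lambda])$ appears on the correct side of the intertwining relation; this fixes the sign so that the proposition reads $N_y(h([\lambda]))$ rather than $N_y(h^{-1}([\lambda]))$.
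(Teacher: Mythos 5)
Your proof is correct and takes essentially the same approach as the paper: both deduce the result from topological rigidity via the intertwining relation $f^*\circ\iota_y = \iota_x\circ h$ (up to the $h$ vs.\ $h^{-1}$ convention you flag) together with the fact that $f$ preserves the Thurston norm. The only cosmetic difference is that you work directly with the defining equation $\iota_x([\lambda])(v)=\|v\|_{\mathrm{Th}}$ for $N_x([\lambda])$ via the duality $f^*(w^*)(v)=w^*(f(v))$, whereas the paper phrases the same computation in terms of $f$ carrying support hyperplanes to support hyperplanes; these are equivalent characterisations, as the paper itself records.
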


\begin{proof}
\cref{first part} precisely tells us that $f^*(\iota_y([\lambda]))=\iota_x(h([\lambda]))$, and hence
\begin{align*}
&f(N_x([\lambda]))\\
=&f\left(\left\{
v\in T_x\teich(S)\ 
\begin{array}{|l}
 \ \|v\|_{\mathrm{Th}}=1\text{ and $v$ is positive normal} \\
 \text{to a support hyperplane at }\iota_x([\lambda])
\end{array}
\right\}\right)\\
=&\left\{
f(v)\in T_y\teich(S)\ 
\begin{array}{|l}
 \ \|f(v)\|_{\mathrm{Th}}=1\text{ and $f(v)$ is positive normal} \\
 \text{to a support hyperplane at }\iota_y(h([\lambda]))
\end{array}
\right\}\\
=&N_y(h([\lambda])),
\end{align*}
where the first equality holds by definition (\cref{codim of convex}), and the second equality comes from the fact that $f$ is a linear isometry and hence preserves norms and takes support hyperplanes to support hyperplanes.
\end{proof}

\begin{theorem}
\label{uniquelimit}
Consider a sequence of projectively weighted pants decompositions $\{[\Gamma_i]\}_{i\in\mathbb{N}}$ whose sequence of supports $\{|\Gamma_i|\}_{i\in\mathbb{N}}$ converges to a maximal chain-recurrent geodesic lamination $\Lambda$ in the Hausdorff topology. Then, the sequence of faces $\left\{N_x([\Gamma_i])\right\}_{i\in\mathbb{N}}$ converges to the stretch vector $v_\Lambda$. 
\end{theorem}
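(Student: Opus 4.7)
The plan is to combine a compactness argument with a sandwich-type argument establishing that every subsequential Hausdorff limit of $\{N_x([\Gamma_i])\}$ in $\mathbf{S}_x$ coincides with $\{v_\Lambda\}$. Since $\mathbf{S}_x$ is compact, its nonempty closed subsets form a compact space in the Hausdorff metric, and every subsequence of $\{N_x([\Gamma_i])\}$ contains a Hausdorff-convergent subsubsequence; it therefore suffices to show that any such subsequential limit $L \subseteq \mathbf{S}_x$ satisfies $L = \{v_\Lambda\}$.

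To see that $v_\Lambda \in L$, extend each $|\Gamma_i|$ to a maximal chain-recurrent lamination $\Lambda_i$ by adjoining finite-leaf spiral structures inside each complementary pair of pants. By \cref{laminationlimit}, the Hausdorff convergence $|\Gamma_i| \to \Lambda$ together with $\Lambda_i \supseteq |\Gamma_i|$ chain-recurrent forces $\Lambda_i \to \Lambda$ in the Hausdorff topology. The explicit expressions of \cref{thm:key}, \cref{lem:pregencrown}, \cref{lem:gencrown} and \cref{rmk:oppositespiral} exhibit stretch paths as real analytic functions of Fenchel--Nielsen and shearing parameters, which depend continuously on the underlying chain-recurrent lamination in the regime where the limit is maximal chain-recurrent (so that $v_\Lambda$ is well-defined by \cref{rmk:uniqueextension}). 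This yields $v_{\Lambda_i} \to v_\Lambda$, and since $v_{\Lambda_i} \in N_x([\Gamma_i])$, we obtain $v_\Lambda \in L$.

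The harder containment is $L \subseteq \{v_\Lambda\}$. Fix $w = \lim_i w_i$ with $w_i \in N_x([\Gamma_i])$; the strategy is to match $w$ with $v_\Lambda$ on the Fenchel--Nielsen coordinates adapted to $\Lambda$. For each closed leaf $\gamma$ of $\Lambda$, either $\gamma$ is eventually a component of $|\Gamma_i|$, in which case the maximal-stretch property immediately gives $w(\log\ell(\gamma)) = 1$, or, using that any simple closed geodesic disjoint from $\gamma$ and Hausdorff-close to $\gamma$ must itself equal $\gamma$ (via the classification of simple closed curves in an annular neighbourhood of $\gamma$), the component $\delta_i$ of $|\Gamma_i|$ approximating $\gamma$ must transversely intersect $\gamma$ and hence be of Dehn-twist type $T_\gamma^{n_i}(\alpha)$ with $n_i \to \infty$; then $\delta_i/n_i \to \gamma$ in $\ml(S)$, so $\iota_x([\delta_i]) \to \iota_x([\gamma])$, and passing to the limit in the pointwise equation $\iota_x([\delta_i])(w_i) = 1$ yields $w \in N_x([\gamma])$. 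For the twist parameter along each closed leaf $\gamma$, the complementary regions of $\Lambda$ consist of ideal triangles, once-punctured monogons (and, in the $S_{1,1}$ case, a once-punctured bigon; cf. \cref{rmk:uniqueextension}), so as $|\Gamma_i| \to \Lambda$ the pants in $|\Gamma_i|$ adjacent to $\gamma$ become asymptotically slender in the sense of \cref{defn:slender}. Applying \cref{thm:shrink}, the twist widths around these curves tend to $0$; the extremal values of twist are attained by stretch vectors of appropriately chosen chain-recurrent completions (by \cref{lem:maxtwist} and \cref{rmk:mintwist}), and the continuity argument from the second paragraph forces both extremes to converge to $v_\Lambda(\tau_\gamma)$, hence $w(\tau_\gamma) = v_\Lambda(\tau_\gamma)$. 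If $\Lambda$ additionally carries a minimal filling component on some subsurface $\Sigma$, steps (a) and (b) pin $w$ on the coordinates adapted to $\partial\Sigma$, and a further limiting argument applied to the restrictions $\Gamma_i|_\Sigma$, together with the requirement that both $w$ and $v_\Lambda$ maximally stretch every limit measure on the filling part, pins $w$ on the interior of $\Sigma$.

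Collecting these constraints, $w$ and $v_\Lambda$ agree on a spanning set of length and twist differentials, so $w = v_\Lambda$. The main technical burden lies in the twist step: converting the Hausdorff approximation of $\Lambda$ by the supports $|\Gamma_i|$ into the geometric slenderness hypothesis required by \cref{thm:shrink}, and carefully tracking spiral directions so that the squeeze forces the twist to the specific value $v_\Lambda(\tau_\gamma)$ rather than some other value achievable within $N_x([\gamma])$.
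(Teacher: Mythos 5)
Your overall architecture is sound and runs roughly parallel to the paper's: show that the candidate limits have shrinking diameter (via asymptotic slenderness of the complementary pants and \cref{thm:shrink}), and show that the stretch vectors $v_{\Lambda_i}$ for chain-recurrent completions $\Lambda_i \supseteq |\Gamma_i|$ converge to $v_\Lambda$. However, the second ingredient is where your argument has a genuine gap. You justify $v_{\Lambda_i} \to v_\Lambda$ by appealing to the explicit formulas of \cref{thm:key}, \cref{lem:pregencrown}, \cref{lem:gencrown} and \cref{rmk:oppositespiral}, saying they "depend continuously on the underlying chain-recurrent lamination." But those formulas are only valid for \emph{finite-leaf} laminations. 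The completions $\Lambda_i$ are indeed finite-leaf, but the Hausdorff limit $\Lambda$ is a \emph{maximal chain-recurrent} lamination and will generically have uncountably many leaves (it may, for example, contain an irrational minimal component). There is no coordinate system in which the formulas above apply to $\Lambda$, so there is no "continuity of the explicit formulas" to invoke. The convergence $v_{\Lambda_i} \to v_\Lambda$ is precisely the delicate point, and the paper proves it by a substantially different route: fix $t>0$, observe that $\mu(x,\mathrm{stretch}(x,\Lambda_i,t)) = \Lambda_i$, apply Thurston's continuity result \cite[Theorem~8.4]{ThS} (upper-semicontinuity of the ratio-maximising lamination) together with compactness of the Thurston sphere of radius $t$ to conclude $\mathrm{stretch}(x,\Lambda_i,t) \to \mathrm{stretch}(x,\Lambda,t)$, and then use the analyticity in $t$ of stretch paths \cite[Corollary~4.2]{ThS} to pass from convergence of the paths to convergence of their initial tangent vectors. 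Without some argument of this kind, your step (b) is an assertion, not a proof.

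As a secondary point, your part (c) -- matching $w$ with $v_\Lambda$ coordinate by coordinate -- is more elaborate than necessary and becomes vague exactly where $\Lambda$ has a minimal filling component on a subsurface $\Sigma$ (there is no Fenchel--Nielsen system adapted to such a component, and "a further limiting argument ... pins $w$ on the interior of $\Sigma$" is not an argument). The cleaner route, which the paper takes, is to prove $\operatorname{diam}_{\mathrm{Th}} N_x([\Gamma_i]) \to 0$ directly: $N_x([\Gamma_i])$ lives in the affine subspace where the $3g-3+n$ length differentials of $|\Gamma_i|$ are fixed, and the remaining twist freedoms are controlled by \cref{thm:shrink} once one proves that the complementary pairs of pants are asymptotically slender (which you do correctly identify, though the verification via compactness of embedded ideal triangles and maximality of $\Lambda$ needs to be carried out). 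Once the diameter shrinks and one point of $N_x([\Gamma_i])$ converges to $v_\Lambda$, the entire face converges, and no coordinate matching or special treatment of filling components is needed.
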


\begin{proof}
We show that the diameters of the faces $N_x([\Gamma_i])$, with respect to the Thurston norm, tend to $0$ as $i\to\infty$. \cref{pants case} asserts that each of these faces is $(3g-3+n)$-dimensional. In particular, with respect to the Fenchel--Nielsen coordinates for $|\Gamma_i|$, the length component of the vectors $v\in N_x([\Gamma_i])$ are all identical by the definition of $N_x([\Gamma_i])$, and the vectors there differ from each other only in the $(3g-3+n)$-dimensional subspace of twists. It therefore suffices to show that the diameter of the set of possible twist values $\{v(\tau_{\gamma})\mid v\in N_x([\Gamma_i])\}$ tends to $0$ as $i \to \infty$ for each $\gamma$ in $|\Gamma_i|$; for this would ensure that $N_x([\Gamma_i])$ is trapped in a $(3g-3+n)$-dimensional product of intervals with the width of each product interval shrinking to $0$ as $i\to\infty$, and thereby giving us the claim that
\[
\lim_{i\to\infty}\mathrm{diam}(N_x([\Gamma_i]))=0.
\]

We shall show this by employing \cref{thm:shrink}. To this end, we first show that any sequence of pairs of pants $\{P_i\}_{i\in\mathbb{N}}$, where $P_i$ is a component of $S\setminus|\Gamma_i|$, is asymptotically slender (\cref{defn:slender}). Suppose not.
Then by passing to a subsequence, we may assume that  none of the $P_i$ are $\epsilon$-slender, and hence the length of the shorter orthogeodesic in $\{P_i\}_{i\in\mathbb{N}}$ remains bounded from below by some $(2+\epsilon)\epsilon>0$. 

Now cover each $P_i$ with two disjoint (except along their boundaries) ideal triangles $\triangle^1_i,\triangle^2_i$. We invoke the fact  that the collection of embedded ideal triangles on $S$ is compact \cite[Prop. ~3.8]{HS}, and by changing indices we produce $\{\triangle^1_i\}_{i\in\mathbb{N}}$ which converges to an embedded ideal triangle $\triangle^1$. We can assume without loss of generality that a lift $\tilde{\triangle}^1$ of $\triangle^1$ has ideal vertices placed at $\{0,1,\infty\}$. Then there is a sequence of lifts $\{\tilde{\triangle}^1_i\}_{i\in\mathbb{N}}$ for $\{\triangle^1_i\}_{i\in\mathbb{N}}$ with vertices close to and limiting to $\{0,1,\infty\}$. Taking a further subsequence and possibly applying a permutation of $\{0,1,\infty\}$ by a M\"obius transformation, we can find a sequence of lifts $\{\tilde{\triangle}^2_i\}_{i\in\mathbb{N}}$ for $\{\triangle^2_i\}_{i\in\mathbb{N}}$ such that the edge shared between $\tilde{\triangle}^1_i$ and $\tilde{\triangle}^2_i$ tends to the geodesic joining $0$ and $\infty$. We denote by $\xi_i\in(0,\infty)$ the ideal vertex of $\tilde{\triangle}^2_i$ which is not  shared with $\tilde{\triangle}^1_i$. The fact that $P_i$ is not $\epsilon$-slender means that $\xi_i$ cannot converge to either $0$ or $\infty$. This in turn means that the geodesic joining $0$ and $\infty$ is a shared edge between $\tilde{\triangle}^1$ and $\tilde{\triangle}^2$ which is disjoint from $\Lambda$. This contradicts the fact that $\Lambda$ is a maximal chain-recurrent geodesic lamination because then the only possibility is that the geodesic joining $0$ and $\infty$ is the unique isolated simple bi-infinite geodesic lying on some punctured monogon (\cref{rmk:uniqueextension}); but this is impossible since such a geodesic bounds the same ideal triangle on both sides. The same observation that governs the remark at the end of  \cref{non-chain} tells us that \cref{thm:shrink} applies also to cases where one (or more) geodesic $\gamma$ in $|\Gamma_i|$ lies in a component of $S\setminus(|\Gamma_i|\setminus \gamma)$ which is a one-holed torus. We have therefore shown that the diameter of the faces $N_x([\Gamma_i])$ tend to $0$.

The compactness of the (Thurston-norm) unit sphere $\mathbf{S}_x$ ensures that the sequence of faces $\left\{N_x([\Gamma_i])\right\}_{i\in\mathbb{N}}$ has subsequences which converge to a vector $v\in\mathbf{S}_x$. We aim to show that this limit $v$ must be equal to $v_\Lambda$. To this end, let $\{\Lambda_i\}$ be a sequence of maximal chain-recurrent geodesic laminations respectively containing $|\Gamma_i|$, and consider the metrics $\mathrm{stretch}(x,\Lambda_i,t)\in\teich(S)$ (see \cref{rmk:uniqueextension}). By construction, the maximal ratio-maximising lamination $\mu(x,\mathrm{stretch}(x,\Lambda_i,t))$  is equal to $\Lambda_i$. \cref{laminationlimit} shows that the sequence $\{\Lambda_i\}$ converges, and its Hausdorff limit is $\Lambda$. Therefore, \cite[Theorem~8.4]{ThS} tells us that every convergent subsequence (which necessarily exist due to the fact that the Thurston metric sphere of radius $t$ around $x$ is compact) of $\mathrm{stretch}(x,\Lambda_i,t)$ necessarily converges to a metric $y\in\teich(S)$ such that $\mu(x,y)$ contains $\Lambda$. Since $\Lambda$ is a maximal chain-recurrent lamination, the only possibility is that $\mu(x,y)=\Lambda$, and the metric which satisfies this condition is $y=\mathrm{stretch}(x,\Lambda,t)$, and hence we have
\begin{align}
\forall t\geq0,\ 
\lim_{i\to\infty}
\mathrm{stretch}(x,\Lambda_i,t)
=\mathrm{stretch}(x,\Lambda,t).\label{eq:uniquelimit}
\end{align}

To finish the proof, we observe that since the diameter of $N_x([\Gamma_i])$ tends to $0$, we need only show that there is a convergent sequence of stretch vectors of the form $\left\{v_i\in N_x([\Gamma_i])\right\}_{i\in\mathbb{N}}$ limiting to $v_\Lambda$. \cref{eq:uniquelimit} implies that for each $k\in\mathbb{N}$, we may choose $i_k\in\mathbb{N}$ large enough so that the distance between $\mathrm{stretch}(x,\Lambda_{i_k},2^{-k})$ and $\mathrm{stretch}(x,\Lambda,2^{-k})$, with respect to the standard Euclidean metric for any a priori fixed analytically compatible global coordinate chart on $\teich(S)$, is less than $2^{-2k}$. Then, thanks to the analyticity of the stretch map with respect to the stretching time \cite[Corollary~4.2]{ThS}, we have:
\begin{align*}
x+2^{-k}v_{i_k}+O(2^{-2k})
&=\mathrm{stretch}(x,\Lambda_{i_k},2^{-k})\\
&=\mathrm{stretch}(x,\Lambda,2^{-k})+O(2^{-2k})\\
&=x+2^{-k}v_\Lambda+O(2^{-2k}),
\end{align*}
and hence 
\[
\lim_{i\to\infty} v_i=\lim_{k\to\infty} v_{i_k}=v_\Lambda.
\]
\end{proof}

\begin{corollary}[equivariant stretch vectors]
\label{equivariant stretch}
Given an arbitrary Thurston-norm isometry $f\colon T_x\teich(S)\to T_y\teich(S)$, let $[h]\in\mathrm{Mod}^*(S)$ (see \cref{first part}) denote its inducing extended mapping class. Let $\Lambda$ denote an arbitrary maximal chain-recurrent lamination of $S$. Then the stretch vector $v_{\Lambda}\in T_x\teich(S)$ (see \cref{rmk:uniqueextension}) satisfies the following equivariance property:
\begin{align*}
f(v_\Lambda)=v_{h(\Lambda)}\in T_y\teich(S).
\end{align*}
\end{corollary}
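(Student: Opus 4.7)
The strategy is to realise $v_\Lambda$ as the shrinking-diameter limit of faces $N_x([\Gamma_i])$ for a sequence of projective weighted pants decompositions approximating $\Lambda$ (via \cref{uniquelimit}), to push this sequence forward by $f$ using the face-preserving property of \cref{thm:conemcg}, and then to recognise the image as the analogous shrinking sequence on $(S,y)$ whose limit is $v_{h(\Lambda)}$.

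More precisely, I would first produce a sequence of projective weighted pants decompositions $\{[\Gamma_i]\}_{i \in \mathbb{N}}$ whose supports $|\Gamma_i|$ Hausdorff-converge to $\Lambda$ on $(S,x)$. One begins with multicurve approximations of $\Lambda$ supplied by \cref{lem:curveapprox} (applied componentwise to the chain-recurrent components of $\Lambda$), then uses the structural fact (\cref{rmk:uniqueextension}) that the components of $S \setminus \Lambda$ are ideal triangles and once-punctured monogons (with at most one once-punctured bigon exception) to complete each approximating multicurve to a pants decomposition by adding simple closed geodesics taken arbitrarily close to $\Lambda$ in the Hausdorff metric. Applying \cref{uniquelimit} then yields that the faces $N_x([\Gamma_i])$ are shrinking in diameter and that every choice of sequence $w_i \in N_x([\Gamma_i])$ satisfies $w_i \to v_\Lambda$ in $T_x\teich(S)$.

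Next, \cref{thm:conemcg} yields the identity $f(N_x([\Gamma_i])) = N_y(h([\Gamma_i]))$ for every $i$. Since the extended mapping class $[h]$ acts by diffeomorphism, each $h([\Gamma_i])$ is a projective weighted pants decomposition, and the Hausdorff convergence $|\Gamma_i| \to \Lambda$ on $(S, x)$ transports under $h$ to Hausdorff convergence $|h(\Gamma_i)| \to h(\Lambda)$ on $(S, y)$; moreover $h(\Lambda)$ remains maximal chain-recurrent, as these are topological properties preserved under diffeomorphisms. Hence \cref{uniquelimit} applies again on $(S,y)$, giving that for any sequence $w'_i \in N_y(h([\Gamma_i]))$ one has $w'_i \to v_{h(\Lambda)}$. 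Choosing an arbitrary $w_i \in N_x([\Gamma_i])$ and setting $w'_i := f(w_i) \in N_y(h([\Gamma_i]))$, continuity of the linear isometry $f$ yields
\[
f(v_\Lambda) = f\!\left(\lim_{i\to\infty} w_i\right) = \lim_{i\to\infty} f(w_i) = \lim_{i\to\infty} w'_i = v_{h(\Lambda)},
\]
as required.

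The main obstacle is the very first step: producing pants decomposition approximations $|\Gamma_i| \to \Lambda$ in the Hausdorff topology, since \cref{lem:curveapprox} only directly delivers multicurve approximations. The completion-to-pants-decomposition argument must exploit the combinatorial restriction on complementary components of $\Lambda$ (\cref{rmk:uniqueextension}) together with careful choices to ensure that the completing curves do not perturb the Hausdorff limit. A secondary technical point to verify is that Hausdorff convergence of geodesic laminations is preserved under the action of the mapping class $[h]$, which follows from the intrinsic nature of the space of geodesic laminations and the fact that $[h]$ is realised by a bi-Lipschitz diffeomorphism.
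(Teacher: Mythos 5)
Your overall strategy matches the paper's proof almost step-for-step: approximate $\Lambda$ by pants decompositions, invoke \cref{uniquelimit}, push forward by $f$ via \cref{thm:conemcg}, reapply \cref{uniquelimit} on $(S,y)$, and conclude by continuity of $f$. The gap is precisely the step you flagged as ``the main obstacle'': producing pants decompositions $\Gamma_i$ with $|\Gamma_i| \to \Lambda$ in the Hausdorff topology. Your proposed fix --- completing each multicurve $\gamma_i$ (from \cref{lem:curveapprox}) to a pants decomposition by ``adding simple closed geodesics taken arbitrarily close to $\Lambda$ in the Hausdorff metric'' --- is not clearly achievable and is also not needed. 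It is not obvious that, once $\gamma_i$ is fixed, one can find completing simple closed geodesics that are simultaneously disjoint from $\gamma_i$, close to $\Lambda$, and together with $\gamma_i$ form a pants decomposition; the complementary regions of $\gamma_i$ need not contain curves close to $\Lambda$ in the requisite way.

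The paper closes this gap with \cref{laminationlimit}, which exploits the maximality of $\Lambda$: if $\gamma_i$ are chain-recurrent laminations Hausdorff-converging to a maximal chain-recurrent $\Lambda$, then \emph{any} chain-recurrent laminations $\Gamma_i \supseteq \gamma_i$ automatically Hausdorff-converge to $\Lambda$. So you can extend each $\gamma_i$ to a pants decomposition $\Gamma_i$ arbitrarily, with no careful choices --- maximality forces the convergence. Once you replace your completion step with an appeal to \cref{laminationlimit}, the rest of your argument (including the observation that Hausdorff convergence transports under $h$ and that $h(\Lambda)$ remains maximal chain-recurrent) is correct and coincides with the paper's.
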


\begin{proof}
The chain-recurrence of $\Lambda$ means that it is arbitrarily closely approximated, in the Hausdorff topology, by a sequence of simple closed multicurves $\{\gamma_i\}_{i\in\mathbb{N}}$ (see \cref{lem:curveapprox}). Extend each multicurve $\gamma_i$ to a pants decomposition $\Gamma_i$. \cref{laminationlimit} shows that the sequence $\{\Gamma_i\}_{i\in\mathbb{N}}$ has a Hausdorff limit which is equal to $\Lambda$. Having satisfied the conditions for \cref{uniquelimit}, we know that any sequence of unit vectors
$
\left\{v_i\in N_x([\Gamma_i])\right\}_{i\in\mathbb{N}}
$
necessarily converges to $v_\Lambda$. Since $f$ is continuous, the sequence $\{f(v_i)\}$ converges to $f(v_\Lambda)$. However, \cref{thm:conemcg} tells us that there is a mapping class $h$ such that
\[
f(v_i)\in 
f(N_x([\Gamma_i]))
=N_y(h([\Gamma_i])).
\]
\cref{uniquelimit} ensures that the sequence $\{f(v_i)\}$ necessarily converges to $v_{h(\Lambda)}$. Therefore, $f(v_\Lambda)=v_{h(\Lambda)}$, as desired.
\end{proof}

Given an arbitrary simple closed geodesic $\gamma$ on $S$, take $\alpha_0$ to be any simple closed geodesic which has non-empty transverse intersection with $\gamma$, and define a sequence of simple closed curves $\{\alpha_m\}_{m\in\mathbb{Z}}$, where $\alpha_m$ is obtained from $\alpha_0$ by the $m$-times iterated Dehn twist around $\gamma$. For each $\alpha_m$, let $\Lambda_{+,m}$ and $\Lambda_{-,m}$ denote chain-recurrent complete geodesic laminations such that
\begin{itemize}
\item
$\Lambda_{\pm,m}$ contain geodesic leaves which bound the same $(1,1)$-cusped annulus $A_m\subset S$, which in turn contains $\alpha_m$;
\item
$\Lambda_{+,m}$ and $\Lambda_{-,m}$ have the same geodesic leaves, except on the interior of $A_m$, where the two bi-infinite geodesic leaves  in $\Lambda_{+,m}$ and $\Lambda_{-,m}$ which spiral toward $\alpha_m$ do so in opposite directions (see left-most and right-most pictures in \cref{fig:2}). 
\end{itemize}

\begin{proposition}[length extraction]
\label{lem:length}
Let $v_{\pm,m}$ denote the respective stretch vectors for the complete laminations $\Lambda_{+,m}$ and $\Lambda_{-,m}$. Then,
\begin{align*}
\ell_x(\gamma)
=
\lim_{m\to\pm\infty}
\frac{-\log\|v_{+,m}-v_{-,m}\|_{\mathrm{Th}}}{i(\gamma,\alpha_0)\cdot |m|},
\end{align*}
where $i(\gamma,\alpha_0)$ denotes the geometric intersection number between $\gamma$ and $\alpha_0$.
\end{proposition}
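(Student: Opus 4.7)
The plan is to reduce the statement to \cref{thm:infkey} (the $(1,1)$-cusped annulus stretch-vector difference formula) together with a standard asymptotic for the length of iterated Dehn twists.

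First, observe that for each $m$, the complete laminations $\Lambda_{+,m}$ and $\Lambda_{-,m}$ coincide outside the $(1,1)$-cusped annulus $A_m$, inside of which they restrict to the two ``opposite-spiralling'' models $\lambda_\pm$ of \cref{thm:key}. Applying \cref{thm:infkey} with $\gamma_0 = \alpha_m$ and $\ell_0 = \ell_x(\alpha_m)$ gives
\[
v_{+,m} - v_{-,m} \;=\; \left(\frac{4\ell_x(\alpha_m)e^{-\ell_x(\alpha_m)}}{1-e^{-\ell_x(\alpha_m)}} \;-\; 4\log(1-e^{-\ell_x(\alpha_m)})\right) E_{\alpha_m},
\]
where $E_{\alpha_m}$ is the unit Fenchel--Nielsen twist vector for $\alpha_m$. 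Since $\alpha_m = T_\gamma^m(\alpha_0)$ escapes every compact set in $\mathcal{S}$, we have $\ell_x(\alpha_m) \to \infty$ as $|m|\to\infty$, so \cref{eq:asympdiff} yields
\[
\|v_{+,m} - v_{-,m}\|_{\mathrm{Th}} \;=\; 4\ell_x(\alpha_m) e^{-\ell_x(\alpha_m)}\,\|E_{\alpha_m}\|_{\mathrm{Th}}\,(1 + o(1)).
\]

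Taking $-\log$ and dividing by $i(\gamma,\alpha_0)\cdot|m|$, the main term is $\ell_x(\alpha_m)/\bigl(i(\gamma,\alpha_0)|m|\bigr)$, which by the standard Dehn-twist length asymptotic $\ell_x(T_\gamma^m\alpha_0) = |m|\cdot i(\gamma,\alpha_0)\cdot\ell_x(\gamma) + O(1)$ converges to $\ell_x(\gamma)$. The remaining terms to absorb are $\log\ell_x(\alpha_m)/|m|$, $\log 4/|m|$, and $\log\|E_{\alpha_m}\|_{\mathrm{Th}}/|m|$; the first two are obviously $O(\log|m|/|m|)$.

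The main technical point is controlling the last term. For this, I would use Wolpert's cosine formula (cf.\ \cite{Wol}): for any simple closed geodesic $\beta$,
\[
E_{\alpha_m}(\log\ell_\beta) \;=\; \frac{1}{\ell_x(\beta)}\sum_{p\in\alpha_m\cap\beta}\cos\theta_p,
\]
whose absolute value is bounded by $i(\alpha_m,\beta)/\ell_x(\beta)$. Since $\ell_x(\beta)$ is uniformly bounded below on $\mathcal{S}$ and $i(\alpha_m,\beta) = i(T_\gamma^m\alpha_0,\beta) = O(|m|)$ uniformly in $\beta$ (with constants depending only on $\alpha_0$ and $\gamma$, which follows from the triangle inequality $i(T_\gamma^m\alpha_0,\beta)\leq |m|\,i(\gamma,\alpha_0)\,i(\gamma,\beta) + i(\alpha_0,\beta)$, and from the uniform upper bound on $i(\gamma,\beta)/\ell_x(\beta)$ coming from collar-type estimates), we obtain $\|E_{\alpha_m}\|_{\mathrm{Th}} = O(|m|)$. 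Hence $\log\|E_{\alpha_m}\|_{\mathrm{Th}}/|m| = O(\log|m|/|m|) \to 0$. Combining all the pieces yields
\[
\lim_{m\to\pm\infty}\frac{-\log\|v_{+,m} - v_{-,m}\|_{\mathrm{Th}}}{i(\gamma,\alpha_0)|m|} \;=\; \ell_x(\gamma),
\]
as claimed. The main obstacle is genuinely the uniform polynomial control of $\|E_{\alpha_m}\|_{\mathrm{Th}}$ in $|m|$; once this is in hand, everything else is bookkeeping on the leading exponential in $\ell_x(\alpha_m)$.
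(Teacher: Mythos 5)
The overall structure of your argument --- applying \cref{thm:infkey} and \cref{eq:asympdiff} to reduce the claim to estimating $\|E_{\alpha_m}\|_{\mathrm{Th}}$, then using the Dehn-twist length asymptotic $\ell_x(\alpha_m) = |m|\,i(\gamma,\alpha_0)\,\ell_x(\gamma) + O(1)$ --- matches the paper's. But there is a gap in how you control the twist-vector norm.

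To conclude that $\log\|E_{\alpha_m}\|_{\mathrm{Th}}/|m| \to 0$, you need a \emph{two-sided} bound on $\|E_{\alpha_m}\|_{\mathrm{Th}}$: your upper bound $O(|m|)$ handles the $\limsup$, but for the $\liminf$ you must also know that $\|E_{\alpha_m}\|_{\mathrm{Th}}$ does not decay exponentially fast (any polynomial, or even a constant, lower bound would do). Your Wolpert-formula argument only delivers the upper bound: the cosine-sum estimate $|E_{\alpha_m}(\log\ell_\beta)| \leq i(\alpha_m,\beta)/\ell_x(\beta)$ caps the supremum from above, but nothing written prevents the supremum from being tiny. Indeed the natural ``witness'' $\beta = \gamma$ is useless, since $i(\alpha_m,\gamma)=i(\alpha_0,\gamma)$ is constant while the intersection angles $\theta_p$ tend to $\pi/2$ as $|m|\to\infty$, so $E_{\alpha_m}(\log\ell_\gamma)\to 0$. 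Exhibiting a curve with a uniformly positive (let alone linearly growing) logarithmic length derivative is a genuine missing step.

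The paper sidesteps this by invoking Thurston's Theorem~5.2: the normalised earthquake-vector map $[\lambda]\mapsto \ell_x(\lambda)^{-1}\left.E_\lambda\right|_x$ is an embedding of the compact space $\pml(S)$ into $T_x\teich(S)$, so by compactness there are constants $c_1,c_2>0$ with $c_1\,\ell_x(\lambda) \leq \|\left.E_\lambda\right|_x\|_{\mathrm{Th}} \leq c_2\,\ell_x(\lambda)$ uniformly in $\lambda$. In particular $\|E_{\alpha_m}\|_{\mathrm{Th}} \asymp \ell_x(\alpha_m) \asymp |m|$, furnishing both bounds at once with no angle or intersection-number estimates. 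Your Wolpert route could be repaired by supplying the missing lower bound, but the compactness argument is cleaner and is what the paper uses.
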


\begin{proof}
From \cite[Theorem~5.2]{ThS}, we know that the unit earthquake vector map $E_{(\cdot)}:\pml(S)\to T_x\teich(S)$ given by
\[
[\lambda]\mapsto \tfrac{1}{\ell_x(\lambda)}{\left.E_{\lambda}\right|_x},
\]
where $\lambda$ is an arbitrary measured lamination representing $[\lambda]$ and $\left.E_\lambda\right|_x$ is the earthquake vector at $x$ corresponding to $\lambda$, defines an embedding. Since $\pml(S)$ is compact and $\|E_{(\cdot)}\|_{\mathrm{Th}}$ is a continuous map, this means that there are constants $c_1,c_2>0$ such that 
\begin{align}
c_1\;\ell_x(\lambda)\leq\|\left.E_{\lambda}\right|_x\|_{\mathrm{Th}}\leq c_2\;\ell_x(\lambda).\label{eq:bounds}
\end{align}
Then, \cref{eq:asympdiff}, combined with \cref{eq:bounds} tells us that
\begin{align*}
c_1'\;\ell_x(\alpha_m)^2e^{-\ell_x(\alpha_m)}
\leq
\|v_{+,m}-v_{-,m}\|_{\mathrm{Th}}
\leq c_2'\;\ell_x(\alpha_m)^2e^{-\ell_x(\alpha_m)},
\end{align*}
for some positive constants $c_1', c_2'$.
Applying the logarithm to the above formula and using the fact that $\ell_x(\alpha_m)$ asymptotically grows as $|m|i(\gamma,\alpha_0)\ell_x(\gamma)$ as $m\to\pm\infty$ yields the desired result.
\end{proof}

\begin{theorem}[Geometric rigidity, \cref{second part}]
Let $f \colon T_x \teich(S) \to T_y \teich(S)$ be a linear isometry as given in \cref{main}, and let $h$ be a diffeomorphism representing the mapping class as in \cref{first part}. Then for every simple closed curve $\gamma\in\mathcal{S}$ on $S$, we have 
\[
\ell_x(\gamma)=\ell_y(h(\gamma)).
\]
\end{theorem}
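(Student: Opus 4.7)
The plan is to extract the length equality directly from the length extraction formula (\cref{lem:length}), combined with the equivariance of stretch vectors (\cref{equivariant stretch}) and the fact that $f$ is a Thurston-norm isometry. First I would fix $\gamma\in\mathcal{S}$ on $(S,x)$ and select a simple closed geodesic $\alpha_0$ transverse to $\gamma$, letting $\alpha_m$ denote the image of $\alpha_0$ under $m$ Dehn twists around $\gamma$. For each integer $m$ I would construct the chain-recurrent complete laminations $\Lambda_{+,m}, \Lambda_{-,m}$ exactly as in the set-up to \cref{lem:length}: each contains a $(1,1)$-cusped annular neighborhood of $\alpha_m$ with two bi-infinite leaves spiralling toward $\alpha_m$, and the two laminations differ only in having these leaves spiral in opposite directions within the annulus. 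These should be chosen so that their underlying chain-recurrent cores are maximal, which can always be arranged by extending the basic annular lamination outside the annulus to a maximal chain-recurrent lamination on $S$.

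With this set-up in hand, I would apply \cref{equivariant stretch} to obtain $f(v_{\Lambda_{\pm,m}})=v_{h(\Lambda_{\pm,m})}$. Since $h$ is a diffeomorphism, each $h(\Lambda_{\pm,m})$ has the same topological structure as $\Lambda_{\pm,m}$, but centred on $h(\alpha_m)$ rather than $\alpha_m$. Moreover, mapping classes conjugate Dehn twists to Dehn twists, so $h(\alpha_m)$ is the image of $h(\alpha_0)$ under $m$ Dehn twists around $h(\gamma)$. Consequently, $h(\Lambda_{\pm,m})$ are laminations of precisely the form required to apply \cref{lem:length} on $(S,y)$, extracting the length of $h(\gamma)$ via the transverse geodesic $h(\alpha_0)$.

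Combining \cref{lem:length} on both sides with the identities $\|v\|_{\mathrm{Th}}=\|f(v)\|_{\mathrm{Th}}$ and $i(h(\gamma),h(\alpha_0))=i(\gamma,\alpha_0)$ would then yield
\begin{align*}
\ell_x(\gamma)
&= \lim_{m\to\pm\infty} \frac{-\log\|v_{\Lambda_{+,m}} - v_{\Lambda_{-,m}}\|_{\mathrm{Th}}}{i(\gamma,\alpha_0)\cdot|m|}\\
&= \lim_{m\to\pm\infty} \frac{-\log\|v_{h(\Lambda_{+,m})} - v_{h(\Lambda_{-,m})}\|_{\mathrm{Th}}}{i(h(\gamma),h(\alpha_0))\cdot|m|}
= \ell_y(h(\gamma)),
\end{align*}
as desired.

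The main obstacle I expect to encounter is the technical compatibility between the ``chain-recurrent complete geodesic laminations'' in the set-up of \cref{lem:length} and the ``maximal chain-recurrent'' hypothesis required by \cref{equivariant stretch}. In particular, one must verify that the $(1,1)$-annulus construction admits an extension to a maximal chain-recurrent lamination on $S$ for which the stretch vector formulas from \cref{thm:infkey} are unchanged. Since those formulas depend only on the geometric behaviour of the stretch map within the annulus, such an extension should indeed be harmless, and once it is in place the three-line computation above completes the proof.
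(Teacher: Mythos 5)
Your proposal is correct and essentially identical to the paper's proof: both apply \cref{lem:length} on each side and combine it with \cref{equivariant stretch}, the norm preservation $\|f(v)\|_{\mathrm{Th}}=\|v\|_{\mathrm{Th}}$, and the invariance $i(h(\gamma),h(\alpha_0))=i(\gamma,\alpha_0)$. The compatibility issue you flag is in fact not an issue: the laminations $\Lambda_{\pm,m}$ in the set-up of \cref{lem:length} are already \emph{complete} and chain-recurrent, and a complete geodesic lamination that is chain-recurrent is automatically a maximal chain-recurrent lamination, so \cref{equivariant stretch} applies to them directly without any further extension step.
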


\begin{proof}
Define $\alpha_m$, $\Lambda_{\pm,m}$ and $v_{\pm,m}$ as in the proof of \cref{lem:length}. 
Since $f$ is an isometry, we see that
\begin{align*}
\|v_{+,m}-v_{-,m}\|_{\mathrm{Th}}
=
\|f(v_{+,m})-f(v_{-,m})\|_{\mathrm{Th}}
=
\|w_{+,m}-w_{-,m}\|_{\mathrm{Th}},
\end{align*}
where $w_{\pm,m}\in T_y\teich(S)$ are stretch vectors with respect to the laminations $h(\Lambda_{\pm,m})$ on $y$ (\cref{equivariant stretch}). Then,
\begin{align*}
\ell_x(\gamma)
&=\lim_{m\to\infty}
\frac{-\log\|v_{+,m}-v_{-,m}\|_{\mathrm{Th}}}{i(\gamma,\alpha_0)\cdot |m|}\\
&=\lim_{m\to\infty}
\frac{-\log\|w_{+,m}-w_{-,m}\|_{\mathrm{Th}}}{i(h(\gamma),h(\alpha_0))\cdot |m|}
=\ell_y(h(\gamma)),
\end{align*}
as desired.
\end{proof}

\subsection{Infinitesimal rigidity}

We now prove \cref{main}: the infinitesimal rigidity of the Thurston metric.

\begin{proof}[Proof of \cref{main}]
By \cref{first part}, for an isometry $f$ in \cref{main}, there is a diffeomorphism $h$ representing some mapping class group which induces the same homeomorphism on $\pml(S)$ as $f_{y,x}:=\iota_x^{-1}\circ f^*\circ \iota_y$. By \cref{second part}, for any simple closed curve $\gamma$ on $S$, the length of $h(\gamma)$ with respect to the hyperbolic metric at $y$ coincides with that of $\gamma$ at $x$. Since the length functions of the simple closed curves  define an embedding of $\teich(S)$ to $\reals^\mathcal{S}$, we see that $h$ takes $(S,x)$ to $(S,y)$. Since $dh$ takes the cotangent vector corresponding to $d_y\log \len(\gamma)$ to $d_x \log \len(h(\gamma))$, we see that 
\[
dh\equiv\iota_y \circ h \circ \iota_x^{-1}\equiv f.
\]
\end{proof}

There are various corollaries to \cref{main}, including the local rigidity (\cref{thm:local} and global rigidity (\cref{thm:walsh}) results stated in the introduction. For example, we can strengthen \cref{equivariant stretch} as follows:

\begin{corollary}\label{fullstretchequivariance}
Given a Thurston-norm isometry $f\colon T_x\teich(S)\to T_y\teich(S)$, consider its corresponding extended mapping class $[h]\in\mathrm{Mod}^*(S)$ (see \cref{first part}) and let $\Lambda$ denote an arbitrary complete lamination of $S$. The stretch vector $v_{\Lambda}\in T_x\teich(S)$ satisfies the following equivariance property
\begin{align*}
f(v_\Lambda)=v_{h(\Lambda)}\in T_y\teich(S).
\end{align*}
\end{corollary}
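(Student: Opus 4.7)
The plan is to deduce \cref{fullstretchequivariance} directly from the full infinitesimal rigidity theorem (\cref{main}), which is now at our disposal, rather than re-running the Hausdorff-limit-of-faces argument that underpins \cref{equivariant stretch}. The proof of \cref{main} shows that any Thurston-norm isometry $f\colon T_x\teich(S)\to T_y\teich(S)$ is literally the differential $dh$ at $x$ of a diffeomorphism $h\colon (S,x)\to (S,y)$, where $h$ represents the extended mapping class produced by topological rigidity. The previous corollary \cref{equivariant stretch} only needed topological rigidity and therefore could be established only for maximal chain-recurrent $\Lambda$ (because stretch vectors of such laminations arise as Hausdorff limits of shrinking faces $N_x([\Gamma_i])$, cf.\ \cref{uniquelimit}). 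With $f=dh$ in hand, however, the restriction to maximal chain-recurrent laminations becomes unnecessary.

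First I would note that since $h$ is an honest isometry of hyperbolic surfaces, it carries every complete geodesic lamination $\Lambda$ on $(S,x)$ to a complete geodesic lamination $h(\Lambda)$ on $(S,y)$, and carries the horocyclic foliation structure (and the central stable triangles, anchors, and stretch-invariant foliations of \cref{s:stretch}) on each ideal complementary region to the analogous structure on the image. The $K$-stretch map along $\Lambda$ is uniquely determined by this data on $(S,x)$, hence it conjugates by $h$ to the $K$-stretch map along $h(\Lambda)$ on $(S,y)$. Consequently, at the level of Teichm\"uller space, the induced action $[h]_*$ intertwines the two stretch paths:
\[
[h]_*\bigl(\operatorname{stretch}(x,\Lambda,t)\bigr)=\operatorname{stretch}(y,h(\Lambda),t)\quad\text{for all }t\geq 0.
\]
Differentiating this identity at $t=0$ and using $f=d[h]_*|_x=dh$ yields
\[
f(v_\Lambda)=dh(v_\Lambda)=\left.\tfrac{d}{dt}\right|_{t=0}\operatorname{stretch}(y,h(\Lambda),t)=v_{h(\Lambda)},
\]
which is the desired equivariance.

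There is no serious obstacle beyond observing the naturality of the stretch construction under isometries of hyperbolic surfaces, which is tautological from its definition in \cref{sec:stretch}. I would include one brief remark to flag that this strengthening is non-trivial: a priori, the stretch vectors $v_\Lambda$ for non-chain-recurrent complete laminations $\Lambda$ need not be extreme points of $\mathbf{S}_x$, nor arise as Hausdorff limits of the faces $N_x([\Gamma_i])$, so the equivariance in these cases genuinely requires more than topological rigidity --- it requires knowing that $f$ is the derivative of a bona fide hyperbolic isometry, which is exactly what \cref{main} provides.
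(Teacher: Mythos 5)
Your proof is correct and matches the paper's intent: the paper states \cref{fullstretchequivariance} as an unproved consequence of \cref{main}, and the argument you spell out — namely that the proof of \cref{main} gives $f=d[h]|_x$ for a genuine hyperbolic isometry $h\colon(S,x)\to(S,y)$, and that the stretch construction is tautologically natural under such isometries so $[h]_*\operatorname{stretch}(x,\Lambda,t)=\operatorname{stretch}(y,h(\Lambda),t)$ — is exactly the derivation the authors have in mind. Your closing remark, distinguishing this from \cref{equivariant stretch} (which only needed topological rigidity and only applied to maximal chain-recurrent laminations via \cref{uniquelimit}), is also accurate and clarifies why the full strength of \cref{main} is genuinely being used here.
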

To clarify, we have removed every assumption on $\Lambda$ apart from completeness.\medskip
 
 \cref{fullstretchequivariance} implies that the notion of tangential adherence introduced in \cref{tangentially adherent} is a linear invariant. This then strengthens \cref{set of measures} as a linearly invariant characterisation of faces of the form $F_{|\lambda|}\subset\mathbf{S}^*$.

 \subsection{Further questions}
Despite our increased understanding of the infinitesimal and global structure of Thurston's Finsler metric on Teichm\"uller space, there is still much that remains mysterious. We list some of the questions we have under consideration:
  
\begin{question}
\label{maximal ratio-maximising}
The notion of \emph{maximal ratio-maximising chain-recurrent lamination} \cite[Theorem~8.2]{ThS} and its role in Thurston's theory of stretch maps as being (intuitively speaking) ``extremal'' stretch maps \cite[Theorem~8.4]{ThS} suggests that stretch vectors with respect to maximal  chain-recurrent laminations should constitute extreme points of the faces of $\mathbf{S}_x$. This is unproven except when $S$ is a one-cusped torus or a four-cusped sphere, and/or possibly as a statement which is ``generically'' true (and even this is not without subtleties). Is this, in fact, true? 
\end{question}

\begin{question}

What are the dimensions, codimensions, face-dimensions and adherence-dimensions of arbitrary faces in $\mathbf{S}_x$ and $\mathbf{S}_x^*$?
\end{question}

\begin{question}
\label{q:covariant}
\cref{inclusion} can be paraphrased in a category-theoretic language as saying that $F_{(\cdot)}$ induces a covariant functor between two full subcategories $\left|\mathcal{ML}\right|$ and $\mathcal{F}$ of the category of sets where
\begin{itemize}
\item
the objects of $\left|\mathcal{ML}\right|$ are given by the supporting geodesic laminations $|\lambda|$ of measured laminations $\lambda\in\mathcal{ML}(S)$, and
\item
the objects of $\mathcal{F}$ are given by the faces of $\mathbf{S}^*_x$.
\end{itemize}
Moreover, \cref{adinclusion} tells us that there is a natural transformation $\eta$ from the functor $F_{(\cdot)}$ to itself that takes each object $|\lambda|\in\mathrm{ob}(\left|\mathcal{ML}\right|)$ to the morphism $\eta_{|\lambda|}$ of inclusion, \ie $F_{|\lambda|}\subset F_{\widehat{|\lambda|}}$. Is this structure part of some general phenomenon? 
\end{question}

\begin{question}
On a related note to \cref{q:covariant}, it seems highly plausible that the map $N_x(\cdot)$ taking a measured lamination $\lambda\in\mathcal{ML}(S)$ to the corresponding face should not require the information of the measure on $\lambda$ and only uses $x$ as an auxiliary metric. Philosophically speaking, then, this map should induce a contravariant functor $N_{(\cdot)}$ from $\left|\mathcal{ML}\right|$ to the full subcategory $\mathcal{N}$ of the category of sets whose objects are given by the faces of $\mathbf{S}_x$. The role of such a purported contravariant functor $N_{(\cdot)}$ is reminiscent of the correspondence between radical  ideals in coefficient rings and algebraic sets appearing in algebraic geometry. Assuming that such a conjectural theory for $N_{(\cdot)}$ holds, is there perhaps a comparable theory of convex ``varieties'' for convex geometry that applies beyond the confines of our work? And is it dual in a structured sense to what we see in \cref{q:covariant}?

\end{question}

\end{document}